\tikzset{>=stealth',
    cvertex/.style={circle,draw=black,inner sep=1pt,outer sep=3pt},
    vertex/.style={circle,fill=black,inner sep=1pt,outer sep=3pt},
    star/.style={circle,fill=yellow,inner sep=0.75pt,outer sep=0.75pt},
    tvertex/.style={inner sep=1pt,font=\criptsize},
    gap/.style={inner sep=0.5pt,fill=white}}
\newcommand{\marginparstretch}{0.6}
\let\oldmarginpar\marginpar
\renewcommand\marginpar[1]{\-\oldmarginpar[\framebox{\setstretch{\marginparstretch}\begin{minipage}{\marginparwidth}{\raggedleft\tiny #1}\end{minipage}}]{\framebox{\setstretch{\marginparstretch}\begin{minipage}{\marginparwidth}{\raggedright\tiny #1}\end{minipage}}}}
\newcommand{\arrowrl}[3][20]
{
\hspace{-5pt}
\begin{tikzpicture}
\node (A) at (0,0) {};
\node (B) at (1,0) {};
\draw[->] ($(A)+(0,0.2)$) -- node [above] {$\scriptstyle f^*$} ($(B)+(0,0.2)$);
\draw [->] ($(B)+(0,0.2)$) -- node [below] {$\scriptstyle f_*$} ($(A)+(0,0.2)$);
\end{tikzpicture}
\hspace{-5pt}
}
\newcommand{\adj}[2][20]{\arrowrl}
\newtheorem{thm}{Theorem}[section]
\newtheorem{prop}[thm]{Proposition}
\newtheorem{lemma}[thm]{Lemma}
\newtheorem{cor}[thm]{Corollary}
\theoremstyle{definition} 
\newtheorem{defin}[thm]{Definition}
\newtheorem{example}[thm]{Example}
\newtheorem{remark}[thm]{Remark}
\newtheorem{conj}[thm]{Conjecture}
\numberwithin{equation}{section}
\newcommand{\F}{\mathcal{F}}
\newcommand{\G}{\mathcal{G}}
\newcommand{\C}[1]{\mathbb{C}^{#1}}
\newcommand{\m}{\mathfrak{m}}
\newcommand{\p}{\mathfrak{p}}
\renewcommand{\c}[1]{\mathcal{#1}}
\renewcommand{\u}[1]{\underline{#1}}
\renewcommand{\t}[1]{\textnormal{#1}}
\newcommand{\looptop}[2]{\xy \SelectTips{cm}{10}
\POS(0,0) \endxy}
\def\RHom{\mathop{\rm {\bf R}Hom}\nolimits}
\def\Cl{\mathop{\rm Cl}\nolimits}
\def\op{\mathop{\rm op}\nolimits}
\def\GL{\mathop{\rm GL}\nolimits}
\def\CM{\mathop{\rm CM}\nolimits}
\def\uCM{\mathop{\underline{\rm CM}}\nolimits}
\def\depth{\mathop{\rm depth}\nolimits}
\def\fl{\mathop{\sf fl}\nolimits}
\def\mod{\mathop{\rm mod}\nolimits}
\def\coh{\mathop{\rm coh}\nolimits}
\def\Mod{\mathop{\rm Mod}\nolimits}
\def\refl{\mathop{\rm ref}\nolimits}
\def\proj{\mathop{\rm proj}\nolimits}
\def\pd{\mathop{\rm proj.dim}\nolimits}
\def\Hom{\mathop{\rm Hom}\nolimits}
\def\End{\mathop{\rm End}\nolimits}
\def\Ext{\mathop{\rm Ext}\nolimits}
\def\Tr{\mathop{\rm Tr}\nolimits}
\def\add{\mathop{\rm add}\nolimits}
\def\rk{\mathop{\rm rk}\nolimits}
\def\Cok{\mathop{\rm Cok}\nolimits}
\def\Ker{\mathop{\rm Ker}\nolimits}
\def\ker{\mathop{\rm ker}\nolimits}
\def\Im{\mathop{\rm Im}\nolimits}
\def\length{\mathop{\rm length}\nolimits}
\def\Sing{\mathop{\rm Sing}\nolimits}
\def\Ann{\mathop{\rm Ann}\nolimits}
\def\Spec{\mathop{\rm Spec}\nolimits}
\def\Max{\mathop{\rm Max}\nolimits}
\def\Db{\mathop{\rm{D}^b}\nolimits}
\def\Dsg{\mathop{\rm{D}_{\mathsf{sg}}}\nolimits}
\def\Kb{\mathop{\rm{K}^b}\nolimits}
\def\Perf{\mathop{\rm{perf}}\nolimits}
\def\MMG{\mathop{\rm MMG}\nolimits}
\def\tilt{\mathop{\rm tilt}\nolimits}
\def\Fac{\mathop{\rm Fac}\nolimits}
\newcommand{\Gr}{{\sf EG}}
\newcommand{\cC}{\mathcal{C}}
\newcommand{\cF}{\mathcal{F}}
\newcommand{\cG}{\mathcal{G}}
\newcommand{\cM}{\mathcal{M}}
\newcommand{\cT}{\mathcal{T}}
\newcommand{\cU}{\mathcal{U}}
\newcommand{\cZ}{\mathcal{Z}}
\newcommand{\shift}[1]{\langle{#1}\rangle}
\renewcommand{\k}{k}
\edef\marginnotetextwidth{\the\textwidth}
\begin{document}
\title[Reduction and MMAs]{Reduction of triangulated categories and Maximal Modification Algebras for $cA_n$ singularities}
\author{Osamu Iyama}
\address{Osamu Iyama\\ Graduate School of Mathematics\\ Nagoya University\\ Chikusa-ku, Nagoya, 464-8602, Japan}
\email{iyama@math.nagoya-u.ac.jp}
\author{Michael Wemyss}
\address{Michael Wemyss, School of Mathematics, James Clerk Maxwell Building, The King's Buildings, Mayfield Road, Edinburgh, EH9 3JZ, UK.}
\email{wemyss.m@googlemail.com}
\dedicatory{Dedicated to Yuji Yoshino on the occasion of his 60th birthday.}
\thanks{The first author was partially supported by JSPS Grant-in-Aid for Scientific Research 24340004, 23540045, 20244001 and 22224001, and the second author by EP/K021400/1.}
\begin{abstract}
In this paper we define and study triangulated categories in which the Hom-spaces have Krull dimension at most one over some base ring (hence they have a natural 2-step filtration), and each factor of the filtration satisfies some Calabi--Yau type property.
If $\cC$ is such a category, we say that $\cC$ is Calabi--Yau with
$\dim\cC\le1$. We extend the notion of Calabi--Yau reduction to this setting, and prove general results which are an analogue of known results in cluster theory.

Such categories appear naturally in the setting of Gorenstein singularities in dimension three as the stable categories $\uCM R$ of Cohen--Macaulay modules. We explain the connection between Calabi--Yau reduction of $\uCM R$ and both partial crepant resolutions and $\mathds{Q}$-factorial terminalizations of
$\Spec R$, and we show under quite general assumptions that Calabi--Yau reductions exist.

In the remainder of the paper we focus on complete local $cA_n$ singularities $R$. By using a purely algebraic argument based on Calabi--Yau reduction of $\uCM R$, we give a complete classification of maximal modifying modules in terms of
the symmetric group, generalizing and strengthening results in \cite{BIKR} and \cite{DH}, where we do not need any restriction on the ground field. We also describe the mutation of modifying modules at an arbitrary (not necessarily indecomposable) direct summand. As a corollary when $k=\mathbb{C}$ we obtain
many autoequivalences of the derived category of the $\mathds{Q}$-factorial terminalizations of $\Spec R$.
\end{abstract}
\maketitle
\parindent 20pt
\parskip 0pt

\setcounter{tocdepth}{1}
\tableofcontents

\section{Introduction}

\subsection{Overview}\label{philosophy}
Let $R$ be a commutative Gorenstein ring of dimension $3$.  This paper develops algebraic tools, specifically CY reduction and other commutative algebraic techniques,
that allow us to deduce various results related to the geometry of partial resolutions of $\Spec R$ by arguing directly on the base $R$.   Let us first explain why this should be possible.

Suppose that we have a chain of crepant morphisms
\[
Y_n\to Y_{n-1}\to\hdots\to Y_1\to\Spec R.
\]
Then as the spaces get `larger' the corresponding singular derived categories $\Dsg(Y_i):=\Db(\coh Y_i)/\Perf(Y_i)$ get `smaller', since the singularities are improving under crepant modification.
The `largeness' of these categories is measured by the number of modifying objects that they contain (for the definition, see \S\ref{1.3}),
the best case being when there are no non-zero modifying objects, in which case the space must be a $\mathds{Q}$-factorial terminalization of $\Spec R$ \cite{IW5}. 

Now under favourable conditions, each $Y_i$ is derived equivalent to some ring $\Lambda_i$, and whenever this happens necessarily $\Lambda_i$ has the form $\Lambda_i\cong\End_R(M_i)\in\CM R$ for some $M_i\in\refl R$ \cite{IW5}.
In this case, we obtain full subcategories $\c{Z}_i$ of $\Dsg(\Lambda_i)$ and full dense functors
\[
\begin{tikzpicture}
\node (a) at (0,0) {$\Dsg(\Lambda_n)$};
\node (b) at (2,0) {$\Dsg(\Lambda_{n-1})$};
\node at (2,-0.5) {$\rotatebox[origin=c]{90}{$\subseteq$}$}; 
\node (b1) at (2,-1) {$\c{Z}_{n-1}$};
\node at (4,-0.5) {$\rotatebox[origin=c]{90}{$\subseteq$}$};
\node at (4.75,0) {$\hdots$};
\node (M1) at (5.2,-0.2) {$\,$};
\node (M1a) at (4,-1) {$\c{Z}_{n-2}$};
\node (c) at (7,0) {$\Dsg(\Lambda_{1})$};
\node at (7,-0.5) {$\rotatebox[origin=c]{90}{$\subseteq$}$}; 
\node (c1) at (7,-1) {$\c{Z}_{1}$};
\node (d) at (9,0) {$\Dsg(R)$};
\node at (9,-0.5) {$\rotatebox[origin=c]{90}{$\subseteq$}$}; 
\node (d1) at (9,-1) {$\c{Z}_{0}$};
\draw[<<-] (a) -- (b1);
\draw[<<-] (b) --  (M1a);
\draw[<<-] (M1) -- (c1);
\draw[<<-] (c) --   (d1);
\end{tikzpicture}
\]
(see \S\ref{CYredandMMAs}). Calabi--Yau reduction gives us the language in which to say $\Dsg(\Lambda_i)$ is obtained by quotienting the subcategory $\c{Z}_{i-1}$ of $\Dsg(\Lambda_{i-1})$, i.e.\ a way of obtaining $\Dsg(\Lambda_i)$ from the previous level.
Even better, we do not have to do this step-by-step, as in the above setup there is a functor from a certain subcategory $\mathcal{Z}$ of $\Dsg(R)$
all the way to $\Dsg(\Lambda_n)$, and we can obtain $\Dsg(\Lambda_n)$ by simply quotienting $\mathcal{Z}$ (see \S\ref{CYredandMMAs}).

Thus the idea is that we should be able to detect all the categories $\Dsg(\Lambda_i)$, and thus all the categories $\Dsg(Y_i)$, by tracking this entirely in the category $\Dsg(R)=\uCM R$ associated with the base singularity $R$.
We thus view any CY reduction of the category $\uCM R$ as the `shadow' of a partial crepant resolution of $\Spec R$ and in this way  $\uCM R$ should `see' all the singularities in the minimal models $Y_n$ of $\Spec R$.

\subsection{CY Categories of Dimension at Most One}\label{CY section intro}
We begin in a somewhat more general setting.  We let $\cC$ denote a triangulated category, and we suppose that $\cM\subseteq \cZ$ are full (not necessarily triangulated) subcategories of $\cC$.  

\begin{thm}
With the setup as above, assume further that $\cM$ is functorially finite in $\cZ$, and that $\cZ$ is closed under cones of $\cM$-monomorphisms and cocones of $\cM$-epimorphisms (see \S\ref{Triangle reduction} for more details).
Then $\cZ/[\cM]$ has the structure of a triangulated category.
\end{thm}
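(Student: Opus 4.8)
The plan is to construct the triangulated structure on $\cZ/[\cM]$ by mimicking the classical Verdier-style construction, but replacing exactness with the subtle-cone conditions we have assumed. First I would fix the shift functor: since $\cZ$ is closed under cones of $\cM$-monomorphisms, for each $Z \in \cZ$ one chooses (using functorial finiteness) a left $\cM$-approximation $Z \to M_Z$, which we can arrange to be an $\cM$-monomorphism after adding a summand, and defines $\langle Z\rangle$ (the shift) to be the cone. Dually the cocone of a right $\cM$-approximation gives a candidate inverse shift. The first real task is to check these are well-defined functors on $\cZ/[\cM]$ and mutually inverse auto-equivalences; this is the standard argument used in the stable category of a Frobenius category, and the key point is that any two $\cM$-approximations differ by maps factoring through $\cM$, so the cones agree in the quotient.

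Next I would define the triangles: a candidate triangle in $\cZ/[\cM]$ is a diagram $X \to Y \to Z \to \langle X\rangle$ that is isomorphic (in $\cZ/[\cM]$) to one arising, via the octahedral axiom in $\cC$, from a chosen $\cM$-monomorphism $X \to M$ in the construction of $\langle X\rangle$ together with a map $X \to Y$. Concretely, given $f\colon X \to Y$ in $\cZ$, form in $\cC$ the pushout-type diagram built from $X \to M_X$ and $f$; the octahedral axiom produces the third object, and one must check it lies in $\cZ$ — this is exactly where closure under cones of $\cM$-monomorphisms (applied to a suitable induced $\cM$-monomorphism) is used. Then I would verify the axioms (TR1)–(TR4) one at a time. (TR1) and the rotation axiom follow from the corresponding statements in $\cC$ once the shift is pinned down. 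The existence of a triangle on any morphism (the remaining part of TR1) is where one needs both the $\cM$-mono and the $\cM$-epi closure hypotheses, together with the observation that every map in $\cZ$ can, up to a map factoring through $\cM$, be completed appropriately.

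The main obstacle, as usual in these arguments, is the octahedral axiom (TR4). The strategy is to lift a composable pair of morphisms in $\cZ/[\cM]$ to genuine morphisms in $\cZ$, build all the relevant cones inside $\cC$ using the octahedron there, and then argue that each newly produced object lies in $\cZ$ and that the connecting maps, read in the quotient, assemble into the required octahedral diagram. The delicate points are: (i) that the various cones taken in $\cC$ of the approximation maps can be arranged to be objects of $\cZ$, which forces one to be careful that the relevant maps really are $\cM$-monomorphisms or $\cM$-epimorphisms (one may need to stabilize by direct summands in $\cM$ to achieve this, which is harmless in the quotient); and (ii) that the choices made are independent, modulo $[\cM]$, of the lifts — here one repeatedly invokes that a map between objects of $\cZ$ which factors through $\cM$ induces the zero map after applying the approximation construction, so all ambiguity is killed in $\cZ/[\cM]$. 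I would organize this by first proving a handful of lemmas: that $\cM$-approximations can be taken to be $\cM$-mono/epi up to summands, that their cones/cocones in $\cC$ land in $\cZ$, and that the shift is well-defined; with those in hand the verification of (TR1)–(TR4) becomes a diagram chase in $\cC$ that descends to $\cZ/[\cM]$, following the template of Happel's proof for Frobenius categories.
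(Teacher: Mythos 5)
Your overall framework is correct and matches the paper's: construct $\shift{1}$ as the cone of a left $\cM$-approximation, $\shift{-1}$ as the cocone of a right $\cM$-approximation, define distinguished triangles by completing $\cM$-monomorphisms in $\cC$ and comparing against the shift triangle, and verify TR1--TR4 by lifting to $\cC$. A small correction first: a left $\cM$-approximation is an $\cM$-monomorphism \emph{by definition}, so there is no need to ``arrange'' this by adding a summand; the cone already lands in $\cZ$ directly by hypothesis (2a).

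The genuine gap is in your treatment of TR2. You assert that ``(TR1) and the rotation axiom follow from the corresponding statements in $\cC$ once the shift is pinned down,'' and put the real difficulty at TR4. In fact it is the other way round in this setting: TR3 and TR4 carry over essentially verbatim from \cite[4.2]{Iyama-Yoshino}, and the place where the weakened hypotheses (no longer assuming $\Hom_{\cC}(\cM,\cM[1])=0$) force new work is precisely TR2. The problem is that when you rotate a distinguished triangle $Z_1\xrightarrow{a}Z_2\xrightarrow{b}Z_3\xrightarrow{d}Z_1\shift{1}$, the first arrow $b$ of the rotated candidate triangle is \emph{not} an $\cM$-monomorphism in general, so the rotated triple does not directly fit the defining template. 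The paper's proof instead applies the octahedral axiom in $\cC$ to produce a triangle $Z_2\xrightarrow{e}M_{Z_1}\oplus Z_3\to Z_1\shift{1}\to Z_2[1]$ in which $e$ is an $\cM$-monomorphism (this is where assumption (2b) enters, since $\beta_{Z_1}$ is an $\cM$-epimorphism), and then proves that the resulting triangle in $\cU$ is isomorphic to the rotated one via a nontrivial diagram chase: one must show that certain differences of maps (e.g.\ $d-f_2$ and $h+a\shift{1}$) factor through $\beta_{Z_1}$ or $\beta_{Z_2}$, hence vanish in $\cU$. None of this is a formal consequence of TR2 in $\cC$. Similarly, TR1(c) needs only hypothesis (2a) (via the $\cM$-monomorphism $(\alpha_{Z_1}\,\, f)$), not both closure hypotheses as you claim; the $\cM$-epi hypothesis (2b) is used to get $\shift{-1}$ and, crucially, inside the TR2 argument just described.
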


We then show that $\cZ/[\cM]$ inherits properties from $\cC$.
We fix a commutative $d$-dimensional equi-codimensional CM ring $R$ with a canonical module $\omega_R$.
For $X\in\mod R$, we denote by $\fl_RX$ the largest sub $R$-module with finite length.

\begin{defin}(=\ref{filtered CY defin})
Let $\cC$ be an $R$-linear triangulated category. We assume $\dim_R\cC\leq 1$, i.e. $\dim_R\Hom_{\cC}(X,Y)\leq 1$ for all $X,Y\in\cC$.
Let $T_\cC(X,Y):=\fl_R\Hom_{\cC}(X,Y)$ for every $X,Y\in\cC$, then there exists a short exact sequence
\[
0\to T_{\cC}(X,Y)\to\Hom_{\cC}(X,Y)\to F_{\cC}(X,Y)\to 0.
\]
We say that an autoequivalence  $S\colon\cC\to\cC$ is a \emph{Serre functor} if for all $X,Y\in\cC$ there are functorial isomorphisms
\begin{align*}
D_0(T_\cC(X,Y))&\cong T_\cC(Y,SX),\\
D_1(F_\cC(X,Y))&\cong F_\cC(Y,SX).
\end{align*}
where $D_i:=\Ext_R^{d-i}(-,\omega_R)$.
If $S:=[n]$ is a Serre functor for an integer $n$, we say that $\cC$ is an \emph{$n$-Calabi--Yau triangulated category of dimension at most one}, and write `$\cC$ is $n$-CY with $\dim_R\cC\leq 1$'.
\end{defin}

Now if $\cC$ is $n$-CY with $\dim_R\cC\leq 1$, we say that $M\in\cC$ is \emph{modifying} if $\Hom_{\cC}(M,M[i])=0$ for all $1\leq i\leq n-2$ and further $T_\cC(M,M[n-1])=0$. 
Given a modifying object $M$, we define
\begin{eqnarray*}
\cZ_M&:=&\{ X\in\cC\mid \Hom_{\cC}(X,M[i])=0\mbox{ for all }1\leq i\leq n-2\mbox{ and }T_{\cC}(X,M[n-1])=0\},\\
\cC_M&:=&\cZ_M/[M].
\end{eqnarray*} 
The following is our main result in this abstract setting. 
\begin{thm}\label{CYreduction theorem intro}(=\ref{CYreduction theorem}, \ref{bijection for MM})
Let $M$ be a modifying object in an $n$-CY triangulated category $\cC$ with $\dim_R\cC\leq1$.  Then \\
\t{(1)} $\cC_M$ is an $n$-CY triangulated category with $\dim_R\cC_M\leq1$.\\
\t{(2)} Assume that $\cC$ is Krull--Schmidt and $M$ is basic. Then there exists a bijection between basic modifying objects with summand $M$ in $\cC$ and basic modifying objects in $\cC_M$.
\end{thm}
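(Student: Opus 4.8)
For part (1), the plan is first to place the pair $\add M\subseteq\cZ_M$ inside $\cC$ in the situation of the general triangulated-quotient theorem stated at the start of \S\ref{CY section intro} (proved in \S\ref{Triangle reduction}). Since $M$ is modifying we have $M\in\cZ_M$, and functorial finiteness of $\add M$ in $\cZ_M$ follows from the ambient finiteness hypotheses. The substantive point is that $\cZ_M$ is closed under cones of $(\add M)$-monomorphisms and cocones of $(\add M)$-epimorphisms: given $X\in\cZ_M$, a left $(\add M)$-approximation $X\to M_0$, and the triangle $X\to M_0\to X\langle1\rangle\to X[1]$, one applies $\Hom_\cC(-,M[i])$ and chases the long exact sequence, using that $\Hom_\cC(M_0,M[i])=0$ for $1\leq i\leq n-2$ (as $M$ is modifying) together with $X\in\cZ_M$ to get $\Hom_\cC(X\langle1\rangle,M[i])=0$ in that range; at the top degree one passes to the finite-length parts, using left-exactness of $\fl_R(-)$ and $T_\cC(M_0,M[n-1])=0=T_\cC(X,M[n-1])$, to conclude $T_\cC(X\langle1\rangle,M[n-1])=0$. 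The statement for cocones is dual. Thus $\cC_M$ is triangulated, with shift $\langle1\rangle$ built from approximation triangles and quasi-inverse $\langle-1\rangle$ built from triangles of right $(\add M)$-approximations. The bound $\dim_R\cC_M\leq1$ is then immediate, as $\Hom_{\cC_M}(X,Y)$ is an $R$-module quotient of $\Hom_\cC(X,Y)$, and restricting $\fl_R(-)$ produces the filtration short exact sequence for $\cC_M$ with $T_{\cC_M}(X,Y)$ a subquotient of $T_\cC(X,Y)$.

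The heart of (1) is showing $\langle n\rangle$ is a Serre functor for $\cC_M$, i.e.\ that there are functorial isomorphisms $D_0T_{\cC_M}(X,Y)\cong T_{\cC_M}(Y,X\langle n\rangle)$ and $D_1F_{\cC_M}(X,Y)\cong F_{\cC_M}(Y,X\langle n\rangle)$. The device I would develop is a reduction formula comparing Hom-spaces before and after the quotient: for $X,Y\in\cZ_M$ one establishes functorial isomorphisms identifying $\Hom_{\cC_M}(X,Y\langle i\rangle)$ with $\Hom_\cC(X,Y[i])$ for $1\leq i\leq n-1$, compatibly with the $T$/$F$ decomposition (and for $i=n$ relating $\Hom_{\cC_M}(X,Y\langle n\rangle)$ to $\Hom_\cC(X,Y[n])$). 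These are proved by applying $\Hom_\cC(X,-)$ to the approximation triangle for $Y$ and iterating, the point being that the spurious terms $\Hom_\cC(X,M_0[j])$ and the subgroups $[M](X,Y\langle i\rangle)$ are annihilated exactly by the defining vanishing of $\cZ_M$, provided that at the boundary degrees $j\in\{1,n-1\}$ one also invokes Serre duality in $\cC$ (which trades $\Hom_\cC(X,M[j])$ for $\Hom_\cC(M,X[n-j])$) together with the two-step filtration. Granting the reduction formula, the Serre isomorphisms for $\cC_M$ are obtained by transporting those of $\cC$ through it. I expect this reduction formula — in particular verifying that it is compatible with the filtration at the extreme degrees — to be the main obstacle of the whole argument.

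For part (2), the bijection sends a basic modifying object $N=M\oplus N'$ of $\cC$ to the image $\overline{N'}$ of $N'$ in $\cC_M$. One first checks $N'\in\cZ_M$ (its relevant Hom-spaces into $M$ are direct summands of the corresponding ones for $N$, which vanish appropriately) and that $\overline{N'}$ is modifying in $\cC_M$: by the reduction formula, $\Hom_{\cC_M}(\overline{N'},\overline{N'}\langle i\rangle)\cong\Hom_\cC(N',N'[i])$ is a summand of $\Hom_\cC(N,N[i])=0$ for $1\leq i\leq n-2$, and $T_{\cC_M}(\overline{N'},\overline{N'}\langle n-1\rangle)=0$ similarly. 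Injectivity on basic objects uses that $\cC_M$ is Krull--Schmidt (from (1) and the standing hypotheses), so a basic object of $\cC_M$ has an essentially unique lift to an object of $\cZ_M$ with no summand in $\add M$, and $N_1\cong N_2$ is recovered from $\overline{N_1'}\cong\overline{N_2'}$. For surjectivity, given a basic modifying $L$ in $\cC_M$ take its lift $\widetilde L\in\cZ_M$ with no $\add M$-summand and show $M\oplus\widetilde L$ is modifying in $\cC$: the diagonal vanishing holds because $M$ is modifying and, via the reduction formula applied to $L$, because $L$ is; the off-diagonal $\Hom_\cC(\widetilde L,M[i])=0$ in the appropriate range is just $\widetilde L\in\cZ_M$, and $\Hom_\cC(M,\widetilde L[i])=0$ follows from it by Serre duality in $\cC$, except at the top degree, where $\widetilde L\in\cZ_M$ controls only the finite-length part of $\Hom_\cC(\widetilde L,M[n-1])$, so that killing the $F$-part of $\Hom_\cC(M,\widetilde L[1])$ requires combining the $F$-part of Serre duality in $\cC$ with the full vanishing in $\cC_M$ coming from $L$ being modifying, read back through the reduction formula. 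Once $M\oplus\widetilde L$ is modifying, $\overline{\widetilde L}\cong L$ holds by construction, and the two assignments are mutually inverse.
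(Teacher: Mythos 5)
Your scaffolding matches the paper's: verify the abstract triangulated-quotient hypotheses for $\add M\subseteq\cZ_M$, develop a reduction formula comparing Hom-spaces of $\cC_M$ with those of $\cC$, and deduce the Serre dualities by transport. But your reduction formula is stated too strongly at the boundary degree, and that overstatement conceals the one part of the argument the paper actually has to work for. Proposition \ref{connection between U and C} gives $\Hom_{\cC_M}(X,Y\shift{i})\cong\Hom_{\cC}(X,Y[i])$ only for $1\le i\le n-2$; at $i=n-1$ all that survives is the torsion part, $T_{\cC_M}(X,Y\shift{n-1})\cong T_{\cC}(X,Y[n-1])$. You claim the full isomorphism through $i=n-1$, and this is false in general: applying $\Hom_{\cC}(Y,-)$ to the approximation triangle of $X$ gives an exact sequence $0\to\Hom_{\cC_M}(Y,X\shift{1})\to\Hom_{\cC}(Y,X[1])\to\Hom_{\cC}(Y,M_X[1])$, and (say for $n=2$) the rightmost group need not vanish on $F$-parts, precisely because a modifying $M$ satisfies only $T_{\cC}(M,M[n-1])=0$, not $\Hom_{\cC}(M,M[n-1])=0$. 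This gap sits exactly where the $D_1$-duality lives: that duality pairs $F_{\cC_M}(X,Y)$ in degree $0$ with $F_{\cC_M}(Y,X\shift{n-1})$, and neither degree admits a reduction isomorphism, so there is nothing to transport; invoking Serre duality in $\cC$ at the boundary just reproduces the same uncontrolled $F$-term on the other side. What the paper actually does (Steps 2--3 of the proof of \ref{CYreduction theorem}) is a presentation argument, not a transport: it writes $\Hom_{\cC_M}(X,Y)$ as a cokernel of a map between two $\cC$-Homs via the approximation triangle, writes $F_{\cC_M}(Y,X\shift{n-1})$ as a kernel of a map between two $F_{\cC}$'s, applies $D_1$ to the first presentation (using $D_1(X)\cong D_1(X/\fl_RX)$), and then identifies the two resulting left-exact three-term sequences through the functorial $D_1$-isomorphisms of $\cC$. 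That device is the missing idea in your proposal; note also that it is the only part of (1) requiring the complete-local hypothesis packaged in \ref{D0 implies D1}, whereas the $T$-duality does transport cleanly because both degrees $1$ and $n-1$ reduce for $T$.

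For part (2) your bijection is exactly the paper's (\ref{bijection for MM}), and the crucial observation is right: being modifying only involves full Hom-vanishing for $1\le i\le n-2$ and $T$-vanishing at $i=n-1$, which are precisely the degrees controlled by \ref{connection between U and C}. But you then manufacture an obstruction that isn't there: you worry about killing the $F$-part of $\Hom_{\cC}(M,\widetilde{L}[1])$, yet the definition of modifying never asks for $F$-vanishing at the top degree, and for $1\le i\le n-2$ the $n$-CY characterization of $\cZ_M$ gives $\Hom_{\cC}(M,\widetilde{L}[i])=0$ outright. Your proposed remedy also could not work even if the obstruction were real, because the degree-$(n-1)$ reduction formula only sees $T$-parts and only the self-Hom of $L$, so it says nothing about cross-terms $\Hom_{\cC}(M,\widetilde{L}[n-1])$. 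Drop the phantom concern and your argument for (2) collapses onto the paper's one-line proof.
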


We call the category $\cC_M$ the \emph{Calabi--Yau reduction} of $\cC$ with respect to $M$.

\subsection{CY Reduction in $\Dsg(R)$}\label{1.3}  

We then apply and improve the general results in \S\ref{CY section intro} in the setting of our original motivation (\S\ref{philosophy}).  When $R$ is a commutative Gorenstein ring, 
it is well-known that $\Dsg(R)\simeq\uCM R$ \cite{Buch}.  As an application of AR duality on not-necessarily-isolated singularities we obtain the following, which is our main motivation for studying $n$-CY categories $\cC$ with $\dim_R\cC\leq 1$.

\begin{thm}\label{nonisolatedARintro}(=\ref{nonisolatedAR})
Let $R$ be a commutative $d$-dimensional equi-codimensional Gorenstein ring with $\dim\Sing R\leq 1$.
Then $\uCM R$ is a $(d-1)$-CY triangulated category with $\dim_R(\uCM R)\le 1$.
\end{thm}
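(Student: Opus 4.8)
The plan is to establish the required functorial isomorphisms of Definition~\ref{filtered CY defin} for the category $\cC = \uCM R$ with $n = d-1$, by upgrading the classical Auslander--Reiten--Serre duality to the non-isolated singularity setting. First I would recall that for a Gorenstein ring $R$ with $\dim\Sing R \le 1$, the stable category $\uCM R$ is $\Hom$-finite over $R$ in the weak sense that $\Hom_{\uCM R}(X,Y)$ is a finitely generated $R$-module supported on $\Sing R$; since $\dim\Sing R \le 1$, this immediately gives $\dim_R\uCM R \le 1$, so the two-step filtration $0\to T_\cC(X,Y)\to\Hom_\cC(X,Y)\to F_\cC(X,Y)\to 0$ makes sense, with $T_\cC(X,Y)$ the finite-length part and $F_\cC(X,Y)$ supported in dimension exactly one (or zero). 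The candidate Serre functor is the shift $[d-1]$, which on $\uCM R \simeq \Dsg(R)$ corresponds to $\Omega^{-(d-1)}$; that this is an autoequivalence is standard.

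The heart of the argument is to produce the two isomorphisms
\[
D_0(T_\cC(X,Y))\cong T_\cC(Y,X[d-1]), \qquad D_1(F_\cC(X,Y))\cong F_\cC(Y,X[d-1]),
\]
where $D_i = \Ext_R^{d-i}(-,\omega_R)$ and $\omega_R = R$ since $R$ is Gorenstein. The standard route is via the Auslander--Bridger transpose and the formula relating $\Hom_{\uCM R}$ to $\Ext$ and $\Tor$: for $X,Y\in\CM R$ one has a functorial isomorphism
\[
\Hom_{\uCM R}(X,Y[i]) \cong \Ext_R^i(X,Y)
\]
for $i>0$ together with AR duality relating $\underline{\Hom}$ and the Auslander transpose against $\omega_R$. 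Localizing at a prime $\p$ with $\dim R/\p \le 1$ reduces the claim to two cases: at maximal ideals, where $R_\p$ has an isolated singularity and the classical $(d-1)$-CY duality of Auslander (for $\fl$-part) applies directly, giving the $D_0$-isomorphism; and at height-$(d-1)$ primes $\p$ in $\Sing R$, where $R_\p$ is a one-dimensional-singular... more precisely where $\dim R_\p$ is smaller and $\uCM R_\p$ carries its own lower-dimensional CY structure, which after applying $\Ext_R^{d-1}(-,R)$ (a duality on dimension-one modules, by local duality) yields the $F$-part isomorphism. The key input is that AR sequences exist for $\CM R$ even when $R$ is not an isolated singularity, as long as one works relative to the singular locus --- this is exactly the content the authors must have developed, presumably citing Auslander's work and the AR duality for $\CM$ modules over complete Gorenstein rings together with a reduction to the completion.

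Concretely the steps are: (1) reduce to $R$ complete (or at least complete at each relevant prime), so that $\CM R$ is Krull--Schmidt and AR theory applies; (2) prove the "small" Serre duality $\underline{\Hom}_R(X,Y) \cong D(\underline{\Hom}_R(Y,\tau X))$ with $\tau = \Omega^{d-1}$ on the finite-length (i.e.\ $\fl_R$) parts of Hom-spaces, which is precisely AR--Serre duality for the part of $\uCM R$ supported at the closed point --- here $D$ is Matlis duality, matching $D_0 = \Ext^d_R(-,R)$ after local duality; (3) handle the codimension-one stratum: localize at each height-$(d-1)$ singular prime $\p$, observe $\uCM R_\p$ is $(d-2)$-CY (by induction or directly, since $R_\p$ has isolated singularity of dimension $= \hgt\p$... no --- $R_\p$ has dimension $d - \dim R/\p$, and its singular locus after localization is just the closed point), apply the $(d-2)$-CY duality there, and then transfer back via $\Ext_R^{d-1}(-,R)$, which is a dualizing functor on the category of $R$-modules of dimension $\le 1$ by local/Grothendieck duality; (4) check all the isomorphisms are functorial and compatible with the filtration maps, so they assemble into the single statement that $[d-1]$ is a Serre functor in the sense of Definition~\ref{filtered CY defin}. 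The main obstacle is step (3): one must verify that the codimension-one localizations genuinely behave like lower-dimensional CY categories and that the Matlis/local duality used at the two strata are compatible, i.e.\ that the functors $D_0$ on the finite-length part and $D_1$ on the dimension-one part fit together coherently --- this is where the care about "equi-codimensional" and the precise form of AR duality for non-isolated singularities is essential.
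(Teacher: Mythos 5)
Your opening step (stable Hom-sets are finitely generated over $R$ and supported on $\Sing R$, hence $\dim_R(\uCM R)\le 1$) is exactly the paper's first observation, and the two-stratum shape of the duality you aim for is the right picture. But the heart of the theorem is the pair of functorial isomorphisms, and the paper does not prove them here at all: the proof is a two-line citation of the non-isolated Auslander--Reiten duality established in \cite[3.1]{IW4}. Your plan to rederive that duality by localization breaks at its first fork: you assert that at a maximal ideal the localization $R_\m$ is an isolated singularity, so that classical AR duality gives the $D_0$-isomorphism for the finite-length part. This is false precisely in the situation the theorem addresses: if $\Sing R$ contains a prime $\p$ with $\dim R/\p=1$, then $R_\m$ is singular along $\p R_\m$ for every $\m\supseteq\p$, so it is not an isolated singularity, classical AR duality does not apply, and the Hom-spaces are not of finite length --- that is the whole reason the statement splits into a $T$-part and an $F$-part. (Localizing at the height $d-1$ singular primes does give isolated singularities of dimension $d-1$, which is the right heuristic for the $F$-part, but only pointwise.)

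Two further gaps. First, the ``transfer back via $\Ext^{d-1}_R(-,R)$'' step is exactly where the work lies: knowing $D_1(F_\cC(X,Y))_\p\cong F_\cC(Y,X[d-2])_\p$ at each of the finitely many one-dimensional singular primes does not by itself produce a single functorial isomorphism of $R$-modules; one needs a globally defined map inducing these, which is what the direct local-duality argument of \cite[\S 3]{IW4} supplies (and it requires neither passage to the completion nor existence of AR sequences, so your step (1) is also not the relevant reduction). Second, your stated target $D_1(F_\cC(X,Y))\cong F_\cC(Y,X[d-1])$ carries the wrong shift: the isomorphism actually available, quoted verbatim in the paper's proof of \ref{nonisolatedAR} and consistent with the extra shift appearing in \ref{D0 implies D1}(2), is $D_1(F_\cC(X,Y))\cong F_\cC(Y,X[d-2])$, i.e.\ the $F$-part duality lives one degree below the $T$-part; your own localization heuristic ($R_\p$ being $(d-2)$-CY) already forces this and contradicts the degree you wrote. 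So the proposal neither reproduces the paper's (citation-based) argument nor, as it stands, constitutes a complete independent proof of the required duality.
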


Now to relate \S\ref{CY section intro} to our previous work \cite{IW4}, the next result says that when $R$ is Gorenstein and $M\in\CM R$, the notion of modifying introduced in \S\ref{CY section intro} is equivalent to the condition $\End_R(M)\in\CM R$, which was the definition of modifying used in \cite{IW4}.
\begin{lemma}\label{modifyingThesame} \cite[4.3]{IW4} 
Let $R$ be a commutative $d$-dimensional equi-codimensional Gorenstein ring with $\dim\Sing R\leq 1$, and let $M\in\CM R$.
Then $\End_R(M)\in\CM R$ if and only if $M$ is a modifying object in $\uCM R$ (i.e. $\Hom_{\uCM R}(M,M[i])=0$ for all $1\leq i\leq d-3$ and $T_{\uCM R}(M,M[d-2])=0$).
\end{lemma}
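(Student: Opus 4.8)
The plan is to pass to commutative algebra and prove the equivalence by a depth computation along the syzygies of $M$.

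First I would unwind what ``modifying'' means here. Since $R$ is Gorenstein, $\uCM R\simeq\Dsg(R)$, the shift is the cosyzygy, and for $M\in\CM R$ one has $\Hom_{\uCM R}(M,M[i])\cong\Ext^i_R(M,M)$ for all $i\ge1$. As $M$ is locally free on the regular locus of $\Spec R$ and $\dim\Sing R\le1$, each $\Ext^i_R(M,M)$ with $i\ge1$ is supported on $\Sing R$, hence has dimension $\le1$; for such a module, being zero is equivalent to $\depth\ge2$, while $\fl_R(-)=0$ is equivalent to $\depth\ge1$. As $\uCM R$ is $(d-1)$-CY with $\dim_R\le1$ (Theorem~\ref{nonisolatedARintro}), the condition ``$M$ is modifying'' thus becomes
\[
\Ext^i_R(M,M)=0\ \ (1\le i\le d-3)\qquad\text{and}\qquad\depth_R\Ext^{d-2}_R(M,M)\ge1,
\]
and the task is to show that this is equivalent to $\depth_R\End_R(M)=d$, i.e.\ $\End_R(M)\in\CM R$. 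I may assume $d\ge3$; for $d\le2$ one checks the statement directly, $\End_R(M)$ being automatically maximal Cohen--Macaulay in that range.

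The engine is the family of four-term exact sequences coming from the syzygy sequences $0\to\Omega^{j+1}M\to P_j\to\Omega^jM\to0$ ($P_j$ projective, $\Omega^0M=M$): applying $\Hom_R(-,M)$ and using $\Ext^1_R(P_j,M)=0$ together with $\Ext^1_R(\Omega^jM,M)\cong\Ext^{j+1}_R(M,M)$, one obtains
\[
0\to H_j\to\Hom_R(P_j,M)\to H_{j+1}\to\Ext^{j+1}_R(M,M)\to0,\qquad H_j:=\Hom_R(\Omega^jM,M),
\]
with $H_0=\End_R(M)$, where $\Hom_R(P_j,M)$ is a direct summand of a finite direct sum of copies of $M$, hence of depth $d$, and $\depth_R H_j\ge\min(2,d)=2$ for all $j$ since $\Omega^jM,M\in\CM R$. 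Splitting each sequence as $0\to H_j\to\Hom_R(P_j,M)\to C_j\to0$ and $0\to C_j\to H_{j+1}\to\Ext^{j+1}_R(M,M)\to0$, the depth lemma gives $\depth_R H_j\ge\min\bigl(d,\ \depth_R H_{j+1}+1,\ \depth_R\Ext^{j+1}_R(M,M)+2\bigr)$; iterating this for $j=0,1,\dots,d-3$ and using $\depth_R H_{d-2}+(d-2)\ge d$ yields
\[
\depth_R\End_R(M)\ \ge\ \min\Bigl(d,\ \min_{1\le j\le d-2}\bigl(\depth_R\Ext^j_R(M,M)+j+1\bigr)\Bigr),
\]
and when $M$ is modifying the right-hand side equals $d$, so $\End_R(M)\in\CM R$; this is one implication. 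Conversely, if $\depth_R\End_R(M)=d$ then $\depth_R C_0\ge d-1$, and an ascending induction on $j$ using $\depth_R C_j\ge\min(d,\depth_R H_j-1)$ and $\depth_R\Ext^{j+1}_R(M,M)\ge\min(\depth_R C_j-1,\depth_R H_{j+1})\ge\min(d-j-2,2)$ shows that $\Ext^{j+1}_R(M,M)=0$ for $j+1\le d-3$ (hence $H_{j+1}\cong C_j$ and $\depth_R H_{j+1}\ge d-(j+1)$, so the induction continues) and that $\depth_R\Ext^{d-2}_R(M,M)\ge1$; that is precisely the modifying condition, completing the proof.

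Everything beyond the depth chase is routine: $\uCM R\simeq\Dsg(R)$ and $\Hom_{\uCM R}(M,M[i])\cong\Ext^i_R(M,M)$ for $i\ge1$ (for $M\in\CM R$ over a Gorenstein ring), $\Ext^1_R(\Omega^jM,M)\cong\Ext^{j+1}_R(M,M)$ (from $\Ext^k_R(\text{projective},M)=0$ for $k\ge1$), and $\depth_R\Hom_R(X,Y)\ge\min(2,d)$ for $X,Y\in\CM R$ (the depth lemma applied to a presentation of $X$). The one point that needs care is the top-degree term, and that is precisely where the hypothesis $\dim\Sing R\le1$ enters: it is what converts the depth inequalities $\depth_R\Ext^i_R(M,M)\ge2$ (for $1\le i\le d-3$) and $\depth_R\Ext^{d-2}_R(M,M)\ge1$ into the vanishing statements and the $\fl_R$-statement appearing in the definition of a modifying object.
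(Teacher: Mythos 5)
Your depth-chase argument is correct in the range where the statement is meaningful ($d\ge 3$), and it is worth noting that the paper itself offers no proof to compare against: Lemma~\ref{modifyingThesame} is imported wholesale from \cite[4.3]{IW4}. Your route --- translate the stable-category conditions into $\Ext^i_R(M,M)=0$ for $1\le i\le d-3$ plus $\depth\Ext^{d-2}_R(M,M)\ge1$ (using that these $\Ext$'s are supported on $\Sing R$, hence of dimension $\le1$), and then run the depth lemma along the four-term sequences $0\to\Hom_R(\Omega^jM,M)\to\Hom_R(P_j,M)\to\Hom_R(\Omega^{j+1}M,M)\to\Ext^{j+1}_R(M,M)\to0$ --- is exactly the standard syzygy/depth-counting proof of such criteria, and is essentially the argument of the cited source rather than a genuinely different one. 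I checked the two directions: the iterated bound $\depth\End_R(M)\ge\min\bigl(d,\min_{1\le j\le d-2}(\depth\Ext^j_R(M,M)+j+1)\bigr)$ is right (using $\depth\Hom_R(\Omega^{d-2}M,M)\ge2$ for the tail), and in the converse the induction $\depth C_j\ge d-1-j$, $\depth\Ext^{j+1}_R(M,M)\ge\min(d-j-2,2)$ correctly forces vanishing for $j+1\le d-3$ and $\depth\ge1$ (i.e.\ $\fl_R=0$) at $j+1=d-2$. One should only add that the hypotheses make all depth statements uniform over maximal ideals (equi-codimensionality), so the localization bookkeeping you leave implicit is harmless.

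The one blemish is your aside on $d\le2$: for $d=2$ the parenthetical condition becomes $T_{\uCM R}(M,M[0])=\fl_R\underline{\End}_R(M)=0$, which is \emph{not} automatic (any non-free indecomposable $M$ over a Kleinian surface singularity has $\End_R(M)\in\CM R$ but nonzero finite-length stable endomorphisms), so ``one checks the statement directly'' is not accurate there. This does not affect the substance, since the notion of modifying in Definition~\ref{modifying def} requires $n=d-1\ge2$ and the lemma is only used for $d\ge3$, but you should either restrict to $d\ge3$ outright or drop the claim about small $d$.
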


By \ref{nonisolatedARintro} and \ref{modifyingThesame}, the first part of the next result is an immediate corollary of \ref{CYreduction theorem intro}.

\begin{cor}\label{CY red is uCM of End intro}(=\ref{CY red is uCM of End})
Let $R$ be a commutative $d$-dimensional equi-codimensional Gorenstein normal domain with $\dim\Sing R\leq 1$, and let $M\in\CM R$ be modifying. Then\\
\t{(1)} the CY reduction $(\uCM R)_M$ of $\uCM R$ is $(d-1)$-CY with $\dim_R(\uCM R)_M\leq 1$.\\
\t{(2)} $(\uCM R)_M\simeq \uCM\End_R(R\oplus M)$ as triangulated categories.
\end{cor}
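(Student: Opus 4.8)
The plan is as follows. Part (1) is, as the authors note, immediate: by Theorem \ref{nonisolatedARintro} the category $\uCM R$ is $(d-1)$-CY with $\dim_R(\uCM R)\le 1$, by Lemma \ref{modifyingThesame} the hypothesis $\End_R(M)\in\CM R$ (equivalently $M\in\CM R$ modifying in the sense of \S\ref{CY section intro}) makes $M$ a modifying object of $\uCM R$, and then Theorem \ref{CYreduction theorem intro}(1) applies verbatim to give that $(\uCM R)_M$ is $(d-1)$-CY with $\dim_R(\uCM R)_M\le 1$. So the content is entirely in part (2): identifying the abstractly-defined quotient $(\uCM R)_M=\cZ_M/[M]$ with the stable category of CM modules over the modification algebra $\Lambda:=\End_R(R\oplus M)$.

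First I would set up the comparison functor. Let $N:=R\oplus M$ and $\Lambda=\End_R(N)$; since $M$ is modifying we have $\Lambda\in\CM R$, and since $R$ is a Gorenstein normal domain $\Lambda$ is an (noncommutative) Gorenstein $R$-order, so $\uCM\Lambda\simeq\Dsg(\Lambda)$ makes sense. The functor $\Hom_R(N,-)\colon\mod R\to\mod\Lambda$ restricts to $\CM R\to\CM\Lambda$ and sends $N$ to the projective $\Lambda$-module $\End_R(N)=\Lambda$; one checks it is fully faithful on $\add N$ with image $\proj\Lambda$. The next (and technical) step is to show that $\Hom_R(N,-)$ carries $\cZ_M$ (the subcategory of $X\in\uCM R$ with $\Hom_{\uCM R}(X,M[i])=0$ for $1\le i\le d-3$ and $T_{\uCM R}(X,M[d-2])=0$) into $\CM\Lambda$ and kills exactly $[\add M]$ after passing to stable categories. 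Sending $\add M$ into $\proj\Lambda$ is clear; conversely $\Hom_R(N,-)$ being essentially surjective up to projective summands onto $\CM\Lambda$ uses that $\Lambda$ has the right depth properties and that for $X\in\cZ_M$ one can build a CM $\Lambda$-approximation. This is the place where one genuinely needs $X\in\cZ_M$ rather than arbitrary $X\in\CM R$: the Ext-vanishing $\Hom_{\uCM R}(X,M[i])=0$ guarantees $\Hom_R(N,X)\in\CM\Lambda$ (a depth computation via the long exact sequence coming from a CM resolution of $X$, using that $\Ext^i_R(N,X)$ vanishes or has finite length appropriately), and the condition $T_{\uCM R}(X,M[d-2])=0$ is exactly what is needed to kill the finite-length obstruction in top degree.

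Having produced an exact functor $\cZ_M/[\add M]\to\uCM\Lambda$, I would prove it is a triangle equivalence by checking fully faithful and dense separately. For density, given $Y\in\CM\Lambda$ one lifts a projective presentation $P_1\to P_0\to Y$ over $\Lambda$ through the equivalence $\add N\simeq\proj\Lambda$ to a map in $\add N$, and takes a syzygy/cone in $\CM R$; one shows this lift lands in $\cZ_M$ (again the Ext-vanishing defining $\cZ_M$ is forced by $Y$ being a genuine module, not just an object of the quotient) and maps to $Y$. For full faithfulness, one compares $\Hom_{\uCM R}(X,X')$ modulo maps through $\add M$ with $\Hom_{\uCM\Lambda}(\Hom_R(N,X),\Hom_R(N,X'))$; the reflexive-module/approximation yoga of \cite{IW4} (for $X,X'\in\CM R$ with $X,X'\in\cZ_M$, maps $\Hom_R(N,X)\to\Hom_R(N,X')$ as $\Lambda$-modules correspond to $R$-module maps $X\to X'$ modulo those factoring through $\add N$, and the $R$-summand is harmless at the stable level) gives the isomorphism. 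Finally, that the functor is triangulated: the triangulated structure on $\cZ_M/[\add M]$ comes from Theorem \ref{CYreduction theorem intro}, and one checks the defining triangles (cones of $\add M$-monomorphisms) go to the standard triangles in $\uCM\Lambda$; equivalently, invoke that an exact-sequence-preserving equivalence between these particular stable categories is automatically triangulated because shifts agree (both are given by cosyzygy with respect to the relevant resolutions). The main obstacle is the second step above — verifying that $\Hom_R(N,-)$ does land in $\CM\Lambda$ and is dense onto it, i.e. reconciling the abstractly-defined $\cZ_M$ with the module-theoretic side — and this is where the results of \cite{IW4} on modifying modules and their endomorphism rings over Gorenstein orders should be cited rather than reproved.
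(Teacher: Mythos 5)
Your proposal follows the same route as the paper: identify $\cZ_M$ with $\CM\End_R(R\oplus M)$ via $\Hom_R(R\oplus M,-)$, match $\add(R\oplus M)$ with $\proj\End_R(R\oplus M)$, and pass to the quotients. The paper's proof is considerably shorter because it cites \ref{modifying 2 ok}(2) for the identification $\cZ_M=\{X\in\CM R\mid\Hom_R(M,X)\in\CM R\}$ and the equivalence $\Hom_R(M,-)\colon\refl R\simeq\refl\End_R(M)$ (so density is automatic rather than needing a projective-presentation lifting argument), but these are precisely the \cite{IW4} inputs you flag at the end, so the substance is the same.
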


As in \cite{IW4}, we view modifying modules as the building blocks of our theory:

\begin{defin} \cite{IW4}
\t{(1)} We say that a modifying $R$-module $M$ is \emph{maximal modifying} (or simply \emph{MM}) if whenever $M\oplus X$ is modifying for some $X\in\refl R$, then $X\in\add M$.  If $M$ is a MM module, we say that $\End_R(M)$ is a \emph{maximal modification algebra} (=\emph{MMA}).\\
\t{(2)} We say that $M\in\CM R$ is \emph{cluster tilting} (or simply \emph{CT}) if $\add M=\{X\in\CM R\mid \Hom_R(M,X)\in\CM R\}$.
\end{defin}

Recall that a normal scheme $X$ is defined to be {\em $\mathds{Q}$-factorial} if for every Weil divisor $D$, there exists $n\in\mathbb{N}$ for which $nD$ is Cartier.  If $X$ and $Y$ are varieties, then a projective birational morphism $f\colon Y\to X$ is called {\em crepant} if $f^*\omega_X=\omega_Y$. A {\em $\mathds{Q}$-factorial terminalization} of $X$ is a crepant projective birational morphism $f\colon Y\to X$ such that $Y$ has only $\mathds{Q}$-factorial terminal singularities.

Using \ref{CY red is uCM of End intro} together with our previous work relating MMAs to the minimal model program, we obtain the following. Recall we say that a module $M$ is a {\em generator} if $R\in\add M$. 

\begin{thm}(=\ref{tilting complex gives})
Let $R$ be a $3$-dimensional Gorenstein normal domain over $\mathbb{C}$ with rational singularities.
If some $\mathds{Q}$-factorial terminalization $Y$ of $\Spec R$ is derived equivalent to some ring $\Lambda$, then there exists an MM generator $M\in\CM R$ of $R$ such that\\
\t{(1)} the CY reduction $(\uCM R)_M$ of $\uCM R$ is triangle equivalent to $\Dsg(Y)$,\\
\t{(2)} $(\uCM R)_M$ is a $2$-CY triangulated category with $\dim_R(\uCM R)_M=0$ and has no non-zero rigid objects.
\end{thm}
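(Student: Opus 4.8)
The plan is to deduce both parts from Corollary~\ref{CY red is uCM of End intro} by first extracting the right module $M$ from the geometry and then identifying the resulting Calabi--Yau reduction with $\Dsg(Y)$. Since $R$ is a $3$-dimensional normal domain, its singular locus has codimension at least two, so $\dim\Sing R\le1$ and Theorem~\ref{nonisolatedARintro} applies: $\uCM R$ is a $2$-CY triangulated category with $\dim_R(\uCM R)\le1$. Next I would invoke the results of \cite{IW5} (resting on \cite{IW4}) relating crepant birational geometry over $R$ to modifying modules: a $\mathds{Q}$-factorial terminalization $Y$ of $\Spec R$ that is derived equivalent to a ring $\Lambda$ forces $\Lambda\cong\End_R(M)$ for some $M\in\CM R$, and the fact that $Y$ is a terminalization, and not merely a partial crepant resolution, translates into $M$ being a \emph{maximal} modifying module; adding an $R$-summand if necessary we may take $M$ to be a generator, so $M$ is an MM generator lying in $\CM R$. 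As $M$ is modifying we have $\End_R(M)\in\CM R$, so Lemma~\ref{modifyingThesame} shows $M$ is a modifying object of $\uCM R$ in the sense of \S\ref{CY section intro}. Corollary~\ref{CY red is uCM of End intro} then gives that $(\uCM R)_M$ is $2$-CY with $\dim_R(\uCM R)_M\le1$, and, since $R\in\add M$ so that $\End_R(R\oplus M)$ and $\Lambda$ are Morita equivalent, that $(\uCM R)_M\simeq\uCM\End_R(R\oplus M)\simeq\uCM\Lambda$ as triangulated categories.

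For part~(1) it remains to identify $\uCM\Lambda$ with $\Dsg(Y)$. Because $\Lambda=\End_R(M)$ for a modifying generator $M$ over the $3$-dimensional Gorenstein normal domain $R$, the algebra $\Lambda$ is Iwanaga--Gorenstein \cite{IW4}, so $\uCM\Lambda\simeq\Dsg(\Lambda)$ by the noncommutative Buchweitz equivalence \cite{Buch}. On the other hand the derived equivalence $\Db(\coh Y)\simeq\Db(\mod\Lambda)$ carries $\Perf(Y)$ onto $\Perf(\Lambda)$, these being the subcategories of compact objects and hence preserved by any triangle equivalence, and so descends to a triangle equivalence $\Dsg(Y)\simeq\Dsg(\Lambda)$ of Verdier quotients. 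Composing, $(\uCM R)_M\simeq\uCM\Lambda\simeq\Dsg(\Lambda)\simeq\Dsg(Y)$, which is~(1).

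For part~(2), the assertions that $(\uCM R)_M$ is $2$-CY and that $\dim_R(\uCM R)_M\le1$ are already part of Corollary~\ref{CY red is uCM of End intro}(1). To upgrade ``$\le1$'' to ``$=0$'' I would use that $\mathds{Q}$-factorial terminal $3$-fold singularities are isolated, so $\Sing Y$ is a finite set of closed points; then every morphism space of $\Dsg(Y)$ is finite-dimensional over $\mathds{C}$, hence of finite length over $R$, and the $\mathds{C}$-linear equivalence of~(1) transports this to $(\uCM R)_M$, giving $\dim_R(\uCM R)_M=0$ (in particular $F$ vanishes identically, so the $2$-CY property here is the classical one). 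Finally, maximality of $M$ implies $(\uCM R)_M$ has no non-zero modifying objects: when $\uCM R$ is Krull--Schmidt this is immediate from the bijection of Theorem~\ref{CYreduction theorem intro}(2), and in general it follows from \cite{IW5} applied to the $\mathds{Q}$-factorial terminalization $Y$ via~(1). Since any rigid object $X$, i.e. one with $\Hom_{(\uCM R)_M}(X,X[1])=0$, is in particular modifying, $(\uCM R)_M$ has no non-zero rigid objects.

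I expect the main obstacle to be the first paragraph: one must marshal the precise statements of \cite{IW4,IW5} so that the geometric hypothesis ``$Y$ is a $\mathds{Q}$-factorial terminalization derived equivalent to a ring'' genuinely produces a maximal modifying \emph{generator} lying in $\CM R$ (rather than merely a reflexive modifying module), and so that all the standing hypotheses of Corollary~\ref{CY red is uCM of End intro} and Lemma~\ref{modifyingThesame} --- $R$ a Gorenstein normal domain, $\dim\Sing R\le1$, $M\in\CM R$, and $\End_R(M)\in\CM R$ --- are genuinely met. Once that bridge is in place, both~(1) and~(2) are essentially bookkeeping on top of the abstract machinery of \S\ref{CY section intro}.
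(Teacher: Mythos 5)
There is a genuine gap at the bridge from the geometry to the MM generator, i.e. exactly at the step you yourself flag as the main obstacle. What \cite[4.13]{IW5} gives you is $\Lambda\cong\End_R(N)$ for a \emph{reflexive} MM module $N$; it does not give $N\in\CM R$, nor $R\in\add N$, and your proposed fix of ``adding an $R$-summand if necessary'' does not work: if $N\notin\CM R$ then $N\oplus R$ is not even modifying (since $\Hom_R(R,N\oplus R)\supseteq N$ would have to be CM), and even when it is modifying there is no reason for $N\oplus R$ to remain \emph{maximal} modifying. Consequently your identification $\Lambda\sim_{\text{Morita}}\End_R(R\oplus M)$, and hence the chain $(\uCM R)_M\simeq\uCM\Lambda\simeq\Dsg(\Lambda)\simeq\Dsg(Y)$, is not justified as stated: in general the MM generator is a \emph{different} module from $N$, and its endomorphism ring is only derived equivalent, not Morita equivalent, to $\Lambda$. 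The paper closes this gap with two further nontrivial inputs: the existence of an MM generator $M\in\CM R$ is quoted from \cite[4.18(2)]{IW4}, and then the fact that all MMAs in dimension three are derived equivalent \cite[4.16]{IW4} yields $\Db(\mod\End_R(M))\simeq\Db(\mod\End_R(N))\simeq\Db(\coh Y)$; passing to the quotients by perfect complexes and using \ref{CY red is uCM of End} then gives (1). Your argument needs to be rerouted through these two results rather than through an isomorphism $\Lambda\cong\End_R(M)$.

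Once the bridge is repaired, the rest of your argument is essentially sound and close to the paper's. For $\dim_R(\uCM R)_M=0$ you use that $\mathds{Q}$-factorial terminal threefold singularities are isolated, which is exactly the paper's mechanism (it embeds $\Dsg(Y)$ into $\bigoplus_i\uCM\cO_{Y,x_i}$ over the finitely many singular points, citing \cite[3.7]{IW5}). For the absence of rigid objects you argue via ``$M$ MM $\Rightarrow$ no non-zero modifying objects in $(\uCM R)_M$ $\Rightarrow$ no non-zero rigid objects'', which is legitimate (it is the unlabelled corollary following \ref{CY red is uCM of End}, valid without Krull--Schmidt), whereas the paper instead deduces it from the embedding into the local categories $\uCM\cO_{Y,x_i}$ and \cite[3.11]{IW5}; either route is acceptable, but your fallback phrase ``in general it follows from \cite{IW5} applied to $Y$'' should be made precise by citing that embedding and \cite[3.11]{IW5} explicitly.
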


Thus we can detect the $\mathds{Q}$-factorial terminalizations of $\Spec R$ on the level of CY reduction of $\uCM R$. 

\subsection{Mutation of MM modules and Tilting Mutation}\label{mutation intro section}
The results of CY reduction in \S\ref{1.3} allow us to deduce the existence of MM generators in certain concrete examples (see \S\ref{cAn section intro}), and so the question becomes how to deduce that they are \emph{all} the MM generators.  

Suppose that $R$ is a complete local normal Gorenstein domain with $\dim R=3$, and we denote by $\MMG R$ the set of isomorphism classes of basic MM generators of $R$.
By \cite[\S 6.2]{IW4} we have an operation on $\MMG R$ called \emph{mutation} which gives a new MM generator $\mu_i(M)$ for a given basic MM generator $M=R\oplus(\bigoplus_{i\in I}M_i)$ by replacing an indecomposable non-free direct summand $M_i$ of $M$.
We denote by $\Gr(\MMG R)$ the \emph{exchange graph} of MM generators of $R$, i.e. the set of vertices is $\MMG R$, and we draw an edge between $M$ and $\mu_i(M)$ for each $M\in\MMG R$ and $i\in I$.  

In this setting we have the following. 

\begin{thm}(=\ref{Auslander type})\label{Auslander type intro}
If the exchange graph $\Gr(\MMG R)$ has a finite connected component $C$, then $\Gr(\MMG R)=C$.
\end{thm}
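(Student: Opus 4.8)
The plan is to show that any finite connected component $C$ of $\Gr(\MMG R)$ is a connected component in the strongest sense: that no vertex of $C$ has an edge leaving $C$, which by definition of a connected component is automatic, so the real content is that $C$ exhausts $\MMG R$, i.e.\ that $\Gr(\MMG R)$ is itself connected. The strategy is the standard ``finite component forces connectedness'' argument familiar from cluster-tilting theory (e.g.\ the analogous statements for cluster categories and for tilting modules), adapted to the present CY-reduction framework. First I would fix an MM generator $N_0 = R \oplus (\bigoplus_{i \in I} N_i)$ lying in $C$, and let $M = R \oplus (\bigoplus_{j \in J} M_j)$ be an arbitrary MM generator; the goal is to produce a path in $\Gr(\MMG R)$ from $N_0$ to $M$. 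The key reduction is to pass to the CY reduction $(\uCM R)_{M}$: since $R$ is complete local Gorenstein of dimension $3$ and a normal domain, \ref{nonisolatedARintro} gives that $\uCM R$ is $1$-CY with $\dim_R(\uCM R)\le 1$, and \ref{CYreduction theorem intro}(2) gives a bijection between basic modifying objects of $\uCM R$ having $M$ (minus its free part) as a summand and basic modifying objects of $(\uCM R)_{M}$. Under this bijection $M$ itself corresponds to the zero object, and MM generators of $R$ containing $M$ correspond to maximal modifying objects of $(\uCM R)_M$; but since $M$ is already \emph{maximal} modifying, $(\uCM R)_M$ has no non-zero modifying objects, so $M$ is a ``source'' with respect to this local picture.

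The main step is then a connectivity-transfer argument. I would first establish that mutation is \emph{transitive on the summands}: given two MM generators $M, M'$ that share all but finitely many indecomposable summands, there is a path between them in $\Gr(\MMG R)$ built from mutations at the differing summands — this is where I would invoke the description of mutation at an arbitrary (not necessarily indecomposable) summand promised in \S\ref{mutation intro section} together with \cite[\S6.2]{IW4}, compatibly with CY reduction via \ref{CY red is uCM of End intro}(2), which identifies $(\uCM R)_M$ with $\uCM\End_R(R\oplus M)$ so that mutations downstairs lift to mutations upstairs. Next, the finiteness of $C$ enters: if $M \notin C$, then walking along a hypothetical mutation path from $N_0$ toward $M$ we must at some point leave $C$; but leaving $C$ along an edge contradicts $C$ being a connected component \emph{unless} $\Gr(\MMG R)$ is disconnected with $M$ in another component. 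So the crux is to rule out two distinct components both of which could a priori be finite: here I would use that the CY reduction $(\uCM R)_M$ is again $1$-CY with $\dim_R \le 1$ (by \ref{CYreduction theorem intro}(1) / \ref{CY red is uCM of End intro}(1)), run the bijection of \ref{CYreduction theorem intro}(2) in reverse starting from \emph{any} vertex of $C$, and thereby realize every MM generator as reachable from $C$ by mutations, because the reduced category has a unique basic MM object (namely $0$) and hence its exchange graph is trivially connected, which pulls back to connectivity of the ``interval'' of MM generators above a common summand.

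Concretely, the steps in order: (i) reduce to showing $\Gr(\MMG R)$ is connected; (ii) for a fixed $M \in C$ and arbitrary MM generator $N$, form the common ``meet'' — an MM generator whose summands are among those of both — using that $R$ is a common generator summand and that the relevant Hom-finiteness makes the mutation combinatorics locally finite; (iii) show $M$ and this meet are joined by a path via the mutation-at-a-summand result of \S\ref{mutation intro section}, and likewise the meet and $N$; (iv) conclude $N \in C$, so $C = \MMG R$. The step I expect to be the main obstacle is (ii)–(iii): controlling mutation at a \emph{non-indecomposable} summand and verifying that each elementary mutation stays within the graph and actually decreases some well-founded invariant measuring the discrepancy between the current MM generator and the target — the finiteness of $C$ is exactly what guarantees such a well-founded invariant (a ``distance in $C$'') exists, so the argument is genuinely using the hypothesis and not merely connectedness of $C$. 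A secondary subtlety is checking that the bijection of \ref{CYreduction theorem intro}(2) is compatible with the graph structure, i.e.\ sends mutation edges to mutation edges; I would verify this directly from the definition of mutation together with \ref{CY red is uCM of End intro}(2), which is the technical glue that lets induction on the size of $I \setminus J$ go through.
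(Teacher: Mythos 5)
There is a genuine gap: your proposal never supplies the mechanism that actually connects an arbitrary MM generator $N$ to the finite component $C$. The CY-reduction bijection of \ref{CYreduction theorem intro}(2) only parametrizes modifying objects \emph{containing a fixed modifying object as a summand}; since an MM generator $M$ admits no proper modifying extensions, reducing at $M$ tells you nothing about how $M$ sits relative to \emph{other} MM generators, which in general share only $R$ with it. Your step (ii), forming a common ``meet'' MM generator whose summands lie among those of both $M$ and $N$, fails for the same reason: such an object need not exist ($R$ itself is usually not MM), so there is no interval of MM generators over a common summand to which the reduction bijection could be applied. Finally, the ``well-founded invariant'' in step (iii) is circular: a distance inside $C$ only makes sense for modules already known to lie in $C$, which is precisely what must be proved for the target $N$; and since $\Gr(\MMG R)$ can have loops ($\mu_i(M)\cong M$), a naive ``each mutation decreases the discrepancy'' scheme has no a priori reason to make progress. (A minor slip: for $d=3$ the category $\uCM R$ is $2$-CY with $\dim_R\le 1$, not $1$-CY.)

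The paper's proof supplies exactly the missing directionality by transporting the problem to tilting theory over $\Lambda=\End_R(M_0)$ for a fixed $M_0\in C$: the functor $F=\Hom_R(M_0,-)$ embeds $\MMG R$ into the poset $\tilt\Lambda$ (\ref{from MMG to S}), landing in the subset $S$ of reflexive tilting modules containing $F(R)$, which is upward closed (\ref{S is a poset ideal}). Proposition \ref{make closer} shows that whenever $T>U$ one can perform a tilting mutation strictly towards the other module while staying comparable, and \ref{compatibility of mutation} matches these tilting mutations with MM mutations, so $F(C)$ is a connected component of $\Gr(S)$ (\ref{F(C) is a component}). Then in \ref{Auslander type 2} the finiteness of $C$ is used non-circularly: the monotone chain of mutations climbing from any $T\in F(C)$ up to $\Lambda$, and then descending from $\Lambda$ to any $U\in S$, stays in $S$ by upward closure and hence in the finite component, so both chains terminate, giving $\Gr(S)=F(C)$ and thus $\Gr(\MMG R)=C$ by injectivity of $F$. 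If you want to salvage your outline, you would need to replace the ``meet'' and ``distance in $C$'' steps by some such ordering argument; the CY-reduction machinery by itself does not provide one.
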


Thus by \ref{Auslander type intro}  if we start with a MM generator $M$ and show that only finitely many MM modules are produced after repeatedly mutating at all possible indecomposable non-free direct summands,
then we can conclude that this finite list of MM generators are all.    This fact will be used in \S\ref{cAn section intro}, and is also needed in the geometric setting of Nolla--Sekiya \cite[\S 5.5]{NS}.

\subsection{$cA_n$ Singularities}\label{cAn section intro}
The remainder of this paper consists of an application of the above techniques to the case of complete local $cA_n$ singularities.
Let $k$ be any field, and let $S:=k[[x,y]]$. For $f\in\m$ where $\m:=(x,y)$, let
\[R:=S[[u,v]]/(f-uv)\]
and $f=f_1\hdots f_n$ be a factorization into prime elements of $S$. For any subset $I\subseteq\{ 1,\hdots, n\}$ we denote
\[
f_I:=\prod_{i\in I}f_i\ \mbox{ and }\ T_I:=(u,f_I)
\]
which is an ideal of $R$.  For a collection of subsets $\emptyset\subsetneq I_1\subsetneq I_2\subsetneq\hdots\subsetneq
I_m\subsetneq\{1,2,\hdots,n\}$, we say that $\F=(I_1,\hdots,I_m)$ is a {\em flag in the set $\{ 1,2,\hdots, n\}$}.  We say that the flag $\c{F}$ is {\em maximal} if $n=m+1$.
We can (and do) identify maximal flags with elements of the symmetric group (see \S\ref{Section cAn}). Given a flag $\c{F}=(I_1,\hdots,I_m)$, we define
\[T^\c{F}:=R\oplus\left(\bigoplus_{j=1}^{m} T_{I_j}\right)\]
and so for each $\omega\in\mathfrak{S}_n$ we let 
\[
T^\omega:=R\oplus (u,f_{\omega(1)})\oplus(u,f_{\omega(1)}f_{\omega(2)})\oplus\hdots\oplus (u,f_{\omega(1)}\hdots f_{\omega(n-1)}).
\]
As an application of results above, we have the following.
\begin{thm}\label{splitting algebraic intro} (=\ref{CY red is uCM of End}(2), \ref{key of classification}) 
Let $\F=(I_1,\hdots,I_m)$ be a flag, and let $(\uCM R)_{T^\c{F}}$ be the CY reduction of $\uCM R$ with respect to $T^\c{F}$.
Then we have triangle equivalences
\[
\uCM\End_R(T^{\F})\simeq (\uCM R)_{T^\c{F}}\simeq 
\bigoplus_{i=1}^{m+1}\uCM\left(\frac{k[[x,y,u,v]]}{(f_{I^i\backslash I^{i-1}}-uv)}\right).
\]
\end{thm}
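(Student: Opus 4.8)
The plan is to prove the two triangle equivalences separately, with the first being essentially a direct application of the general machinery and the second requiring the bulk of the concrete work specific to $cA_n$ singularities.

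First I would establish $\uCM\End_R(T^{\F})\simeq (\uCM R)_{T^\c{F}}$. By \ref{CY red is uCM of End intro}(2), once we know $R$ is a $d$-dimensional ($d=3$) equi-codimensional Gorenstein normal domain with $\dim\Sing R\leq 1$ and that $T^{\F}\in\CM R$ is a modifying generator, we get $(\uCM R)_{T^\c{F}}\simeq\uCM\End_R(R\oplus T^{\F})=\uCM\End_R(T^{\F})$ (since $R\in\add T^{\F}$). So the work here is: (i) verify $R=S[[u,v]]/(f-uv)$ is a normal Gorenstein domain of dimension $3$ with one-dimensional singular locus — this is standard for hypersurface $cA_n$ singularities, the singular locus being cut out by $u=v=0$ together with the non-smooth locus of $f$; (ii) verify each $T_{I_j}=(u,f_{I_j})$ lies in $\CM R$ and that $T^{\F}$ is modifying, i.e. $\End_R(T^{\F})\in\CM R$ by \ref{modifyingThesame}. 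The module-theoretic computation of $\End_R(T_I)$ and $\Hom_R(T_I,T_J)$ for nested ideals $T_I\subseteq T_J$ is a local, explicit calculation using the relation $f=uv$; I expect these $\Hom$-modules to be again of the form "ideal generated by $u$ and a product of the $f_i$," hence visibly CM.

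The second equivalence is the heart of the statement. The strategy is to analyze the algebra $\Lambda:=\End_R(T^{\F})$ directly. Writing $I^0:=\emptyset$, $I^{m+1}:=\{1,\dots,n\}$ and $I^j:=I_j$, I would show $\Lambda$ decomposes — not as a ring, but in the sense that its singularity category splits — into blocks indexed by the "gaps" $I^i\setminus I^{i-1}$ of the flag. Concretely, the flag $\F$ subdivides the factorization $f=f_1\cdots f_n$ into consecutive chunks $g_i:=f_{I^i\setminus I^{i-1}}$ with $g_1\cdots g_{m+1}=f$, and the claim is that the singular part of $\End_R(T^{\F})$ "sees" each chunk independently, each contributing a copy of $\uCM$ of the sub-$cA$-singularity $k[[x,y,u,v]]/(g_i-uv)$. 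The cleanest route is probably to identify $\End_R(T^{\F})$ with (a completion of) a quotient of a path algebra of the "$\widetilde{A}_m$-type" quiver with relations — these algebras were studied in \cite{BIKR} — and then compute its singularity category. Alternatively, and more in the spirit of this paper, one can induct on $m$ using CY reduction: reducing $\uCM R$ at a single ideal $T_{I_1}$ should, by a one-step version of the argument, peel off one factor $\uCM(k[[x,y,u,v]]/(f_{I_1}-uv))$ and leave $\uCM$ of the "complementary" $cA_{n-|I_1|}$ singularity $S[[u,v]]/(f_{\{1,\dots,n\}\setminus I_1}-uv)$, with the remaining reduction carried out there; iterating gives the direct sum. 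The forward reference \ref{key of classification} suggests the authors do exactly this kind of explicit identification.

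The main obstacle I anticipate is establishing the splitting $\uCM\End_R(T^{\F})\simeq\bigoplus_i\uCM(k[[x,y,u,v]]/(g_i-uv))$: a priori $\End_R(T^{\F})$ is a connected algebra (the quiver is a cycle), so the direct-sum decomposition of its singularity category is a genuinely nontrivial phenomenon, reflecting that the "Cartier" summands $\proj$ of the category have been quotiented away and only the local singular contributions at the distinct "nodes" survive. Making this precise requires either (a) a careful analysis of which idempotents of $\End_R(T^{\F})$ act as identities on $\uCM$, showing the $\Perf$-quotient kills all the connecting morphisms between blocks, or (b) constructing explicit triangle functors in both directions and checking they are mutually inverse after passing to the stable/singularity category — e.g. restriction-of-scalars along the natural maps $R\to k[[x,y,u,v]]/(g_i-uv)$ on one side, and on the other side assembling CM modules block-by-block. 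The bookkeeping for general flags (as opposed to $m=1$) is the part most likely to need the inductive CY-reduction argument rather than a single direct computation, and verifying compatibility of the CY structures under the equivalence — so that it is not merely an equivalence of triangulated categories but respects the $(d-1)$-CY-with-$\dim\le1$ structure — is where the earlier abstract results (\ref{CYreduction theorem intro}, \ref{CY red is uCM of End intro}) must be invoked rather than re-proved.
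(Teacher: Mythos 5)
Your plan for the first equivalence matches the paper: once $R$ is verified to be an equi-codimensional normal Gorenstein domain with $\dim\Sing R\le 1$ and $T^\F$ a modifying generator, Theorem~\ref{CY red is uCM of End}(2) gives $(\uCM R)_{T^\F}\simeq\uCM\End_R(T^\F)$. However, for the second equivalence there is a genuine gap: you never identify the tool the paper actually relies on, namely Kn\"orrer periodicity $K\colon\uCM R^\flat\xrightarrow{\sim}\uCM R$ with $R^\flat=S/(f)$. The paper's proof of \ref{key of classification} transports $T^\F$ to $S^\F=\bigoplus_j S/(f_{I_j})$ (Lemma~\ref{S and T}), computes the CY reduction in the \emph{one-dimensional} category $\uCM R^\flat$ by repeated application of Theorem~\ref{3.7} --- a concrete matrix-factorization argument (\ref{describe Z}, \ref{Hom in CY quotient}) that exhibits $\cZ_{S/(f)}$ explicitly as $\add\{R^\flat,\,Y,\,\Omega_{R^\flat}(Z)\}$ and shows the product decomposition directly --- and then applies Kn\"orrer to each factor to return to dimension three. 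Without this descent to dimension one, both of your proposed routes run into trouble.

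Concerning those routes: approach (a), analyzing idempotents of $\End_R(T^\F)$ to force a block decomposition of its singularity category, is exactly the hard problem the Kn\"orrer trick is designed to circumvent --- the quiver of $\End_R(T^\F)$ is a cycle and the splitting of $\uCM\End_R(T^\F)$ is not visible from idempotents; the paper instead proves it by the explicit $\cZ$-calculation in dimension one. Approach (b) is based on a false premise: there are no natural ring maps $R=S[[u,v]]/(f-uv)\to k[[x,y,u,v]]/(g_i-uv)$, since $f-uv$ does not map to zero under $g_i\mapsto uv$ unless $i=m=1$ and $g_1=f$; the factor categories $\uCM(k[[x,y,u,v]]/(g_i-uv))$ do not embed in $\uCM R$ via restriction of scalars, but rather appear as Kn\"orrer images of the one-dimensional factors $\uCM(S/(g_i))$. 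Your inductive sketch ("peel off one factor, reduce in the complement") is morally correct and corresponds to iteratively applying Theorem~\ref{3.7} in dimension one and then Corollary~\ref{chainofCMs}, but as written it leaves the crucial one-step splitting unproved, and without pinpointing Kn\"orrer periodicity as the mechanism, it is not a proof.
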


We remark that \ref{splitting algebraic intro} is expected from, and generalizes, some results in \cite[\S5]{IW5} which rely on very precise information regarding the singularities in the $\mathds{Q}$-factorial terminalizations of $\Spec R$.  Here there are no restrictions on the field, and also our method generalizes; since in many other examples the explicit forms of $\mathds{Q}$-factorial terminalizations are not known, being able to argue directly on the base singularity $\Spec R$ is desirable.  

In fact, most of the proof of \ref{splitting algebraic intro} can be reduced to the following calculation, in dimension one.

\begin{thm}(=\ref{3.7})
Let $S=k[[x,y]]$ be a formal power series ring of two variables over an arbitrary field $k$, $f,g\in S$ and $R:=S/(fg)$ be a one-dimensional hypersurface.\\
\t{(1)} $\uCM R$ is a $2$-CY triangulated category with $\dim_R(\uCM R)\le 1$.\\
\t{(2)} $S/(f)$ is a modifying object in $\uCM R$, and the CY reduction $(\uCM R)_{S/(f)}$ of $\uCM R$ is triangle equivalent to $\uCM(S/(f))\times\uCM(S/(g))$.
\end{thm}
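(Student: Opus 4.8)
The plan is to deduce Theorem \ref{3.7} from the general machinery of \S\ref{CY section intro} applied to the one-dimensional hypersurface $R = S/(fg)$. For part (1), the key observation is that $R$ is a one-dimensional equi-codimensional Gorenstein ring (a hypersurface), so by Theorem \ref{nonisolatedARintro} with $d=1$ the stable category $\uCM R$ is automatically a $(d-1) = 0$-CY\dots wait — here $\dim R = 1$, so $d-1 = 0$, but the claim is $2$-CY. The resolution is that $\uCM R$ is $2$-periodic (matrix factorizations over a hypersurface give $\Omega^2 \cong \mathrm{id}$), so $[2] \cong [0] = \mathrm{id}$ on $\uCM R$, and hence the Serre functor $[d-1] = [0] = [2]$; thus $\uCM R$ is simultaneously $0$-CY and $2$-CY with $\dim_R \uCM R \le 1$. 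I would first verify $\dim \Sing R \le 1$ (automatic since $\dim R = 1$) and then quote Theorem \ref{nonisolatedARintro} directly, followed by a short remark invoking $2$-periodicity to upgrade the Serre functor shift from $0$ to $2$ — this is what makes the ``$2$-CY'' phrasing legitimate and matches the convention used for $\uCM R$ of hypersurfaces throughout the subject.

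For part (2), I would first check that $M := S/(f)$ lies in $\CM R$ and is a modifying object in $\uCM R$. Membership in $\CM R$ is clear: $S/(f)$ is a one-dimensional CM module over $R = S/(fg)$ (it is annihilated by $f$, and is a hypersurface quotient of $S$, hence CM of dimension one). For modifying: by Lemma \ref{modifyingThesame} (with $d=1$, so the range $1 \le i \le d-3$ is empty and the only condition is $T_{\uCM R}(M, M[d-2]) = T_{\uCM R}(M, M[-1]) = 0$), it suffices to show $\End_R(S/(f)) \in \CM R$. But $\End_R(S/(f)) \cong \End_{S/(f)}(S/(f)) = S/(f)$, which is indeed CM (of dimension one) over $R$ — in fact one should more carefully compute $\Hom_R(S/(f), S/(f))$ and see it equals $S/(f)$, using that $f, g$ form (after the hypersurface presentation) a regular-sequence-like configuration; the relevant input is that $S$ is a UFD / regular local ring so $(f) \colon (fg) = (g)$ etc. Alternatively, and perhaps more cleanly, I would directly verify the vanishing $T_{\uCM R}(S/(f), \Omega^{-1}(S/(f))) = 0$ using that $\Hom_R(S/(f), S/(f))$ is a maximal Cohen--Macaulay (hence torsion-free) $R$-module so has no finite-length submodule.

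Once $M = S/(f)$ is known to be modifying, Corollary \ref{CY red is uCM of End intro}(2) gives immediately that the CY reduction $(\uCM R)_{S/(f)} \simeq \uCM \End_R(R \oplus S/(f))$ as triangulated categories. So the remaining task — and this is where the real work is — is to identify the ring $\End_R(R \oplus S/(f))$, or rather its stable category of CM modules, with $\uCM(S/(f)) \times \uCM(S/(g))$. The natural approach: $R \oplus S/(f)$ has a second natural ``complement'', namely $S/(g)$, and one expects $R \oplus S/(f) \oplus S/(g)$ to be related to the normalization or a partial resolution; concretely, $\Lambda := \End_R(R \oplus S/(f))$ should be shown to be a ring whose singularity category splits as a product. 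I would exhibit an idempotent $e \in \Lambda$ (coming from the summand decomposition) and analyze $e\Lambda e$ and $(1-e)\Lambda(1-e)$; the reduced/non-free part of the CM modules over $\Lambda$ should decompose according to the two branches $\{f = 0\}$ and $\{g = 0\}$ of $\Spec R$. The main obstacle will be precisely this final identification: computing $\End_R(R \oplus S/(f))$ explicitly enough (presumably as a quiver with relations, or via an explicit isomorphism of its singularity category with the product) to see the splitting $\uCM(S/(f)) \times \uCM(S/(g))$. This is a concrete but delicate commutative-algebra computation — essentially a ``recollement splits'' argument for the singularity category of $\Lambda$ — and I expect it, rather than the formal invocation of CY reduction, to consume the bulk of the proof. (It is also plausible the authors instead prove (2) by recognizing $(\uCM R)_{S/(f)}$ directly as $\uCM(S/(f)) \times \uCM(S/(g))$ via a functor induced by the two surjections $R \twoheadrightarrow S/(f)$ and $R \twoheadrightarrow S/(g)$, checking fully-faithfulness and density on the reduction — I would keep this alternative in reserve if the endomorphism-ring computation proves unwieldy.)
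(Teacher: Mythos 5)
Your argument for part (1) is correct and is exactly what the paper does: invoke the AR-duality theorem (\ref{nonisolatedAR}) to get $0$-CY with $\dim_R \le 1$, then use the $2$-periodicity of the suspension on $\uCM$ of a hypersurface to re-read it as a $2$-CY category. Your check that $S/(f)$ is modifying is also fine in substance, although you should not literally cite Lemma \ref{modifyingThesame}, whose stated hypotheses include $d\ge 2$ and $\dim\Sing R\le 1$; the paper instead verifies the condition $T_{\uCM R}(S/(f),S/(f)[1])=0$ directly, via a small criterion (\ref{criterion}) characterizing when $\fl_R\Ext^1_R(S/(f),X)=0$.

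The real problem is your main route for part (2). Corollary \ref{CY red is uCM of End intro}(2) requires $R$ to be an equi-codimensional Gorenstein \emph{normal domain} with $\dim R=d\ge 2$ and $\dim\Sing R\le 1$. Here $R=S/(fg)$ has $\dim R=1$ and is typically neither normal nor a domain (it has the branches $f=0$ and $g=0$), so you are not entitled to invoke that corollary, and the identification $(\uCM R)_{S/(f)}\simeq \uCM\End_R(R\oplus S/(f))$ is not available off the shelf. Even granting it, you acknowledge that identifying $\uCM\End_R(R\oplus S/(f))$ with $\uCM(S/(f))\times\uCM(S/(g))$ is ``where the real work is,'' but you do not supply that work. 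Your parenthetical backup plan points vaguely in the right direction but misses the actual shape of the argument: the paper shows, via an explicit matrix-factorization computation (Lemma \ref{describe Z}), that
\[
\cZ_{S/(f)}=\add\{\,R,\ Y,\ \Omega_R(Z)\mid Y\in\CM(S/(f)),\ Z\in\CM(S/(g))\,\},
\]
with the crucial (and non-obvious) appearance of a syzygy shift $\Omega_R$ on the $\CM(S/(g))$ factor; it then checks by hand that morphisms in the reduction between the two pieces vanish and that Hom-spaces within each piece agree with those of $\uCM(S/(f))$ and $\uCM(S/(g))$ respectively. That matrix-factorization description of $\cZ_{S/(f)}$ and the accompanying Hom computations are the technical heart of the proof, and they are entirely absent from your proposal. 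As written, part (2) of your argument has a genuine gap.
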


Then, combining \ref{splitting algebraic intro} with the mutation in \S\ref{mutation intro section}, we are able to give a complete classification of the modifying, MM and CT $R$-modules.  This generalizes and strengthens results from \cite{BIKR} and \cite{DH}, since our singularities are not necessarily isolated, and there is no restriction on the ground field.

\begin{thm}\label{commalgMMA_intro}(=\ref{classification})
Suppose that $f_1,\hdots,f_n\in\m:=(x,y)\subseteq k[[x,y]]$ are irreducible power series.  Let $R=k[[x,y,u,v]]/(f_1\hdots f_n-uv)$, then\\
\t{(1)} The basic modifying generators of $R$ are precisely $T^{\F}$, where $\F$ is a flag in $\{1,2,\hdots,n\}$.\\
\t{(2)} The basic MM generators of $R$ are precisely $T^\omega$, where $\omega\in\mathfrak{S}_n$.\\
\t{(3)} $R$ has a CT module if and only if $f_i\notin\m^2$ for all $1\le i\le n$. In this case, the basic CT $R$-modules are precisely $T^\omega$, where $\omega\in\mathfrak{S}_n$.
\end{thm}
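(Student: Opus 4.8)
The plan is to prove part (2) first, since the maximal modifying generators are the pivot on which the other two parts turn, and then deduce (1) and (3) quickly. Throughout write $R=k[[x,y,u,v]]/(f_1\cdots f_n-uv)$, and for a flag $\F$ let $\bar T^{\F}$ denote the non-free part of $T^{\F}$. By \ref{nonisolatedARintro} the category $\uCM R$ is $2$-CY with $\dim_R\le1$, and by \ref{modifyingThesame} a generator $R\oplus M$ is a modifying module exactly when $M$ is a modifying object of $\uCM R$, so \ref{CYreduction theorem intro} applies to every $T^{\F}$. That each $T^{\F}$ (hence each $T^\omega$) is a modifying generator, i.e.\ $\End_R(T^{\F})\in\CM R$, is the explicit computation already carried out in \S\ref{Section cAn}.

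For (2), apply \ref{splitting algebraic intro} with $M=\bar T^\omega$. Since the flag attached to $\omega$ is maximal, its consecutive differences are the singletons $\{\omega(i)\}$, and
\[
(\uCM R)_{T^\omega}\simeq\bigoplus_{i=1}^{n}\uCM\left(\frac{k[[x,y,u,v]]}{(f_{\omega(i)}-uv)}\right).
\]
Each summand is the stable category of a $cA_1$-singularity with \emph{irreducible} defining equation $f_{\omega(i)}$; by the one-dimensional analysis behind \ref{3.7}, such a category has no non-zero modifying object, hence neither does $(\uCM R)_{T^\omega}$. The bijection of \ref{CYreduction theorem intro}(2) then forces the only basic modifying object of $\uCM R$ having $\bar T^\omega$ as a summand to be $\bar T^\omega$ itself, so $T^\omega\in\MMG R$ for every $\omega\in\mathfrak{S}_n$. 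Finally, using the description of mutation of modifying modules at an arbitrary indecomposable summand (\S\ref{mutation intro section}), mutating $T^\omega$ at its $j$-th non-free summand $T_{I_j}$, $I_j=\{\omega(1),\dots,\omega(j)\}$, replaces $I_j$ by $(I_{j-1}\cup I_{j+1})\setminus I_j=\{\omega(1),\dots,\omega(j-1),\omega(j+1)\}$ (with $I_0:=\emptyset$, $I_n:=\{1,\dots,n\}$); that is, $\mu_j(T^\omega)=T^{\omega s_j}$ with $s_j=(j,j+1)$. As $s_1,\dots,s_{n-1}$ generate $\mathfrak{S}_n$, the full subgraph of $\Gr(\MMG R)$ spanned by $\{T^\omega:\omega\in\mathfrak{S}_n\}$ is connected and closed under mutation, hence a finite connected component, and \ref{Auslander type intro} gives $\Gr(\MMG R)=\{T^\omega:\omega\in\mathfrak{S}_n\}$, proving $\MMG R=\{T^\omega:\omega\in\mathfrak{S}_n\}$.

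For (1): each $T^{\F}$ is a modifying generator (it is a direct summand of some $T^\omega$, and $\End_R$ of a summand is an $R$-module summand of $\End_R(T^\omega)\in\CM R$). Conversely, every basic modifying generator of $R$ is a direct summand of a basic MM generator, by the general theory of modifying modules over $3$-dimensional Gorenstein normal domains (cf.\ \cite{IW4}), hence by (2) a summand of some $T^\omega=R\oplus\bigoplus_{j=1}^{n-1}T_{I_j}$; the basic summands of $T^\omega$ containing $R$ are exactly $R\oplus\bigoplus_{j\in J}T_{I_j}$ with $J\subseteq\{1,\dots,n-1\}$, and $\{I_j\}_{j\in J}$ is a sub-flag of the maximal flag of $\omega$, so such a module is $T^{\F}$. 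For (3): a CT module is a CM generator whose endomorphism ring is an NCCR, which for a CM generator $M$ holds iff $(\uCM R)_M=\uCM\End_R(R\oplus M)=0$; in particular a CT module is an MM generator, so by (2) it must be some $T^\omega$. If some $f_i\in\m^2$, then $f_i-uv$ has no linear term, $k[[x,y,u,v]]/(f_i-uv)$ is a singular hypersurface, the corresponding summand in the displayed decomposition of $(\uCM R)_{T^\omega}$ is non-zero, and no $T^\omega$ is CT; so $R$ has no CT module. If every $f_i\notin\m^2$, then each $f_{\omega(i)}-uv$ has a linear term, every summand is the stable category of a regular ring and vanishes, so $(\uCM R)_{T^\omega}=0$ and every $T^\omega$ is CT; with (2) this identifies the basic CT modules with $\{T^\omega:\omega\in\mathfrak{S}_n\}$.

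The main obstacle is everything feeding the first half of the (2)-argument: the structural computation \ref{splitting algebraic intro} together with the one-dimensional reduction \ref{3.7} behind it, and the vanishing statement that the stable category of a $cA_1$-singularity with irreducible defining equation has no non-zero modifying object over an \emph{arbitrary} field — this is precisely where one cannot appeal to the geometric fact that such singularities are $\mathds{Q}$-factorial terminal, and the explicit module theory of $cA_n$-singularities in the spirit of \cite{BIKR} becomes indispensable. A secondary but essential point is pinning down the mutation rule $\mu_j(T^\omega)=T^{\omega s_j}$ precisely enough that \ref{Auslander type intro} can be invoked; granting these, parts (1)--(3) are the bookkeeping sketched above.
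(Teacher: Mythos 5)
Your architecture matches the paper's exactly: prove that each $T^\omega$ is an MM generator via CY reduction, establish the mutation rule $\mu_j(T^\omega)=T^{\omega s_j}$, invoke the finite-connected-component theorem \ref{Auslander type intro} to conclude these are \emph{all} the MM generators, then deduce (1) from the fact that every modifying generator is a summand of an MM generator (dimension three) and (3) from the explicit CY-reduction criterion for CT. You correctly isolate the two technical inputs still needed, so the plan is sound.

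The one substantive correction is in how you propose to obtain the crucial vanishing: you claim that $\uCM(k[[x,y,u,v]]/(g-uv))$ with $g$ irreducible has no non-zero modifying objects ``by the one-dimensional analysis behind \ref{3.7}.'' That points at the wrong tool. Theorem \ref{3.7} describes the CY reduction of a one-dimensional hypersurface; it does not, on its own, kill modifying objects in the irreducible $cA_1$ case (and indeed the difficulty is precisely that $\uCM$ of such a ring can be huge and non-Hom-finite when $g\in\m^2$). The paper's route is entirely different: it first proves $\Cl(k[[x,y,u,v]]/(g-uv))=0$, i.e.\ that this $cA_1$ singularity is factorial over any field, via the $K_0$ comparison through Kn\"orrer periodicity (the technical point being the class of $K(\m)$, handled in the paper's \ref{K(m) is zero}, so that the ideal $(u,g)$ generates $\Cl$ and is itself free because $g$ is irreducible), and then invokes Dao's theorem (or the Huneke--Wiegand $\Tor$-rigidity argument) to conclude that over a three-dimensional factorial Gorenstein hypersurface every modifying module is free. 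Without this factoriality-plus-rigidity step, the ``no non-zero modifying objects'' claim is unsupported, and parts (2) and (3) do not follow. The secondary gap you flag yourself — pinning down $\mu_j(T^\omega)=T^{\omega s_j}$ — is genuine and does require the explicit Hom-ideal computations (the analogues of the paper's $\Hom_R((u,a),(u,ab))\cong(u,b)$ and the associated approximation sequences), but you have correctly located where that work has to go.
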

This gives many examples of MMAs, and we give (in \ref{quiverOK}) the explicit quivers of these MMAs.  
We then specialize the field to $k=\mathbb{C}$ in order to apply our results to geometry.  As a corollary to \ref{commalgMMA_intro} we obtain the following remarkable result.
\begin{cor}(=\ref{MMnotgen}, \ref{finite number cor}) When $k=\mathbb{C}$,\\
\t{(1)} The MM modules are precisely $(I\otimes_R T^\omega)^{**}$ for some $\omega\in\mathfrak{S}_n$ and some $I\in\Cl(R)$.\\
\t{(2)} There are only finitely many algebras (up to Morita equivalence) in the derived equivalence class containing the MMAs.\\
\t{(3)} There are only finitely many algebras (up to Morita equivalence) in the derived equivalence class containing the $\mathds{Q}$-factorial terminalizations of $\Spec R$.
\end{cor}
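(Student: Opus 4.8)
The plan is to derive all three parts from the classification of the basic MM generators of $R$ as $\{T^\omega:\omega\in\mathfrak{S}_n\}$ in \ref{commalgMMA_intro}(2), combined with the action of the divisor class group $\Cl(R)$ on reflexive modules via $M\mapsto(I\otimes_R M)^{**}$. The basic observations are: a reflexive $R$-module $M$ is modifying exactly when $\End_R(M)\in\CM R$ (\cite{IW4}); for a rank-one reflexive $I$ the canonical $R$-algebra homomorphism $\End_R(M)\to\End_R((I\otimes_R M)^{**})$ is an isomorphism, since it is an isomorphism in codimension one between reflexive $R$-modules; and therefore $(I\otimes_R-)^{**}$ is a self-equivalence of $\refl R$ that preserves the partial order ``is a direct summand of'', preserves the classes of modifying and of maximal modifying modules, and preserves endomorphism rings up to isomorphism. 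In particular every $(I\otimes_R T^\omega)^{**}$ is an MM module whose endomorphism ring is isomorphic to the MMA $\End_R(T^\omega)$.

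For part (1) it remains to show that every basic MM module $N$ is of this form. The key step, which I expect to be the main obstacle, is to show that every indecomposable direct summand of $N$ has rank one. Granting this, write $N=\bigoplus_{j=0}^t N_j$ with each $N_j\in\Cl(R)$; then $(N_0^{-1}\otimes_R N)^{**}$ is again an MM module and has $R=(N_0^{-1}\otimes_R N_0)^{**}$ as a direct summand, so it is a basic MM generator, hence isomorphic to some $T^\omega$ by \ref{commalgMMA_intro}(2), and applying $(N_0\otimes_R-)^{**}$ gives $N\cong(N_0\otimes_R T^\omega)^{**}$. To prove the rank-one claim I would exploit that the modifying \emph{generators} are, by \ref{commalgMMA_intro}(1), direct sums of $R$ and the rank-one ideals $(u,f_I)$, that every basic MM module has the same number of indecomposable summands as the quiver of $\End_R(T^\omega)$ has vertices (a derived invariant), and the finiteness, via \ref{Auslander type intro}, of the mutation class of MM generators; no indecomposable modifying summand of rank $>1$ can survive in this picture. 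When $k=\mathbb{C}$ one can instead argue geometrically: the flopping curves over a $cA_n$ singularity have length one, forcing rank one.

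For part (2), fix an MMA $\Lambda:=\End_R(T^{\mathrm{id}})$. All MMAs of $R$ lie in one derived equivalence class: by \ref{commalgMMA_intro}(2) and \ref{Auslander type intro} the MM generators form a single mutation class, and mutation induces derived equivalences (\cite{IW4}). Any basic algebra in this class is $\End_\Lambda(T)$ for a tilting complex $T$ over $\Lambda$ (Rickard); as $\Lambda$ is module-finite over the complete local noetherian ring $R$ and $T$ is perfect, $\End_\Lambda(T)$ is again module-finite over $R$ with centre $R$, and by the correspondence between tilting over a modification algebra and modifying $R$-modules (\cite{IW4,IW5}) it is an MMA, say $\End_R(N)$ with $N$ an MM module. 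By part (1), $N\cong(I\otimes_R T^{\omega'})^{**}$, whence $\End_R(N)\cong\End_R(T^{\omega'})$. Thus up to Morita equivalence --- indeed up to isomorphism --- the derived equivalence class of the MMAs consists of the finitely many algebras $\End_R(T^{\omega'})$, $\omega'\in\mathfrak{S}_n$. The delicate ingredient here is the closure statement that an $R$-order derived equivalent to an MMA is itself an MMA, which I would take from \cite{IW4,IW5}.

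For part (3), note that $R=\mathbb{C}[[x,y,u,v]]/(f_1\cdots f_n-uv)$ is a $3$-dimensional Gorenstein normal domain with rational singularities (a $cA_n$ singularity), so $\Spec R$ has $\mathds{Q}$-factorial terminalizations; any two of them are connected by flops and hence derived equivalent, and each is derived equivalent to an MMA of $R$, since for $cA_n$ singularities these terminalizations carry tilting bundles (cf.\ \cite{IW5} and \ref{tilting complex gives}). Hence the derived equivalence class containing the $\mathds{Q}$-factorial terminalizations of $\Spec R$ is the one containing the MMAs, and (3) follows from (2).
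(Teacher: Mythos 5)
Your parts (2) and (3) match the paper's route closely: you invoke that MMAs over a normal $3$-dimensional Gorenstein domain are closed under derived equivalence (\cite[4.8]{IW4}) together with $\End_R((I\otimes_R T^\omega)^{**})\cong\End_R(T^\omega)$ to bound the Morita classes by $n!$, and for (3) you reduce to (2) by noting every $\mathds{Q}$-factorial terminalization is derived equivalent to some MMA (the paper cites \cite[1.9]{IW5}; your tilting-bundle justification via one-dimensional fibres is a valid substitute). Your reduction of (1) to the rank-one claim via $N:=\Hom_R(N_0,M)$ and $M\cong(N_0\otimes_R N)^{**}$ is exactly the paper's Lemma~\ref{if M has rank 1 summand}.

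The genuine gap is in the rank-one claim itself (the paper's Proposition~\ref{modifying is rank 1}: every indecomposable modifying $R$-module has rank one). Your combinatorial sketch --- counting indecomposable summands against quiver vertices and invoking finiteness of the MM mutation class --- does not actually exclude an indecomposable summand of rank $\ge 2$: having the correct number of indecomposable summands is perfectly consistent with some of them having higher rank, and the quiver-vertex count is a statement about the number of summands, not their ranks. Your geometric alternative via length-one flopping curves is the right intuition (the paper notes \ref{modifying is rank 1} generalizes \cite[A1]{VdB1d}) but is not fleshed out, and as stated only controls summands that are visible on the $\mathds{Q}$-factorial terminalization, whereas the claim is for \emph{all} indecomposable modifying modules. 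The paper's actual proof is a hyperplane-section reduction: using $\fl_R\Ext^1_R(M,M)=0$ (from \ref{modifyingThesame}) one chooses a generic $t$ acting as a non-zerodivisor on $\Ext^1_R(M,M)$ with $R/(t)$ an $A_{m-1}$ surface singularity (\ref{generic}); then $\End_R(M)/t\cong\End_{R/(t)}(M/tM)\cong\End_{R/(t)}((M/tM)^{**})$ via Auslander--Goldman, and since $\End_R(M)$ is local so is this quotient, forcing $(M/tM)^{**}$ indecomposable and hence rank one from the classification of indecomposable CMs over $A_{m-1}$ surface singularities. That slicing argument is the ingredient missing from your proposal.
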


Keeping $k=\mathbb{C}$ and $R$ as above, we then move from studying the $\mathds{Q}$-factorial terminalizations of $\Spec R$ to the arbitrary partial crepant resolutions of $\Spec R$, which in general have canonical singularities. 
We produce many examples of derived equivalences and autoequivalences on these singular spaces.  
The partial crepant resolutions of $\Spec R$ have a certain number of curves above the origin, and all singularities on these curves have the form $uv=f_{I}$ (see \cite[5.6]{IW5}).  
We describe the partial resolutions combinatorially in terms of flags $\F$, and denote the corresponding spaces by $X^\F$ (see \S\ref{Geo cor} for more details).
\begin{thm}\label{geometric_intro}(=\ref{permuteDbresult})
Let $\F$ and $\G$ be flags in $\{1,2,\hdots,n\}$. Then $X^\F$ and $X^\G$ are derived equivalent if they have the same number of curves above the origin of $\Spec R$, and the singularities of $X^\F$ can be permuted to the singularities of $X^\G$.  
\end{thm}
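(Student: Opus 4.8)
The plan is to reduce the geometric statement about derived equivalence of the partial crepant resolutions $X^\F$ and $X^\G$ to the algebraic statement already established, namely that the corresponding modifying algebras are derived equivalent. First I would invoke the fact (from \cite{IW5}, which we may cite as it is referenced in the setup) that each partial crepant resolution $X^\F$ of $\Spec R$ is derived equivalent to $\End_R(T^\F)$, where $T^\F$ is the modifying generator associated with the flag $\F$; more precisely, $\Db(\coh X^\F)\simeq\Db(\mod\End_R(T^\F))$. Thus it suffices to prove that $\End_R(T^\F)$ and $\End_R(T^\G)$ are derived equivalent whenever $\F$ and $\G$ have the same number of members (equivalently, $X^\F$ and $X^\G$ have the same number of curves above the origin) and the singularities along those curves agree as an unordered collection.

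The next step is to control how the derived equivalence class of $\End_R(T^\F)$ depends on $\F$. By Theorem \ref{splitting algebraic intro} we know
\[
\uCM\End_R(T^\F)\simeq\bigoplus_{i=1}^{m+1}\uCM\left(\frac{k[[x,y,u,v]]}{(f_{I^i\backslash I^{i-1}}-uv)}\right),
\]
so the stable category of $\End_R(T^\F)$ depends only on the multiset of ``jumps'' $\{I^i\backslash I^{i-1}\}$, and hence on the multiset of singularities $uv=f_{I^i\backslash I^{i-1}}$ appearing along the curves — which is exactly the data in the hypothesis. To upgrade this to a derived equivalence of the algebras themselves (not just their singularity categories), I would use the mutation theory of \S\ref{mutation intro section}: mutation of modifying generators at an indecomposable summand induces a derived equivalence of the endomorphism algebras (this is \cite[\S6.2]{IW4}, available in the setup), and mutation acts on flags by the elementary transpositions that reorder, refine, or coarsen the flag. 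Concretely, if $\F$ and $\G$ have the same number of members and the same multiset of jumps, then $\G$ is obtained from $\F$ by a sequence of such elementary moves (permuting the order in which the blocks $I^i\backslash I^{i-1}$ are stacked), each of which is realized by a mutation and hence by a derived equivalence. Composing these gives $\Db(\mod\End_R(T^\F))\simeq\Db(\mod\End_R(T^\G))$, and transporting back through the geometric derived equivalences yields $\Db(\coh X^\F)\simeq\Db(\coh X^\G)$.

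The main obstacle I anticipate is the second step: verifying that mutation of $T^\F$ at the appropriate summand really does correspond to the combinatorial operation on flags that permutes two adjacent blocks, and that every flag with a prescribed multiset of jumps is reachable from $\F$ by such mutations. This requires an explicit identification of the mutated module $\mu_i(T^\F)$ in terms of the ideals $T_I=(u,f_I)$ — one must compute the exchange sequence at the summand $T_{I_i}$ and check that the new module is again of the form $T^{\F'}$ for the flag $\F'$ obtained by the expected block-swap. This is a local, dimension-reduction style computation (the relevant extension groups are controlled by the one-dimensional hypersurfaces $k[[x,y]]/(f_{I_{i+1}\backslash I_{i-1}})$, so Theorem \ref{3.7} and the structure of $\uCM$ of a one-dimensional node do the heavy lifting), but making the bookkeeping of flags and their reorderings precise, and confirming transitivity of the symmetric-group action generated by these adjacent swaps on the set of flags with fixed jump-multiset, is where the care is needed. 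A secondary point is to make sure the hypothesis ``the singularities of $X^\F$ can be permuted to the singularities of $X^\G$'' is exactly equivalent to ``equal multiset of jumps,'' which follows from \cite[5.6]{IW5} identifying the curve singularities of $X^\F$ with $uv=f_{I^i\backslash I^{i-1}}$.
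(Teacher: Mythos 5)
Your proposal takes essentially the same route as the paper: reduce to showing $\End_R(T^\F)$ and $\End_R(T^\G)$ are derived equivalent via the tilting bundle from \cite[5.2]{IW5}, then realize a sequence of adjacent block-swaps connecting $\c{P}(\F)$ to $\c{P}(\G)$ by mutations, each of which is a derived equivalence by \ref{welldefined}; the ``main obstacle'' you flag (that mutation acts on the combinatorial picture as reflection of the chosen blocks) is exactly what the paper establishes in \ref{mainmutation}. The only slip is the phrase that mutation can ``refine or coarsen'' the flag --- it cannot, as it preserves the number of summands --- but this does not affect the argument since you only invoke same-length flags, and the rest of the reasoning matches the paper's.
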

In fact, \ref{geometric_intro} comes very easily from a simple calculation which determines the mutations of a given modifying module: 
\begin{thm}(=\ref{mainmutation})
Fix a flag $\c{F}=(I_1,\hdots,I_m)$, and associate to $\c{F}$ the module $T^\F$ and the combinatorial picture $\c{P}(\F)$ (see \S\ref{mutation}). Choose $\emptyset\neq J\subseteq \{1,\hdots,m\}$,  then $\mu_J(T^\F)$ is the module corresponding to the $J$-reflection of $\c{P}(\F)$. 
\end{thm}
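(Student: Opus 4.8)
The idea is to reduce the computation of $\mu_J(T^{\c{F}})$ to a \emph{local} mutation problem by passing to a Calabi--Yau reduction in which only the summands indexed by $J$ survive.  Write $\c{F}=(I_1,\ldots,I_m)$, and let $\c{F}_0$ be the sub-flag obtained from $\c{F}$ by deleting the $I_j$ with $j\in J$.  Then $\add T^{\c{F}_0}\subseteq\add T^{\c{F}}$, so $T^{\c{F}_0}$ is modifying, and by the definition of mutation at a (possibly decomposable) direct summand, $T^{\c{F}_0}$ is a direct summand of $\mu_J(T^{\c{F}})$ as well.  Hence $T^{\c{F}}$ and $\mu_J(T^{\c{F}})$ are both basic modifying generators of $R$ containing $T^{\c{F}_0}$, and by \ref{CYreduction theorem intro}(2) (applied to the modifying object $T^{\c{F}_0}$, using that $\uCM R$ is Krull--Schmidt) they correspond to basic modifying objects of the reduction $(\uCM R)_{T^{\c{F}_0}}$.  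By \ref{splitting algebraic intro} this reduction is a product $\bigoplus_k\uCM R_k$ of stable categories of $cA$-type hypersurfaces indexed by the gaps of $\c{F}_0$, and under this equivalence $T^{\c{F}}$ becomes an object which is zero on the factors coming from the gaps of $\c{F}_0$ containing no $J$-index, and on a factor $\uCM R_k$ whose gap does contain mutated summands it equals $R_k$ together with the \emph{full} non-free part of a modifying generator $T^{\c{G}_k}$ of that $cA$-type factor, where $\c{G}_k$ is the flag cut out of $\c{F}$ by that gap.

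The technical input I would establish first is that mutation of modifying objects is compatible with Calabi--Yau reduction: the bijection of \ref{CYreduction theorem intro}(2) intertwines $\mu_J$ on $\uCM R$ with mutation at the corresponding summand of $(\uCM R)_{T^{\c{F}_0}}$.  This is the exact analogue of the well-known compatibility of cluster-tilting mutation with CY reduction, and one proves it by pushing the exchange sequences of \cite[\S 6.2]{IW4} through the reduction functor.  Granting this, and noting that mutation in a product category is performed factor by factor, the theorem reduces to the following purely local assertion: for $R'=k[[x,y,u,v]]/(\prod_{i\in E}f_i-uv)$ and a flag $\c{G}=(K_1\subsetneq\cdots\subsetneq K_t)$ in $E$, the mutation of $T^{\c{G}}$ at its full non-free part $\bigoplus_j T_{K_j}$ equals $T^{\bar{\c{G}}}$, where $\bar{\c{G}}$ is the flag whose ordered list of gaps $(K_1,\,K_2\setminus K_1,\,\ldots,\,E\setminus K_t)$ has been reversed.

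This local assertion is where I expect the real work to lie.  The base case $t=1$ is (the $uv$-deformation, to dimension three, of) Theorem \ref{3.7}: over $S/(f_{K_1}f_{E\setminus K_1})$ the module $S/(f_{K_1})$ mutates to $S/(f_{E\setminus K_1})$, and the matrix-factorisation assignment $(\phi,\psi)\mapsto\left(\begin{smallmatrix}\phi & v\\ u & \psi\end{smallmatrix}\right)$ transports this to the exchange $T_{K_1}\leftrightarrow T_{E\setminus K_1}$ over $R'$.  The subtle point is that for $t\ge 2$ the mutation $\mu_{\{1,\ldots,t\}}$ at a decomposable summand is genuinely simultaneous, not the composite of the single mutations $\mu_{\{1\}},\ldots,\mu_{\{t\}}$ (reversing $t+1$ gaps costs more than $t$ adjacent transpositions), so the inductive step cannot merely iterate the base case; instead I would CY-reduce $\uCM R'$ once more, this time at $R'$ together with the outermost wall $T_{K_1}$ of $\c{G}$ (again invoking \ref{splitting algebraic intro}), which splits off one factor and identifies the residual simultaneous mutation on the complementary factor with the $(t-1)$-case, closing the induction.

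Finally I would assemble the pieces.  Lifting each reversed flag $\bar{\c{G}}_k$ back along $(\uCM R)_{T^{\c{F}_0}}\simeq\bigoplus_k\uCM R_k$ (an object $T_K$ of $\uCM R_k$ lifting to $T_{I_p\cup K}$, with $I_p$ the lower end of the corresponding gap of $\c{F}_0$), one sees that $\mu_J(T^{\c{F}})=T^{\c{F}'}$ with $\c{F}'$ obtained from $\c{F}$ by reversing, inside each maximal run of consecutive indices of $J$, the corresponding block of gaps of $\c{F}$; by the definitions in \S\ref{mutation} this is precisely the flag associated to the $J$-reflection of $\c{P}(\c{F})$.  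As a bonus, since the only nontrivial input here is Theorem \ref{3.7}, which holds over an arbitrary field, so does the whole computation---this is why no hypothesis on $k$ is needed.
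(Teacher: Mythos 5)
Your overall strategy (reduce at $T^{\c{F}_0}$, split via \ref{key of classification}, and compute a ``full reversal'' locally) is attractive, but it has two genuine gaps. First, the claimed compatibility of $\mu_J$ with CY reduction is not something you can wave through: in this paper mutation is a \emph{module-theoretic} operation over $R$, defined by minimal $(\add M_{J^c})$-approximations together with the duality $(-)^*$, and it is not characterized intrinsically by the triangulated structure. In the cluster-tilting setting compatibility comes for free from uniqueness of complements, but here modifying objects are not CT, $\mu_i(M)\cong M$ can occur (loops in the exchange graph), and for decomposable $J$ one even has $\mu_J^+\neq\mu_J^-$ in general; so the bijection of \ref{bijection for MM} alone cannot tell you what $\mu_J$ becomes in $(\uCM R)_{T^{\c{F}_0}}$. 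To ``push the exchange sequences through the reduction'' you would have to transport minimal approximations and the dual through the chain of equivalences (the identification $(\uCM R)_M\simeq\uCM\End_R(R\oplus M)$, the splitting of \ref{3.7}, and Kn\"orrer periodicity twice), and verifying that this preserves the exchange sequences is essentially the same explicit matrix-factorization/ideal bookkeeping that the paper's direct proof consists of (\ref{describe Hom-sets}, \ref{exactness}, \ref{approximation}).

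Second, the induction for your ``local assertion'' does not close. Reducing $\uCM R'$ at $R'\oplus T_{K_1}$ can at best control mutations at summands \emph{other} than $T_{K_1}$: the mutated module $\mu_{\{1,\dots,t\}}(T^{\c{G}})$ need not contain $T_{K_1}$ at all (after the full reversal the first wall is $(u,f_{E\setminus K_t})$, not $(u,f_{K_1})$), so it is not even in the domain of the bijection for that reduction; and what this reduction would compute, granting compatibility, is the $\{2,\dots,t\}$-reflection (which fixes $g_1$ and reverses the remaining gaps), not the full reversal --- already for $t=2$ it outputs $R'\oplus(u,g_1)\oplus(u,g_1g_3)$ instead of $R'\oplus(u,g_3)\oplus(u,g_2g_3)$. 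The base case is also not a consequence of \ref{3.7}: that theorem is about CY reduction, not mutation, and ``mutation over $S/(fg)$'' is not defined in this framework ($S/(fg)$ is neither normal nor a domain), so the assertion that $S/(f_{K_1})$ ``mutates to'' $S/(f_{E\setminus K_1})$ and transports across Kn\"orrer periodicity is exactly the computation you were trying to avoid. The paper instead proves \ref{mainmutation} directly: \ref{exactness} produces the explicit exchange sequence for each summand in a connected component of $J$, \ref{approximation}/\ref{rightapprox} show it is the required minimal approximation, and reading off the kernels/cokernels gives the reflected flag; any repair of your approach will, at the local step, reproduce that calculation.
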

In particular, since in the proof of \ref{commalgMMA_intro} we prove that the exchange graph of MM modules is connected,
\ref{geometric_intro} gives the following alternative proof of \cite{Chen} in the case of complete local $cA_n$ singularities, which does not involve the argument passing to dimension four:
\begin{cor}\label{AllQfactDb_intro}
Let $R=\C{}[[x,y,u,v]]/(f_1\hdots f_n-uv)$, then all $\mathds{Q}$-factorial terminalizations of $\Spec R$ are derived equivalent.
\end{cor}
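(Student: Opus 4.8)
The plan is to transfer the statement from geometry to algebra and then close it off using the classification of MM generators together with the mutation theory already in place. First I would observe that, since $R$ is a complete local $3$-dimensional Gorenstein normal domain over $\mathbb{C}$ with rational singularities, every $\mathds{Q}$-factorial terminalization $Y\to\Spec R$ is derived equivalent to some ring (this is the geometric input supplied by \cite{IW5}); feeding this into Theorem~\ref{tilting complex gives} then produces a basic MM generator $M\in\CM R$ together with a triangle equivalence $\Db(\coh Y)\simeq\Db(\mod\End_R(M))$. Hence it suffices to show that the MMAs $\End_R(M)$, as $M$ ranges over the basic MM generators of $R$, all lie in a single derived equivalence class.

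By Theorem~\ref{commalgMMA_intro}(2) the basic MM generators of $R$ are precisely the modules $T^{\omega}$, $\omega\in\mathfrak{S}_n$, and these are exactly the vertices of the exchange graph $\Gr(\MMG R)$. The next step is to use the connectedness of $\Gr(\MMG R)$, which is established during the proof of Theorem~\ref{commalgMMA_intro}: by the combinatorial mutation rule of Theorem~\ref{mainmutation}, $\mu_J(T^{\F})$ is read off as the $J$-reflection of the picture $\mathcal{P}(\F)$, and the single-wall reflections let one pass between any two $T^{\omega}$ and $T^{\omega'}$ by a finite sequence of mutations. Finally I would invoke \cite[\S6.2]{IW4}: each edge of $\Gr(\MMG R)$, that is each mutation $M\rightsquigarrow\mu_J(M)$ of basic MM generators, is realised by a tilting $\End_R(M)$-$\End_R(\mu_J(M))$-bimodule and hence induces a derived equivalence of the two MMAs; composing these along a path in the connected graph $\Gr(\MMG R)$ shows that all the algebras $\End_R(T^{\omega})$ are derived equivalent, which with the first paragraph proves the corollary. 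One could instead bypass the tilting-bimodule step and apply Theorem~\ref{geometric_intro} directly: the maximal flags are exactly those whose associated spaces $X^{\omega}$ are $\mathds{Q}$-factorial terminalizations, each such $X^{\omega}$ has $n-1$ curves above the origin, and all of them carry the same multiset of singularities $\{uv=f_i\}$, merely permuted along the chain of curves, so $X^{\omega}$ and $X^{\omega'}$ are derived equivalent for all $\omega,\omega'$.

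I expect the main obstacle to be the reduction carried out in the first paragraph, namely ensuring that \emph{every} $\mathds{Q}$-factorial terminalization of $\Spec R$ is derived equivalent to an MMA of the specific form $\End_R(M)$ with $M$ a basic MM generator. This rests on two ingredients: the geometric fact (from \cite{IW5}, via Theorem~\ref{tilting complex gives}) that such $Y$ is derived equivalent to an algebra of the form $\End_R(M)$, and the completeness of the classification in Theorem~\ref{commalgMMA_intro}(2) --- that iterated mutation starting from any one $T^{\omega}$ recovers \emph{all} MM generators --- which in turn relies on Theorem~\ref{Auslander type intro}, to the effect that a finite connected component of $\Gr(\MMG R)$ is automatically the whole graph. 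Everything after that is a formal chaining of known derived equivalences, and in particular the argument stays entirely in dimension three, which is precisely what distinguishes it from the proof of \cite{Chen}.
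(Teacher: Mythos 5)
Your proof is correct and follows essentially the same route as the paper: the connectivity of the exchange graph $\Gr(\MMG R)$ established in Section~\ref{MM and CT} combined with Theorem~\ref{permuteDbresult}. The one place you go beyond the paper's extremely terse proof is your first paragraph, where you spell out the reduction from an arbitrary $\mathds{Q}$-factorial terminalization $Y$ (not a priori one of the blowup schemes $X^{\omega}$) to an algebra $\End_R(T^{\omega})$; the paper leaves this implicit, but it indeed rests on Van den Bergh's tilting bundle for one-dimensional fibres (cf.\ \ref{1d fibres gives}) fed into \ref{tilting complex gives} and the classification \ref{classification}, exactly as you describe, and the two routes you offer for the final step (chaining tilting mutations along a path of the graph via \ref{welldefined}, or invoking \ref{permuteDbresult} directly) are equivalent.
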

We remark that although all the results above are given in the complete local setting, this is mainly for our own convenience, since it simplifies calculations.  Most of our results also hold in the polynomial setting, but the proofs are much more technical.

\medskip
\noindent
{\bf Conventions.}  Throughout $R$ will always denote a commutative noetherian ring, and in Section \ref{Section cAn} $R$ will always denote $k[[x,y,u,v]]/(f-uv)$.  
All modules will be left modules, so for a ring $A$  we denote $\mod A$ to be the category of finitely generated left $A$-modules, and $\Mod A$ will denote the category of all left $A$-modules.  
Throughout when composing maps $fg$ will mean $f$ then $g$, similarly for quivers $ab$ will mean $a$ then $b$.  
Note that with this convention $\Hom_R(M,X)$ is a $\End_R(M)$-module and $\Hom_R(X,M)$ is a $\End_R(M)^{\rm op}$-module.  
For $M\in\mod A$ we denote $\add M$ to be the full subcategory consisting of summands of finite direct sums of copies of $M$, and we denote $\proj A:=\add A$ to be the category of finitely generated projective $A$-modules. 

\medskip
\noindent
{\bf Acknowledgements.} The authors would like to Hailong Dao for many interesting discussions regarding this work, especially with regards to the class group calculation in \S\ref{class section}.

\section{Triangulated and CY Reduction}\label{reduction section}

\subsection{Triangulated Reduction}\label{Triangle reduction}  In this section we let $\cC$ denote a triangulated category, and we suppose that $\cM\subseteq \cZ$ are full (not necessarily triangulated) subcategories of $\cC$.

Recall that we say a morphism $f:A\to B$ in $\cC$ is an \emph{$\cM$-monomorphism} if $(f\cdot):\Hom_{\cC}(B,M)\rightarrow \Hom_{\cC}(A,M)$ is surjective for all $M\in\cM$.
We say that $f$ is an \emph{$\cM$-epimorphism} if $(\cdot f):\Hom_{\cC}(M,A)\rightarrow \Hom_{\cC}(M,B)$ is surjective for all $M\in\cM$.
Similarly, we say that $f$ is a \emph{left $\cM$-approximation} of $B$ if $A\in\cM$ with $f$ an $\cM$-monomorphism, whereas we say that $f$ is a \emph{right $\cM$-approximation} of $A$ if $B\in\cM$ with $f$ an $\cM$-epimorphism.

Throughout this subsection we assume that $\cM\subseteq\cZ$ satisfies
\begin{enumerate}
\item[(1)] Every $Z\in\cZ$ admits a left $\cM$-approximation $Z\to M_Z$ and a right $\cM$-approximation $N_Z\to Z$ (i.e.\ $\cM$ is functorially finite in $\cZ$).
\item[(2a)]  Whenever $Z_1\stackrel{f}{\to} Z_2$ in $\cZ$ is an $\cM$-monomorphism, if we complete $f$ to a triangle $Z_1\stackrel{f}{\to} Z_2\stackrel{g}{\to} C\stackrel{h}{\to} Z_1[1]$ then $C\in\cZ$ and $g$ is an $\cM$-epimorphism. 
\item[(2b)] Whenever $Z_2\stackrel{g}{\to} Z_3$ in $\cZ$ is an $\cM$-epimorphism, if we complete $g$ to a triangle $B\stackrel{f}{\to} Z_2\stackrel{g}{\to} Z_3\stackrel{h}{\to} B[1]$ then $B\in\cZ$ and $f$ is an $\cM$-monomorphism.
\end{enumerate}

Recall that we denote $\cZ/[\cM]$ to be the additive category with the same objects as $\cZ$, but the morphism sets are defined to be $\Hom_{\cZ/[\cM]}(X,Y):=\Hom_{\cZ}(X,Y)/\cM(X,Y)$ where $\cM(X,Y)$ are the subspace of morphisms
that factor through an object in $\cM$. The following result generalizes \cite[4.2]{Iyama-Yoshino} where a very restrictive condition $\Hom_{\cC}(\cM,\cM[1])=0$ was assumed. Also we refer to \cite{LZ} for a related result.

\begin{thm}\label{reduction is triangulated}
With the assumptions (1), (2a) and (2b) as above, $\cU:=\cZ/[\cM]$ has the structure of a triangulated category.
\end{thm}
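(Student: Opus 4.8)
The plan is to define a candidate shift functor and a class of triangles on $\cU = \cZ/[\cM]$, and then verify the octahedral and other axioms directly, following the template of Iyama--Yoshino's argument but without the vanishing hypothesis $\Hom_\cC(\cM,\cM[1])=0$. First I would define the shift: given $Z \in \cZ$, choose by (1) a left $\cM$-approximation $Z \xrightarrow{a_Z} M_Z$, which is in particular an $\cM$-monomorphism, and complete it to a triangle $Z \xrightarrow{a_Z} M_Z \xrightarrow{b_Z} \langle Z\rangle \xrightarrow{c_Z} Z[1]$. By (2a), $\langle Z\rangle \in \cZ$ and $b_Z$ is an $\cM$-epimorphism; set this $\langle Z\rangle$ to be the shift of $Z$ in $\cU$. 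One must check that $\langle - \rangle$ is well-defined up to canonical isomorphism in $\cU$ (independent of the choice of approximation and of the cone), that it extends to morphisms, and that it is an autoequivalence of $\cU$ — the quasi-inverse being dually defined via right $\cM$-approximations and (2b), using the cocone $\langle Z\rangle' \to N_Z \to Z$. The triangles in $\cU$ are then declared to be those isomorphic (in $\cU$) to images of diagrams $Z_1 \xrightarrow{f} Z_2 \xrightarrow{g} Z_3 \xrightarrow{h} \langle Z_1\rangle$ arising from a commutative diagram in $\cC$
\[
\begin{array}{ccccccc}
Z_1 & \xrightarrow{f} & Z_2 & \to & C & \to & Z_1[1]\\
\| & & \downarrow & & \downarrow & & \|\\
Z_1 & \xrightarrow{a_{Z_1}} & M_{Z_1} & \xrightarrow{b_{Z_1}} & \langle Z_1\rangle & \xrightarrow{c_{Z_1}} & Z_1[1]
\end{array}
\]
where the top row is a triangle in $\cC$; i.e.\ $Z_3$ is obtained from $C$ (which need not lie in $\cZ$) by pushing out along the approximation.

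The key steps, in order, would be: (i) show every morphism $f\colon Z_1 \to Z_2$ in $\cZ$ embeds into such a diagram with $Z_3 \in \cZ$ — this uses (1) to get the approximation of $Z_1$, forms the pushout triangle, and then one shows $Z_3 \in \cZ$ by an octahedral argument reducing to (2a)/(2b); (ii) rotation of triangles, which needs a genuinely new argument compared to Iyama--Yoshino precisely because $\cM(Z_1,Z_2)$-morphisms are killed and one must show the rotated diagram is again of the standard form up to $\cU$-isomorphism; (iii) the morphism-completion axiom (TR3), filling a commutative square between two standard triangles to a morphism of triangles, modulo $[\cM]$; and (iv) the octahedral axiom (TR4), which I would obtain by stacking octahedra in $\cC$ and carefully tracking which vertices land in $\cZ$. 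Throughout, the recurring lemma is: if $Z \xrightarrow{f} Z' \xrightarrow{g} C$ is a triangle in $\cC$ with $Z, Z' \in \cZ$ and $f$ an $\cM$-monomorphism, then $C \in \cZ$ and $g$ is an $\cM$-epimorphism (this is exactly (2a)), together with its dual (2b); and one repeatedly needs that composites and pushouts of $\cM$-monomorphisms along suitable maps remain $\cM$-monomorphisms, which follows formally from the approximation property.

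I expect the main obstacle to be the octahedral axiom (TR4) in the absence of $\Hom_\cC(\cM,\cM[1])=0$. In the Iyama--Yoshino setting that vanishing forces many of the auxiliary cones to lie in $\cZ$ essentially for free and makes the various pushout squares strictly commute; here, commutativity only holds modulo $[\cM]$, so one must repeatedly replace maps by homotopic ones and check that the ambiguity is controlled. Concretely, when one forms the octahedron on $Z_1 \to Z_2 \to Z_3$ in $\cC$, the `new' vertex is a cone $C_{13}$, and there is no reason $C_{13} \in \cZ$; instead $Z_1\langle 1\rangle$-type objects are only obtained after pushing out along approximations, and one must show these pushouts can be performed compatibly across all four faces of the octahedron. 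The technical heart is therefore a careful bookkeeping argument showing that a choice of left $\cM$-approximation of $Z_1$ can be propagated coherently, so that the four triangles of the octahedron in $\cU$ are all of the standard form simultaneously; I would isolate this as a single lemma ("compatible pushout of an octahedron along an approximation") and then deduce (TR4) and also streamline the proofs of (TR1)--(TR3). A secondary but routine obstacle is verifying that $[\cM]$ is compatible with the triangulated structure, e.g.\ that a morphism factoring through $\cM$ induces the zero map on the relevant Hom-groups in $\cU$, which is immediate, and that $\langle-\rangle$ sends $[\cM]$-zero maps to $[\cM]$-zero maps, which follows from functoriality of the construction.
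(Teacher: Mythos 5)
Your overall architecture coincides with the paper's (which itself follows the Iyama--Yoshino template): the shift is the cone of a chosen left $\cM$-approximation, its quasi-inverse the cocone of a right approximation, and triangles are declared by comparing a triangle on a morphism of $\cZ$ against the approximation triangle of its source. Two points of caution on the setup. First, the comparison diagram you draw only exists when $f$ is an $\cM$-monomorphism, since the lift $Z_2\to M_{Z_1}$ with $f\psi=\alpha_{Z_1}$ requires surjectivity of $(f\cdot)$ on $\Hom_{\cC}(-,M_{Z_1})$; so the paper defines triangles only from $\cM$-monomorphisms (where (2a) puts the cone in $\cZ$ for free), and for a general $\overline f$ one simply replaces $f$ by the $\cM$-monomorphism $(\alpha_{Z_1}\ f)\colon Z_1\to M_{Z_1}\oplus Z_2$, whose cone again lies in $\cZ$ directly by (2a) --- this is the precise implementation of your ``pushout'', and no octahedral argument is needed for your step (i).

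The genuine thin spot is step (ii): you correctly flag that rotation (TR2) needs a new argument once $\Hom_{\cC}(\cM,\cM[1])=0$ is dropped, but you give no mechanism for it, and instead locate the technical heart in TR4, where in fact nothing new happens --- the paper's proofs of TR3 and TR4 are verbatim those of Iyama--Yoshino, so your proposed ``compatible pushout of an octahedron along an approximation'' lemma is effort spent in the wrong place. The missing mechanism for TR2 is this: from $c\cdot(-\alpha_{Z_1}[1])=0$ the octahedral axiom yields a triangle $Z_2\to M_{Z_1}\oplus Z_3\to Z_1\langle 1\rangle\to Z_2[1]$ whose first map is an $\cM$-monomorphism by (2b), hence gives a triangle of $\cU$; one then checks that its comparison with the rotated candidate $Z_2\to Z_3\to Z_1\langle 1\rangle\to Z_2\langle 1\rangle$ commutes \emph{in $\cU$} by showing the discrepancies (e.g.\ $d-f_2$ and $h+a\langle 1\rangle$) compose to zero with $\gamma_{Z_1}$, resp.\ $\gamma_{Z_2}$, and therefore factor through $\beta_{Z_1}$, resp.\ $\beta_{Z_2}$, i.e.\ through $\cM$. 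Without this factoring-through-$\beta$ argument (or an equivalent), your outline does not yet prove the rotation axiom, which is exactly where the dropped hypothesis was used in Iyama--Yoshino.
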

\begin{proof}
We first define an autoequivalence $\shift{1}$ on $\cU$.  For $Z\in\cU$ fix a left $\cM$-approximation $Z\xrightarrow{\alpha_Z}M_Z$ in $\cZ$, then define $Z\shift{1}$ to be the cone of $\alpha_Z$ in $\cC$, so we have a triangle
\[
Z\xrightarrow{\alpha_Z}M_Z \xrightarrow{\beta_Z}Z\shift{1}\xrightarrow{\gamma_Z}Z[1]
\]
Note by assumption (2a) that $Z\shift{1}\in\cZ$.  Now for $\overline{f}\in\Hom_{\cU}(Z_1,Z_2)$ consider 
\begin{equation}
\begin{array}{c}
{\SelectTips{cm}{10}
\xy0;/r.37pc/:
(0,0)*+{Z_1}="2",
(15,0)*+{M_{Z_1}}="3",
(30,0)*+{Z_1\shift{1}}="4",
(45,0)*+{Z_1[1]}="5",
(0,-9)*+{Z_2}="b2",
(15,-9)*+{M_{Z_2}}="b3",
(30,-9)*+{Z_2\shift{1}}="b4",
(45,-9)*+{Z_2[1]}="b5",
\ar"2";"3"^{\alpha_{Z_1}}
\ar"3";"4"^{\beta_{Z_1}}
\ar"4";"5"^{\gamma_{Z_1}}
\ar"b2";"b3"^{\alpha_{Z_2}}
\ar"b3";"b4"^{\beta_{Z_2}}
\ar"b4";"b5"^{\gamma_{Z_2}}
\ar"2";"b2"_f
\ar@{.>}"3";"b3"^g
\ar@{.>}"4";"b4"^{h:=f\shift{1}}
\ar"5";"b5"^{f[1]}
\endxy}
\end{array}
\label{ashift1}
\end{equation}
where $g$ exists because $\alpha_{Z_1}$ is an $\cM$-monomorphism, and $h$ exists by TR3.   We define $\overline{f}\shift{1}:=\overline{h}$.  It is standard to check that $\shift{1}$ is a well-defined functor $\cU\to\cU$ (e.g. \cite[2.6]{Iyama-Yoshino}).  
For the quasi-inverse functor, for every $Z\in\cU$ fix a right $\cM$-approximation $N_Z\xrightarrow{\varepsilon_Z} Z$ in $\cZ$, then define $Z\shift{-1}$ via the triangle
\[
Z\shift{-1}\xrightarrow{\delta_Z} N_Z \xrightarrow{\varepsilon_Z} Z \xrightarrow{\zeta_Z} Z\shift{-1}[1].
\]
In a similar way $\shift{-1}$ gives a well-defined functor $\cU\to\cU$.  Since by assumption (2a) $\beta_Z$ is a left $\cM$-approximation, 
and by assumption (2b) $\delta_Z$ is a right $\cM$-approximation, it is easy to check that $\shift{1}$ and $\shift{-1}$ are quasi-inverse to each other.

We now define triangles.  For $Z_1\xrightarrow{a}Z_2$ an $\cM$-monomorphism, we complete $a$ to a triangle $Z_1\xrightarrow{a}Z_2 \xrightarrow{b}Z_3\xrightarrow{c}Z_1[1]$ and so obtain a commutative diagram 
\begin{equation}\label{comm triangles}
\begin{array}{c}
{\SelectTips{cm}{10}
\xy0;/r.37pc/:
(0,0)*+{Z_1}="2",
(15,0)*+{Z_2}="3",
(30,0)*+{Z_3}="4",
(45,0)*+{Z_1[1]}="5",
(0,-9)*+{Z_1}="b2",
(15,-9)*+{M_{Z_1}}="b3",
(30,-9)*+{Z_1\shift{1}}="b4",
(45,-9)*+{Z_1[1]}="b5",
\ar"2";"3"^{a}
\ar"3";"4"^{b}
\ar"4";"5"^{c}
\ar"b2";"b3"^{\alpha_{Z_1}}
\ar"b3";"b4"^{\beta_{Z_1}}
\ar"b4";"b5"^{\gamma_{Z_1}}
\ar@{=}"2";"b2"
\ar@{.>}"3";"b3"^{\psi}
\ar@{.>}"4";"b4"^d
\ar@{=}"5";"b5"
\endxy}
\end{array}
\end{equation}
where $\psi$ exists since $a$ is an $\cM$-monomorphism, and $d$ exists by TR3.  We define triangles in $\cU$ to be all those isomorphic to the sequences
\[
Z_1\xrightarrow{\overline{a}}Z_2\xrightarrow{\overline{b}} Z_3\xrightarrow{\overline{d}} Z_1\shift{1}
\]
obtained in this way.  We now check the axioms of a triangulated category.\\
TR1(a) Let $Z\in\cU$, then $Z\xrightarrow{{\rm id}}Z$ is an $\cM$-monomorphism in $\cZ$, so 
\[
{\SelectTips{cm}{10}
\xy0;/r.37pc/:
(0,0)*+{Z}="2",
(15,0)*+{Z}="3",
(30,0)*+{0}="4",
(45,0)*+{Z[1]}="5",
(0,-9)*+{Z}="b2",
(15,-9)*+{M_{Z}}="b3",
(30,-9)*+{Z\shift{1}}="b4",
(45,-9)*+{Z[1]}="b5",
\ar"2";"3"^{{\rm id}}
\ar"3";"4"
\ar"4";"5"
\ar"b2";"b3"^{\alpha_{Z}}
\ar"b3";"b4"^{\beta_{Z}}
\ar"b4";"b5"^{\gamma_{Z}}
\ar@{=}"2";"b2"
\ar@{.>}"3";"b3"^{\psi}
\ar@{.>}"4";"b4"^d
\ar@{=}"5";"b5"
\endxy}
\]
shows that $Z\xrightarrow{\overline{{\rm id}}}Z\to 0\to Z\shift{1}$ is a triangle in $\cU$.\\
TR1(b) Every sequence isomorphic to a triangle is by definition a triangle.\\
TR1(c) Suppose that $Z_1\xrightarrow{\overline{f}}Z_2$ is a morphism in $\cU$.  Then $Z_1\xrightarrow{(\alpha_{Z_1}\,\, f)}M_{Z_1}\oplus Z_2$ is an $\cM$-monomorphism in $\cZ$, so completing to a triangle gives 
\begin{eqnarray}
\begin{array}{c}
{\SelectTips{cm}{10}
\xy0;/r.37pc/:
(0,0)*+{Z_1}="2",
(15,0)*+{M_{Z_1}\oplus Z_2}="3",
(30,0)*+{Z_3}="4",
(45,0)*+{Z_1[1]}="5",
(0,-9)*+{Z_1}="b2",
(15,-9)*+{M_{Z_1}}="b3",
(30,-9)*+{Z_1\shift{1}}="b4",
(45,-9)*+{Z_1[1]}="b5",
\ar"2";"3"^(0.4){(\alpha_{Z_1}\,\, f)}
\ar"3";"4"^(0.6){g_1\choose g_2}
\ar"4";"5"
\ar"b2";"b3"^{\alpha_{Z_1}}
\ar"b3";"b4"^{\beta_{Z_1}}
\ar"b4";"b5"^{\gamma_{Z_1}}
\ar@{=}"2";"b2"
\ar@{.>}"3";"b3"^{\psi}
\ar@{.>}"4";"b4"^d
\ar@{=}"5";"b5"
\endxy}
\end{array}
\label{from_ses_ok}
\end{eqnarray}
which shows that $Z_1\xrightarrow{\overline{f}} Z_2\xrightarrow{\overline{g_2}}Z_3\xrightarrow{\overline{h}}Z_1\shift{1}$, being isomorphic in $\cU$ to 
\[
Z_1\xrightarrow{\overline{(\alpha_{Z_1}\,\, f)}} M_{Z_1}\oplus Z_2\xrightarrow{\overline{g_1\choose g_2}}Z_3\xrightarrow{\overline{h}}Z_1\shift{1},
\]
is a triangle in $\cU$.\\
TR2. (Rotation) Suppose that $Z_1\xrightarrow{\overline{a}}Z_2\xrightarrow{\overline{b}} Z_3\xrightarrow{\overline{d}} Z_1\shift{1}$ is a triangle in $\cU$.  
By (\ref{from_ses_ok}) we can assume that $a$ is an $\cM$-monomorphism, and the triangle arises from the commutative diagram (\ref{comm triangles}).

Now by rotating (\ref{comm triangles}) we have a commutative diagram of triangles
\begin{equation}
\begin{array}{c}
{\SelectTips{cm}{10}
\xy0;/r.37pc/:
(0,0)*+{Z_2}="2",
(15,0)*+{Z_3}="3",
(30,0)*+{Z_1[1]}="4",
(45,0)*+{Z_2[1]}="5",
(0,-9)*+{M_{Z_1}}="b2",
(15,-9)*+{Z_1\shift{1}}="b3",
(30,-9)*+{Z_1[1]}="b4",
(45,-9)*+{M_{Z_1}[1]}="b5",
\ar"2";"3"^{b}
\ar"3";"4"^{c}
\ar"4";"5"^{-a[1]}
\ar"b2";"b3"^{\beta_{Z_1}}
\ar"b3";"b4"^{\gamma_{Z_1}}
\ar"b4";"b5"^{-\alpha_{Z_1}[1]}
\ar"2";"b2"^{\psi}
\ar"3";"b3"^d
\ar@{=}"4";"b4"
\ar"5";"b5"^{\psi[1]}
\endxy}\label{dgamma=c}
\end{array}
\end{equation}
from which is follows that $c\cdot(-\alpha_{Z_1}[1])=0$.  Hence applying the octahedral axiom 
\begin{equation}
\begin{array}{c}
\begin{tikzpicture}[xscale=1.2,yscale=1.2]
\node (A) at (0,0) {$Z_3$};
\node (B) at (1.5,0) {$Z_1[1]$};
\node (C) at (4,0) {$Z_2[1]$};
\node (A1) at (6,0) {$Z_3[1]$};
\node (A1a) at (5,-2) {$Z_3[1]$};
\node (B1) at (3.75,-3.5) {$Z_1[2]$};
\node (C1) at (2.25,-4.5) {$Z_2[2]$};
\node (D) at (intersection of B--B1 and A--A1a) {$M_{Z_1}[1]$};
\node (E) at (intersection of C--C1 and A--A1a) {$(M_{Z_1}\oplus Z_3)[1]$};
\node (F) at (intersection of C--C1 and B--B1) {$Z_1\shift{1}[1]$};
\draw[->] (A) -- node[above] {$\scriptstyle c$} (B);
\draw[->] (B) -- node[above] {$\scriptstyle -a[1]$} (C);
\draw[->] (C) -- node[above] {$\scriptstyle -b[1]$} (A1);
\draw[->] (B) -- node[right,pos=0.2] {$\scriptstyle -\alpha_{Z_1}[1]$} (D);
\draw[->] (D) -- node[above right,pos=0.1] {$\scriptstyle (1\,\, 0)$} (E);
\draw[->] (E) -- node[above,pos=0.95] {$\scriptstyle {0\choose1}$} (A1a);
\draw[->] (A) -- node[below] {$\scriptstyle 0$} (D);
\draw[->] (D) -- node[left] {$\scriptstyle -\beta_{Z_1}[1]$} (F);
\draw[->] (F) -- node[right, pos=0.2] {$\scriptstyle -\gamma_{Z_1}[1]$} (B1);
\draw[double distance=1.5pt] (A1) -- (A1a);
\draw[->] (A1a) -- node[right]  {$\scriptstyle c[1]$} (B1);
\draw[->] (B1) -- node[below,pos=0.2] {$\scriptstyle $} (C1);
\draw[densely dotted, ->] (C) -- (E);
\draw[densely dotted, ->] (E) -- (F);
\draw[densely dotted, ->] (F) -- (C1);
\end{tikzpicture}
\end{array}\label{oct1}
\end{equation}
and rotating we obtain a triangle
\[
Z_2\xrightarrow{e} M_{Z_1}\oplus Z_3\xrightarrow{f} Z_1\shift{1}\xrightarrow{g=-\gamma_{Z_1}\cdot a[1]} Z_2[1]
\]
where $f$ is an $\cM$-epimorphism since $\beta_{Z_1}$ is, and the diagram (\ref{oct1}) commutes.  By assumption (2b) $e$ is an $\cM$-monomorphism, so there exists a commutative diagram of triangles
\begin{equation}
\begin{array}{c}
{\SelectTips{cm}{10}
\xy0;/r.37pc/:
(0,0)*+{Z_2}="2",
(15,0)*+{M_{Z_1}\oplus Z_3}="3",
(30,0)*+{Z_1\shift{1}}="4",
(45,0)*+{Z_2[1]}="5",
(0,-9)*+{Z_2}="b2",
(15,-9)*+{M_{Z_2}}="b3",
(30,-9)*+{Z_2\shift{1}}="b4",
(45,-9)*+{Z_2[1]}="b5",
\ar"2";"3"^(0.4){e}
\ar"3";"4"^(0.6){f}
\ar"4";"5"^{-\gamma_{Z_1}\cdot a[1]}
\ar"b2";"b3"^{\alpha_{Z_2}}
\ar"b3";"b4"^{\beta_{Z_2}}
\ar"b4";"b5"^{\gamma_{Z_2}}
\ar@{=}"2";"b2"
\ar@{.>}"3";"b3"^{\phi}
\ar@{.>}"4";"b4"^h
\ar@{=}"5";"b5"
\endxy}
\end{array}\label{hgamma=g}
\end{equation}
and so by definition $Z_2\xrightarrow{\overline{e}}M_{Z_1}\oplus Z_3\xrightarrow{\overline{f}}Z_1\shift{1}\xrightarrow{\overline{h}} Z_2\shift{1}$ is a triangle in $\cU$.  We now claim that the diagram
\begin{eqnarray}
\begin{array}{c}
{\SelectTips{cm}{10}
\xy0;/r.37pc/:
(0,0)*+{Z_2}="2",
(15,0)*+{M_{Z_1}\oplus Z_3}="3",
(30,0)*+{Z_1\shift{1}}="4",
(45,0)*+{Z_2\shift{1}}="5",
(0,-9)*+{Z_2}="b2",
(15,-9)*+{Z_3}="b3",
(30,-9)*+{Z_1\shift{1}}="b4",
(45,-9)*+{Z_2\shift{1}}="b5",
\ar"2";"3"^(0.4){\overline{e}}
\ar"3";"4"^(0.6){\overline{f}}
\ar"4";"5"^{\overline{h}}
\ar"b2";"b3"^{\overline{b}}
\ar"b3";"b4"^{\overline{d}}
\ar"b4";"b5"^{-\overline{a}\shift{1}}
\ar@{=}"2";"b2"
\ar"3";"b3"_{\cong}^{\overline{{0\choose 1}}}
\ar@{=}"4";"b4"
\ar@{=}"5";"b5"
\endxy}
\end{array}
\label{want_comm}
\end{eqnarray}
commutes in $\cU$, as this proves that the rotation $Z_2\xrightarrow{\overline{b}}Z_3\xrightarrow{\overline{d}}Z_1\shift{1}\xrightarrow{-\overline{a}\shift{1}}Z_2\shift{1}$ is a triangle in $\cU$.  The left square in (\ref{want_comm}) commutes immediately from the commutativity of the top right square in (\ref{oct1}).  For the middle square in (\ref{want_comm}), write $f={f_1\choose f_2}$, so $\overline{f}=\overline{f_2}$.  Then from (\ref{oct1}) we see that $f_2\cdot \gamma_{Z_1}[1]=c[1]$, hence $(d-f_2)\cdot\gamma_{Z_1}\stackrel{\mbox{\scriptsize(\ref{dgamma=c})}}{=}c-c=0$.  This implies that $d-f_2$ factors through $\beta_{Z_1}$, thus $\overline{d}=\overline{f_2}$ and so the middle square in (\ref{want_comm}) commutes.  For the right hand square in (\ref{want_comm}), note that 
\[
(h+a\shift{1})\cdot\gamma_{Z_2}\stackrel{\mbox{\scriptsize(\ref{hgamma=g})}}{=}-\gamma_{Z_1}\cdot a[1]+a\shift{1}\cdot\gamma_{Z_2}\stackrel{\mbox{\scriptsize(\ref{ashift1})}}{=}0
\]
This implies that $h+a\shift{1}$ factors through $\beta_{Z_2}$ and so $\overline{h}=-\overline{a}\shift{1}$ as required.\\
The proofs of TR3 and TR4 are identical to those in \cite[4.2]{Iyama-Yoshino}.
\end{proof}

\subsection{CY Categories and CY Reduction}
In this subsection we let $R$ denote a $d$-dimensional equi-codimensional (i.e.\ $\dim R_\m=\dim R$ for all $\m\in\Max R$) CM ring with a canonical module $\omega_R$.
We assume that all our categories $\cC$ are \emph{$R$-linear}, in the sense that each Hom-set in $\cC$ is a finitely generated $R$-module such that the composition map is $R$-bilinear.

Let $\CM_iR:=\{ X\in\mod R\mid \depth_{R_\m}X_\m=\dim_{R_\m}X_\m=i \mbox{ for all }\m\in\Max R \}$ be the category of CM $R$-modules of dimension $i$.
Then the functor
\[D_i:=\Ext^{d-i}_R(-,\omega_R)\colon\mod R\to\mod R\]
gives a duality $D_i:\CM_iR\to\CM_iR$.
In the rest let $\cT:=\CM_0R$ and $\cF:=\CM_1R$. Thus $\CM_0R$ is the category of finite length $R$-modules. Clearly we have $\Hom_R(\cT,\cF)=0$, and also we have dualities $D_0\colon\cT\to\cT$ and $D_1\colon\cF\to\cF$. 
Any $X\in\mod R$ has a unique maximal finite length submodule, which we denote by $\fl_RX$.

Recall from the introduction the following.
\begin{defin}\label{filtered CY defin}
Let $\cC$ be an $R$-linear triangulated category. We assume $\dim_R\cC\le1$, where
\[\dim_R\cC:=\sup\{\dim_R\Hom_{\cC}(X,Y)\mid X,Y\in\cC\}.\]
For every $X,Y\in\cC$, by setting $T_\cC(X,Y):=\fl_R\Hom_{\cC}(X,Y)$, there exists a short exact sequence
\[
0\to T_{\cC}(X,Y)\to\Hom_{\cC}(X,Y)\to F_{\cC}(X,Y)\to 0
\]
with $T_\cC(X,Y)\in\cT$ and $F_\cC(X,Y)\in\cF$.  We say that an autoequivalence $S\colon\cC\to\cC$ is a \emph{Serre functor} if for all $X,Y\in\cC$ there are functorial isomorphisms
\begin{align*}
D_0(T_\cC(X,Y))&\cong T_\cC(Y,SX),\\
D_1(F_\cC(X,Y))&\cong F_\cC(Y,SX).
\end{align*}
If $S=[n]$ is a Serre functor for an integer $n$, we say that $\cC$ is an \emph{$n$-Calabi--Yau} triangulated category of dimension at most one.
\end{defin}

\begin{remark}
We remark that the usual definition of $n$-CY is to simply take $R=k$ where $k$ is an algebraically closed field, so $\cT=\mod R$, $\cF=\emptyset$ and $D_0=\Hom_k(-,k)$. 
\end{remark}

For our main examples of $n$-CY triangulated categories $\cC$ with $\dim_R\cC\leq 1$, we refer the reader to \S\ref{Section3}.

\begin{defin}\label{modifying def}
Fix $n\geq 2$ and suppose that $\cC$ is an $n$-CY triangulated category with $\dim_R\cC\leq1$.  We say that $M\in\cC$ is \emph{modifying} if\\
\t{(1)} $\Hom_{\cC}(M,M[i])=0$ for all $1\leq i\leq n-2$.\\
\t{(2)} $T_\cC(M,M[n-1])=0$.\\
Given a modifying object $M$, we define
\[
\cZ_M:=\{ X\in\cC\mid \Hom_{\cC}(X,M[i])=0\mbox{ for all }1\leq i\leq n-2 \mbox{ and } T_{\cC}(X,M[n-1])=0\}.
\]
Since $\cC$ is $n$-CY, we have
\[\cZ_M=\{ X\in\cC\mid \Hom_{\cC}(M,X[i])=0\mbox{ for all }1\leq i\leq n-2 \mbox{ and } T_{\cC}(M,X[n-1])=0\}.\]
We call the factor category $\cC_M:=\cZ_M/[M]$ the \emph{reduction} of $\cC$.
\end{defin}

\begin{remark}\label{fg Hom sets ok}
Since our category $\cC$ is $R$-linear, by assumption all Hom-sets are finitely generated $R$-modules. 
In particular, for any $M\in\cC$ this implies that $\Hom_{\cC}(X,M)\in\mod\End_{\cC}(M)^{\op}$ and $\Hom_{\cC}(M,X)\in\mod\End_{\cC}(M)$ for all $X\in\cC$.  Below, this allows us to construct both left and right $(\add M)$-approximations.
\end{remark}

Now we wish to show that given a modifying object $M$ in an $n$-CY triangulated category $\cC$ with $\dim_R\cC\leq 1$,
then the reduction $\cC_M$ has a structure of an $n$-CY triangulated category with $\dim_R\cC_M\leq 1$.  First we need the following technical lemma.

\begin{lemma}\label{autoequiv lemma}
Suppose that $M$ is a modifying object in an $n$-CY triangulated category $\cC$ with $\dim_R\cC\leq1$, with $n\geq 2$.  
Then for any $X\in\cZ_M$ (respectively, $Y\in\cZ_M$), there exists a triangle
\[
X\xrightarrow{f}M_0\xrightarrow{g}Y\to X[1]
\]
with $Y\in\cZ_M$ (respectively, $X\in\cZ_M$), where $f$ is a left $(\add M)$-approximation and $g$ is a right $(\add M)$-approximation.
\end{lemma}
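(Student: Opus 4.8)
The plan is to exhibit $Y$ as (a shift of) the cone of a left $(\add M)$-approximation. Since the Hom-spaces in $\cC$ are finitely generated $R$-modules, $\add M$ is functorially finite in $\cC$ (Remark~\ref{fg Hom sets ok}), so one may choose a left $(\add M)$-approximation $f\colon X\to M_0$ and complete it to a triangle
\[
X\xrightarrow{f}M_0\xrightarrow{g}Y\xrightarrow{h}X[1];
\]
this $Y$ is the candidate. The second, parenthetical, statement is obtained dually: start instead from a right $(\add M)$-approximation $g\colon M_0\to Y$, let $X$ be its cocone, and run the symmetric argument, now using the second (equivalent) description of $\cZ_M$ from Definition~\ref{modifying def}.

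First I would check that $Y\in\cZ_M$ by a long exact sequence chase. Applying $\Hom_\cC(-,M[i])$ to the triangle for $1\le i\le n-1$, the end term $X[1]$ contributes the shifted group $\Hom_\cC(X,M[i-1])$, which vanishes for $2\le i\le n-1$ since $X\in\cZ_M$, while $\Hom_\cC(M_0,M[i])$ vanishes for $1\le i\le n-2$ and has zero finite-length part for $i=n-1$ since $M$ is modifying. For the remaining index $i=1$, one first uses that $f$ is a left $(\add M)$-approximation: $\Hom_\cC(f,M)$ is surjective, so exactness in the rotated triangle forces $\Hom_\cC(h,M[1])=0$ and hence $\Hom_\cC(g,M[1])$ injective; then $\Hom_\cC(Y,M[1])\hookrightarrow\Hom_\cC(M_0,M[1])$, from which $\Hom_\cC(Y,M[1])=0$ when $n\ge3$, and $T_\cC(Y,M[n-1])=0$ when $n=2$ (i.e.\ $n-1=1$), because $M$ is modifying. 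Assembling the cases gives $Y\in\cZ_M$.

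Next I would check that $g$ is a right $(\add M)$-approximation, i.e.\ that $\Hom_\cC(M,g)\colon\Hom_\cC(M,M_0)\to\Hom_\cC(M,Y)$ is surjective; by the long exact sequence obtained from $\Hom_\cC(M,-)$ this amounts to $\Hom_\cC(M,h)=0$, where $h\colon Y\to X[1]$. When $n\ge3$ this is immediate, because $X\in\cZ_M$ forces $\Hom_\cC(M,X[1])=0$ (the index $1$ lies in the range $[1,n-2]$), so the target of $\Hom_\cC(M,h)$ is zero. When $n=2$ one only knows $T_\cC(M,X[1])=0$, so $\Hom_\cC(M,X[1])$ is merely a module in $\CM_1R$ (or zero) and in particular has no non-zero finite-length submodule; here one must show that the image of $\Hom_\cC(M,h)$ has finite length, whence it vanishes. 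This is the step that needs the two-step filtration $0\to T_\cC\to\Hom_\cC\to F_\cC\to0$ together with the Serre-duality isomorphisms $D_0$, $D_1$ on the graded pieces, transporting the vanishing $T_\cC(X,M[1])=0$ (equivalently $T_\cC(M,X[1])=0$) through the relevant connecting maps.

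I expect this last point — the right-approximation property for $n=2$ — to be the main obstacle. In the Iyama--Yoshino setting one assumes $\Hom_\cC(\cM,\cM[1])=0$, which makes all the Hom-groups in these exact sequences vanish outright and the approximation properties automatic; in the ``dimension at most one'' generality rigidity is imposed only on the finite-length parts, so the long exact sequences no longer terminate in zeros and one has to keep track of the $T_\cC$- and $F_\cC$-pieces separately, using that $D_0$ is a duality on finite-length modules and $D_1$ annihilates them. Everything else — existence of the approximations, the membership $Y\in\cZ_M$, and the reduction of the parenthetical statement to the main one — is then a routine diagram chase.
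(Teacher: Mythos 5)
Your first half — existence of the triangle and the verification that $Y\in\cZ_M$ — is correct and is essentially the paper's own argument: the $i=1$ case (and, for $n=2$, the $T$-part at $i=n-1$) is handled exactly as in the paper by using surjectivity of $(f\cdot)$ to force the connecting map to vanish, so that $\Hom_\cC(Y,M[1])$ injects into $\Hom_\cC(M_0,M[1])$, and the $T$-part dies because a finite-length submodule of a module with zero finite-length part is zero. The reduction of the parenthetical statement to the dual one is also fine.

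The genuine gap is the one you yourself flag: the right-approximation property of $g$ when $n=2$ is asserted to follow from the filtration and the dualities, but no argument is given, and the mechanism you propose (``show the image of $\Hom_\cC(M,h)$ has finite length, whence it lands in $T_\cC(M,X[1])=0$'') is left entirely unexecuted — I do not see how to establish that finite-length claim except by proving the stronger statement that the image is zero, which is what the actual proof does. The paper's route is: from surjectivity of $(f\cdot)\colon\Hom_\cC(M_0,M)\to\Hom_\cC(X,M)$ and $\Hom_R(\cT,\cF)=0$ one gets an induced surjection $F_\cC(M_0,M)\twoheadrightarrow F_\cC(X,M)$; applying the $D_1$ Serre duality turns this into an injection $F_\cC(M,X[1])\hookrightarrow F_\cC(M,M_0[1])$ compatible with $(\cdot f[1])$; since $T_\cC(M,X[1])=0$ (as $X\in\cZ_M$) and $T_\cC(M,M_0[1])=0$ ($M$ modifying), these $F$-groups are the full Hom-groups, so $(\cdot f[1])\colon\Hom_\cC(M,X[1])\to\Hom_\cC(M,M_0[1])$ is injective; exactness of the sequence obtained by applying $\Hom_\cC(M,-)$ to the triangle then forces $\Hom_\cC(M,h)=0$, i.e.\ $(\cdot g)$ is surjective. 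Without this (or an equivalent) argument the lemma is unproved precisely in the case $n=2$, which is the case needed for the applications to $\uCM R$ with $\dim R=3$.
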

\begin{proof}
Let $f:X\to M_0$ be a left $(\add M)$-approximation and complete $f$ to a triangle
\begin{equation}\label{approximation triangle}
X\xrightarrow{f}M_0\xrightarrow{g}Y\to X[1]
\end{equation}
in $\cC$.  We will show that $Y\in\cZ_M$ in two stages.\\
\emph{Claim 1:} $\Hom_{\cC}(Y,M[i])=0$ for all $1\leq i\leq n-2$.  When $n=2$ there is nothing to prove, so we suppose $n>2$.  
Simply applying applying $\Hom_{\cC}(-,M)$ to (\ref{approximation triangle}) and using the fact that $X,M_0\in\cZ(M)$, together with the surjectivity of $(f\cdot)\colon\Hom_{\cC}(M_0,M)\to\Hom_{\cC}(X,M)$, verifies the claim.\\
\emph{Claim 2:} We next claim that
\begin{equation}
\Hom_{\cC}(Y[1-n],M)\xrightarrow{g[1-n]\cdot}\Hom_{\cC}(M_0[1-n],M)\label{inj}
\end{equation}
is injective.  To verify this, if $n=2$ then applying $\Hom_{\cC}(-,M)$ to (\ref{approximation triangle}) gives an exact sequence
\[
\Hom_{\cC}(M_0,M)\xrightarrow{f\cdot}\Hom_{\cC}(X,M)\to
\Hom_{\cC}(Y[-1],M)\xrightarrow{g[-1]\cdot}\Hom_{\cC}(M_0[-1],M)
\]
from which the surjectivity of $(f\cdot)$ gives the injectivity of $(g[-1]\cdot)$. If $n>2$ then the exact sequence
\[
\Hom_{\cC}(X[2-n],M)=0\to\Hom_{\cC}(Y[1-n],M)\xrightarrow{g[1-n]\cdot}\Hom_{\cC}(M_0[1-n],M)
\]
verifies the claim.

Hence Claim 2 shows that $T_\cC(Y[1-n],M)$ embeds inside $\Hom_{\cC}(M_0[1-n],M)=F_\cC(M_0[1-n],M)$.  Since $\Hom_R(\cT,\cF)=0$ we deduce that $T_\cC(Y[1-n],M)=0$.  This, together with Claim 1, shows that $Y\in\cZ_M$.

It remains to show that $g$ is a right $(\add M)$-approximation.  If $n>2$ then $\Hom_{\cC}(M,X[1])=0$ and so $(\cdot g)\colon\Hom_{\cC}(M,M_0)\to\Hom_{\cC}(M,Y)$ is surjective, as required. 
Hence we can assume that $n=2$. Now we have the following commutative diagram with exact rows
\[
{\SelectTips{cm}{10}
\xy0;/r.37pc/:
(20,0)*+{0}="1",
(30,0)*+{T_\cC(M_0,M)}="2",
(50,0)*+{\Hom_{\cC}(M_0,M)}="3",
(70,0)*+{F_\cC(M_0,M)}="4",
(83,0)*+{0}="5",
(20,-7)*+{0}="b1",
(30,-7)*+{T_\cC(X,M)}="b2",
(50,-7)*+{\Hom_{\cC}(X,M)}="b3",
(70,-7)*+{F_\cC(X,M)}="b4",
(83,-7)*+{0}="b5",
\ar"1";"2"
\ar"2";"3"
\ar"3";"4"
\ar"4";"5"
\ar"b1";"b2"
\ar"b2";"b3"
\ar"b3";"b4"
\ar"b4";"b5"
\ar@{->>}"3";"b3"^{f\cdot}
\endxy}
\]
so since $\Hom_R(\cT,\cF)=0$ we obtain an induced surjection
\begin{equation}
F_\cC(M_0,M)\xrightarrow{f\cdot} F_\cC(X,M)\to 0.\label{surjection2}
\end{equation}
Now applying $\Hom_{\cC}(M,-)$ to \eqref{approximation triangle} and
$D_1$ to \eqref{surjection2} and comparing them, we have a commutative diagram
\[
{\SelectTips{cm}{10}
\xy0;/r.37pc/:
(10,0)*+{\Hom_{\cC}(M,M_0)}="1",
(30,0)*+{\Hom_{\cC}(M,Y)}="2",
(50,0)*+{\Hom_{\cC}(M,X[1])}="3",
(73,0)*+{\Hom_{\cC}(M,M_0[1])}="4",
(36,-7)*+{0}="b1",
(50,-7)*+{F_\cC(M,X[1])}="b2",
(73,-7)*+{F_\cC(M,M_0[1])}="b3"
\ar"1";"2"^{\cdot g}
\ar"2";"3"
\ar"3";"4"^{\cdot f[1]}
\ar"b1";"b2"
\ar"b2";"b3"^{\cdot f[1]}
\ar"3";"b2"
\ar"4";"b3"
\endxy}
\]
of exact sequences.  Since $M$ is modifying and $X\in\cZ_M$, the two vertical maps are isomorphisms.
Thus the injectivity of $(\cdot f[1])$ implies the surjectivity of $(\cdot g)$.
\end{proof}
The following is the main result of this section.

\begin{thm}\label{CYreduction theorem}
Let $M$ be a modifying object in an $n$-CY triangulated category $\cC$ with $\dim_R\cC\leq1$.  Then $\cC_M$ is an $n$-CY triangulated category with $\dim_R\cC_M\leq1$.
\end{thm}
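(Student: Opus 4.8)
The plan is to first establish the triangulated structure on $\cC_M=\cZ_M/[\cM]$ where $\cM:=\add M$, by verifying the hypotheses of Theorem \ref{reduction is triangulated} with $\cZ:=\cZ_M$. Functorial finiteness of $\cM$ in $\cZ_M$ follows from Remark \ref{fg Hom sets ok}, since the finitely generated Hom-modules give both left and right $(\add M)$-approximations; the key point that these approximations stay inside $\cZ_M$ and that the approximation maps are $\cM$-mono/epimorphisms is exactly the content of Lemma \ref{autoequiv lemma}, which produces the triangle $X\to M_0\to Y\to X[1]$ with $X,Y\in\cZ_M$, $f$ a left $\cM$-approximation and $g$ a right $\cM$-approximation. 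This simultaneously gives conditions (1), (2a), (2b): given an $\cM$-monomorphism $Z_1\to Z_2$ inside $\cZ_M$, one uses the octahedral axiom to relate its cone to the $Y$ appearing in Lemma \ref{autoequiv lemma} applied to $Z_1$, deducing the cone lies in $\cZ_M$ and the next map is an $\cM$-epimorphism; dually for (2b). So Theorem \ref{reduction is triangulated} applies and $\cC_M$ is triangulated, with shift $\shift{1}$ built from the triangle $X\xrightarrow{f}M_0\to X\shift{1}\to X[1]$ of Lemma \ref{autoequiv lemma}.

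Next I would identify, for $X,Y\in\cZ_M$, the Hom-modules of $\cC_M$ with suitable subquotients of Hom-modules in $\cC$. The starting point is that $\Hom_{\cC_M}(X,Y)=\Hom_{\cC}(X,Y)/[\cM](X,Y)$, so since $\cC$ is $R$-linear with $\dim_R\cC\le 1$, the quotient is a finitely generated $R$-module of Krull dimension $\le 1$, giving $\dim_R\cC_M\le 1$ for free, and the two-step filtration $0\to T_{\cC_M}(X,Y)\to\Hom_{\cC_M}(X,Y)\to F_{\cC_M}(X,Y)\to 0$ exists automatically. The real work is to show $[n]$ is a Serre functor on $\cC_M$, i.e. that $\shift{n}$ (the $n$-fold shift in $\cC_M$) is isomorphic to $[n]$ on objects of $\cZ_M$ up to the $[\cM]$-quotient, and that the two functorial isomorphisms $D_0(T_{\cC_M}(X,Y))\cong T_{\cC_M}(Y,\shift{n}X)$, $D_1(F_{\cC_M}(X,Y))\cong F_{\cC_M}(Y,\shift{n}X)$ hold. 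The strategy for the Serre duality is the standard dimension-shifting argument: use the defining triangle $X\xrightarrow{f}M_0\to X\shift{1}\to X[1]$ together with the modifying/$\cZ_M$ conditions (which kill $\Hom_{\cC}(M,X[i])$ and $T_{\cC}(M,X[n-1])$ for the relevant $i$) to produce, after iterating $n$ times, an exact sequence computing $\Hom_{\cC_M}(Y,X\shift{n})$ in terms of $\Hom_{\cC}(Y,X[n])$ modulo morphisms factoring through $\cM$ and through shifts of $\cM$; then feed this through the Serre functor $[n]$ on $\cC$, which on the $T$-part and $F$-part separately intertwines $D_0$ and $D_1$.

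The main obstacle will be the careful bookkeeping in the previous paragraph: controlling exactly which morphisms get killed when passing to $\cC_M$ and matching them correctly with the $D_0$/$D_1$ dualities. Concretely, after applying $\Hom_{\cC}(Y,-)$ to the triangles defining $X\shift{1},X\shift{2},\dots,X\shift{n}$, one must show the connecting maps are surjective or vanish in the right degrees (using $Y\in\cZ_M$ to kill $\Hom_{\cC}(Y,M[i])$ for $1\le i\le n-2$ and $T_{\cC}(Y,M[n-1])=0$, and using that $M_0\in\add M$ is modifying), and then that the resulting identification is compatible with the filtration $T\subseteq\Hom$: since $\Hom_R(\cT,\cF)=0$, the $T$-part and $F$-part split off cleanly at each stage, so the $n$-CY property of $\cC$ applied degreewise transports to $\cC_M$. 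I expect that establishing $\shift{n}\cong[n]$ on $\cC_M$ — equivalently, that the "correction terms" $M_0,M_1,\dots$ accumulated in building $\shift{n}$ contribute nothing after the $[\cM]$-quotient — is the delicate step, and it is precisely where the hypotheses $\Hom_{\cC}(M,M[i])=0$ for $1\le i\le n-2$ and $T_{\cC}(M,M[n-1])=0$ are needed. Functoriality of all the isomorphisms is then a routine (if tedious) diagram chase, to be dispatched at the end.
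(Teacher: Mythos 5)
Your first paragraph is essentially the paper's argument: triangulation comes from combining Lemma \ref{autoequiv lemma} (which supplies the approximation triangles staying in $\cZ_M$) with Theorem \ref{reduction is triangulated}, and $\dim_R\cC_M\le 1$ is automatic since every $\Hom_{\cC_M}$ is a quotient of a $\Hom_{\cC}$. So far so good.

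The trouble is in your second and third paragraphs, and it is a genuine gap rather than bookkeeping. You propose to iterate the approximation triangle $n$ times so as to ``compute $\Hom_{\cC_M}(Y,X\shift{n})$ in terms of $\Hom_{\cC}(Y,X[n])$,'' and you also aim to show $\shift{n}\cong[n]$ on $\cZ_M$ modulo $[\cM]$. Neither of these can be made to work. First, $[n]$ of $\cC$ does not even send $\cZ_M$ to $\cZ_M$ (already $X[1]\in\cZ_M$ would force $\Hom_\cC(X,M)=0$), so the comparison of $\shift{n}$ with $[n]$ is ill-posed. Second, and more importantly, the hypotheses on $Y\in\cZ_M$ kill $\Hom_\cC(Y,M[i])$ only for $1\le i\le n-2$ and kill $T_\cC(Y,M[n-1])$ only; they say nothing about $\Hom_\cC(Y,M[n-1])$ (the $F$-part) or about anything in degree $n$. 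Consequently the dimension-shift that yields $\Hom_{\cC_M}(Y,X\shift{i})\cong\Hom_\cC(Y,X[i])$ succeeds only up to $i=n-2$, gives only the $T$-part at $i=n-1$, and simply stops there. There is no identification at degree $n$, and the ``correction terms'' you wave away do not vanish.

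The way the paper gets around this is to never reach degree $n$ at all. It proves (Proposition \ref{connection between U and C}) the partial comparisons $\Hom_{\cC_M}(X,Y\shift{i})\cong\Hom_\cC(X,Y[i])$ for $1\le i\le n-2$ and $T_{\cC_M}(X,Y\shift{n-1})\cong T_\cC(X,Y[n-1])$, then states the required Serre duality in the equivalent form relating degree $1$ to degree $n-1$ (apply $\shift{1}$ everywhere to see this is the same as relating $0$ to $n$), and applies the $n$-CY duality of $\cC$ precisely at the degrees where the comparison holds. For the $T$-part this is a one-line chain; for the $F$-part there is no isomorphism even at degree $n-1$ — instead one gets an exact sequence $0\to F_{\cC_M}(Y,X\shift{n-1})\to F_\cC(Y,X[n-1])\to F_\cC(Y,M_X[n-1])$, which the paper matches against the $D_1$-dual of the left-approximation sequence $\Hom_\cC(M_X,Y)\to\Hom_\cC(X,Y)\to\Hom_{\cC_M}(X,Y)\to 0$ via a commutative ladder. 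That comparison of two exact sequences, rather than a single iterated identification, is the step your proposal is missing.
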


The fact that $\cC_M$ is triangulated follows by combining \ref{autoequiv lemma} and \ref{reduction is triangulated}.
It is also clear that $\dim_R\cC_M\leq 1$ holds since $\Hom_{\cC_M}(X,Y)$ is a factor module of $\Hom_{\cC}(X,Y)$ for all $X,Y\in\cC$.
To prove the dualities, we need the following observations.

\begin{prop}\label{connection between U and C}
For any $X,Y\in\cC_M$, we have functorial isomorphisms\\
\t{(1)} $\Hom_{\cC_M}(X,Y\shift{i})\cong\Hom_{\cC}(X,Y[i])$ for all $i$ with $1\le i\le n-2$,\\
\t{(2)} $T_{\cC_M}(X,Y\shift{d-1})\cong T_{\cC}(X,Y[n-1])$.
\end{prop}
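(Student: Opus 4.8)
The plan is to reduce both claims to one computation of $\Hom_{\cC_M}(X,Z\shift{1})$ for $Z\in\cZ_M$, and then to run an index count governed by the defining conditions of $\cZ_M$. I would begin by recalling, from the proof of \ref{CYreduction theorem} (which combines \ref{reduction is triangulated} and \ref{autoequiv lemma}), that the shift functor $\shift{1}$ on $\cC_M$ is built from triangles
\[
Z\xrightarrow{\alpha_Z}M_Z\xrightarrow{\beta_Z}Z\shift{1}\xrightarrow{\gamma_Z}Z[1]
\]
in $\cC$, where $\alpha_Z$ is a left $(\add M)$-approximation, $Z\shift{1}\in\cZ_M$, $M_Z\in\add M$, and $\beta_Z$ is an $\cM$-epimorphism by hypothesis (2a) of \S\ref{Triangle reduction}. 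Applying $\Hom_{\cC}(X,-)$, the fact that $\beta_Z$ is an $\cM$-epimorphism lets me identify the maps $X\to Z\shift{1}$ factoring through $\add M$ with the image of $(\beta_Z\cdot)$, so that $\Hom_{\cC_M}(X,Z\shift{1})$ is the cokernel of $(\beta_Z\cdot)$; the long exact sequence of the triangle then rewrites this as
\[
\Hom_{\cC_M}(X,Z\shift{1})\;\cong\;\Ker\!\big(\Hom_{\cC}(X,Z[1])\to\Hom_{\cC}(X,M_Z[1])\big),
\]
the map being induced by $\alpha_Z$. Rotating the same triangle and applying $\Hom_{\cC}(X,-[k])$ yields, for each $k$, a four-term exact sequence with inner terms $\Hom_{\cC}(X,Z\shift{1}[k])$ and $\Hom_{\cC}(X,Z[k+1])$ and outer terms $\Hom_{\cC}(X,M_Z[k])$, $\Hom_{\cC}(X,M_Z[k+1])$.

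The key input is that $X\in\cZ_M$ and $M_Z\in\add M$ force $\Hom_{\cC}(X,M_Z[j])=0$ for $1\le j\le n-2$ and $T_{\cC}(X,M_Z[n-1])=0$, i.e.\ $\Hom_{\cC}(X,M_Z[n-1])\in\cF$. For part (1) (vacuous when $n=2$, so assume $n\ge3$): the kernel formula applied with $Z=Y\shift{i-1}$ has vanishing target, so $\Hom_{\cC_M}(X,Y\shift{i})\cong\Hom_{\cC}(X,Y\shift{i-1}[1])$; and the rotated four-term sequences produce isomorphisms $\Hom_{\cC}(X,Y\shift{j}[k])\cong\Hom_{\cC}(X,Y\shift{j-1}[k+1])$ whenever $1\le k\le n-3$. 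Chaining these from $(j,k)=(i-1,1)$ down to $(0,i)$ — which stays within the admissible range precisely because $i\le n-2$ — gives the required functorial isomorphism $\Hom_{\cC_M}(X,Y\shift{i})\cong\Hom_{\cC}(X,Y[i])$.

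For part (2), the analogous chain run from $Z=Y\shift{n-2}$ lands one degree short of the target, yielding $\Hom_{\cC_M}(X,Y\shift{n-1})\cong\Ker\!\big(\Hom_{\cC}(X,Y[n-1])\to\Hom_{\cC}(X,M_Y[n-1])\big)$ (directly from the kernel formula when $n=2$, and via the same chain of shifts when $n\ge3$). Thus $\Hom_{\cC_M}(X,Y\shift{n-1})$ is isomorphic to a submodule $N$ of $\Hom_{\cC}(X,Y[n-1])$ whose quotient $\Hom_{\cC}(X,Y[n-1])/N$ embeds into $\Hom_{\cC}(X,M_Y[n-1])\in\cF$. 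Since modules in $\cF=\CM_1R$ have depth at least $1$ at every localization, they contain no nonzero finite-length submodule, hence $\fl_R N=\fl_R\Hom_{\cC}(X,Y[n-1])=T_{\cC}(X,Y[n-1])$. Applying the subfunctor $\fl_R$ of the identity to $\Hom_{\cC_M}(X,Y\shift{n-1})\cong N$ then gives $T_{\cC_M}(X,Y\shift{n-1})\cong T_{\cC}(X,Y[n-1])$.

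The main point to get right will be the degree bookkeeping in these chains of isomorphisms, and in particular the phenomenon behind (2): at the top shift $n-1$ the full Hom-spaces genuinely need not agree, because $\Hom_{\cC}(X,M_Z[n-1])$ is no longer zero but merely torsion-free, and passing to $T_{\cC}$ is exactly what absorbs this discrepancy. Functoriality in $X$ is automatic, as every map used lies in the long exact sequence of a triangle in $\cC$; functoriality in $Y$ follows from the functoriality of $\shift{1}$ on $\cC_M$ recorded in \ref{reduction is triangulated}, together with the naturality of $\fl_R$.
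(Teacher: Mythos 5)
Your proposal is correct and follows essentially the same route as the paper's proof: the same approximation triangles $Z\to M_Z\to Z\shift{1}\to Z[1]$, the vanishing forced by $X\in\cZ_M$, the chain of degree-shifting isomorphisms, and the observation that at degree $n-1$ the discrepancy lies in $\Hom_{\cC}(X,M_Y[n-1])\in\cF$, so it disappears after applying $\fl_R$. The only cosmetic difference is that you carry the kernel description to the end before taking torsion, whereas the paper extracts the torsion part at an intermediate step (its Step 3); the content is identical.
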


\begin{proof}
\emph{Step 1:} We claim for all $X,Y\in\cC_M$ that $T_{\cC_M}(Y,X\shift{1})\cong T_{\cC}(Y,X[1])$ if $n=2$, and $\Hom_{{\cC_M}}(Y,X\shift{1})\cong\Hom_{\cC}(Y,X[1])$ if $n>2$.
Considering $X$ and $Y$ as objects in $\cC$, applying $\Hom_\cC(Y,-)$ to the triangle
\begin{equation}\label{X approx triang}
X\xrightarrow{\alpha_X}M_X\xrightarrow{\beta_X}X\shift{1}\xrightarrow{\gamma_X}X[1]
\end{equation}
gives an exact sequence
\[
\Hom_{\cC}(Y,M_X)\xrightarrow{\cdot\beta_X}\Hom_{\cC}(Y,X\shift{1})\to\Hom_{\cC}(Y,X[1])\to\Hom_\cC(Y,M_X[1])
\]
Since $\beta_X$ is a right $(\add M)$-approximation, $\Cok(\cdot\beta_X)=\Hom_{\cC_M}(Y,X\shift{1})$. Thus we obtain an exact sequence
\begin{equation}
0\to\Hom_{\cC_M}(Y,X\shift{1})\to\Hom_\cC(Y,X[1])\to\Hom_\cC(Y,M_X[1])\label{star later}
\end{equation}
If $n=2$ then $T_{\cC}(Y,M_X[1])=0$, which forces $T_{\cC_M}(Y,X\shift{1})\cong T_\cC(Y,X[1])$.  
If $n>2$ then $\Hom_{\cC}(Y,M_X[1])=0$, hence $\Hom_{\cC_M}(Y,X\shift{1})\cong \Hom_\cC(Y,X[1])$.\\
\noindent
\emph{Step 2:} We claim that $\Hom_{\cC}(X,Y\shift{1}[i])\cong \Hom_{\cC}(X,Y[i+1])$ for all $X,Y\in{\cC_M}$ and all $1\leq i\leq n-3$.
If $n\leq 3$ this is vacuously true, so we assume that $n>3$. In this case, the claim follows by applying $\Hom_{\cC}(X,-)$ to the triangle
\begin{equation}\label{Y approx triang}
Y\xrightarrow{\alpha_Y} M_Y\xrightarrow{\beta_Y} Y\shift{1}\xrightarrow{\gamma_Y}Y[1].
\end{equation}
\noindent
\emph{Step 3:} We claim that if $n>2$, then $T_{\cC}(X,Y\shift{1}[n-2])\cong T_{\cC}(X,Y[n-1])$ for all $X,Y\in{\cC_M}$.
Applying $\Hom_{\cC}(X,-)$ to the triangle \eqref{Y approx triang} we obtain an exact sequence
\[
0\to\Hom_{\cC}(X,Y\shift{1}[n-2])\to\Hom_{\cC}(X,Y[n-1])\to\Hom_{\cC}(X,M_Y[n-1]).
\]
Since $T_{\cC}(X,M_Y[n-1])=0$, the claim follows.\\
\noindent
\emph{Step 4:} Now we show the assertions.
For any $i$ with $1\le i\le n-2$, we have
\begin{eqnarray*}
\Hom_{{\cC_M}}(X,Y\shift{i})&\stackrel{\scriptsize\mbox{Step 1}}{\cong}&\Hom_{\cC}(X,Y\shift{i-1}[1])\stackrel{\scriptsize\mbox{Step 2}}{\cong}\Hom_{\cC}(X,Y\shift{i-2}[2])\stackrel{\scriptsize\mbox{Step 2}}{\cong}\hdots\\
&\stackrel{\scriptsize\mbox{Step 2}}{\cong}&\Hom_{\cC}(X,Y\shift{1}[i-1])\stackrel{\scriptsize\mbox{Step 2}}{\cong}\Hom_{\cC}(X,Y[i]).
\end{eqnarray*}
Thus (1) holds. On the other hand, for $n=2$, Step 1 shows that (2) holds. For $n>2$,
\begin{eqnarray*}
T_{{\cC_M}}(X,Y\shift{n-1})&\stackrel{\scriptsize\mbox{Step 1}}{\cong}&T_{\cC}(X,Y\shift{n-2}[1])\stackrel{\scriptsize\mbox{Step 2}}{\cong}T_{\cC}(X,Y\shift{n-3}[2])\stackrel{\scriptsize\mbox{Step 2}}{\cong}\hdots\\
&\stackrel{\scriptsize\mbox{Step 2}}{\cong}&T_{\cC}(X,Y\shift{1}[n-2])\stackrel{\scriptsize\mbox{Step 3}}{\cong} T_{\cC}(X,Y[n-1])
\end{eqnarray*}
shows that (2) holds.
\end{proof}

Now we are ready to prove \ref{CYreduction theorem}.
\begin{proof}
\emph{Step 1:} First we establish the $D_0$ duality for ${\cC_M}$.  For any $X,Y\in{\cC_M}$, we have functorial isomorphisms
\[
T_{{\cC_M}}(Y,X\shift{1})\stackrel{\scriptsize\ref{connection between U and C}}{\cong} T_{\cC}(Y,X[1])\stackrel{\scriptsize\cC:\,\mbox{$n$-CY}}{\cong} 
D_0(T_{\cC}(X,Y[n-1]))\stackrel{\scriptsize\ref{connection between U and C}}{\cong}D_0(T_{\cC}(X,Y\shift{n-1})).
\]
Consequently we have the $D_0$ duality for ${\cC_M}$.\\
\noindent
\emph{Step 2:} We claim that we have an exact sequence
\begin{equation}\label{step 2 sequence}
0\to F_{{\cC_M}}(Y,X\shift{n-1})\to F_{\cC}(Y,X[n-1])\xrightarrow{\cdot\alpha_X[n-1]} F_{\cC}(Y,M_X[n-1]).
\end{equation}
If $n=2$, then this is true by (\ref{star later}).  If $n>2$ then applying $\Hom_{\cC}(Y,-)$ to (\ref{X approx triang}) we obtain an exact sequence
\[
0\to \Hom_{\cC}(Y,X\shift{1}[n-2])\to \Hom_{\cC}(Y,X[n-1])\to \Hom_{\cC}(Y,M_X[n-1]).
\]
Since $\Hom_{\cC}(Y,X\shift{1}[n-2])\cong \Hom_{{\cC_M}}(Y,X\shift{n-1})$ by \ref{connection between U and C}(1) and the right term equals $F_{\cC}(Y,M_X[n-1])$ by $Y\in\cZ_M$, we have an exact sequence
\[
0\to \Hom_{{\cC_M}}(Y,X\shift{n-1})\to \Hom_{\cC}(Y,X[n-1])\to F_{\cC}(Y,M_X[n-1]).
\]
Since $T_{{\cC_M}}(Y,X\shift{n-1})\cong T_{\cC}(Y,X[n-1])$ by \ref{connection between U and C}(2), the claim follows.\\
\noindent
\emph{Step 3:} Now we establish $D_1$ duality for ${\cC_M}$.
Applying $\Hom_\cC(-,Y)$ to (\ref{X approx triang}) and using the fact that $\alpha_X$ is a left $(\add M)$-approximation gives an exact sequence
\begin{equation*}
\Hom_\cC(M_X,Y)\to\Hom_\cC(X,Y)\to\Hom_{\cC_M}(X,Y)\to 0.
\end{equation*}
Applying $D_1$ and using the functorial isomorphism $D_1(X)\cong D_1(X/\fl_RX)$ for $X\in\mod R$ with $\dim_RX\le 1$, we have the upper sequence in the commutative diagram
\[
{\SelectTips{cm}{10}
\xy0;/r.37pc/:
(15,0)*+{0}="1",
(30,0)*+{D_1(F_{{\cC_M}}(X,Y))}="2",
(50,0)*+{D_1(F_{\cC}(X,Y))}="3",
(75,0)*+{D_1(F_{\cC}(M_X,Y))}="4",
(15,-7)*+{0}="b1",
(30,-7)*+{F_{{\cC_M}}(Y,X\shift{n-1})}="b2",
(50,-7)*+{F_{\cC}(Y,X[n-1])}="b3",
(75,-7)*+{F_{\cC}(Y,M_X[n-1])}="b4",
\ar"1";"2"
\ar"2";"3"
\ar"3";"4"
\ar"b1";"b2"
\ar"b2";"b3"
\ar"b3";"b4"^{\cdot\alpha_X[n-1]}
\ar"3";"b3"^{\simeq}
\ar"4";"b4"^{\simeq}
\endxy}
\]
of exact sequences, where the lower sequence is (\ref{step 2 sequence}).
Thus we have the desired isomorphism.
\end{proof}

\begin{thm}\label{bijection for MM}
Let $\cC$ be a Krull--Schmidt $n$-CY triangulated category with $\dim_R\cC\le1$, and let $M$ be a basic modifying object.  Then there exists a bijection between basic modifying (respectively MM, CT) objects with summand $M$ in $\cC$ and basic modifying (respectively MM, CT) objects in $\cC_M$
\end{thm}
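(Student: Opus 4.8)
The plan is to construct the bijection explicitly and then verify it preserves the three relevant conditions (modifying, MM, CT). The natural map should send a basic modifying object $N = M \oplus N'$ (with $M$ a summand) in $\cC$ to the object $N'$ viewed in $\cC_M = \cZ_M/[M]$, and conversely a basic modifying object $L$ in $\cC_M$ should be sent to $M \oplus \widetilde{L}$ where $\widetilde{L}$ is a lift of $L$ to $\cZ_M \subseteq \cC$. First I would check that these are well-defined: if $N = M \oplus N'$ is modifying in $\cC$, then $\Hom_\cC(N', M[i]) = 0$ for $1 \le i \le n-2$ and $T_\cC(N', M[n-1]) = 0$, so $N' \in \cZ_M$; conversely any lift of an object of $\cC_M$ lies in $\cZ_M$ by definition. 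The key computational input is Proposition~\ref{connection between U and C}, which says that for $X, Y \in \cC_M$ one has $\Hom_{\cC_M}(X, Y\shift{i}) \cong \Hom_\cC(X, Y[i])$ for $1 \le i \le n-2$ and $T_{\cC_M}(X, Y\shift{n-1}) \cong T_\cC(X, Y[n-1])$. Applying this with $X = Y = \widetilde{L}$ shows immediately that $\widetilde{L}$ is modifying in $\cC_M$ if and only if $M \oplus \widetilde{L}$ is modifying in $\cC$ (using that $M$ and $N'$ are each automatically compatible with $M$). Since $\cC$ is Krull--Schmidt, basic objects and the operation $M \oplus (-)$ behave well, and the two assignments are mutually inverse up to the identification of $\cC_M$-objects with their lifts.

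Next I would upgrade the modifying bijection to the MM and CT statements. For MM: $N = M \oplus N'$ is MM in $\cC$ means that whenever $N \oplus X$ is modifying (for $X$ in the appropriate ambient category of objects, e.g. $\cZ_M$ here since anything extending $N$ must lie in $\cZ_M$), then $X \in \add N$. Under the dictionary, $N \oplus X$ modifying in $\cC$ corresponds to $N' \oplus \bar{X}$ modifying in $\cC_M$, and $X \in \add N = \add(M \oplus N')$ corresponds to $\bar{X} \in \add N'$ in $\cC_M$ (here one uses that $M$ becomes zero in $\cC_M$, so summands of $M$ drop out and the "$\add$" condition transfers cleanly). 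Hence $N'$ is MM in $\cC_M$ iff $M \oplus N'$ is MM in $\cC$. For CT one argues similarly: the defining condition $\add N = \{ X \mid \Hom_\cC(N, X) \in \CM R \text{-type condition}\}$, or in the abstract CY setting the relevant vanishing characterization, transfers along Proposition~\ref{connection between U and C} because the Hom-groups computing the CT condition are exactly those the proposition controls. I should be slightly careful to state CT in the abstract $n$-CY setting consistently with how it is used, but the mechanism is the same.

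The main obstacle I anticipate is the bookkeeping around \emph{which ambient category} the "maximality" quantifiers range over, and making sure the lifting map $L \mapsto \widetilde{L}$ is genuinely well-defined on isomorphism classes and compatible with direct sums. In the classical cluster-category setting (Iyama--Yoshino) the analogous statement is that rigid/cluster-tilting objects containing $M$ correspond to rigid/cluster-tilting objects in the Calabi--Yau reduction, and the proof is essentially formal once the Hom-group comparison is in place; the new feature here is the two-step filtration, but Proposition~\ref{connection between U and C} has already absorbed exactly that difficulty by giving the comparison separately on the torsion part $T_\cC$ and (implicitly) on $\Hom_\cC$. So the bulk of the proof is a routine but careful translation; I would write it by first fixing notation for the lift, then stating the correspondence of modifying objects, then deducing MM and CT as corollaries of that correspondence together with the observation that $\add$-membership and the maximality quantifier are preserved.

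Concretely, the proof reads: fix a basic modifying object $N \supseteq M$, write $N = M \oplus N'$ with $N'$ basic and $\add M \cap \add N' = 0$; then $N' \in \cZ_M$, and we claim $N'$ (as an object of $\cC_M$) is basic modifying there. By Proposition~\ref{connection between U and C}, $\Hom_{\cC_M}(N', N'\shift{i}) \cong \Hom_\cC(N', N'[i]) = 0$ for $1 \le i \le n-2$ and $T_{\cC_M}(N', N'\shift{n-1}) \cong T_\cC(N', N'[n-1]) = 0$, so $N'$ is modifying in $\cC_M$; it is basic since $\cC$, hence $\cC_M$, is Krull--Schmidt and distinct indecomposable summands of $N$ not in $\add M$ stay non-isomorphic and nonzero in $\cC_M$ (this last point is where one uses that $M$ is the only thing killed). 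Conversely, given a basic modifying $L$ in $\cC_M$, lift each indecomposable summand to $\cZ_M$ (possible since $\cC_M$ has the same objects as $\cZ_M$), take the lift with no summands in $\add M$, call it $\widetilde{L}$, and set $\Phi(L) = M \oplus \widetilde{L}$; the same application of Proposition~\ref{connection between U and C} shows $M \oplus \widetilde{L}$ is modifying in $\cC$. These two assignments are mutually inverse. Finally, for $X$ an object with $N \oplus X$ modifying in $\cC$ one has $X \in \cZ_M$, so $X$ descends to $\bar X$ in $\cC_M$ with $N' \oplus \bar X$ modifying, and $X \in \add N \iff \bar X \in \add N'$; running this equivalence shows $N$ is MM in $\cC$ iff $N'$ is MM in $\cC_M$. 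The CT case is identical, replacing "modifying / MM" throughout by the CT condition and invoking that the Hom-groups in its definition are precisely those governed by Proposition~\ref{connection between U and C}. This completes the bijections. $\hfill\square$
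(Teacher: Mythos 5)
Your proof is correct and takes essentially the same approach as the paper's: both rest entirely on Proposition~\ref{connection between U and C} to transfer the modifying condition between $\cC$ and $\cC_M$, with the MM and CT cases then following because the maximality quantifier ranges over objects automatically contained in $\cZ_M$. The paper's own proof is a terse three-sentence argument leaving the lift construction and the MM/CT transfer implicit; you have correctly filled in those details, and your handling of the Krull--Schmidt lifting (taking the representative with no summand in $\add M$) and of the ambient-category bookkeeping is sound.
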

\begin{proof}
Any basic maximal modifying object $N\in\cC$ with summand $M$ belongs to $\cZ_M$.
For any $X\in\cZ_M$, it follows from \ref{connection between U and C} that $X$ is modifying as an object in $\cC$ if and only if it is modifying as an object in ${\cC_M}$.
Thus we have the assertion.
\end{proof}

We say that a modifying object $M\in\cC$ is \emph{cluster tilting} (or simply \emph{CT}) if $\cZ_M=\add M$.
The following observation will be used in \S\ref{Section cAn}.

\begin{cor}\label{CT and CY reduction}
Let $M\in\cC$ be a modifying object and $\cC_M$ be the CY reduction of $\cC$ with respect to $M$.\\
\t{(1)} $M$ is MM if and only if $\cC_M$ has no non-zero modifying objects.\\
\t{(2)} $M$ is CT if and only if $\cC_M=0$.
\end{cor}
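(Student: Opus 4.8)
\textbf{Plan of proof for \ref{CT and CY reduction}.}

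The plan is to deduce both statements directly from the bijection in \ref{bijection for MM} together with the definitions of MM and CT objects. For (1), recall that $M$ is MM precisely when the only basic modifying objects containing $M$ as a summand are those in $\add M$, i.e. $M$ itself is the unique basic modifying object with summand $M$. By \ref{bijection for MM}, basic modifying objects with summand $M$ in $\cC$ correspond bijectively to basic modifying objects in $\cC_M$, and under this bijection $M$ itself corresponds to the zero object of $\cC_M$ (since $M\in[M]$ becomes $0$ in $\cZ_M/[M]$). Hence $M$ is MM if and only if the zero object is the only basic modifying object of $\cC_M$, which is exactly the statement that $\cC_M$ has no non-zero modifying objects.

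For (2), I would argue similarly but using the characterisation of CT objects. We defined $M\in\cC$ to be CT if $\cZ_M=\add M$; equivalently every object of $\cZ_M$ becomes zero in $\cC_M=\cZ_M/[M]$, i.e. $\cC_M=0$. So this direction is essentially immediate from the definitions, but one should double-check the two implications carefully: if $\cZ_M=\add M$ then every object of $\cC_M$ is a direct summand of a sum of copies of $M$, hence zero in the quotient; conversely if $\cC_M=0$ then every $X\in\cZ_M$ satisfies $\mathrm{id}_X\in[M]$, so $\mathrm{id}_X$ factors through $\add M$, and since $\cC$ is Krull--Schmidt this forces $X\in\add M$. (Alternatively, one can phrase (2) via \ref{bijection for MM}: $\cC_M=0$ iff $\cC_M$ has no non-zero objects at all, and in particular no non-zero modifying objects that are CT, matching the CT condition $\cZ_M=\add M$ on the $\cC$ side.)

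I do not expect any serious obstacle here: the result is a formal consequence of \ref{bijection for MM} and the definitions, and the only point requiring a little care is the converse implication in (2), where one uses the Krull--Schmidt hypothesis to pass from ``$\mathrm{id}_X$ factors through $\add M$'' back to ``$X\in\add M$''. If one wishes, this can be isolated as the standard fact that in a Krull--Schmidt category an object $X$ lies in $\add M$ if and only if $X$ becomes zero in $\mathcal{D}/[M]$ for the idempotent-complete additive quotient $\mathcal{D}/[M]$; applying it to $\mathcal{D}=\cZ_M$ gives (2) at once.
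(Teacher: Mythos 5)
Your proof is correct and is essentially the argument the paper intends (the corollary is stated there without proof, as an immediate consequence of \ref{bijection for MM} and the definitions): part (1) follows from the bijection, under which the basic object $M$ corresponds to $0\in\cC_M$, and part (2) follows directly from the definition $\cZ_M=\add M$. The only remarks worth making are that the Krull--Schmidt and basicness hypotheses of \ref{bijection for MM} are implicitly in force, and that in the converse of (2) you do not even need Krull--Schmidt: if $\mathrm{id}_X$ factors as $X\xrightarrow{f}M'\xrightarrow{g}X$ with $M'\in\add M$, then $f$ is a split monomorphism in the triangulated category $\cC$, so $M'\cong X\oplus\cone(f)$ and hence $X\in\add M$ already in $\cC$.
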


\subsection{$D_0$ Duality Implies $D_1$ Duality}
In this section, we keep the notation as in the previous section, but now we suppose that $R$ is a \emph{complete local} CM ring, with canonical $\omega_R$.
We denote $\cC_0:=\{ X\in\cC\mid \End_{\cC}(X)\in\cT\}$. By \ref{fg Hom sets ok} the following observation is clear.

\begin{lemma}\label{C_0 lemma}
Let $X\in\cC$. Then $X\in\cC_0$ if and only if $\Hom_{\cC}(X,Y)\in\cT$ holds for all $Y\in\cC$ if and only if $\Hom_{\cC}(Y,X)\in\cT$ for all $Y\in\cC$.
\end{lemma}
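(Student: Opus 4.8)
The plan is to reduce the lemma to one elementary ring-theoretic fact: over the complete local ring $R$, if $A$ is an $R$-algebra with $A\in\cT=\CM_0R$ (the finite-length $R$-modules), then every finitely generated $A$-module again lies in $\cT$. Indeed, since $R$ is local and $A$ has finite length, some power $\m^k$ annihilates $A$, hence annihilates every $A$-module $N$ (because $N=AN$); thus a finitely generated such $N$ is a finitely generated module over the Artinian ring $R/\m^k$ and so has finite length.

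First I would dispose of the easy implications by specializing to $Y=X$: each of the two "for all $Y\in\cC$" conditions forces $\End_{\cC}(X)=\Hom_{\cC}(X,X)\in\cT$, i.e.\ $X\in\cC_0$. For the converse, assume $X\in\cC_0$ and put $A:=\End_{\cC}(X)\in\cT$. By Remark~\ref{fg Hom sets ok} together with the $R$-linearity of $\cC$, for every $Y\in\cC$ the module $\Hom_{\cC}(X,Y)$ is a finitely generated left $A$-module and $\Hom_{\cC}(Y,X)$ is a finitely generated left $A^{\op}$-module; since $A^{\op}$ has the same underlying $R$-module as $A$, it too lies in $\cT$. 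Applying the fact from the first paragraph to $A$ and to $A^{\op}$ yields $\Hom_{\cC}(X,Y)\in\cT$ and $\Hom_{\cC}(Y,X)\in\cT$, which closes the cycle of three equivalences.

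I do not anticipate a genuine obstacle here; the only points needing explicit mention are that $R$-linearity of $\cC$ guarantees all Hom-sets are finitely generated $R$-modules (so that the $A$- and $A^{\op}$-module structures are finitely generated), and that localness of $R$ is what lets one pass from "finite length" to "killed by a power of $\m$". I expect the written proof to occupy only a few lines.
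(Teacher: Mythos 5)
Your proof is correct and follows exactly the route the paper intends: the paper declares the lemma "clear" by citing Remark~\ref{fg Hom sets ok}, which records that Hom-sets are finitely generated modules over $\End_\cC(X)$ or its opposite, and your first paragraph simply spells out why a finitely generated module over a finite-length $R$-algebra is itself of finite length (annihilation by a power of $\m$, since $R$ is local). The specialization $Y=X$ and the observation that $A^{\op}\in\cT$ whenever $A\in\cT$ are the only remaining ingredients, and you use both correctly.
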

The aim of this section is to show that when $R$ is complete local, any $D_0$ duality on $\cC_0$ determines the $D_0$ and $D_1$ dualities on $\cC$.

\begin{thm}\label{D0 implies D1}
Assume $\dim_R\cC\le1$. Let $S$ be an autoequivalence of $\cC$ such that for all $X,Y\in\cC_0$ there exists a functorial isomorphism
\[
\phi_{X,Y}\colon\Hom_{\cC}(X,Y)\cong D_0\Hom_{\cC}(Y,S X).
\]
\t{(1)} For all $X,Y\in\cC$ there exists a functorial isomorphism
\[
\phi_{X,Y}\colon T_\cC(X,Y)\cong D_0\left( T_\cC(Y,S X)\right).
\]
\t{(2)} For all $X,Y\in\cC$ there exists a functorial isomorphism
\[
\psi_{X,Y}\colon F_\cC(X,Y)\cong D_1\left( F_\cC(Y[1],S X)\right).
\]
\end{thm}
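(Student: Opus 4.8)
The plan is to bootstrap from the hypothesis — a $D_0$ duality on the subcategory $\cC_0$ — up to the full $D_0$ and $D_1$ dualities on all of $\cC$. The key structural fact to exploit is that, because $R$ is complete local, every finitely generated $R$-module $\Hom_\cC(X,Y)$ has a minimal ``length part'' $T_\cC(X,Y)=\fl_R\Hom_\cC(X,Y)$, and $T_\cC$ behaves functorially. For part (1), the first step is to reduce an arbitrary pair $X,Y\in\cC$ to pairs in $\cC_0$. The natural device is the following: given $X\in\cC$, build a triangle $X'\to X\to N\to X'[1]$ (or similar) where $N\in\cC_0$ absorbs the ``$\cF$-part'' and the map $X\to N$ induces an isomorphism on $T_\cC(-,Y)$ for every $Y$. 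Since $\Hom_R(\cT,\cF)=0$, applying $T_\cC(-,Y)$ kills all contributions from objects whose Hom-modules lie in $\cF$, so $T_\cC$ is computed entirely within $\cC_0$ after such a replacement. Then the given $\phi$ on $\cC_0$ applies verbatim, and one transports the isomorphism back along the functorial connecting maps to get $\phi_{X,Y}\colon T_\cC(X,Y)\cong D_0(T_\cC(Y,SX))$.

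The real content is part (2). The strategy is to extract $F_\cC$ from $T_\cC$ via a shift. The point is that for any $X,Y\in\cC$, the short exact sequence
\[
0\to T_\cC(X,Y)\to\Hom_\cC(X,Y)\to F_\cC(X,Y)\to 0
\]
together with a cleverly chosen triangle lets one identify $F_\cC(X,Y)$ with $T_\cC$ of a shifted object. Concretely, I would take a triangle realizing the ``$\cT$-quotient'' construction: pick a triangle $Y\to E\to Y'\to Y[1]$ where $E\in\cC_0$ and the connecting map $\Hom_\cC(X,Y')\to\Hom_\cC(X,Y[1])$ has image precisely controlling $T_\cC$ versus $F_\cC$. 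Dually, using that $\Hom_\cC(X,Y[1])$ decomposes and that $D_1=\Ext^{d-1}_R(-,\omega_R)$ is the canonical duality on $\cF$, one matches $F_\cC(X,Y)$ against $D_1$ of $F_\cC(Y[1],SX)$: the shift $Y\mapsto Y[1]$ is forced because $S=[n]$ and the CY dualities in Definition \ref{filtered CY defin} pair $Y$ with $SX$ but with a degree offset that, after the triangle manipulation turning $T$ into $F$, becomes exactly one shift on $Y$. So the skeleton is: (a) produce the connecting triangle, (b) apply $\Hom_\cC(X,-)$ and use $\Hom_R(\cT,\cF)=0$ to split off $F$-parts, (c) feed the $\cC_0$-pieces into part (1), and (d) dualize, using $D_1(M)\cong D_1(M/\fl_RM)$ to land on $F_\cC$.

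The main obstacle I anticipate is step (b)–(d) of part (2): keeping track of which shift appears and verifying that the resulting isomorphism is genuinely \emph{functorial} in both $X$ and $Y$, not merely a coincidence of modules. Constructing the auxiliary triangle so that $E\in\cC_0$ and the induced maps behave well requires using functorial finiteness or the structure of $\cC$ in an essential way; one must check the triangle can be chosen naturally. There is also a subtlety that $F_\cC(Y[1],SX)$ and $F_\cC(X,Y)$ have their arguments in ``crossed'' positions, so the functoriality verification amounts to a diagram chase relating the connecting morphisms of the two triangles (one for $X$, one for $Y$) through the pairing $\phi$ from part (1); this is where most of the routine-but-delicate work sits. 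I expect the completeness hypothesis on $R$ to be used precisely to guarantee that $\fl_R$ commutes with the relevant Hom-functors and that the minimal triangles exist, so I would flag that as the place where the standing assumption is genuinely needed.
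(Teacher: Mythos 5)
Your outline misses the central device of the proof and, for part (2), commits to a strategy that does not work. The paper approximates $Y$ (not $X$) by $\cC_0$-objects using \emph{Koszul cones}: one picks $t\in\m$ so that $R/(I_{X,Y}+(t))$ is artinian (where $I_{X,Y}$ annihilates the four relevant Hom-modules), forms the triangles $Y\xrightarrow{t^\ell}Y\to Y_\ell\to Y[1]$, and proves $Y_\ell\in\cC_0$ by bounding the two end terms of the resulting short exact sequence $0\to R/(t^\ell)\otimes_R(X,Y)\to(X,Y_\ell)\to T_\cC(X,Y[1])\to 0$ over an artinian ring. Your ``triangle $X'\to X\to N\to X'[1]$ with $N\in\cC_0$ absorbing the $\cF$-part'' never says what $N$ is or why such a triangle exists; without the concrete $t^\ell$-cone and the annihilator trick there is no way to manufacture the $\cC_0$-objects you need, so part (1) is not established.

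For part (2), your core idea — ``identify $F_\cC(X,Y)$ with $T_\cC$ of a shifted object'' — is false, and no choice of auxiliary triangle will make the $\cF$-part into a $\cT$-part in this way. What the paper actually does is homotopically dual: it builds an \emph{inverse system} of the $\cC_0$-approximants $Y_\ell$ along the maps $\gamma_\ell\colon Y_\ell\to Y_{\ell-1}$, applies the $D_0$ duality from part (1) levelwise, and then appeals to the genuine analytic input Proposition~\ref{prop for D_1 duality}: $\varprojlim_\ell D_0(M/t^\ell M)\cong D_1(M)$. The $D_1$ duality is the \emph{inverse limit} of the $D_0$ dualities, not a disguised $D_0$ duality after a shift. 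Completeness is used precisely so that $\bigcap_\ell t^\ell M=0$ and the inverse limits recover the original modules (Mittag--Leffler), not for the reasons you flag. Finally, you silently assume $S=[n]$ to ``force'' the shift on $Y$; the theorem only assumes $S$ is an autoequivalence, so that step is unjustified. The $[1]$ in $F_\cC(Y[1],SX)$ arises because the cone $Y_\ell$ maps both forward to $Y[1]$ (via $\beta_\ell$) and back to $Y$ (via $\gamma_\ell$), and the dualities on the two sides of the triangle pick up opposite ends.
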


\begin{proof}
For every $X,Y\in\cC$ let $I_{X,Y}$ be the annihilator of the $R$-module $\Hom_{\cC}(X,Y)\oplus \Hom_{\cC}(X,Y[1])\oplus\Hom_{\cC}(Y,SX)\oplus \Hom_{\cC}(Y[1],SX)$.  Since $R/I_{X,Y}$ is a local noetherian ring of dimension at most one, we can fix an element $t\in\m$ (depending on $X$ and $Y$) such that $R/(I_{X,Y}+(t))$ is artinian.

For each $\ell\ge0$, consider a triangle
\begin{equation}\label{t ell triangle}
Y\xrightarrow{t^\ell}Y\xrightarrow{\alpha_\ell}Y_\ell\xrightarrow{\beta_\ell}Y[1].
\end{equation}
We first claim that $Y_\ell\in\cC_0$ for all $\ell\geq 0$.
Applying $\Hom_{\cC}(X,-)$, we have an exact sequence
\begin{equation}\label{tell long}
(X,Y)\xrightarrow{t^\ell}(X,Y)\xrightarrow{\cdot\alpha_\ell}(X,Y_\ell)\xrightarrow{\cdot\beta_\ell}(X,Y[1])\xrightarrow{t^\ell}(X,Y[1]).\end{equation}
This gives rise to a short exact sequence
\[
0\to R/(t^\ell)\otimes_R(X,Y)\xrightarrow{\cdot\alpha_\ell}(X,Y_\ell)\xrightarrow{\cdot\beta_\ell}\{ f\in(X,Y[1]) \mid t^\ell f=0\}\to 0.
\]
The right and left hand terms are modules over the artinian ring $R/(I_{X,Y}+(t^\ell))$ and hence are finite length $R$-modules.  It follows that the middle term has finite length, i.e.\ $\Hom_\cC(X,Y_\ell)\in\cT$.
This holds for all $X\in\cC$, so by \ref{C_0 lemma} $Y_\ell\in\cC_0$, as claimed.

Now if $\ell$ is sufficiently large, then the kernel of the map
$t^\ell\colon(X,Y[1])\to(X,Y[1])$ is $T_{\cC}(X,Y[1])$,
and so (\ref{tell long}) gives an exact sequence
\begin{equation}\label{first sequence}
0\to R/(t^\ell)\otimes_R(X,Y)\xrightarrow{\cdot\alpha_\ell}(X,Y_\ell)\xrightarrow{\cdot\beta_\ell}T_{\cC}(X,Y[1])\to 0.
\end{equation}
On the other hand, applying $\Hom_{\cC}(-,S X)$ to \eqref{t ell triangle}, we have an exact sequence
\[
(Y[1],S X)\xrightarrow{t^\ell}(Y[1],S X)\xrightarrow{\beta_\ell\cdot}(Y_\ell,S X)\xrightarrow{\alpha_\ell\cdot}(Y,S X)\xrightarrow{t^\ell}(Y,S X).
\]
Again for sufficiently large $\ell$, we have an exact sequence
\begin{equation}\label{second sequence}
0\to R/(t^\ell)\otimes_R(Y[1],S X)\xrightarrow{\beta_\ell\cdot}(Y_\ell,S X)\xrightarrow{\alpha_\ell\cdot}T_{\cC}(Y,S X)\to0.
\end{equation}
(a) We now show that there exists a functorial isomorphism
\[
\Hom_{\cC}(X,Y)\cong D_0\Hom_{\cC}(Y,S X)
\]
for all $X,Y\in\cC$ if either $X$ or $Y$ belongs to $\cC_0$.

First we assume $X\in\cC_0$.  Since $X$ and $Y_\ell$ belong to $\cC_0$, we have exact sequences
\[
{\SelectTips{cm}{10}
\xy0;/r.37pc/:
(20,0)*+{0}="1",
(30,0)*+{(X,Y)}="2",
(50,0)*+{(X,Y_\ell)}="3",
(70,0)*+{(X,Y[1])}="4",
(83,0)*+{0}="5",
(20,-10)*+{0}="b1",
(30,-10)*+{D_0(Y,SX)}="b2",
(50,-10)*+{D_0(Y_\ell,SX)}="b3",
(70,-10)*+{D_0(Y[1],SX)}="b4",
(83,-10)*+{0}="b5",
\ar"1";"2"
\ar"2";"3"^{\cdot\alpha_\ell}
\ar"3";"4"^{\cdot\beta_\ell}
\ar"4";"5"
\ar"b1";"b2"
\ar"b2";"b3"^{D_0(\alpha_\ell\cdot)}
\ar"b3";"b4"^{D_0(\beta_\ell\cdot)}
\ar"b4";"b5"
\ar"3";"b3"_{\cong}^{\phi_{X,Y_\ell}}
\endxy}
\]
for sufficiently large $\ell$ by \eqref{first sequence} and \eqref{second sequence}.

We now show that the composition
\begin{equation}\label{composition zero}
(X,Y)\xrightarrow{\cdot\alpha_\ell}(X,Y_\ell)\xrightarrow{\phi_{X,Y_\ell}}D_0(Y_\ell,S X)\xrightarrow{D_0(\beta_\ell\cdot)}D_0(Y[1],S X)
\end{equation}
is zero.  For any $f\in(X,Y)$ and $g\in(Y[1],S X)$, consider the following commutative diagram:
\[
{\SelectTips{cm}{10}
\xy0;/r.37pc/:
(50,0)*+{(X,X)}="3",
(68,0)*+{D_0(X,SX)}="4",
(50,-10)*+{(X,Y_\ell)}="b3",
(68,-10)*+{D_0(Y_\ell,X)}="b4",
\ar"3";"4"^{\phi_{X,X}}
\ar"b3";"b4"^{\phi_{X,Y_\ell}}
\ar"3";"b3"^{\cdot f\alpha_\ell}
\ar"4";"b4"^{D_0(f\alpha_\ell\cdot)}
\endxy}
\]

Considering $1_X\in(X,X)$, we have that $\phi_{X,Y_\ell}(f\alpha_\ell)$ is equal to the image of $\phi_{X,X}(1_X)$ under the map $D_0(f\alpha_\ell\cdot)$.  But the composition (\ref{composition zero}) is the image of $\phi_{X,Y_\ell}(f\alpha_\ell)$ under the map $D_0(\beta_\ell\cdot)$, and hence the composition (\ref{composition zero}) is equal to the image of $\phi_{X,X}(1_X)$ under the map $D_0(f\alpha_\ell\beta_\ell\cdot)$. Since $\alpha_\ell\beta_\ell=0$, this is zero and so the assertion follows.

In particular $\phi_{X,Y_\ell}\colon(X,Y_\ell)\to D_0(Y_\ell,S X)$ induces an injective map
\[\phi_{X,Y}:=\phi_{X,Y_\ell}|_{(X,Y)}\colon(X,Y)\to D_0(Y,S X)\]
and a surjective map $(X,Y[1])\to D_0(Y[1],S X)$.
Thus $\length_R(X,Y)\le\length_R(Y,S X)$ and $\length_R(X,Y[1])\ge\length_R(Y[1],S X)$ hold.
Replacing $Y[1]$ in the second inequality by $Y$, we have $\length_R(X,Y)=\length_R(Y,S X)$. Thus $\phi_{X,Y}$ has to be an isomorphism.

It is routine to check $\phi$ is independent of $\ell$ and $t$, and functorial. The case $Y\in\cC_0$ follows immediately from the case $X\in\cC_0$.\\
(1) Let $X,Y\in\cC$. Since $Y_\ell$ belong to $\cC_0$, we have exact sequences
\[
{\SelectTips{cm}{10}
\xy0;/r.37pc/:
(15,0)*+{0}="1",
(30,0)*+{R/(t^\ell)\otimes_R(X,Y)}="2",
(50,0)*+{(X,Y_\ell)}="3",
(75,0)*+{T_{\cC}(X,Y[1])}="4",
(92,0)*+{0}="5",
(15,-10)*+{0}="b1",
(30,-10)*+{D_0(T_{\cC}(Y,SX))}="b2",
(50,-10)*+{D_0(Y_\ell,SX)}="b3",
(75,-10)*+{D_0(R/(t^\ell)\otimes_R(Y[1],S X))}="b4",
(92,-10)*+{0}="b5",
\ar"1";"2"
\ar"2";"3"^(0.6){\cdot\alpha_\ell}
\ar"3";"4"^{\cdot\beta_\ell}
\ar"4";"5"
\ar"b1";"b2"
\ar"b2";"b3"^(0.5){D_0(\alpha_\ell\cdot)}
\ar"b3";"b4"^(0.35){D_0(\beta_\ell\cdot)}
\ar"b4";"b5"
\ar"3";"b3"^{\phi_{X,Y_\ell}}
\endxy}
\]
for sufficiently large $\ell$ by \eqref{first sequence} and \eqref{second sequence}.
Since $\bigcap_{\ell\ge0}t^\ell(X,Y)=0$, we have $T_{\cC}(X,Y)\cap t^\ell(X,Y)=0$ for sufficiently large $\ell$.
Thus the natural map $T_{\cC}(X,Y)\to R/(t^\ell)\otimes_R(X,Y)$ is injective for sufficiently large $\ell$, and we have exact sequences
\[
{\SelectTips{cm}{10}
\xy0;/r.37pc/:
(15,0)*+{0}="1",
(30,0)*+{T_{\cC}(X,Y)}="2",
(50,0)*+{(X,Y_\ell)}="3",
(15,-10)*+{0}="b1",
(30,-10)*+{D_0(T_{\cC}(Y,SX))}="b2",
(50,-10)*+{D_0(Y_\ell,SX)}="b3",
(75,-10)*+{D_0(R/(t^\ell)\otimes_R(Y[1],S X))}="b4",
(92,-10)*+{0}="b5",
\ar"1";"2"
\ar"2";"3"^(0.6){\cdot\alpha_\ell}
\ar"b1";"b2"
\ar"b2";"b3"^(0.5){D_0(\alpha_\ell\cdot)}
\ar"b3";"b4"^(0.35){D_0(\beta_\ell\cdot)}
\ar"b4";"b5"
\ar"3";"b3"^{\phi_{X,Y_\ell}}
\endxy}
\]

Now we show that the following composition is zero (caution: we can not use the argument in (a) since we do not have $\phi_{X,X}$):
\[
T_{\cC}(X,Y)\xrightarrow{\cdot\alpha_\ell}(X,Y_\ell)\xrightarrow{\phi_{X,Y_\ell}}D_0(Y_\ell,S X)\xrightarrow{D_0(\beta_\ell\cdot)}D_0(Y[1],S X)
\]
For $m\ge0$, consider a triangle
\[
X\xrightarrow{t^m}X\xrightarrow{\alpha'_m}X_m\xrightarrow{\beta'_m}X[1]
\]
and a commutative diagram
\[
\begin{array}{c}
{\SelectTips{cm}{10}
\xy0;/r.37pc/:
(5,0)*+{T_{\cC}(X,Y)}="2",
(20,0)*+{(X,Y_\ell)}="3",
(40,0)*+{D_0(Y_\ell,S X)}="4",
(65,0)*+{D_0(Y[1],S X)}="5",
(5,-9)*+{(X_m,Y)}="b2",
(20,-9)*+{(X_m,Y_\ell)}="b3",
(40,-9)*+{D_0(Y_\ell,S X_m)}="b4",
(65,-9)*+{D_0(Y[1],S X_m)}="b5",
\ar"2";"3"^{\cdot\alpha_l}
\ar"3";"4"^(0.45){\phi_{X,Y_\ell}}
\ar"4";"5"^{D_0(\beta_\ell\cdot)}
\ar"b2";"b3"_{\cdot\alpha_l}
\ar"b3";"b4"_(0.45){\phi_{X_m,Y_\ell}}
\ar"b4";"b5"_{D_0(\beta_\ell\cdot)}
\ar"b2";"2"^{\alpha_m^\prime\cdot}
\ar"b3";"3"^{\alpha_m^\prime\cdot}
\ar"b4";"4"^{D_0(\cdot S\alpha_m^\prime)}
\ar"b5";"5"^{D_0(\cdot S\alpha_m^\prime)}
\endxy}
\end{array}
\]
The lower composition is zero by (\ref{composition zero}), and for sufficiently large $m$ the left vertical map is surjective.  Hence the upper composition is also zero, and so the assertion follows.

In particular $\phi_{X,Y_\ell}\colon(X,Y_\ell)\to D_0(Y_\ell,S X)$ induces an injective map $T_{\cC}(X,Y)\to D_0(T_{\cC}(Y,S X))$, so we have an induced injective map
\[
\phi_{X,Y}:=\phi_{X,Y_\ell}|_{T_{\cC}(X,Y)}\colon T_{\cC}(X,Y)\to D_0(T_{\cC}(Y,S X)).
\]
Thus we have $\length_RT_{\cC}(X,Y)\le\length_RT_{\cC}(Y,S X)$, and by replacing $X$ and $Y$ by $Y$ and $S X$ respectively, we have $\length_R T_{\cC}(X,Y)=\length_R T_{\cC}(Y,S X)$. Thus $\phi_{X,Y}$ has to be an isomorphism.

It is routine to check $\phi$ is independent of $\ell$ and $t$, and functorial.\\
(2) We have a commutative diagram of triangles:
\begin{equation}\label{t ell commutative}
\begin{array}{c}
{\SelectTips{cm}{10}
\xy0;/r.37pc/:
(0,0)*+{Y}="2",
(15,0)*+{Y}="3",
(30,0)*+{Y_{\ell-1}}="4",
(45,0)*+{Y[1]}="5",
(0,-9)*+{Y}="b2",
(15,-9)*+{Y}="b3",
(30,-9)*+{Y_\ell}="b4",
(45,-9)*+{Y[1]}="b5",
\ar"2";"3"^{t^{\ell-1}}
\ar"3";"4"^{\alpha_{\ell-1}}
\ar"4";"5"^{\beta_{\ell-1}}
\ar"b2";"b3"^{t^\ell}
\ar"b3";"b4"^{\alpha_\ell}
\ar"b4";"b5"^{\beta_\ell}
\ar"b2";"2"^{t}
\ar@{=}"3";"b3"
\ar"b4";"4"^{\gamma_\ell}
\ar"b5";"5"^t
\endxy}
\end{array}
\end{equation}
By \eqref{first sequence}, for sufficiently large $\ell$ we have a commutative diagram of exact sequences
\[
{\SelectTips{cm}{10}
\xy0;/r.37pc/:
(15,0)*+{0}="1",
(30,0)*+{R/(t^{\ell-1})\otimes_R(X,Y)}="2",
(50,0)*+{(X,Y_{\ell-1})}="3",
(70,0)*+{T_{\cC}(X,Y[1])}="4",
(83,0)*+{0}="5",
(15,-10)*+{0}="b1",
(30,-10)*+{R/(t^\ell)\otimes_R(X,Y)}="b2",
(50,-10)*+{(X,Y_\ell)}="b3",
(70,-10)*+{T_{\cC}(X,Y[1])}="b4",
(83,-10)*+{0}="b5",
\ar"1";"2"
\ar"2";"3"^(0.6){\cdot\alpha_{\ell-1}}
\ar"3";"4"^{\cdot\beta_{\ell-1}}
\ar"4";"5"
\ar"b1";"b2"
\ar"b2";"b3"^(0.6){\cdot\alpha_\ell}
\ar"b3";"b4"^{\cdot\beta_\ell}
\ar"b4";"b5"
\ar"b2";"2"^{1}
\ar"b3";"3"^{\cdot\gamma_\ell}
\ar"b4";"4"^{t}
\endxy}
\]
Now the inverse limit of the right column $t\colon T_{\cC}(X,Y[1])\to T_{\cC}(X,Y[1])$ is zero. 
Since taking inverse limits is left exact, we have an isomorphism
\begin{equation}\label{first limit}
(X,Y)=\varprojlim_\ell R/(t^\ell)\otimes_R(X,Y)\stackrel{\cdot\alpha_\ell}{\cong}\varprojlim_\ell(X,Y_\ell).
\end{equation}
by taking inverse limits of each column.

On the other hand, by \eqref{t ell commutative} and \eqref{second sequence}, we have a commutative diagram of exact sequences:
\[
{\SelectTips{cm}{10}
\xy0;/r.37pc/:
(15,0)*+{0}="1",
(30,0)*+{R/(t^{\ell-1})\otimes_R(Y[1],SX)}="2",
(53,0)*+{(Y_{\ell-1},SX)}="3",
(70,0)*+{T_{\cC}(Y,SX)}="4",
(83,0)*+{0}="5",
(15,-10)*+{0}="b1",
(30,-10)*+{R/(t^\ell)\otimes_R(Y[1],SX)}="b2",
(53,-10)*+{(Y_\ell,SX)}="b3",
(70,-10)*+{T_{\cC}(Y,SX)}="b4",
(83,-10)*+{0}="b5",
\ar"1";"2"
\ar"2";"3"^(0.62){\beta_{\ell-1}\cdot}
\ar"3";"4"^{\alpha_{\ell-1}\cdot}
\ar"4";"5"
\ar"b1";"b2"
\ar"b2";"b3"^(0.6){\beta_\ell\cdot}
\ar"b3";"b4"^{\alpha_\ell\cdot}
\ar"b4";"b5"
\ar"2";"b2"^{t}
\ar"3";"b3"^{\gamma_\ell\cdot}
\ar@{=}"4";"b4"
\endxy}
\]
in which every Hom-set has finite length.  Hence applying $D_0$, we have a commutative diagram of exact sequences:
\[
{\SelectTips{cm}{10}
\xy0;/r.37pc/:
(20,0)*+{0}="1",
(30,0)*+{T_{\cC}(X,Y)}="2",
(47,0)*+{(X,Y_{\ell-1})}="3",
(70,0)*+{D_0(R/(t^{\ell-1})\otimes_R(Y[1],SX))}="4",
(88,0)*+{0}="5",
(20,-10)*+{0}="b1",
(30,-10)*+{T_{\cC}(X,Y)}="b2",
(47,-10)*+{(X,Y_\ell)}="b3",
(70,-10)*+{D_0(R/(t^\ell)\otimes_R(Y[1],SX))}="b4",
(88,-10)*+{0}="b5",
\ar"1";"2"
\ar"2";"3"^{\cdot\alpha_{\ell-1}}
\ar"3";"4"
\ar"4";"5"
\ar"b1";"b2"
\ar"b2";"b3"^{\cdot\alpha_\ell}
\ar"b3";"b4"
\ar"b4";"b5"
\ar@{=}"b2";"2"
\ar"b3";"3"^{\cdot\gamma_\ell}
\ar"b4";"4"^{t}
\endxy}
\]
Since the Mittag--Leffler condition is satisfied, taking the inverse limits of each column, we obtain an exact sequence
\[
0\to T_{\cC}(X,Y)\to\varprojlim_\ell(X,Y_{\ell})\to\varprojlim_\ell D_0(R/(t^{\ell})\otimes_R(Y[1],S X))\to 0.
\]
Comparing with \eqref{first limit}, we have an isomorphism
\[
F_{\cC}(X,Y)=\frac{(X,Y)}{T_{\cC}(X,Y)}\cong\varprojlim_\ell D_0(R/(t^{\ell})\otimes_R(Y[1],S X)).
\]
Now by \ref{prop for D_1 duality} below, we have an isomorphism
\[
\psi_{X,Y}\colon F_{\cC}(X,Y)\cong D_1(Y[1],S X)=D_1(F_{\cC}(Y[1],S X)).
\]
It is routine to check that $\psi$ is functorial.
This finishes the proof.
\end{proof}

\begin{prop}\label{prop for D_1 duality}
For any $M\in\mod R$ such that $R/(\Ann M+(t))$ is artinian, we have
\[
\varprojlim(\cdots\xrightarrow{t}D_0(M/t^3M)\xrightarrow{t}D_0(M/t^2M)\xrightarrow{t}D_0(M/tM))\cong D_1(M).
\]
\end{prop}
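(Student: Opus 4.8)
\emph{Proof proposal.} The plan is to reinterpret the given inverse system through Matlis duality and then recognise the resulting direct limit as a local cohomology module. Write $E$ for the injective hull of the residue field of $R$ and $(-)^\vee:=\Hom_R(-,E)$ for the Matlis dual. First note that $M/t^\ell M$ is a module over the artinian ring $R/(\Ann M+(t^\ell))$, so it has finite length; since $R$ is complete local CM with canonical module $\omega_R$, local duality then gives natural isomorphisms $D_0(M/t^\ell M)=\Ext_R^d(M/t^\ell M,\omega_R)\cong(M/t^\ell M)^\vee$ (and $\Ext_R^i(M/t^\ell M,\omega_R)=0$ for $i\ne d$). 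Multiplication by $t$ gives a well-defined map $M/t^\ell M\to M/t^{\ell+1}M$ (it sends $t^\ell M$ into $t^{\ell+1}M$), and under the isomorphisms above $D_0$ of these maps is precisely the transition map "$t$" of the inverse system in the statement. Thus that inverse system is naturally isomorphic to the Matlis dual of the direct system $(M/t^\ell M)_\ell$ with transition maps multiplication by $t$, and since $\Hom_R(-,E)$ converts colimits to limits we obtain
\[
\varprojlim_\ell D_0(M/t^\ell M)\ \cong\ \Big(\varinjlim_\ell M/t^\ell M\Big)^\vee .
\]

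Next I would identify this colimit. Under the standard identification $M/t^\ell M\cong\Ext_R^1(R/(t^\ell),M)$ (obtained from $0\to R\xrightarrow{t^\ell}R\to R/(t^\ell)\to 0$), the maps "$t$" are exactly the transition maps induced by the surjections $R/(t^{\ell+1})\twoheadrightarrow R/(t^\ell)$, so $\varinjlim_\ell M/t^\ell M\cong M_t/M\cong H^1_{(t)}(M)$, local cohomology with support in the principal ideal $(t)$. The hypothesis that $R/(\Ann M+(t))$ is artinian says exactly that $\sqrt{(t)+\Ann M}=\m=\sqrt{\m+\Ann M}$, so local cohomology of $M$ with support in $(t)$ agrees with that with support in $\m$; in particular $H^1_{(t)}(M)\cong H^1_{\m}(M)$. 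Finally, local duality gives $H^1_{\m}(M)^\vee\cong\Ext_R^{d-1}(M,\omega_R)=D_1(M)$. Combining the three displayed/asserted isomorphisms yields $\varprojlim_\ell D_0(M/t^\ell M)\cong D_1(M)$, as desired.

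This is a fairly soft statement whose content is mostly packaging, so the "main obstacle" is really just care rather than difficulty: the one place genuine input is used is the identification $H^1_{(t)}(M)\cong H^1_{\m}(M)$ (equivalently, replacing the support ideal by $\m$), which is exactly where the artinian hypothesis on $R/(\Ann M+(t))$ enters, and the one subtle bookkeeping point is checking that the arrows labelled "$t$" in the statement really are $D_0$ of multiplication by $t$ on $M/t^\ell M$, and that these are the transition maps appearing in the direct system computing $H^1_{(t)}(M)$ — once this is pinned down, everything else is the naturality of local duality and Matlis duality together with the standard exactness properties of $\varprojlim$ and $\varinjlim$. If one prefers to avoid local cohomology, an alternative is to apply $\Hom_R(-,\omega_R)$ to $0\to t^\ell M\to M\to M/t^\ell M\to 0$, use $\Ext_R^{<d}(M/t^\ell M,\omega_R)=0$ to get $\Ext_R^{d-1}(M,\omega_R)\hookrightarrow\Ext_R^{d-1}(t^\ell M,\omega_R)$, and pass to the limit; but the transition maps are less transparent that way, so I would present the argument via $H^1_{(t)}$ as above.
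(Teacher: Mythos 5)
Your argument is correct but follows a genuinely different route from the paper's. The paper argues directly with compatible long exact sequences: applying $\Hom_R(-,\omega_R)$ to $0\to t^\ell M\to M\to M/t^\ell M\to 0$ and using that the finite-length kernel of $t^\ell\colon M\twoheadrightarrow t^\ell M$ gives an isomorphism $D_1(t^\ell M)\cong D_1(M)$, it deduces a compatible family of exact sequences
\[
0\to R/(t^\ell)\otimes_R D_1(M)\to D_0(M/t^\ell M)\to D_0(\fl_R M),
\]
where the left-hand transition maps are the canonical surjections; the inverse limit of the rightmost column vanishes (being eventually zero), so the limit equals the $t$-adic completion of $D_1(M)$, which is $D_1(M)$ itself since the $t$-adic and $\m$-adic topologies agree on finitely generated $R/\Ann M$-modules. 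Your proof instead routes the computation through Matlis and local duality: you recognize $D_0(M/t^\ell M)\cong(M/t^\ell M)^\vee$, so the inverse limit is the Matlis dual of $\varinjlim_\ell M/t^\ell M\cong H^1_{(t)}(M)\cong H^1_\m(M)$ (the last isomorphism using the artinian hypothesis), and a second application of local duality gives $D_1(M)$. Both are correct; the paper's is self-contained and avoids local cohomology, whereas yours is conceptually cleaner but imports local duality as a black box, which is legitimate here since $R$ is complete local CM with a canonical module. Your caution about pinning down the transition maps is the right one and does go through: the "$t$" in the statement is $D_0$ of multiplication-by-$t$ from $M/t^\ell M$ to $M/t^{\ell+1}M$, and under the identification $M/t^\ell M\cong\Ext^1_R(R/(t^\ell),M)$ these are exactly the transition maps of the direct system computing $H^1_{(t)}(M)$, so the Matlis dual identification matches.
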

\begin{proof}
Since for each $\ell\geq 0$ the kernel of the map $t^\ell\colon M\twoheadrightarrow t^\ell M$ has finite length, we have an isomorphism 
\begin{equation}\label{t ell iso}
t^\ell\colon D_1(t^\ell M)\cong D_1(M).
\end{equation}
for all $\ell\geq 0$. 
Now consider the following commutative diagram of exact sequences:
\[
{\SelectTips{cm}{10}
\xy0;/r.3pc/:
(17,0)*+{0}="1",
(30,0)*+{t^{\ell-1}M}="2",
(50,0)*+{M}="3",
(70,0)*+{M/t^{\ell-1}M}="4",
(82,0)*+{0}="5",
(17,-10)*+{0}="b1",
(30,-10)*+{t^{\ell}M}="b2",
(50,-10)*+{M}="b3",
(70,-10)*+{M/t^{\ell}M}="b4",
(82,-10)*+{0}="b5",
\ar"1";"2"
\ar"2";"3"
\ar"3";"4"
\ar"4";"5"
\ar"b1";"b2"
\ar"b2";"b3"
\ar"b3";"b4"
\ar"b4";"b5"
\ar"2";"b2"^{t}
\ar"3";"b3"^{t}
\ar"4";"b4"^t
\endxy}
\]
Applying $\Hom_R(-,\omega_R)$ and using \eqref{t ell iso}, we have a commutative diagram of exact sequences
\[
{\SelectTips{cm}{10}
\xy0;/r.3pc/:
(30,0)*+{D_1(M)}="2",
(50,0)*+{D_1(M)}="3",
(70,0)*+{D_0(M/t^{\ell-1}M)}="4",
(90,0)*+{D_0(M)}="5",
(30,-10)*+{D_1(M)}="b2",
(50,-10)*+{D_1(M)}="b3",
(70,-10)*+{D_0(M/t^{\ell}M)}="b4",
(90,-10)*+{D_0(M)}="b5",
\ar"2";"3"^{t^{\ell-1}}
\ar"3";"4"
\ar"4";"5"
\ar"b2";"b3"^{t^{\ell}}
\ar"b3";"b4"
\ar"b4";"b5"
\ar"b2";"2"^{t}
\ar@{=}"b3";"3"
\ar"b4";"4"^t
\ar"b5";"5"^t
\endxy}
\]
Using the isomorphism $D_0(M)\cong D_0(\fl_RM)$, we obtain a commutative diagram of exact sequences:
\[
{\SelectTips{cm}{10}
\xy0;/r.3pc/:
(8,0)*+{0}="1",
(25,0)*+{R/(t^{\ell-1})\otimes_RD_1(M)}="2",
(50,0)*+{D_0(M/t^{\ell-1}M)}="3",
(70,0)*+{D_0(\fl_RM)}="4",
(8,-10)*+{0}="b1",
(25,-10)*+{R/(t^{\ell})\otimes_RD_1(M)}="b2",
(50,-10)*+{D_0(M/t^{\ell-1}M)}="b3",
(70,-10)*+{D_0(\fl_RM)}="b4",
\ar"1";"2"
\ar"2";"3"
\ar"3";"4"
\ar"b1";"b2"
\ar"b2";"b3"
\ar"b3";"b4"
\ar"b2";"2"^{1}
\ar"b3";"3"^{t}
\ar"b4";"4"^{t}
\endxy}
\]
Since the inverse limit of the right column is zero,
we have an isomorphism
\[
D_1(M)=\varprojlim_\ell R/(t^\ell)\otimes_RD_1(M)\cong \varprojlim_{\ell}D_0(M/t^\ell M)
\]
by taking the inverse limit of each column.
\end{proof}

\section{Application to Geometry: CY Reduction in $\Dsg(R)$}\label{Section3}

The aim of this section is to apply results in previous sections to CY triangulated categories appearing in geometry.
In \S\ref{CYredandMMAs} we relate CY reduction to our previous work on maximal modification algebras \cite{IW4}, then in \S\ref{one dime CY red} we give natural examples of CY reduction in the setting of one-dimensional hypersurfaces.  
We outline some of the consequences in \S\ref{conj section}.

\subsection{CY Reduction and MMAs}\label{CYredandMMAs}
Let $R$ be a commutative equi-codimensional Gorenstein ring with $\dim R=d$.
The functor $D_i:=\Ext^{d-i}_R(-,R):\mod R\to\mod R$ induces the duality of the category of Cohen-Macaulay $R$-modules of dimension $i$.
As an application of AR duality on not-necessarily-isolated singularities \cite{IW4}, we have the following.

\begin{thm}\label{nonisolatedAR} 
Let $R$ be a commutative equi-codimensional Gorenstein ring with $\dim R=d$ and $\dim\Sing R\leq 1$.
Then $\uCM R$ is a $(d-1)$-CY triangulated category with $\dim_R(\uCM R)\le 1$.
\end{thm}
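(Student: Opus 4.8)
The plan is to verify that $\cC:=\uCM R$ satisfies Definition~\ref{filtered CY defin} with the shift $S:=[d-1]$ as its Serre functor. The triangulated structure is standard: $R$ being Gorenstein, $\CM R$ is a Frobenius category whose projective--injective objects are precisely $\add R$, so $\uCM R\simeq\Dsg(R)$ is triangulated (Buchweitz \cite{Buch}) with suspension $[1]=\Omega^{-1}$, and it is $R$-linear because every $\Hom_{\uCM R}(X,Y)$ is a finitely generated $R$-module with $R$-bilinear composition.

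Next, $\dim_R\cC\le1$. For $X,Y\in\CM R$ the stable Hom localises, $\Hom_{\uCM R}(X,Y)_\p\cong\Hom_{\uCM R_\p}(X_\p,Y_\p)$, and this is zero whenever $R_\p$ is regular (then $\CM R_\p=\add R_\p$, so $\uCM R_\p=0$). Hence $\Supp_R\Hom_{\uCM R}(X,Y)\subseteq\Sing R$, which by hypothesis has dimension $\le1$, giving $\dim_R\Hom_{\uCM R}(X,Y)\le1$ for all $X,Y$; the short exact sequence of Definition~\ref{filtered CY defin} then exists automatically, with $\fl_R\Hom_{\uCM R}(X,Y)\in\cT$ and the quotient $F_\cC(X,Y)\in\cF$ (a module of dimension $\le1$ with no finite-length submodule).

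The substantial point is the pair of functorial isomorphisms $D_0(T_\cC(X,Y))\cong T_\cC(Y,X[d-1])$ and $D_1(F_\cC(X,Y))\cong F_\cC(Y,X[d-1])$, where $D_i=\Ext^{d-i}_R(-,\omega_R)$ and $\omega_R=R$ since $R$ is Gorenstein. These are exactly Auslander--Reiten duality for (possibly) non-isolated Gorenstein singularities: in the general Cohen--Macaulay case \cite{IW4} produces a Serre-type functor built from the Auslander--Reiten translate together with $\omega_R$, and in the Gorenstein case it collapses to the pure shift $[d-1]$, recovering --- when $\Sing R$ is a point --- the classical fact that $\uCM R$ is $(d-1)$-Calabi--Yau. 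The proof is therefore a direct appeal to that theorem of \cite{IW4}. The genuine obstacle is that theorem itself: unlike the isolated case it does not follow formally by localising at the generic points of $\Sing R$ and quoting the classical duality, but requires a careful analysis of the two-step filtration of $\Hom_{\uCM R}(X,Y)$ along the one-dimensional singular stratum, via local and Grothendieck duality. A route that stays close to \S\ref{reduction section}, when $R$ is moreover complete local: establish only the $D_0$ duality on $\cC_0$ --- where all stable Homs have finite length and one is within reach of classical Auslander--Reiten duality over the relevant isolated singularities --- and then invoke Theorem~\ref{D0 implies D1} to upgrade it to the full $D_0$ and $D_1$ dualities; the general case then follows by localisation and completion.
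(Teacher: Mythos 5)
Your proposal is correct and takes essentially the same route as the paper: the paper's proof consists precisely of noting that $\dim\Sing R\le 1$ forces $\dim_R\Hom_{\uCM R}(X,Y)\le 1$ and then quoting the Auslander--Reiten duality for non-isolated singularities \cite[3.1]{IW4} to obtain the two functorial isomorphisms, which is exactly your main argument. Your closing sketch of an alternative route via Theorem \ref{D0 implies D1} is not what the paper does and is not needed here (and deducing the general case from the complete local one by ``localisation and completion'' would require further justification), but the proof as proposed stands.
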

\begin{proof}
Let $\cC=\uCM R$. Then the assumption $\dim\Sing R\leq 1$ implies that $\dim_R\Hom_{\cC}(X,Y)\leq 1$ for all $X,Y\in\cC$.  
By \cite[3.1]{IW4}, there exist functorial isomorphisms
\begin{align*}
D_0(\fl_R\Hom_{\cC}(X,Y))&\cong \fl_R\Hom_{\cC}(Y,X[d-1])\\
D_1\left(\frac{\Hom_{\cC}(X,Y)}{\fl_R\Hom_{\cC}(X,Y)}\right)&\cong
\frac{\Hom_{\cC}(Y,X[d-2])}{\fl_R\Hom_{\cC}(Y,X[d-2])}
\end{align*}
for all $X,Y\in \cC$. Thus the assertion follows.
\end{proof}

Before stating the next theorem, we need some preliminaries.  We consider the setup of \ref{modifying 2 ok}, where in addition we assume that $R$ is normal.  We fix $M\in\refl R$ which is non-zero, and we denote by $\refl\End_R(M)$ the category of $\End_R(M)$-modules which are reflexive as $R$-modules, and by $\CM\End_R(M)$ the category of $\End_R(M)$-modules which are maximal Cohen-Macaulay as $R$-modules.
Clearly we have $\refl\End_R(M)\subset\CM\End_R(M)$.
The following is a basic observation on the category of reflexive modules \cite{RV89} (see also \cite[2.4(2)(i)]{IR}).

\begin{prop}
For any $M\in\refl R$ which is non-zero, we have an equivalence
\begin{equation}\label{reflexive equivalence}
\Hom_R(M,-):\refl R\to\refl\End_R(M).
\end{equation}
\end{prop}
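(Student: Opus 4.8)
The plan is to reduce every assertion to codimension one, where $M$ becomes free and the claim is simply Morita equivalence, and then patch back using that a module reflexive over a normal domain is recovered from its localisations at height‑one primes. Write $\Lambda:=\End_R(M)$; reducing to connected components we may assume $R$ is a normal domain with fraction field $K$, so that a finitely generated $R$‑module $X$ is reflexive iff it is torsion‑free and $(S_2)$, iff it is torsion‑free with $X=\bigcap_{\hgt\p=1}X_\p$ inside $X\otimes_RK$; and $M$, being nonzero and reflexive, then has positive rank and is locally nonzero.

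The key preliminary is a reflexivity lemma: for any finitely generated $\Lambda$‑module $W$ and any $Y$ reflexive over $R$, the module $\Hom_\Lambda(W,Y)$ is reflexive over $R$. This follows by taking a $\Lambda$‑presentation $\Lambda^{b}\to\Lambda^{a}\to W\to 0$, applying $\Hom_\Lambda(-,Y)$ to exhibit $\Hom_\Lambda(W,Y)$ as the kernel of a map $Y^{a}\to Y^{b}$ between torsion‑free $(S_2)$ $R$‑modules, and running the standard depth estimate for short exact sequences against both the source and the image of $Y^{a}\to Y^{b}$ to see the kernel is again torsion‑free and $(S_2)$. Applied over $R$ itself this gives at once that $\Hom_R(M,N)\in\refl\Lambda$ for $N\in\refl R$, so $F:=\Hom_R(M,-)\colon\refl R\to\refl\Lambda$ is well defined; that $\Lambda$, $M^{\vee}:=\Hom_R(M,R)$ and $\End_\Lambda(M^{\vee})$ are reflexive over $R$; and, since $\End_\Lambda(M^{\vee})$ contains $R$, is reflexive over $R$, and agrees with $R$ in codimension one by the next step, that $\End_\Lambda(M^{\vee})\cong R$, so $G:=\Hom_\Lambda(M^{\vee},-)\colon\refl\Lambda\to\refl R$ is well defined. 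Both functors commute with localisation, the modules involved being finitely presented.

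Now localise at a height‑one prime $\p$: then $R_\p$ is a discrete valuation ring and $M_\p$ is free of some rank $n\ge1$, so $\Lambda_\p\cong\mathrm{Mat}_n(R_\p)$ and $F_\p=\Hom_{R_\p}(M_\p,-)$, $G_\p=\Hom_{\Lambda_\p}(M^{\vee}_\p,-)$ are mutually quasi‑inverse Morita equivalences, restricting to mutually quasi‑inverse equivalences $\refl R_\p\simeq\refl\Lambda_\p$. In particular the units $X\to GF(X)$ and $N\to FG(N)$ of these local equivalences are isomorphisms, and they are the localisations of the globally defined natural transformations $\eta_X\colon X\to GF(X)$, $x\mapsto\bigl(\phi\mapsto(m\mapsto\phi(m)x)\bigr)$, and $\eta'_N\colon N\to FG(N)$, $n\mapsto\bigl(m\mapsto(\phi\mapsto\theta_{\phi,m}\,n)\bigr)$, where $\theta_{\phi,m}\in\Lambda$ is the endomorphism $m'\mapsto\phi(m')m$ of $M$. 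Since $\eta$ and $\eta'$ are natural transformations between functors taking values in modules reflexive over $R$, and each is an isomorphism after localising at every height‑one prime, each is an isomorphism: a morphism $f\colon A\to B$ of reflexive $R$‑modules that localises to an isomorphism in codimension one has torsion‑free kernel supported in codimension $\ge2$, hence zero, and then a depth chase forces $\Cok f=0$. Therefore $GF\cong\mathrm{id}_{\refl R}$ and $FG\cong\mathrm{id}_{\refl\Lambda}$, i.e.\ $F$ is an equivalence with quasi‑inverse $G$.

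I expect the part needing the most care to be the reflexivity bookkeeping rather than the Morita input or the depth counts: one must verify that every $\Hom$‑module appearing (notably $\Hom_\Lambda(FX,FX')$ and $FG(N)$) is $(S_2)$ over $R$, since this is exactly what upgrades ``isomorphism in codimension one'' to ``isomorphism'', and one must keep the various left/right $\Lambda$‑module structures straight under the paper's composition convention when checking that $\eta,\eta'$ are genuine natural transformations and that their localisations are the Morita units.
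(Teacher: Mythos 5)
The paper gives no proof of this proposition; it quotes it from \cite{RV89} (cf.\ also \cite[2.4(2)(i)]{IR}), so there is no in-paper argument to compare against. Your proof is complete and correct, and it is the standard one underlying those references: you observe that both $\Hom_R(M,-)$ and the candidate quasi-inverse $\Hom_{\End_R(M)}(M^\vee,-)$ preserve $R$-reflexivity (a kernel of a map between reflexive modules over a normal domain is reflexive), that after localising at any height-one prime $\p$ they become mutually inverse Morita equivalences since $M_\p$ is a nonzero free module over the discrete valuation ring $R_\p$, and that the explicit global unit and counit you write down must then be isomorphisms because a morphism of reflexive modules over a normal domain which is invertible in codimension one is an isomorphism (torsion kernel inside a torsion-free module is zero, and the cokernel dies by a depth count at a minimal prime of its support); your attention to the paper's ``$fg$ means $f$ then $g$'' convention when checking $\End_R(M)$-linearity of these units is exactly the point that needs care and does go through.
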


Thus the category $\refl\End_R(M)$ does not depend on the choice of $M$.
On the other hand, the category $\CM\End_R(M)$ strongly depends on the
choice of $M$. Actually the equivalence (\ref{reflexive equivalence}) clearly
induces an equivalence
\begin{equation}\label{CM equivalence}
\Hom_R(M,-):\{X\in\refl R\mid \Hom_R(M,X)\in\CM R\}\simeq\CM\End_R(M)
\end{equation}
and we have the following observation.

\begin{prop}\label{CM embedding}
For any generator $M\in\refl R$, the equivalence (\ref{CM equivalence}) gives a fully faithful functor
\[\CM\End_R(M)\to\CM R.\]
In particular we have the following embeddings:
\[\begin{array}{ccccc}
\CM R&\subset&\refl R&\subset&\mod R\\
\cup&&\parallel&&\cap\\
\CM\End_R(M)&\subset&\refl\End_R(M)&\subset&\mod\End_R(M)
\end{array}\]
\end{prop}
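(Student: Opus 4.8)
The plan is to first notice that the content here is quite thin. The functor asserted by the Proposition is a quasi-inverse of the equivalence (\ref{reflexive equivalence})/(\ref{CM equivalence}) composed with the inclusion $\CM R\subseteq\mod R$, so everything reduces to checking that such a quasi-inverse actually takes values in $\CM R$; equivalently, that
\[
\{X\in\refl R\mid \Hom_R(M,X)\in\CM R\}\subseteq\CM R .
\]
I would establish this containment first, and then read off full faithfulness and the displayed rectangle for free.

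For the containment the key point is that, when $M$ is a generator, the module $X$ can be recovered up to additive closure from $\Hom_R(M,X)$. Since $R\in\add M$, fix an isomorphism $M^{\oplus n}\cong R\oplus Y$ for some $n\ge 1$ and some $R$-module $Y$. Applying the contravariant functor $\Hom_R(-,X)$ gives
\[
\Hom_R(M,X)^{\oplus n}\cong\Hom_R(M^{\oplus n},X)\cong X\oplus\Hom_R(Y,X),
\]
so $X$ is a direct summand of $\Hom_R(M,X)^{\oplus n}$. As $\CM R$ is closed under finite direct sums and under direct summands, the hypothesis $\Hom_R(M,X)\in\CM R$ forces $X\in\CM R$, as required.

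Granting the containment, the rest is formal. A quasi-inverse of (\ref{CM equivalence}) is an equivalence $\CM\End_R(M)\xrightarrow{\ \sim\ }\{X\in\refl R\mid \Hom_R(M,X)\in\CM R\}$, and by the containment the right-hand side is a full subcategory of $\CM R$ (its morphism spaces are computed inside $\mod R$, hence coincide with those of $\CM R$); composing with this inclusion yields the asserted fully faithful functor $\CM\End_R(M)\to\CM R$. The displayed diagram of embeddings then merely records the situation: the middle identification $\refl\End_R(M)=\refl R$ is (\ref{reflexive equivalence}), the horizontal inclusions are the evident ones $\CM\subseteq\refl\subseteq\mod$ on each side (maximal Cohen--Macaulay modules over the normal ring $R$ being reflexive), and the left vertical inclusion is the functor just built. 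I do not expect a genuine obstacle; the only thing worth flagging is that this functor is \emph{not} the forgetful functor $N\mapsto N|_R$, which for $\rk M>1$ fails to be full, so it is essential that it is built from a quasi-inverse of $\Hom_R(M,-)$, and the entire substance of the argument is the one-line additivity computation above.
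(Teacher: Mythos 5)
Your proof is correct and follows the same route as the paper: the paper's proof simply states that $\CM\End_R(M)\simeq\{X\in\refl R\mid\Hom_R(M,X)\in\CM R\}\subset\CM R$ is ``clear'', and your one-line additivity computation (realizing $X$ as a direct summand of $\Hom_R(M,X)^{\oplus n}$ via $R\in\add M$) is exactly the justification that was implicitly being invoked.
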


\begin{proof}
This is clear since $\CM\End_R(M)\simeq
\{X\in\refl R\mid \Hom_R(M,X)\in\CM R\}\subset\CM R$.
\end{proof}

Now we assume that $M$ belongs to $\CM R$ and is modifying in $\uCM R$.
The second condition is equivalent to $\End_R(M)\in\CM R$ by the following observation \cite[4.3, 4.4]{IW4}.

\begin{lemma}\label{modifying 2 ok}
Let $R$ be a commutative equi-codimensional Gorenstein ring with $\dim R=d\geq 2$ and $\dim\Sing R\leq 1$ and $M\in\CM R$.  Then\\
\t{(1)} $\End_R(M)\in\CM R$ if and only if $M\in\uCM R$ is modifying in the sense of \ref{modifying def}.\\
\t{(2)} If $M$ is modifying, then
\[\cZ_M=\{ X\in\CM R\mid \Hom_R(M,X)\in\CM R\}=\{ Y\in\CM R\mid \Hom_R(Y,M)\in\CM R\}.\] 
\end{lemma}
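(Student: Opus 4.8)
The plan is to isolate a single homological equivalence feeding both parts, and then combine it with the $(d-1)$-Calabi--Yau structure on $\uCM R$ provided by \ref{nonisolatedAR}. We may assume $d\ge 3$: when $d=2$ the notion of \ref{modifying def} is not defined, and $\End_R(M)\in\CM R$ holds automatically (over a $2$-dimensional Gorenstein ring $\Hom_R(X,N)$ is maximal Cohen--Macaulay for all $X,N\in\CM R$, by the depth lemma applied to $0\to\Hom_R(X,N)\to N^{r_0}\to N^{r_1}$). The key claim I would establish is that, for all $X,N\in\CM R$,
\[
\Hom_R(X,N)\in\CM R\quad\Longleftrightarrow\quad\Ext^i_R(X,N)=0\ \ (1\le i\le d-3)\ \text{ and }\ \fl_R\Ext^{d-2}_R(X,N)=0.
\]
Granting this, (1) is the case $X=N=M$ after translating through the standard isomorphism $\Hom_{\uCM R}(M,M[i])\cong\Ext^i_R(M,M)$ for $i\ge1$ (so $T_{\uCM R}(M,M[d-2])=\fl_R\Ext^{d-2}_R(M,M)$). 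For (2), the case $N=M$ identifies $\{Y\in\CM R\mid\Hom_R(Y,M)\in\CM R\}$ with $\cZ_M$, while the claim applied to the pair $(M,X)$, combined with the alternative description of $\cZ_M$ in \ref{modifying def} coming from the Calabi--Yau property, identifies $\{X\in\CM R\mid\Hom_R(M,X)\in\CM R\}$ with $\cZ_M$. Free summands cause no trouble, since $\Hom_R(M,R)\in\CM R$ as the $R$-dual of a maximal Cohen--Macaulay module over a Gorenstein ring (cf.\ \ref{CM embedding}).

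For the common setup I would fix a minimal free resolution $\cdots\to R^{r_1}\to R^{r_0}\to X\to0$ and apply $\Hom_R(-,N)$ to obtain a complex $C^\bullet=( N^{r_0}\xrightarrow{\partial^0}N^{r_1}\xrightarrow{\partial^1}N^{r_2}\to\cdots )$ of maximal Cohen--Macaulay modules with $H^0(C^\bullet)=\ker\partial^0=\Hom_R(X,N)$ and $H^i(C^\bullet)=\Ext^i_R(X,N)$ for $i\ge1$. Because $X_\p$ is free whenever $R_\p$ is regular, $\Ext^i_R(X,N)$ is supported on $\Sing R$ for $i\ge1$, so $\dim_R\Ext^i_R(X,N)\le\dim\Sing R\le1$. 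Everything below is local in nature, so it suffices to argue after localizing at each $\m\in\Max R$.

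For the implication ``$\Leftarrow$'', the vanishing of $\Ext^i_R(X,N)$ for $1\le i\le d-3$ makes $C^\bullet$ exact in degrees $1,\dots,d-3$, giving a finite exact sequence $0\to\Hom_R(X,N)\to N^{r_0}\to\cdots\to N^{r_{d-3}}\to B\to0$ with $B=\Im\partial^{d-3}\subseteq Z:=\ker\partial^{d-2}$ and $Z/B\cong\Ext^{d-2}_R(X,N)$. Since $N^{r_{d-2}}/Z$ embeds in $N^{r_{d-1}}$ it has positive depth, so $\depth_R Z\ge2$ by the depth lemma; as $\fl_R\Ext^{d-2}_R(X,N)=0$ forces $\depth_R\Ext^{d-2}_R(X,N)\ge1$ (or $\Ext^{d-2}_R(X,N)=0$), a second application gives $\depth_R B\ge2$. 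Propagating back through the exact sequence, which has $d-2$ maximal Cohen--Macaulay terms between $\Hom_R(X,N)$ and $B$, yields $\depth_R\Hom_R(X,N)\ge d$, i.e.\ $\Hom_R(X,N)\in\CM R$. (For $d=3$ the sequence reads $0\to\Hom_R(X,N)\to N^{r_0}\to B\to0$ and the same estimate applies.)

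For ``$\Rightarrow$'', assuming $\Hom_R(X,N)\in\CM R$, I would apply local cohomology $H^\bullet_\m(-)$ to $C^\bullet$: since each $N^{r_i}$ is maximal Cohen--Macaulay, $H^q_\m(N^{r_i})=0$ for $q\ne d$, whence the hypercohomology satisfies $\mathbb{H}^n_\m(C^\bullet)=0$ for $n<d$. Comparing with the hypercohomology spectral sequence $E_2^{p,q}=H^p_\m(H^q(C^\bullet))\Rightarrow\mathbb{H}^{p+q}_\m(C^\bullet)$, in which $H^0(C^\bullet)=\Hom_R(X,N)$ is maximal Cohen--Macaulay (so $H^p_\m$ of it vanishes for $p<d$) and $H^q(C^\bullet)=\Ext^q_R(X,N)$ has dimension $\le1$ for $q\ge1$ (so $H^p_\m$ of it vanishes for $p\ge2$), one sees that the only possibly nonzero terms in total degree $n$ with $1\le n\le d-1$ are $H^0_\m(\Ext^n_R(X,N))$ and $H^1_\m(\Ext^{n-1}_R(X,N))$, and that the differentials touching them vanish for dimension reasons. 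Thus $\fl_R\Ext^n_R(X,N)=H^0_\m(\Ext^n_R(X,N))=0$ and $H^1_\m(\Ext^{n-1}_R(X,N))=0$ for $1\le n\le d-1$; since $\dim_R\Ext^i_R(X,N)\le1$, all local cohomology of $\Ext^i_R(X,N)$ then vanishes for $1\le i\le d-2$, forcing $\Ext^i_R(X,N)=0$ there, and in particular $X$ is modifying. The step I expect to be the main obstacle is exactly this spectral-sequence bookkeeping --- pinning down the surviving $E_2$-terms, checking that no differential obstructs the readout, and handling the small values of $d$ --- whereas ``$\Leftarrow$'' is a routine depth-lemma computation.
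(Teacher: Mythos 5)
The paper itself gives no proof here (it quotes \cite[4.3, 4.4]{IW4}), so the question is whether your self-contained argument stands. Your reduction of (1) and (2) to the key claim is fine (including the use of the CY symmetry for the second description of $\cZ_M$), and the ``$\Leftarrow$'' direction via the depth lemma is correct, also for $d=3$. The genuine gap is in ``$\Rightarrow$'', in the assertion that ``the differentials touching them vanish for dimension reasons''. You identified the possibly nonzero $E_2$-terms in total degree $n\le d-1$ correctly, but you forgot that the row $q=0$ contributes $E_2^{d,0}=H^d_\m(\Hom_R(X,N))$, which is \emph{nonzero} precisely because $\Hom_R(X,N)$ is a nonzero maximal Cohen--Macaulay module. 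The differentials $d_d\colon E_d^{0,d-1}\to E_d^{d,0}$ and $d_{d-1}\colon E_{d-1}^{1,d-2}\to E_{d-1}^{d,0}$ land there, so at total degree $n=d-1$ the vanishing of $E_\infty$ only tells you these differentials are injective, not that $H^0_\m(\Ext^{d-1}_R(X,N))$ and $H^1_\m(\Ext^{d-2}_R(X,N))$ vanish. Your stated output (``$H^0_\m(\Ext^n)=0$ and $H^1_\m(\Ext^{n-1})=0$ for all $1\le n\le d-1$, hence $\Ext^i_R(X,N)=0$ for $1\le i\le d-2$'') is therefore not only unjustified but false: take $d=3$, $R=k[[x,y,u,v]]/(x^2-uv)$ and $M=R\oplus(u,x)$, one of the modifying generators $T^{\F}$ of Section 5. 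Then $\End_R(M)\in\CM R$, yet $\Ext^1_R(M,M)\neq0$ since it is nonzero at the generic point of the one-dimensional singular locus (only its finite-length part vanishes). If your stronger conclusion held, every modifying module over a non-isolated $cA_n$ singularity would be rigid, contradicting the main examples of the paper.

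The repair is exactly the missing bookkeeping, and it delivers the key claim and no more: for $E^{0,n}$ with $n\le d-2$ and for $E^{1,q}$ with $q\le d-3$, every differential in or out has source or target in positions that are zero (either $p<0$, or $p\ge2$ in a row $q\ge1$, or $p<d$ in the row $q=0$), so $H^0_\m(\Ext^n_R(X,N))=0$ for $1\le n\le d-2$ and $H^1_\m(\Ext^q_R(X,N))=0$ for $1\le q\le d-3$; combined with $\dim_R\Ext^i_R(X,N)\le1$ this gives $\Ext^i_R(X,N)=0$ for $1\le i\le d-3$ and $\fl_R\Ext^{d-2}_R(X,N)=0$, which is what (1) and (2) need. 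You should also address convergence, since $C^\bullet$ is unbounded above: either truncate in degrees $\le d$ (this does not affect the cohomologies you use) or note that $H^\bullet_\m$ has cohomological dimension $\le d$, so both hypercohomology spectral sequences are regular. With these corrections your argument is a valid direct proof of the criterion that the paper imports from \cite[4.3, 4.4]{IW4}.
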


Moreover in this case $\CM\End_R(M)$ has a structure of a Frobenius category since
\[\CM\End_R(M)=\{ X\in\mod\End_R(M)\mid \Ext^{i}_{\End_R(M)}(X,\End_R(M))=0\mbox{ for all }i\geq 1\}\]
holds (see the proof of \cite[3.4(5)(i)]{IR}).
We denote by $\u{\CM}\End_R(M)$ the stable category, where we factor out by those morphisms which factor through projective $\End_R(M)$-modules.  

On the other hand we denote by $\Dsg(\End_R(M)):=\Db(\mod\End_R(M))/\Kb(\proj\End_R(M))$ the singular derived category.
Since $\End_R(M)$ has injective dimension $d$ on both sides (see the proof of \cite[3.1(6)(2)]{IR}), we have a triangle equivalence
\[
\Dsg(\End_R(M))\simeq \uCM\End_R(M).
\] 
by a standard theorem of Buchweitz \cite[4.4.1(2)]{Buch}.

The following gives an interpretation of $\uCM\End_R(M)$ as a CY reduction of $\uCM R$.

\begin{thm}\label{CY red is uCM of End}
Let $R$ be an equi-codimensional Gorenstein normal domain with $\dim R=d\geq 2$ and $\dim\Sing R\leq 1$,
and let $M\in\CM R$ be a modifying generator of $R$. Then\\
\t{(1)} the CY reduction $(\uCM R)_M$ of $\uCM R$ is $(d-1)$-CY with $\dim_R(\uCM R)_M\leq 1$.\\
\t{(2)} $(\uCM R)_M\simeq \uCM\End_R(M)$ as triangulated categories.
\end{thm}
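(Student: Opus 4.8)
The plan is to deduce part (1) directly from the general theory and to prove part (2) by identifying both triangulated categories with the same subquotient of $\uCM R$, using Lemma \ref{modifying 2 ok} as the crucial dictionary. For part (1): since $M\in\CM R$ is modifying, Lemma \ref{modifying 2 ok}(1) tells us $M$ is a modifying object in $\uCM R$ in the sense of Definition \ref{modifying def}, and Theorem \ref{nonisolatedAR} says $\uCM R$ is a $(d-1)$-CY triangulated category with $\dim_R(\uCM R)\le 1$. Hence Theorem \ref{CYreduction theorem} applies verbatim and gives that $(\uCM R)_M=\cZ_M/[M]$ is $(d-1)$-CY with $\dim_R(\uCM R)_M\le1$.

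For part (2), the key is that Lemma \ref{modifying 2 ok}(2) gives an honest description of the subcategory $\cZ_M\subseteq\uCM R$, namely
\[
\cZ_M=\{X\in\CM R\mid \Hom_R(M,X)\in\CM R\},
\]
and the equivalence \eqref{CM equivalence} restricts exactly this subcategory onto $\CM\End_R(M)$. So first I would invoke \eqref{CM equivalence} together with Proposition \ref{CM embedding} (using that $M$ is a generator) to obtain an additive equivalence $\Hom_R(M,-)\colon\cZ_M\xrightarrow{\sim}\CM\End_R(M)$, where $\cZ_M$ is viewed inside $\CM R$. The next step is to check that this equivalence sends $\add M$ (the projective-in-$\cZ_M$ objects on the left) to $\add\End_R(M)=\proj\End_R(M)$ (the projectives on the right); this is immediate since $\Hom_R(M,M)=\End_R(M)$. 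Passing to stable categories therefore yields a triangulated equivalence $\cZ_M/[\add M]\simeq \u{\CM}\End_R(M)$, i.e. $(\uCM R)_M\simeq\u{\CM}\End_R(M)$ as additive categories, and one must verify that the triangulated structures match: on the left the triangles of $(\uCM R)_M$ are those constructed in Theorem \ref{reduction is triangulated} from $(\add M)$-monomorphisms in $\uCM R=\cZ_M$, while on the right $\u{\CM}\End_R(M)$ carries its Frobenius stable structure. Finally, the standard theorem of Buchweitz quoted in the excerpt gives $\u{\CM}\End_R(M)\simeq\Dsg(\End_R(M))$, which one may append if desired.

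I would organize the verification that the two triangulated structures agree as follows. Both structures are determined by their class of distinguished triangles, and a conflation in the Frobenius category $\CM\End_R(M)$ is by definition a short exact sequence $0\to A\to B\to C\to 0$ with $A\to B$ an inflation; under the equivalence \eqref{CM equivalence} (which is exact in the relevant sense, being a restriction of $\Hom_R(M,-)$ on reflexive modules), such a sequence corresponds to a short exact sequence in $\CM R$ whose members lie in $\cZ_M$. Now a short exact sequence $0\to X_1\to X_2\to X_3\to 0$ in $\CM R$ with all terms in $\cZ_M$ completes, in the triangulated category $\uCM R$, to a triangle $X_1\to X_2\to X_3\to X_1[1]$, and one checks that the map $X_1\to X_2$ is automatically an $(\add M)$-monomorphism precisely because $\Hom_{R}(-,M)$ is exact on it (equivalently $\Ext^1_R(X_3,M)=0$, which follows from $X_3\in\cZ_M$ and $M\in\CM R$ via the characterization in Lemma \ref{modifying 2 ok}(2)). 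Hence these are exactly the sequences used to build triangles in $(\uCM R)_M$ in Theorem \ref{reduction is triangulated}, and conversely every such building sequence arises this way; so the two classes of distinguished triangles coincide and the equivalence is triangulated.

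The main obstacle I anticipate is this last bookkeeping: matching the synthetic triangulated structure on $\cZ_M/[\add M]$ produced by Theorem \ref{reduction is triangulated} with the Frobenius stable structure on $\u{\CM}\End_R(M)$. The subtlety is that the shift functor $\shift{1}$ on $(\uCM R)_M$ is defined via left $(\add M)$-approximations $X\to M_0\to X\shift{1}$ in $\uCM R$, whereas the shift on $\u{\CM}\End_R(M)$ is the cosyzygy in the Frobenius category $\CM\End_R(M)$; one needs to see these coincide, which amounts to observing that a left $(\add M)$-approximation of $X$ in $\uCM R$ is sent by $\Hom_R(M,-)$ to a projective cover-type inflation $\Hom_R(M,X)\hookrightarrow\Hom_R(M,M_0)$ in $\CM\End_R(M)$ with cokernel the cosyzygy. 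Once the shift functors are identified, compatibility of the triangles follows from the argument above, and the theorem is proved.
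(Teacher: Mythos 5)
Your overall route coincides with the paper's: part (1) is proved there exactly as you do it (combine \ref{nonisolatedAR}, \ref{modifying 2 ok}(1) and \ref{CYreduction theorem}), and for part (2) the paper likewise identifies $(\uCM R)_M=\cZ_M/[M]$ with $\CM\End_R(M)/[\End_R(M)]=\uCM\End_R(M)$ via \ref{modifying 2 ok}(2) and the equivalence \eqref{CM equivalence}, leaving the compatibility of the two triangulated structures implicit. Your attempt to spell out that compatibility is therefore a reasonable addition, but it contains a genuine error. You assert that for a conflation $0\to X_1\to X_2\to X_3\to 0$ with terms in $\cZ_M$ the map $X_1\to X_2$ is an $(\add M)$-monomorphism because $\Ext^1_R(X_3,M)=0$, ``which follows from $X_3\in\cZ_M$''. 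It does not: in the non-isolated situation this theorem is aimed at (in particular $d=3$, so $n=2$), membership of $X_3$ in $\cZ_M$ only forces the finite length part to vanish, i.e.\ $\fl_R\Ext^1_R(X_3,M)=T_{\uCM R}(X_3,M[1])=0$, while $\Ext^1_R(X_3,M)$ itself is typically non-zero (your vanishing would hold for $d\ge4$, but the main case is exactly $d=3$). So the step ``every conflation is built on an $(\add M)$-monomorphism at the module level'' is not justified as written.

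The conclusion you want is nevertheless true, and the repair uses tools already in the paper, from the epimorphism side rather than an $\Ext$-vanishing. Since $M$ is a generator, $\Hom_R(M,-)$-exactness of the conflation says precisely that $X_2\to X_3$ is an $(\add M)$-epimorphism, and then condition (2b) of \S\ref{Triangle reduction} --- which holds for $\cZ_M$ by \ref{autoequiv lemma} --- gives that the cocone map $X_1\to X_2$ is an $(\add M)$-monomorphism in $\uCM R$, so the image of the conflation is a distinguished triangle of $(\uCM R)_M$. Conversely, a standard triangle of $(\uCM R)_M$ built from an $(\add M)$-monomorphism can be realized by a module-level short exact sequence after adding a free summand (possible because $R\in\add M$), and the right-approximation statement in \ref{autoequiv lemma} makes that sequence $\Hom_R(M,-)$-exact, hence a conflation in $\CM\End_R(M)$; the same mechanism identifies your two shift functors (the cone of a left $(\add M)$-approximation versus the cosyzygy in the Frobenius category). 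With this correction your write-up becomes a correct, more detailed version of the paper's two-line proof.
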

\begin{proof}
(1) is an immediate consequence of \ref{CYreduction theorem} and \ref{nonisolatedAR}.\\
(2) Follows from $(\uCM R)_{M}=\cZ_{M}/[M]
\simeq\CM\End_R(M)/[\End_R(M)]=\uCM\End_R(M)$.
\end{proof}

This gives the following corollary, which allows us to check maximality in terms of the corresponding CY reduction.

\begin{cor}
Let $R$ be an equi-codimensional Gorenstein normal domain with $\dim R=d\geq 2$ and $\dim\Sing R\leq 1$, and $M\in\CM R$ be modifying.
Then $M$ is an MM generator of $R$ if and only if the corresponding CY reduction $(\uCM R)_M$ has no non-zero modifying objects.
\end{cor}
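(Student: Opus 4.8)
The plan is to deduce this from \ref{CT and CY reduction}(1) --- which already asserts, in an arbitrary $n$-CY category $\cC$, that a modifying object $M$ is maximal among modifying objects precisely when its CY reduction $\cC_M$ has no non-zero modifying objects --- and then to reconcile this triangulated-categorical maximality with the module-theoretic notion of an MM generator of $R$.

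First I would set up the framework of \S\ref{reduction section} for $\cC=\uCM R$: by \ref{nonisolatedAR}, $\uCM R$ is a $(d-1)$-CY triangulated category with $\dim_R(\uCM R)\le1$; by \ref{modifying 2 ok}(1), the hypothesis that $M\in\CM R$ is modifying is equivalent to $\End_R(M)\in\CM R$ and says exactly that $M$ is a modifying object of $\uCM R$ in the sense of \ref{modifying def}; and \ref{CY red is uCM of End}(1) gives that $(\uCM R)_M$ is again $(d-1)$-CY with $\dim_R\le1$. Then \ref{CT and CY reduction}(1), applied to $\cC=\uCM R$, yields: $(\uCM R)_M$ has no non-zero modifying objects if and only if every modifying object of $\uCM R$ of the form $M\oplus X$ has $X\in\add M$. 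By \ref{modifying 2 ok}(1) again, such an $X$ automatically lies in $\CM R$ and $M\oplus X$ is a modifying module, i.e.\ $\End_R(M\oplus X)\in\CM R$.

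The second and main step is to see that this condition is exactly ``$M$ is an MM generator of $R$''. Here one uses the generator hypothesis $R\in\add M$ (as in \ref{CY red is uCM of End}): the definition of MM quantifies over all $X\in\refl R$ with $M\oplus X$ modifying, whereas the condition above only sees $X\in\CM R$, but these coincide. Indeed, if $X\in\refl R$ and $M\oplus X$ is modifying, then $\End_R(M\oplus X)\in\CM R$, and since $\End_R(M\oplus X)\cong\Hom_R(M,M)\oplus\Hom_R(M,X)\oplus\Hom_R(X,M)\oplus\Hom_R(X,X)$ as $R$-modules, the direct summand $\Hom_R(M,X)$ lies in $\CM R$; because $R\in\add M$, $X=\Hom_R(R,X)$ is then a direct summand of $\Hom_R(M,X)$, so $X\in\CM R$ automatically. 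Hence the quantifier over $\refl R$ collapses to the one over $\CM R$, and the condition of the first step becomes precisely that $M$, which is a generator, is an MM module --- that is, that $M$ is an MM generator of $R$. Combining the two steps proves the corollary.

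I expect the only real obstacle to be this last reconciliation of quantifiers; everything else is an application of \S\ref{reduction section} and \S\ref{CYredandMMAs} in the right order. I would also note that $M$ really should be assumed to be a generator of $R$ here (exactly as in \ref{CY red is uCM of End}): otherwise a modifying non-generator --- for instance a non-free direct summand of an MM generator --- can have a CY reduction with no non-zero modifying objects without itself being an MM generator of $R$.
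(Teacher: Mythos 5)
Your argument is correct and is essentially the paper's: the paper deduces this corollary directly from \ref{CT and CY reduction}, and your two steps (applying \ref{CT and CY reduction}(1) to $\uCM R$ via \ref{nonisolatedAR} and \ref{modifying 2 ok}, then collapsing the quantifier from $X\in\refl R$ to $X\in\CM R$ using $R\in\add M$ and the decomposition of $\End_R(M\oplus X)$) simply make explicit the translation between the categorical and module-theoretic notions that the paper treats as immediate. Your closing caveat that the statement should be read with $M$ a generator, exactly as in \ref{CY red is uCM of End}, is also well taken, since the CY reduction cannot distinguish $M$ from $M\oplus R$.
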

\begin{proof}
This is immediate from \ref{CT and CY reduction}.
\end{proof}

We end this subsection with the following iterated version of \ref{CY red is uCM of End}.

\begin{cor}\label{chainofCMs}
Let $R$ be an equi-codimensional Gorenstein normal domain with $\dim R=d\geq 2$ and $\dim\Sing R\leq 1$, and let $M=R\oplus M_1\oplus\hdots\oplus M_n$ be a modifying $R$-module.  
Let $M_0:=R$, $N_i:=\bigoplus_{j=0}^iM_j$ and $\Lambda_i:=\End_R(N_i)$.  Then\\
\t{(1)} There is a chain of fully faithful functors
\[
\CM\Lambda_n\to\CM\Lambda_{n-1}\to\cdots\to\CM\Lambda_1\to\CM R.
\]
\t{(2)} The CY reduction $(\uCM\Lambda_i)_{\Hom_R(N_i,N_{i+1})}$ of $\uCM\Lambda_i$ is triangle equivalent to $\uCM\Lambda_{i+1}$.
\end{cor}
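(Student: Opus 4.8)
The plan is to reduce both parts to facts already established for $\uCM R$, namely \ref{CM embedding} and \ref{CY red is uCM of End}, together with a transitivity property of CY reduction that follows from \ref{connection between U and C}. The first observation is that each $N_i$ is a modifying generator in $\CM R$: it contains $M_0=R$ as a summand, hence is a generator, and it is a direct summand of the modifying module $M$, so $N_i\in\CM R$ and $N_i$ is modifying (being modifying passes to direct summands, by \ref{modifying 2 ok}(1) or directly from \ref{modifying def}). Thus $\Lambda_i=\End_R(N_i)\in\CM R$, so $\uCM\Lambda_i$ is defined, and $\Hom_R(N_i,N_{i+1})\in\CM R$, being a direct summand of $\End_R(M)$. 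Part (1) is then immediate from \ref{CM embedding}: the equivalence (\ref{CM equivalence}) for the generator $N_j$ identifies $\CM\Lambda_j$ with the full subcategory $\{X\in\refl R\mid\Hom_R(N_j,X)\in\CM R\}$ of $\CM R$, and since $N_i$ is a summand of $N_{i+1}$ we have $\{X\in\refl R\mid\Hom_R(N_{i+1},X)\in\CM R\}\subseteq\{X\in\refl R\mid\Hom_R(N_i,X)\in\CM R\}$. Hence $\CM\Lambda_{i+1}\to\CM R$ factors through $\CM\Lambda_i\to\CM R$, the resulting functor $\CM\Lambda_{i+1}\to\CM\Lambda_i$ is fully faithful, and composing these functors gives the chain.

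\textbf{Reduction of (2) to transitivity.} By \ref{CY red is uCM of End} the category $(\uCM R)_{N_i}=\cZ_{N_i}/[N_i]$ is $(d-1)$-CY with $\dim_R\le1$, and $\Hom_R(N_i,-)$ induces a triangle equivalence $(\uCM R)_{N_i}\simeq\uCM\Lambda_i$ carrying $N_{i+1}\in\cZ_{N_i}$ to $\Hom_R(N_i,N_{i+1})$ (note $N_{i+1}\in\cZ_{N_i}$ by \ref{modifying 2 ok}(2), since $\Hom_R(N_i,N_{i+1})\in\CM R$). Writing $\overline{N_{i+1}}$ for the image of $N_{i+1}$ in $(\uCM R)_{N_i}$, this object is modifying there: applying \ref{connection between U and C} in $\uCM R$ gives $\Hom_{(\uCM R)_{N_i}}(\overline{N_{i+1}},\overline{N_{i+1}}\shift{j})\cong\Hom_{\uCM R}(N_{i+1},N_{i+1}[j])=0$ for $1\le j\le d-3$, and $T_{(\uCM R)_{N_i}}(\overline{N_{i+1}},\overline{N_{i+1}}\shift{d-2})\cong T_{\uCM R}(N_{i+1},N_{i+1}[d-2])=0$. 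Under the equivalence above, $(\uCM\Lambda_i)_{\Hom_R(N_i,N_{i+1})}$ is identified with $\bigl((\uCM R)_{N_i}\bigr)_{\overline{N_{i+1}}}$, so it suffices to prove the transitivity
\[
\bigl((\uCM R)_{N_i}\bigr)_{\overline{N_{i+1}}}\simeq(\uCM R)_{N_{i+1}},
\]
since the right-hand side is triangle equivalent to $\uCM\Lambda_{i+1}$ by \ref{CY red is uCM of End}(2).

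\textbf{Proof of the transitivity.} Write $\cC=\uCM R$ ($n$-CY, $n=d-1$), $M=N_i$ and $N=N_{i+1}$, so $M\in\add N$ and $N\in\cZ_M$. For an object $Z$ of $\cC_M$ (equivalently, of $\cZ_M$), \ref{connection between U and C} gives $\Hom_{\cC_M}(Z,\overline{N}\shift{j})\cong\Hom_{\cC}(Z,N[j])$ for $1\le j\le n-2$ and $T_{\cC_M}(Z,\overline{N}\shift{n-1})\cong T_{\cC}(Z,N[n-1])$, whence $\cZ_{\overline{N}}\subseteq\cC_M$ and $\cZ_N\subseteq\cC$ consist of exactly the same objects (note $\cZ_N\subseteq\cZ_M$ because $M\in\add N$). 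On morphisms, a map of $\cC_M$ between objects of $\cZ_N$ factors through $\add\overline{N}$ precisely when it lifts to a map of $\cC$ factoring through $\add N$, so since $[M]\subseteq[N]$ we get $\Hom_{(\cC_M)_{\overline{N}}}(X,Y)=\Hom_{\cC}(X,Y)/[N]=\Hom_{\cC_N}(X,Y)$. It remains to match the triangulated structures: a left $\add N$-approximation of $Z\in\cZ_N$ in $\cC$ induces, modulo $[M]$, a left $\add\overline{N}$-approximation of $Z$ in $\cC_M$, and the cone of the former formed in $\cC$ agrees, up to a summand lying in $\add N$, with the cone formed inside $\cC_M$ as in \ref{reduction is triangulated}; hence it represents $Z\shift{1}$ simultaneously in $\cC_N$ and in $(\cC_M)_{\overline{N}}$. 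Likewise the defining triangles of $(\cC_M)_{\overline{N}}$ arise from $\add N$-monomorphisms in $\cZ_N$ and are precisely the defining triangles of $\cC_N$, so $(\cC_M)_{\overline{N}}\simeq\cC_N$ as triangulated categories.

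\textbf{Main obstacle.} The algebraic content is entirely contained in \ref{CM embedding}, \ref{CY red is uCM of End} and \ref{connection between U and C}; the step needing genuine care is the transitivity, and within it the verification that the \emph{triangulated} structure of $(\cC_M)_{\overline{N}}$ — whose shift and triangles are built from $\add\overline{N}$-approximations taken inside the already-reduced category $\cC_M$ — coincides with that of $\cC_N$, built from $\add N$-approximations taken in $\cC$ directly. This is a bookkeeping argument around the construction in \ref{reduction is triangulated}, the key point being that $\add M\subseteq\add N$, so cones computed in $\cC$ and in $\cC_M$ differ only by summands in $\add N$ and therefore become canonically isomorphic after passing to $\cZ_N/[N]$.
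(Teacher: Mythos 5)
Your argument is correct, and part (1) coincides with the paper's proof: both transport the chain $\cZ_{N_n}\subset\cdots\subset\cZ_{N_1}\subset\CM R$ through the equivalences $\Hom_R(N_i,-)$ of (\ref{CM equivalence}). For part (2), however, you take a genuinely different route. The paper never formulates a transitivity statement for CY reduction; it stays at the level of module categories, identifying $\cZ^{\Lambda_i}_{\Hom_R(N_i,N_{i+1})}=\{X\in\CM\Lambda_i\mid\Hom_{\Lambda_i}(\Hom_R(N_i,N_{i+1}),X)\in\CM R\}$ with $\cZ_{N_{i+1}}\subset\CM R$ via $\Hom_R(N_i,-)$, and then noting that the quotient by $[\Hom_R(N_i,N_{i+1})]$ corresponds under $\Hom_R(N_{i+1},-)$ to $\CM\Lambda_{i+1}/[\Lambda_{i+1}]=\uCM\Lambda_{i+1}$; this is a three-line computation, with the compatibility of triangulated structures left implicit exactly as in the proof of \ref{CY red is uCM of End}(2). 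You instead transfer the problem to $(\uCM R)_{N_i}$ via \ref{CY red is uCM of End}(2) and prove the abstract transitivity $\bigl((\uCM R)_{N_i}\bigr)_{\overline{N_{i+1}}}\simeq(\uCM R)_{N_{i+1}}$ using \ref{connection between U and C} to match objects and the inclusion of ideals $[N_i]\subseteq[N_{i+1}]$ to match morphisms, and you do the extra bookkeeping around \ref{reduction is triangulated} (cones computed in $\uCM R$ versus in the reduction differ only by summands in $\add N_i\subseteq\add N_{i+1}$) to compare the two triangulated structures. Your route is longer but yields a reusable transitivity principle for CY reduction and is more explicit about the triangle structures; the paper's route is shorter because the Frobenius/module-category descriptions make the identification of the relevant subcategories and quotients immediate. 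Both rest on the same ingredients: (\ref{CM equivalence})/\ref{CM embedding}, \ref{modifying 2 ok}, \ref{CY red is uCM of End} and \ref{connection between U and C}.
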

\begin{proof}
(1) Clearly 
\[\cZ_{N_n}\subset\cZ_{N_{n-1}}\subset\cdots\subset\cZ_{N_1}\subset\CM R.\]
Applying the equivalence $\Hom_R(M,-):\cZ_{N_i}\simeq\CM\Lambda_i$ from (\ref{CM equivalence}) shows the assertion.\\
(2) The embedding $\CM\Lambda_i\simeq\cZ_{N_i}\subset\cZ_{N_{i+1}}\simeq
\CM\Lambda_{i+1}$ in (1) induces an equivalence
\[\cZ^{\Lambda_i}_{\Hom_R(N_i,N_{i+1})}:=
\{X\in\CM\Lambda_i\mid\Hom_{\Lambda_i}(\Hom_R(N_i,N_{i+1}),X)\in\CM R\}
\simeq\cZ_{N_{i+1}}\simeq\CM\Lambda_{i+1}\]
which sends $\Hom_R(N_i,N_{i+1})$ to $\Lambda_{i+1}$. Thus we have
\[(\uCM\Lambda_i)_{\Hom_R(N_i,N_{i+1})}=
\cZ^{\Lambda_i}_{\Hom_R(N_i,N_{i+1})}/[\Hom_R(N_i,N_{i+1})]
\simeq\CM\Lambda_{i+1}/[\Lambda_{i+1}]=\uCM\Lambda_{i+1}.\]
\end{proof}

\subsection{CY Reduction for One-Dimensional Hypersurfaces}\label{one dime CY red}

Let $S=k[[x,y]]$ be a formal power series ring of two variables over an arbitrary field $k$. For $f,g\in S$, let
\[R:=S/(fg)\]
be a one-dimensional hypersurface. 
Then $M\in\mod R$ is a CM $R$-module if and only if $\fl_RM=0$.
Our main result in this subsection is the following.

\begin{thm}\label{3.7}
With notation as above, \\
\t{(1)} $\uCM R$ is a $2$-CY triangulated category with $\dim_R(\uCM R)\le 1$.\\
\t{(2)} $S/(f)$ is a modifying object in $\uCM R$, and the CY reduction $(\uCM R)_{S/(f)}$ of $\uCM R$ is triangle equivalent to $\uCM(S/(f))\times\uCM(S/(g))$.
\end{thm}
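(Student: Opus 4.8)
The plan is to verify the two assertions by reducing, as much as possible, to the abstract machinery of \S\ref{reduction section}. For part (1), the statement $\uCM R$ is $2$-CY with $\dim_R(\uCM R)\le 1$ is an instance of \ref{nonisolatedAR}: since $R=S/(fg)$ is a $1$-dimensional hypersurface, it is Gorenstein and equi-codimensional, and $\dim\Sing R\le 1$ holds trivially because $\dim R=1$; hence $\uCM R$ is $(d-1)=1$\,?—no, here $d=1$, so one must be careful. Actually \ref{nonisolatedAR} gives a $(d-1)$-CY structure, but for $d=1$ that would be $0$-CY, which is not what is claimed. So the correct route is different: one should think of $R=S/(fg)$ as living inside the two-variable (now viewed as the $uv$-picture) setup, OR — more directly — observe that the hypersurface $R$ has a natural structure where one applies \ref{nonisolatedAR} with $d=2$ after first recognizing $\uCM R$ of a $1$-dimensional hypersurface $R = S/(fg)$ is the same as $\uCM \widehat R$ for a suitable $2$-dimensional Gorenstein ring, or simply prove directly the AR-duality isomorphisms of \cite[3.1]{IW4} in this dimension. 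The cleanest formulation: for a $1$-dimensional reduced hypersurface, matrix factorizations give $\uCM R$ a $2$-periodic structure, so $[2]\cong\mathrm{id}$, and the Serre-functor isomorphisms of \ref{filtered CY defin} with $n=2$ reduce to the classical AR-duality/Auslander–Reiten translate statements for curve singularities; I would cite \cite[3.1]{IW4} directly, checking the hypotheses carefully (equi-codimensional Gorenstein with $\dim\Sing R\le\dim R - ? $).

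For part (2), first I would show $S/(f)\in\CM R$ and that it is modifying. That $S/(f)$ is CM over $R$ amounts to $\fl_R(S/(f))=0$, which holds because $S/(f)$ is a $1$-dimensional domain (assuming $f$ prime; in general one argues $\mathrm{ht}$ considerations) and hence torsion-free, so has no finite-length submodule. To see it is modifying, by \ref{modifyingThesame}/\ref{modifying 2 ok} it suffices to check $\End_R(S/(f))\in\CM R$; but $\End_R(S/(f)) \cong S/(f)$ (any $R$-endomorphism of the cyclic module $S/(f)$ is multiplication by an element of $S/(f)$, using that $S/(f)$ is a domain), and $S/(f)\in\CM R$ as just noted. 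Then I would identify the category $\cZ_{S/(f)}$ explicitly. By \ref{modifying 2 ok}(2), $\cZ_{S/(f)}=\{X\in\CM R\mid \Hom_R(S/(f),X)\in\CM R\}$. A CM $R$-module is a (reduced) maximal Cohen–Macaulay module over $R=S/(fg)$; the key structural input is that such modules decompose according to the two components $V(f)$ and $V(g)$ of $\Spec R$ away from the singular locus, and $X\in\cZ_{S/(f)}$ precisely when $X$ is ``supported compatibly'' so that $\Hom_R(S/(f),X)$ stays CM. I expect $\cZ_{S/(f)}$ to consist of modules of the form (something pulled back from $S/(f)$) $\oplus$ (something pulled back from $S/(g)$), up to projectives.

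The main step, and the one I expect to be the real obstacle, is establishing the triangle equivalence
\[
(\uCM R)_{S/(f)} \;=\; \cZ_{S/(f)}/[S/(f)] \;\simeq\; \uCM(S/(f))\times\uCM(S/(g)).
\]
By \ref{CY red is uCM of End} (applicable once we know $R$ is a normal domain — which fails here since $R=S/(fg)$ is reducible! so \ref{CY red is uCM of End} does \emph{not} directly apply), we would \emph{like} to say $(\uCM R)_{S/(f)}\simeq\uCM\End_R(R\oplus S/(f))$, and then compute that endomorphism ring. So the first sub-obstacle is that the normality hypothesis of \ref{CY red is uCM of End} is not met; I would instead invoke the more basic identification $(\uCM R)_M=\cZ_M/[M]$ together with the reflexive-equivalence circle of ideas \eqref{CM equivalence}, being careful to re-derive what is needed over the non-normal $R$ (the equivalence $\Hom_R(M,-)\colon\cZ_M\to\CM\Lambda$ where $\Lambda=\End_R(R\oplus S/(f))$ should still hold provided $R\oplus S/(f)$ is a generator with $\End\in\CM R$). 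The second, and I think harder, sub-obstacle is the explicit computation showing $\Lambda=\End_R(R\oplus S/(f))$, or rather its stable category $\uCM\Lambda$, splits as a product corresponding to $S/(f)$ and $S/(g)$: concretely I expect a recollement / idempotent decomposition of $\Lambda$ modulo its ideal of maps factoring through projectives, where the two ``corners'' are Morita-equivalent (in the stable sense) to $S/(f)$ and $S/(g)$ respectively. This is essentially a matrix-factorization computation: writing $R=S/(fg)$ and using the conormal sequence $0\to (f)/(fg)\to S/(fg)\to S/(f)\to 0$ to get an explicit matrix factorization of $fg$ presenting $S/(f)$, one computes $\End_R(R\oplus S/(f))$ as an explicit $2\times 2$ ``triangular-like'' matrix algebra over $S$, and then checks that killing the projective $\Lambda$-modules separates the $f$-block from the $g$-block. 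I would organize the argument so that this computation is isolated into one lemma (the dimension-one hypersurface being exactly the case where everything is explicit), and then \ref{splitting algebraic intro} in higher dimension is bootstrapped from it as the paper indicates.
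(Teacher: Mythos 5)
Your part (1) is fine and is essentially the paper's argument: \ref{nonisolatedAR} (i.e.\ AR duality of \cite[3.1]{IW4}) gives that $\uCM R$ is $0$-CY with $\dim_R(\uCM R)\le 1$, and $2$-periodicity of matrix factorizations gives $[2]\cong\mathrm{id}$, whence $2$-CY. The problems are in part (2). First, your verification that $S/(f)$ is modifying, and your description of $\cZ_{S/(f)}$, both rest on \ref{modifyingThesame}/\ref{modifying 2 ok}, which are statements about the $(d-1)$-CY structure and are proved under hypotheses ($d\ge 2$, and for \ref{modifying 2 ok}(2) the relevant approximation theory) that do not hold here. Worse, in dimension one the criterion ``$\End_R(M)\in\CM R$'' (equivalently ``$\Hom_R(S/(f),X)\in\CM R$'') is automatic for every torsion-free module, so it is vacuous: with your reading one would get $\cZ_{S/(f)}=\CM R$, which is false. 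The correct condition for the $2$-CY structure is $T_{\uCM R}(X,(S/(f))[1])=\fl_R\Ext^1_R(S/(f),X)=0$, and computing which $X$ satisfy it is the real content: the paper's \ref{criterion}, \ref{two MFs} and especially \ref{describe Z}, which shows $\cZ_{S/(f)}=\add\{R,\,Y,\,\Omega_R(Z)\mid Y\in\CM(S/(f)),\,Z\in\CM(S/(g))\}$ by a genuine matrix-factorization argument. Note in particular that the ``$g$-part'' enters only through syzygies $\Omega_R(Z)$, which your guess ``pulled back from $S/(g)$'' misses; this is exactly why $\uCM(S/(g))$ appears in the answer (via $\Omega_R=[-1]$ and $\Omega_R(S/(f))\cong S/(g)$).

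Second, your main step is only a plan, and the proposed detour is shakier than the direct route. The identification $(\uCM R)_M\simeq\uCM\End_R(R\oplus M)$ in \ref{CY red is uCM of End} is proved via the equivalence $\Hom_R(M,-)\colon\cZ_M\simeq\CM\End_R(M)$, which in turn uses normality and the description $\cZ_M=\{X\mid\Hom_R(M,X)\in\CM R\}$ — precisely the description that fails in dimension one — so ``re-derive what is needed over the non-normal $R$'' is not a routine adjustment but a new theorem (one would have to identify the image of $\cZ_{S/(f)}$ inside $\mod\End_R(R\oplus S/(f))$, e.g.\ as the Gorenstein-projective modules, and then split that stable category into an $f$-block and a $g$-block and identify the $g$-block with $\uCM(S/(g))$). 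None of this is carried out; it is asserted as ``I expect'' and ``one checks''. The paper avoids the endomorphism ring entirely: it proves directly in $\cZ_{S/(f)}/[R\oplus S/(f)]$ that every map between $\CM(S/(f))$ and $\Omega_R(\CM(S/(g)))$ factors through $\add(S/(f))$ (explicit matrix-factorization diagrams), and then computes the two blocks using \ref{Hom in CY quotient} (a map between modules in $\CM(S/(f))$ factoring through $\add R$ already factors through $\add(S/(f))$) together with the shift $\Omega_R$. So the decomposition and the identification of the factors — the actual mathematical content of (2) — are missing from your proposal.
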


We give the proof in the remainder of this subsection.
First we note that the natural surjections $R\to S/(f)$ and $R\to S/(g)$ induce fully faithful functors $\CM(S/(f))\to\CM R$ and $\CM(S/(g))\to\CM R$.

\begin{lemma}\label{criterion}
$X\in\CM R$ satisfies $\fl_R\Ext^1_R(S/(f),X)=0$ if and only if $X/fX\in\CM R$.
\end{lemma}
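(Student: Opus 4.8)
The plan is to compute $\Ext^1_R(S/(f),X)$ explicitly via the periodic free resolution of $S/(f)$ over the hypersurface $R$, fit $X/fX$ into a short exact sequence having $\Ext^1_R(S/(f),X)$ as a submodule and a submodule of $X$ as the quotient, and then use the hypothesis $X\in\CM R$ to transfer the vanishing of finite-length submodules between the two ends.

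First I would write down the resolution coming from the factorization $fg=f\cdot g$. Since $S$ is a domain, multiplication by $g$ on $R$ has kernel exactly $fR$ and multiplication by $f$ on $R$ has kernel exactly $gR$, so $S/(f)=R/(f)$ admits the $2$-periodic free resolution
\[
\cdots\xrightarrow{\ f\ }R\xrightarrow{\ g\ }R\xrightarrow{\ f\ }R\to S/(f)\to 0.
\]
Applying $\Hom_R(-,X)$ and using $\Hom_R(R,X)\cong X$ turns this into the complex $X\xrightarrow{\ f\ }X\xrightarrow{\ g\ }X\xrightarrow{\ f\ }\cdots$, so that
\[
\Ext^1_R(S/(f),X)\cong (0:_Xg)/fX,
\]
where $(0:_Xg)=\{x\in X\mid gx=0\}$; this quotient makes sense because for $x\in X$ one has $g(fx)=(gf)x=0$ since $fg=0$ in $R$, i.e.\ $fX\subseteq(0:_Xg)$.

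Next I would observe that the map $X\to X$, $x\mapsto gx$, sends $fX$ to zero (as $gf=0$ in $R$), hence factors through an $R$-linear map $X/fX\to X$, $\overline{x}\mapsto gx$, whose image is $gX$ and whose kernel is precisely $(0:_Xg)/fX$. This yields a short exact sequence
\[
0\to\Ext^1_R(S/(f),X)\to X/fX\xrightarrow{\ g\ }gX\to 0.
\]
Because $X\in\CM R$ means $\fl_RX=0$, its submodule $gX$ also satisfies $\fl_R(gX)=0$ (a finite-length submodule of $gX$ is a finite-length submodule of $X$). Now both implications are immediate: if $\fl_R(X/fX)=0$ then a fortiori $\fl_R\Ext^1_R(S/(f),X)=0$, since $\Ext^1_R(S/(f),X)$ is a submodule of $X/fX$; conversely, if $\fl_R\Ext^1_R(S/(f),X)=0$, then any finite-length submodule $N\subseteq X/fX$ maps onto a finite-length submodule of $gX$, which is $0$, so $N\subseteq\Ext^1_R(S/(f),X)$ and hence $N=0$. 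Thus $\fl_R(X/fX)=0$, which by the recalled characterization "$Y\in\CM R\iff\fl_RY=0$" for this one-dimensional ring means $X/fX\in\CM R$.

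I do not expect a genuine obstacle: the two points needing a little care are the annihilator computations $(0:_Rf)=gR$ and $(0:_Rg)=fR$ (which rely only on $S$ being a domain, so that $f,g$ are nonzerodivisors; in the degenerate cases where $f$ or $g$ is a unit both sides of the equivalence are trivially true), and the identification of the kernel of $X/fX\xrightarrow{g}gX$. Everything else is formal bookkeeping with finite-length submodules, so the real content is just the correct free resolution of $S/(f)$ over $R$.
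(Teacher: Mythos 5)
Your proof is correct and takes essentially the same route as the paper: both identify $\Ext^1_R(S/(f),X)\cong(0:_Xg)/fX$ (your $2$-periodic resolution is just the iterated form of the paper's sequence $0\to S/(g)\xrightarrow{f}R\to S/(f)\to 0$), and then run the finite-length bookkeeping through the short exact sequence $0\to(0:_Xg)/fX\to X/fX\to X/(0:_Xg)\to 0$ using that ``no finite-length submodule'' passes to submodules and extensions. Your only (harmless) variation is identifying the quotient as $gX\subseteq X$ via multiplication by $g$, which makes immediate the paper's step that $X/\Hom_R(S/(g),X)$ is CM because it embeds in $\Hom_R(S/(f),X)$.
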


\begin{proof}
Applying $\Hom_R(-,X)$ to the exact sequence
\[
0\to S/(g)\xrightarrow{f}R\to S/(f)\to 0
\]
gives an exact sequence
\[
0\to \Hom_R(S/(f),X)\to X\xrightarrow{f\cdot} \Hom_R(S/(g),X)\to \Ext^1_R(S/(f),X)\to 0.
\]
In particular $\fl_R\Ext^1_R(S/(f),X)=0$ if and only if
$\frac{\Hom_R(S/(g),X)}{fX}\in \CM R$.
On the other hand, exchanging $f$ and $g$ in the above exact sequence,
we have $\frac{X}{\Hom_R(S/(g),X)}\in\CM R$ since $\Hom_R(S/(f),X)\in\CM R$ and $\CM R$ is closed under submodules.
Since we have an exact sequence
\[0\to\frac{\Hom_R(S/(g),X)}{fX}\to\frac{X}{fX}\to\frac{X}{\Hom_R(S/(g),X)}\to0\]
and $\CM R$ is closed under submodules and extensions,
we have that $\frac{\Hom_R(S/(g),X)}{fX}\in \CM R$ if and only if $X/fX\in \CM R$.
Thus the assertion follows.
\end{proof}

We also need the following easy observation, which is valid for any dimension.

\begin{lemma}\label{two MFs}
Let $A$ and $B$ be $n\times n$ matrices over $S$ such that $AB=fgI_n=BA$ and $X:={\rm Cok}(S^n\xrightarrow{A}S^n)$. Then the following conditions are equivalent.\\
\t{(1)} $X\in\CM(S/(f))$.\\
\t{(2)} There exists an $n\times n$ matrix $B'$ over $S$ such that $AB'=fI_n=B'A$.\\
\t{(3)} All entries in $B$ belongs to $(g)$.\\
If these conditions are satisfied, then $\fl_R\Ext^1_R(S/(f),X)=0=\fl_R\Ext^1_R(S/(g),\Omega_R(X))$.
\end{lemma}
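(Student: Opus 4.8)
The plan is to prove the cyclic chain of implications $(1)\Rightarrow(3)\Rightarrow(2)\Rightarrow(1)$ and then read off the two vanishing statements from \ref{criterion}. The first thing I would record is that, since $S$ is a domain and $fg\neq 0$, the identity $\det A\cdot\det B=(fg)^n$ forces $\det A\neq 0\neq\det B$, so $A$ and $B$ are injective as endomorphisms of $S^n$; in other words $(A,B)$ is a matrix factorization of $fg$, and $X=\Cok(S^n\xrightarrow{A}S^n)$ is the associated maximal Cohen--Macaulay $R$-module. This injectivity is essentially the only input needed for the equivalence of $(1)$, $(2)$, $(3)$.

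For $(1)\Rightarrow(3)$: from $X\in\CM(S/(f))$ one has $fX=0$, i.e.\ $fS^n\subseteq AS^n$, so $fI_n=AC$ for some matrix $C$ over $S$; then $A(B-gC)=fgI_n-g\cdot fI_n=0$, and injectivity of $A$ gives $B=gC$, so all entries of $B$ lie in $(g)$. For $(3)\Rightarrow(2)$: write $B=gB'$ (legitimate since $S$ is a domain) and cancel the nonzerodivisor $g$ from $AB=fgI_n=BA$ to obtain $AB'=fI_n=B'A$. For $(2)\Rightarrow(1)$: the pair $(A,B')$ is then a matrix factorization of $f$, so $fS^n=AB'S^n\subseteq AS^n$ shows $fX=0$; moreover, since $A$ is injective, $0\to S^n\xrightarrow{A}S^n\to X\to 0$ gives $\pd_SX\le 1$, whence $\depth_SX\ge\dim S-1=\dim(S/(f))$ by Auslander--Buchsbaum, and therefore $X\in\CM(S/(f))$ (the case $X=0$ being trivial).

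For the final assertion, suppose the equivalent conditions hold. Then $\fl_R\Ext^1_R(S/(f),X)=0$ is immediate from \ref{criterion}: since $fX=0$ we have $X/fX=X\in\CM(S/(f))\subseteq\CM R$. For the remaining vanishing I would first identify $\Omega_R X$. Reducing the matrix factorization $(A,B)$ modulo $fg$ yields a $2$-periodic free resolution $\cdots\to R^n\xrightarrow{\bar B}R^n\xrightarrow{\bar A}R^n\to X\to 0$ over $R$, so $\Omega_RX\cong\Cok(S^n\xrightarrow{B}S^n)$ (up to free summands). With $B=gB'$ as above, $BS^n\subseteq gS^n$, hence $\Omega_RX/g\,\Omega_RX\cong S^n/(BS^n+gS^n)=S^n/gS^n\cong(S/(g))^n$, which is free over $S/(g)$ and so lies in $\CM R$. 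Applying \ref{criterion} with the roles of $f$ and $g$ interchanged — legitimate since $R=S/(fg)$ is symmetric in $f$ and $g$ — and noting that $\Ext^{\ge1}_R(S/(g),R)=0$ ($R$ is Gorenstein and $S/(g)\in\CM R$), so that free summands of $\Omega_RX$ are harmless, we conclude $\fl_R\Ext^1_R(S/(g),\Omega_RX)=0$.

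I do not anticipate a serious obstacle here: the three implications are pure matrix algebra over a domain, and the last step is just the matrix-factorization bookkeeping (pinning down which module is $\Cok(\bar A)$, which is $\Cok(\bar B)$, and invoking $2$-periodicity over the hypersurface $R$) followed by a one-line computation fed into \ref{criterion}. If anything requires care it is the depth/Auslander--Buchsbaum argument in $(2)\Rightarrow(1)$ and keeping track of the fact that $\Omega_R$ is only well defined up to free summands.
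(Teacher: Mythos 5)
Your proof is correct and follows essentially the same approach as the paper: the equivalences are proved by elementary matrix algebra over the domain $S$ (using the injectivity of $A$, which is exactly what the paper invokes when it says $A$ is invertible over $k((x,y))$), and the two vanishing statements are read off by computing $X/fX$ and $\Omega_R(X)/g\,\Omega_R(X)$ and feeding them into \ref{criterion}, just as the paper does. The only cosmetic differences are that you run the cycle $(1)\Rightarrow(3)\Rightarrow(2)\Rightarrow(1)$ instead of $(1)\Rightarrow(2)\Rightarrow(3)\Rightarrow(1)$, you spell out the standard Eisenbud fact ``cokernel of a matrix factorization of $f$ is MCM over $S/(f)$'' via Auslander--Buchsbaum rather than citing it as clear, and you make explicit the $2$-periodicity argument identifying $\Omega_R(X)$ with $\Cok(\bar B)=S^n/BS^n$ together with the (harmless, as you note, since $\Ext^{\ge1}_R(S/(g),R)=0$) free-summand caveat — details the paper leaves implicit.
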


\begin{proof}
(1)$\Rightarrow$(2) is clear since $A$ gives a matrix factorization of $f$.\\
(2)$\Rightarrow$(3) Since $A$ is invertible as a matrix over $k((x,y))$, we have $B=gB'$.\\
(3)$\Rightarrow$(1) is clear since we have matrix factorization $A(g^{-1}B)=fI_n=(g^{-1}B)A$.

Since $\fl_R(X/fX)=\fl_RX=0$ by (1), we have $\fl_R\Ext^1_R(S/(f),X)=0$ by \ref{criterion}.
Since $\Omega_R(X)=S^n/B(S^n)$, we have $\Omega_R(X)/g\Omega_R(X)=S^n/(B(S^n)+gS^n)=S^n/gS^n$ by (3). 
Thus $\fl_R(\Omega_R(X)/g\Omega_R(X))=0$ and we have $\fl_R\Ext^1_R(S/(g),\Omega_R(X))=0$ by \ref{criterion}.
\end{proof}

Let $\cZ_{S/(f)}:=\{X\in \CM R\ |\ \fl_R\Ext^1_R(S/(f),X)=0\}$.
Then the CY reduction $(\uCM R)_{S/(f)}$ is given by $\cZ_{S/(f)}/[R\oplus S/(f)]$. The following is a crucial step.

\begin{lemma}\label{describe Z}
We have $\cZ_{S/(f)}=\add\{R,Y,\Omega_R(Z)\mid Y\in \CM(S/(f)),\ Z\in \CM(S/(g))\}$.
\end{lemma}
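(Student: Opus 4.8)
The plan is to prove both inclusions. For the inclusion $\add\{R,Y,\Omega_R(Z)\mid Y\in\CM(S/(f)),\ Z\in\CM(S/(g))\}\subseteq\cZ_{S/(f)}$, note first that $R\in\cZ_{S/(f)}$ trivially since $\Ext^1_R(S/(f),R)$ is already CM (indeed $R$ is free, and one can compute $\Ext^1_R(S/(f),R)\cong S/(g)$ which lies in $\CM R$). For $Y\in\CM(S/(f))$, write $Y$ via a matrix factorization $A(g^{-1}B)=fI_n=(g^{-1}B)A$ of $f$, which induces a matrix factorization $AB=fgI_n=BA$ of $fg$; then Lemma \ref{two MFs} (the implications (1)$\Rightarrow$(3) and the final sentence) gives $\fl_R\Ext^1_R(S/(f),Y)=0$, i.e. $Y\in\cZ_{S/(f)}$. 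For $\Omega_R(Z)$ with $Z\in\CM(S/(g))$, apply Lemma \ref{two MFs} with the roles of $f$ and $g$ interchanged: $Z\in\CM(S/(g))$ means $Z=\Cok(S^n\xrightarrow{B}S^n)$ for a matrix factorization $BA'=gI_n=A'B$, hence $BA=fgI_n=AB$ with $A=fA'$, and the final sentence of \ref{two MFs} (again with $f,g$ swapped) yields $\fl_R\Ext^1_R(S/(f),\Omega_R(Z))=0$. Since $\cZ_{S/(f)}$ is closed under finite direct sums, this settles one inclusion.

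For the reverse inclusion $\cZ_{S/(f)}\subseteq\add\{R,Y,\Omega_R(Z)\}$, take $X\in\cZ_{S/(f)}$, so $X\in\CM R$ with $\fl_R\Ext^1_R(S/(f),X)=0$, equivalently $X/fX\in\CM R$ by Lemma \ref{criterion}. Without loss of generality $X$ has no free summand (free summands are absorbed into the $R$ term). The key structural input is the short exact sequence appearing in the proof of \ref{criterion},
\[
0\to \Hom_R(S/(f),X)\to X\xrightarrow{f\cdot}\Hom_R(S/(g),X)\to\Ext^1_R(S/(f),X)\to 0,
\]
together with its analogue obtained by exchanging $f$ and $g$. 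Set $X_f:=\Hom_R(S/(f),X)$, which is annihilated by $f$ hence an $S/(f)$-module, and is a submodule of $X$, so $X_f\in\CM R$, i.e. $X_f\in\CM(S/(f))$ (using that $\fl_R X=0$); similarly $X_g:=\Hom_R(S/(g),X)\in\CM(S/(g))$. The hypothesis $X/fX\in\CM R$ forces (via the exact sequences in \ref{criterion}) that the cokernel of $f\cdot\colon X\to X_g$ has no finite-length part, and in fact — this is the place where the hypersurface structure is used — one shows $\Im(f\cdot\colon X\to X_g)$ and the two "pieces" $X_f$, $X_g$ fit into splittings: because $S$ is regular of dimension $2$ and $X$ is CM of dimension $1$, the submodule $X_f\subseteq X$ has $X/X_f\in\CM R$ (as $X/X_f\hookrightarrow X_g$), so the sequence $0\to X_f\to X\to X/X_f\to 0$ is a sequence of CM modules. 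I would then argue that $X/X_f$, being a CM $R$-module on which multiplication by $f$ is injective (its $f$-torsion is $X_f/X_f=0$) must lie in $\add\{\Omega_R(Z)\mid Z\in\CM(S/(g))\}$, by analyzing its matrix factorization: a CM $R$-module with no $f$-torsion and no $g$-torsion either decomposes, and the "$g$-divisible" part is exactly a syzygy of an $S/(g)$-module. Assembling, $X\cong X_f\oplus(X/X_f)$ after possibly absorbing a free summand, which gives the claimed form.

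The main obstacle — and the step I would spend the most care on — is the reverse inclusion, specifically showing that an $X\in\cZ_{S/(f)}$ with no free summand actually \emph{decomposes} as (an $S/(f)$-module) $\oplus$ (a syzygy of an $S/(g)$-module), rather than merely sitting in an extension of such. The cleanest route is probably to work entirely with matrix factorizations: write $X=\Cok(S^n\xrightarrow{A}S^n)$ with $AB=fgI_n=BA$, observe that over the localization $k((x,y))$ the matrices $A$ and $B$ are invertible, and use the condition $X/fX\in\CM R$ to show — after a change of basis over $S$ — that $A$ and $B$ can be put in block-diagonal form with one block a matrix factorization of $f$ (giving the $Y\in\CM(S/(f))$ summand) and the complementary block having all entries of its "$B$-part" divisible by $f$, i.e. a matrix factorization of $g$ up to the twist that identifies it with $\Omega_R$ of an $S/(g)$-module. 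This block decomposition is essentially a factoriality/unique-factorization-of-matrix-factorizations statement over the regular local ring $S$, and making it rigorous (e.g. via the structure of the cokernel of $f\cdot$, or via Smith-normal-form type arguments over the PID $k((x,y))[x]$ localized appropriately) is the crux; everything else is bookkeeping with the exact sequences already established in \ref{criterion} and \ref{two MFs}.
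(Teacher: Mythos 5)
The inclusion $\supseteq$ in your proposal is correct and is exactly the paper's argument (apply \ref{two MFs}, once as stated and once with $f$ and $g$ exchanged), so that part is fine. The problem is the reverse inclusion, which is where the entire content of the lemma lies, and which your proposal does not actually prove. Your first route asserts that $X\cong X_f\oplus(X/X_f)$ and that $X/X_f$ lies in $\add\{\Omega_R(Z)\mid Z\in\CM(S/(g))\}$; neither assertion is justified, and together they are essentially equivalent to the lemma itself. The splitting is not formal: for $X=R$ the analogous sequence $0\to gR\to R\to S/(g)\to 0$ is non-split even though $R\in\cZ_{S/(f)}$, so any splitting argument must use genuinely more than the exact sequences from \ref{criterion}, and your ``absorb the free summands'' reduction does not supply that missing input. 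Moreover $X/X_f$ embeds in $\Hom_R(S/(g),X)$, hence is killed by $g$, so a priori it is merely an object of $\CM(S/(g))$; since $\CM(S/(g))\not\subseteq\cZ_{S/(f)}$ in general (for $f=x$, $g=y$ the module $S/(g)$ fails $X/fX\in\CM R$), being a $g$-torsion CM module does not place it in $\add\{\Omega_R(Z)\}$, and your parenthetical claim that $f$ acts injectively on $X/X_f$ is both unproved (the $f$-torsion of $X/X_f$ is not $X_f/X_f$) and not the property that is needed.

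Your second route is indeed the correct strategy, and it is precisely what the paper does: compare the minimal $S$-free resolution $0\to S^n\xrightarrow{B}S^n\to X/fX\to 0$ (a matrix factorization of $f$, available because $X/fX\in\CM R$ by \ref{criterion}) with the presentation $S^{2n}\xrightarrow{(A\ fI_n)}S^n\to X/fX\to 0$ induced from the minimal resolution of $X$, then use minimality --- concretely, that the entries of the non-trivial block $C'$ of the complementary factor lie in $(x,y)$ --- to extract a split epimorphism and explicit invertible matrices $U,V$ over $S$ with $VAU$ block-diagonal, one block a matrix factorization of $f$ (giving the $\CM(S/(f))$ summand) and the other with all entries in $(f)$ (giving the $\Omega_R(\CM(S/(g)))$ summand via \ref{two MFs}). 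You explicitly defer exactly this step (``making it rigorous \dots is the crux''), so as written the proposal is a proof plan rather than a proof: the block-diagonalization, which is the whole difficulty, is nowhere carried out.
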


\begin{proof}
The inclusion ``$\supseteq$'' follows from \ref{two MFs}. 
We shall show ``$\subseteq$''. 
Assume that $X\in\CM R$ satisfies $\fl_R\Ext^1_R(S/(f),X)=0$.
Take a minimal free resolution
\[
0\to S^n\xrightarrow{A}S^n\to X\to 0
\]
of the $S$-module $X$, where $A$ is an $n\times n$ matrix over $S$.
Then we have a free resolution
\begin{equation}\label{resolution of X/fX 1}
S^{2n}\xrightarrow{(A\,\,\, fI_n)}S^n\to X/fX\to 0
\end{equation}
of the $S$-module $X/fX$, where $I_n$ is the identity matrix of size $n$.
On the other hand $X/fX$ belongs to $\CM R$ by our assumption and \ref{criterion}.
Since $fX$ is contained in the radical of $X$, the minimal numbers of generators of $X$ and $X/fX$ are the same. Thus we have a minimal free resolution
\begin{equation}\label{resolution of X/fX 2}
0\to S^n\xrightarrow{B}S^n\to X/fX\to 0
\end{equation}
of the $S$-module $X/fX$, where $B$ is an $n\times n$ matrix over $S$.
Let $BC=fI_n=CB$ be the corresponding matrix factorization.
We write more explicitly
\[
B=\left(\begin{array}{cc}fI_m&O\\ O&B'\end{array}\right),\ 
C=\left(\begin{array}{cc}I_m&O\\ O&C'\end{array}\right)
\]
for some $m$ with $0\le m\le n$ where  all entries of $C'$ belong to $(x,y)$ and $B'C'=fI_{n-m}=C'B'$.
Since \eqref{resolution of X/fX 2} is minimal, we can obtain \eqref{resolution of X/fX 1} by adding a trivial summand and thus obtain a commutative diagram
\[
{\SelectTips{cm}{10}
\xy0;/r.37pc/:
(45,0)*+{S^{2n}}="2",
(60,0)*+{S^n}="3",
(70,0)*+{X/fX}="4",
(78,0)*+{0}="5",
(45,-9)*+{S^{2n}}="b2",
(60,-9)*+{S^n}="b3",
(70,-9)*+{X/fX}="b4",
(78,-9)*+{0}="b5",
\ar"2";"3"^{(A\,\,\, fI_n)}
\ar"3";"4"
\ar"4";"5"
\ar"b2";"b3"^{(B\,\,\, O)}
\ar"b3";"b4"
\ar"b4";"b5"
\ar"2";"b2"_{E=\scriptsize \begin{pmatrix} E_1&E_2\\ E_3&E_4 \end{pmatrix} }^\cong
\ar"3";"b3"^{\cong}_F
\ar@{=}"4";"b4"
\endxy}
\]
where the vertical maps are isomorphisms and $E_i$ ($1\le i\le 4$) is an $n\times n$ matrix over $S$.
Hence by replacing $B$ and $C$ by $BF^{-1}$ and $FC$ respectively, we can assume $F=I_n$.
Then we have $BE_1=A$ and $BE_2=fI_n$.  Since $BC=fI_n$ and $B$ is invertible as a matrix over $k((x,y))$, we have
\[
E_2=C=\left(\begin{array}{cc}I_m&O\\ O&C'\end{array}\right).
\]
Now we write $E_1$ as $E_1=\left(\begin{array}{cc}G_1&G_2\\ G_3&G_4\end{array}\right)$, where $G_1$ is an $m\times m$ matrix. Then the map
\[
\left( G_3\ G_4\ O\ C' \right): S^{2n}\to S^{n-m}
\]
given by the $n-m$ rows of the invertible matrix $E$ is a split epimorphism.
Since all entries of the right part $(O\ C')$ are in the unique maximal ideal $(x,y)$ of $S$, the left part $(G_3\ G_4): S^n\to S^{n-m}$ must be a split epimorphism.
Hence there exists an $n\times n$ invertible matrix $U$ such that $(G_3\ G_4)U=(O\ I_{n-m})$.
Then
\[
AU=BE_1U=\left(\begin{array}{cc}fI_m&O\\ O&B'\end{array}\right)\left(\begin{array}{cc}G_1&G_2\\ G_3&G_4\end{array}\right)U=\left(\begin{array}{cc}G'_1&G'_2\\ O&B'\end{array}\right)
\]
where all entries of $G'_1$ and $G'_2$ are in $(f)$.
Since $C'B'=fI_{n-m}$, the $n\times n$ invertible matrix $V:=\left(\begin{array}{cc}I_m&-f^{-1}G'_2C'\\ O&I_{n-m}\end{array}\right)$ over $S$ satisfies
\[
VAU=\left(\begin{array}{cc}G'_1&O\\ O&B'\end{array}\right).
\]
Since both $U$ and $V$ are invertible, we have that $X={\rm Cok}(S^n\stackrel{A}{\longrightarrow}S^n)$ is a direct sum of
${\rm Cok}(S^m\stackrel{G'_1}{\longrightarrow}S^m)$ and
${\rm Cok}(S^{n-m}\stackrel{B'}{\longrightarrow}S^{n-m})$.
Since all entries of $G_1^\prime$ are in $(f)$, the former belongs to $\Omega_R(\CM(S/(g)))$ by \ref{two MFs}(3)$\Rightarrow$(1).
Since $B'C'=fI_{n-m}$, the latter belongs to $\CM(S/(f))$ by \ref{two MFs}(2)$\Rightarrow$(1).
\end{proof}

\begin{lemma}\label{Hom in CY quotient}
\t{(1)} $\Hom_{(\uCM R)/[S/(f)]}(Y,Y')=\Hom_{\uCM(S/(f))}(Y,Y')$ for all $Y,Y'\in\CM(S/(f))$.\\
\t{(2)} $\Hom_{(\uCM R)/[S/(g)]}(Z,Z')=\Hom_{\uCM(S/(g))}(Z,Z')$ for all $Z,Z'\in\CM(S/(g))$.
\end{lemma}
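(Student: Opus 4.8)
The plan is to realise all three Hom-spaces as subquotients of $\Hom_R(Y,Y')$ and to match up the morphisms that get killed. I will prove (1); part (2) is (1) with the roles of $f$ and $g$ exchanged.

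First I would observe that since $Y,Y'$ are $S/(f)$-modules, viewed as $R$-modules through the surjection $R\twoheadrightarrow S/(f)$, every $R$-linear map $Y\to Y'$ is automatically $S/(f)$-linear, so $\Hom_R(Y,Y')=\Hom_{S/(f)}(Y,Y')$ as $R$-modules. Next, passing from $\CM R$ to $\uCM R$ kills precisely the morphisms factoring through a free $R$-module, and then forming the quotient by $[S/(f)]$ additionally kills those factoring through $\add_R(S/(f))$; combining these two steps,
\[
\Hom_{(\uCM R)/[S/(f)]}(Y,Y')=\Hom_R(Y,Y')/\cP(Y,Y'),
\]
where $\cP(Y,Y')\subseteq\Hom_R(Y,Y')$ denotes the morphisms that factor through some object of $\add_R(R\oplus S/(f))$, equivalently through some $R^m\oplus(S/(f))^n$. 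In the same way $\Hom_{\uCM(S/(f))}(Y,Y')=\Hom_{S/(f)}(Y,Y')/\cP'(Y,Y')$, where $\cP'(Y,Y')$ is the set of morphisms factoring through a free $S/(f)$-module. So the lemma reduces to the identity $\cP(Y,Y')=\cP'(Y,Y')$ inside $\Hom_R(Y,Y')=\Hom_{S/(f)}(Y,Y')$. The inclusion $\cP'\subseteq\cP$ is immediate since $S/(f)\in\add_R(S/(f))$.

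For the reverse inclusion the key point is the $R$-module isomorphism $\Ann_R(f)=gR\cong S/(f)$: since $S$ is a domain, $fr\in(fg)$ forces $r\in(g)$, so $\Ann_R(f)=gR$, while the map $S\to gR$, $s\mapsto\overline{gs}$, has kernel $(f)$; note also $f\cdot gR=0$, so $gR$ is a free $S/(f)$-module of rank one. Now let $\phi\colon Y\to Y'$ factor through $R^m\oplus(S/(f))^n$, and write $\phi=\phi_1+\phi_2$ where $\phi_1$ factors through $R^m$ and $\phi_2$ through $(S/(f))^n$. In a factorization $Y\to R^m\to Y'$ of $\phi_1$ the first map is $R$-linear and $fY=0$, so its image lies in $\Ann_{R^m}(f)=(gR)^m\cong(S/(f))^m$; hence $\phi_1$ factors through the free $S/(f)$-module $(S/(f))^m$. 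As $\phi_2$ already factors through $(S/(f))^n$, we get that $\phi=\phi_1+\phi_2$ factors through $(S/(f))^{m+n}$, so $\phi\in\cP'(Y,Y')$. This proves $\cP=\cP'$, hence (1), and (2) follows by the same argument using $\Ann_R(g)=fR\cong S/(g)$.

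The argument is essentially formal, and the only genuinely substantive step is the passage from a free $R$-summand of a factorization to a free $S/(f)$-summand, which is exactly what $\Ann_R(f)\cong S/(f)$ delivers. The one piece of bookkeeping worth being careful about is the first displayed equality: one has to check that a morphism factoring in $\uCM R$ through an object of $\add(S/(f))$ is, after correcting by a map through a free $R$-module, an honest factorization in $\CM R$ through $\add_R(R\oplus S/(f))$, which is standard.
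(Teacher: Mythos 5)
Your proof is correct and takes essentially the same approach as the paper's. The only (cosmetic) difference is the side of the factorization you work with: the paper factors the outgoing map $R^m\to Y'$ through $a\colon R\twoheadrightarrow S/(f)$, using the exact sequence $0\to(f)\xrightarrow{b}R\xrightarrow{a}S/(f)\to 0$ together with $(b\cdot)=0$ on $\Hom_R(-,Y')$ to show $a$ is a left $\CM(S/(f))$-approximation, while you instead show the incoming map $Y\to R^m$ lands in $\Ann_{R^m}(f)=(gR)^m\cong(S/(f))^m$ --- two dual packagings of the same fact that $Y,Y'$ are annihilated by $f$ and $\Ann_R(f)=(g)\cong S/(f)$.
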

\begin{proof}
(1) Let $Y,Y'\in\CM(S/(f))$. 
Since $\CM(S/(f))\to\CM R$ is fully faithful, it suffices to show that if a map $Y\to Y'$ factors through $\add R$, it also factors through $\add(S/(f))$.  
Consider an exact sequence
\[ 
0\to (f)\xrightarrow{b} R\xrightarrow{a}S/(f)\to 0
\]
We only have to show that any map $R\to Y'$ factors through $a$ (i.e. $a$ is a left $\CM(S/(f))$-approximation).
Applying $\Hom_R(-,Y')$, we have an exact sequence
\[
0\to \Hom_R(S/(f), Y')\xrightarrow{a\cdot}\Hom_R(R,Y')\xrightarrow{b\cdot}\Hom_R((f), Y')
\]
where we have $(b\cdot)=0$ since $Y'\in\CM(S/(f))$. Hence $(a\cdot)$ is an isomorphism and we are done.\\
(2) follows from (1) by swapping $f$ and $g$.
\end{proof}

Now we are ready to prove \ref{3.7}.
\begin{proof}
(1) By \ref{nonisolatedAR}, we have that $\uCM R$ is a $0$-CY triangulated category with $\dim_R(\uCM R)\le1$.
Since $R$ is a hypersurface, $[2]$ is isomorphic to the identity functor \cite{E80,Y}. Thus the assertion follows.\\
(2)  \emph{Step 1:}
For $Y\in\CM(S/(f))$ and $Z\in\CM(S/(g))$, we show $\Hom_{(\uCM R)/[S/(f)]}(Y,\Omega_R(Z))=0=\Hom_{(\uCM R)/[S/(f)]}(\Omega_R(Z),Y)$.
In particular, $(\uCM R)_{S/(f)}=\cZ_{S/(f)}/[R\oplus S/(f)]$ decomposes a product of a full subcategory consisting of objects in $\CM(S/(f))$ and that consisting of objects in $\Omega_R(\CM(S/(g))$ by \ref{describe Z}.

Let $0\to S^n\xrightarrow{A}S^n\to Y\to0$ and $0\to S^m\xrightarrow{A'}S^m\to Z\to0$ be free resolutions, and $AB=fI_n=BA$ and $A'B'=gI_m=B'A'$ be matrix factorizations.
For any $a\in\Hom_R(Y,\Omega_R(Z))$, there exist matrices $C$ and $D$ over $S$ which makes the following diagram commutative:
\[
{\SelectTips{cm}{10}
\xy0;/r.37pc/:
(23,0)*+{0}="1",
(30,0)*+{S^n}="2",
(40,0)*+{S^n}="3",
(50,0)*+{Y}="4",
(57,0)*+{0}="5",
(23,-7)*+{0}="b1",
(30,-7)*+{S^m}="b2",
(40,-7)*+{S^m}="b3",
(50,-7)*+{\Omega_R(Z)}="b4",
(57,-7)*+{0}="b5",
\ar"1";"2"
\ar"2";"3"^A
\ar"3";"4"
\ar"4";"5"
\ar"b1";"b2"
\ar"b2";"b3"^{fB'}
\ar"b3";"b4"
\ar"b4";"b5"
\ar"2";"b2"^D
\ar"3";"b3"^{C}
\ar"4";"b4"^{a}
\endxy}
\]
Multiplying $B$ to the equality $AC=fDB'$ from the left, we have $C=BDB'$. Thus we have a commutative diagram
\[
{\SelectTips{cm}{10}
\xy0;/r.37pc/:
(23,0)*+{0}="1",
(30,0)*+{S^n}="2",
(40,0)*+{S^n}="3",
(50,0)*+{Y}="4",
(60,0)*+{0}="5",
(23,-7)*+{0}="b1",
(30,-7)*+{S^n}="b2",
(40,-7)*+{S^n}="b3",
(50,-7)*+{(S/(f))^n}="b4",
(60,-7)*+{0}="b5",
(23,-14)*+{0}="c1",
(30,-14)*+{S^m}="c2",
(40,-14)*+{S^m}="c3",
(50,-14)*+{\Omega_R(Z)}="c4",
(60,-14)*+{0}="c5",
\ar"1";"2"
\ar"2";"3"^A
\ar"3";"4"
\ar"4";"5"
\ar"b1";"b2"
\ar"b2";"b3"^{fI_n}
\ar"b3";"b4"
\ar"b4";"b5"
\ar"c1";"c2"
\ar"c2";"c3"^{fB'}
\ar"c3";"c4"
\ar"c4";"c5"
\ar@{=}"2";"b2"
\ar"3";"b3"^{B}
\ar"4";"b4"
\ar"b2";"c2"^{D}
\ar"b3";"c3"^{DB'}
\ar"b4";"c4"
\endxy}
\]
which shows that $a$ factors through $\add(S/(f))$.

For any $b\in\Hom_R(\Omega_R(Z),Y)$, there exist matrices $C$ and $D$ over $S$ which makes the following diagram commutative:
\[
{\SelectTips{cm}{10}
\xy0;/r.37pc/:
(23,0)*+{0}="1",
(30,0)*+{S^m}="2",
(40,0)*+{S^m}="3",
(50,0)*+{\Omega_R(Z)}="4",
(57,0)*+{0}="5",
(23,-7)*+{0}="b1",
(30,-7)*+{S^n}="b2",
(40,-7)*+{S^n}="b3",
(50,-7)*+{Y}="b4",
(57,-7)*+{0}="b5",
\ar"1";"2"
\ar"2";"3"^{fB'}
\ar"3";"4"
\ar"4";"5"
\ar"b1";"b2"
\ar"b2";"b3"^{A}
\ar"b3";"b4"
\ar"b4";"b5"
\ar"2";"b2"^D
\ar"3";"b3"^{C}
\ar"4";"b4"^{\psi}
\endxy}
\]
Multiplying $B$ to the equality $fB'C=DA$ from the right, we have $B'CB=D$.
Thus we have a commutative diagram
\[
{\SelectTips{cm}{10}
\xy0;/r.37pc/:
(23,0)*+{0}="1",
(30,0)*+{S^m}="2",
(40,0)*+{S^m}="3",
(50,0)*+{\Omega_R(Z)}="4",
(60,0)*+{0}="5",
(23,-7)*+{0}="b1",
(30,-7)*+{S^n}="b2",
(40,-7)*+{S^n}="b3",
(50,-7)*+{(S/(f))^n}="b4",
(60,-7)*+{0}="b5",
(23,-14)*+{0}="c1",
(30,-14)*+{S^n}="c2",
(40,-14)*+{S^n}="c3",
(50,-14)*+{Y}="c4",
(60,-14)*+{0}="c5",
\ar"1";"2"
\ar"2";"3"^{fB'}
\ar"3";"4"
\ar"4";"5"
\ar"b1";"b2"
\ar"b2";"b3"^{fI_n}
\ar"b3";"b4"
\ar"b4";"b5"
\ar"c1";"c2"
\ar"c2";"c3"^{A}
\ar"c3";"c4"
\ar"c4";"c5"
\ar"2";"b2"_{B'C}
\ar"3";"b3"^C
\ar"4";"b4"
\ar"b2";"c2"_B
\ar@{=}"b3";"c3"
\ar"b4";"c4"
\endxy}
\]
which shows that $b$ factors through $\add(S/(f))$.

\noindent\emph{Step 2:} 
Clearly $\Hom_{(\uCM R)_{S/(f)}}(Y,Y')=\Hom_{\uCM(S/(f))}(Y,Y')$ holds for all $Y,Y'\in\CM(S/(f))$ by \ref{Hom in CY quotient}(1).
It remains to show $\Hom_{(\uCM R)_{S/(f)}}(\Omega_R(Z),\Omega_R(Z'))=\Hom_{\uCM(S/(g))}(Z,Z')$ for all $Z,Z'\in\CM(S/(g))$.
Since $\Omega_R$ gives an equivalence $[-1]:\uCM R\to\uCM R$ and $\Omega_R(S/(f))=S/(g)$, we have
\[\Hom_{(\uCM R)/[S/(f)]}(\Omega_R(Z),\Omega_R(Z'))\cong\Hom_{(\uCM R)/[S/(g)]}(Z,Z').\]
This equals $\Hom_{\uCM(S/(g))}(Z,Z')$ by \ref{Hom in CY quotient}(2). Thus the assertion follows.
\end{proof}

\subsection{General Remarks and Conjectures}\label{conj section}

The concept of CY reduction has been invented as an algebraic tool for proving statements regarding modifying and maximal modifying modules on the base singularity $\Spec R$.  
There is now a conjectural geometric picture underlying this theory, and the following is a slightly weaker version of \cite[Conj 1.8]{IW5}.
\begin{conj}\label{main conj}
Let $R$ be a $3$-dimensional Gorenstein normal normal domain over $\mathbb{C}$ with rational singularities, so $\uCM R$ is a $2$-CY triangulated category with $\dim_R(\uCM R)\leq 1$.
Then there exists a CY reduction $(\uCM R)_M$ of $\uCM R$ with $\dim_R(\uCM R)_M=0$, and further $(\uCM R)_M$  has no non-zero rigid objects.
\end{conj}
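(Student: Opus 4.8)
The plan is to reduce Conjecture~\ref{main conj} to a single geometric statement, after which Theorem~\ref{tilting complex gives} supplies everything else. Since $R$ is $3$-dimensional, Gorenstein and has rational singularities, it has canonical singularities, so the minimal model programme in dimension three (run relatively over $\Spec R$ starting from any resolution) produces a $\mathds{Q}$-factorial terminalization $f\colon Y\to\Spec R$. As $\dim Y=3$ and $Y$ has $\mathds{Q}$-factorial terminal singularities, these singularities are isolated compound Du Val points; in particular $Y$ has only isolated singularities, and by \cite{IW5} $Y$ carries no non-zero modifying objects, since it is a minimal model of $\Spec R$.

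The one thing that remains, and the crux of the matter, is the claim that some such $Y$ is derived equivalent to a ring $\Lambda$. Granting this, Theorem~\ref{tilting complex gives} produces an MM generator $M\in\CM R$ with $(\uCM R)_M\simeq\Dsg(Y)$, together with the assertion that $(\uCM R)_M$ is a $2$-CY triangulated category satisfying $\dim_R(\uCM R)_M=0$ and having no non-zero rigid objects — which is precisely Conjecture~\ref{main conj}. The vanishing $\dim_R(\uCM R)_M=0$ is forced by $Y$ having isolated singularities, via the equivalence $\Dsg(Y)\simeq\uCM\End_R(M)\simeq(\uCM R)_M$ coming from Corollary~\ref{CY red is uCM of End intro} and Buchweitz's theorem \cite{Buch}; and the absence of non-zero rigid objects follows formally, since in a $2$-CY category $\cC$ with $\dim_R\cC=0$ every Hom-space has finite length, so $T_\cC(X,X[1])=\Hom_\cC(X,X[1])$ and a rigid object is automatically modifying, whence by Corollary~\ref{CT and CY reduction}(1) the category has no non-zero rigid objects exactly when $M$ is maximal modifying, which it is.

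Thus the entire content of the conjecture is concentrated in the statement that a $\mathds{Q}$-factorial terminalization of $\Spec R$ is derived equivalent to a ring. This is the hard part, and it is essentially a $3$-dimensional instance of the noncommutative crepant resolution philosophy: it asks that a minimal model $Y$ of $\Spec R$ admit a tilting bundle. It holds when $Y$ is \emph{smooth} (a genuine crepant resolution), by the local classification of crepant resolutions of $3$-fold singularities together with a gluing argument, and it is proved in the present paper for complete local $cA_n$ singularities through the explicit construction of $M=T^\omega$; but in general — with non-smooth terminal points on $Y$ and no hypothesis on $\Sing R$ — it is open, and I expect any complete proof to have to confront it directly.

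An alternative, purely algebraic route, more in the spirit of this paper, would be to construct $M$ directly inside $\CM R$: starting from the modifying generator $R$, use the iterated CY reduction of Corollary~\ref{chainofCMs} to enlarge a modifying generator one indecomposable non-free summand at a time, and show that this stabilises at a maximal modifying generator $M$; it then remains to prove that for such an $M$ the singular locus of $\End_R(M)$ is zero-dimensional, i.e.\ $\dim_R(\uCM R)_M=0$. This last step — that a maximal modifying module must resolve the one-dimensional part of $\Sing R$ — is where the algebraic approach stalls, since I see no way to control it without reintroducing exactly the geometric input above; so by either route the genuine obstacle is the same: proving that the minimal model of $\Spec R$ is derived equivalent to a ring.
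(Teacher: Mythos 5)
What you were asked to prove is stated in the paper only as Conjecture \ref{main conj}; the paper does not prove it in this generality. Its results in this direction are exactly the ones you point to: Theorem \ref{tilting complex gives} establishes the conjecture under the extra hypothesis that some $\mathds{Q}$-factorial terminalization $Y$ of $\Spec R$ is derived equivalent to a ring, and Corollary \ref{1d fibres gives} verifies that hypothesis when $Y\to\Spec R$ has one-dimensional fibres, via Van den Bergh's tilting bundle \cite{VdB1d}. Your proposal reduces the conjecture to precisely this derived-equivalence statement and correctly identifies it as the genuinely open content, so as a (conditional) argument it coincides with the paper's own route; there is no complete proof to compare it against.

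Two caveats on the details. First, your parenthetical claim that the derived equivalence ``holds when $Y$ is smooth, by the local classification of crepant resolutions of $3$-fold singularities together with a gluing argument'' is not justified: the general results available are Van den Bergh's (fibre dimension at most one) and quotient-singularity cases, and a smooth crepant resolution of a $3$-fold rational Gorenstein singularity may well have two-dimensional fibres, where the existence of a tilting object is not known; you should not present this case as settled. Second, your formal derivation of ``no non-zero rigid objects'' from \ref{CT and CY reduction}(1) is slightly loose: that corollary sits in the Krull--Schmidt, basic setting of \ref{bijection for MM}, and the paper explicitly remarks after Conjecture \ref{main conj} that $\uCM R$ need not be Krull--Schmidt here. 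The implication you actually need (an MM generator forces the reduction to have no non-zero modifying, hence no non-zero rigid, objects) can be checked directly from \ref{connection between U and C} and \ref{modifying 2 ok} without Krull--Schmidt, but as written it needs justification; the paper instead deduces the rigidity statement inside the proof of \ref{tilting complex gives} from the embedding $\Dsg(Y)\hookrightarrow\bigoplus_i\uCM\cO_{Y,x_i}$ and \cite[3.11]{IW5} --- and since you invoke \ref{tilting complex gives} wholesale, that part of your argument is in any case redundant.
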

This is somewhat remarkable, since in this level of generality $\uCM R$ is a not Krull--Schmidt, and has many modifying objects.
Yet it still will admit an extremely well-behaved CY reduction.
The best case scenario is when $(\uCM R)_M=0$, which is equivalent to there existing an NCCR of $R$.

We remark that the conjecture is true in quite a broad setting:
\begin{thm}\label{tilting complex gives}
Let $R$ be a $3$-dimensional Gorenstein normal normal domain over $\mathbb{C}$ with rational singularities. 
If some $\mathds{Q}$-factorial terminalization $Y$ of $\Spec R$ is derived equivalent to some ring $\Lambda$,
then there exists an MM generator $M\in\CM R$ of $R$ such that $(\uCM R)_M$ is triangle equivalent to $\Dsg(Y)$.
In particular, Conjecture \ref{main conj} is true.
\end{thm}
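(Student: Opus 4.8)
The plan is to combine the geometric classification from \cite{IW5} with \ref{CY red is uCM of End}, and then to read off the remaining properties of the reduction directly from the geometry of $Y$. First I would invoke \cite{IW5}: since $Y$ is a $\mathds{Q}$-factorial terminalization of $\Spec R$ which is derived equivalent to some ring $\Lambda$, there exists an MM generator $M\in\CM R$ with $\Lambda\cong\End_R(M)$, and the derived equivalence $\Db(\coh Y)\simeq\Db(\mod\Lambda)$ identifies $\Perf(Y)$ with $\Kb(\proj\Lambda)$. Passing to Verdier quotients, this descends to a triangle equivalence $\Dsg(Y)\simeq\Dsg(\Lambda)$. Concretely $M$ arises as the pushforward of a tilting bundle on $Y$ that may be chosen to contain $\O_Y$ as a summand; that its summands lie in $\CM R$ and that the resulting module is maximal modifying is part of the content of \cite{IW5}.

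Next, as $R$ is a $3$-dimensional Gorenstein normal domain it is equi-codimensional with $\dim\Sing R\le 1$, so by \ref{modifying 2 ok} the module $M\in\CM R$ is a modifying generator. Hence \ref{CY red is uCM of End}(2) applies and gives a triangle equivalence $(\uCM R)_M\simeq\uCM\End_R(M)$, while \ref{CY red is uCM of End}(1) tells us that $(\uCM R)_M$ is $2$-CY with $\dim_R(\uCM R)_M\le 1$. Since $\End_R(M)$ has injective dimension $d=3$ on both sides, Buchweitz's theorem \cite{Buch} gives $\uCM\End_R(M)\simeq\Dsg(\End_R(M))$, and the Morita equivalence $\End_R(M)\cong\Lambda$ together with the first paragraph yields $\Dsg(\End_R(M))\simeq\Dsg(\Lambda)\simeq\Dsg(Y)$. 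Chaining these triangle equivalences produces $(\uCM R)_M\simeq\Dsg(Y)$, which is the main assertion.

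It then remains to deduce Conjecture \ref{main conj}. Since $Y$ has only terminal $3$-fold singularities, its singular locus is a finite set of closed points, so every object of $\Dsg(Y)$ is supported on a finite set of closed points of $\Spec R$; as all Hom-spaces in $\Dsg(Y)\simeq(\uCM R)_M$ are finitely generated over $R$, they are therefore of finite length, giving $\dim_R(\uCM R)_M=0$. Finally, $Y$ is a minimal model and admits no non-trivial crepant modification, so $\Dsg(Y)\simeq(\uCM R)_M$ has no non-zero modifying objects — which, via \ref{CT and CY reduction}(1), also re-proves that $M$ is MM — and in particular it has no non-zero rigid objects. This verifies Conjecture \ref{main conj}.

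The hard part will be the first step, which is essentially imported wholesale from \cite{IW5}: the genuinely non-formal inputs are that a $\mathds{Q}$-factorial terminalization derived equivalent to a ring produces a maximal modifying \emph{generator} lying in $\CM R$ (rather than merely a reflexive MM module), and that a derived equivalence between the possibly-singular space $Y$ and the ring $\Lambda$ preserves perfect complexes, hence singularity categories. Everything after that is a formal assembly of \ref{CY red is uCM of End}, Buchweitz's theorem, and elementary support and minimal-model considerations.
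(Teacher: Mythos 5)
The main gap is in your first step. The hypothesis is only that $Y$ is derived equivalent to \emph{some ring} $\Lambda$; it gives you no tilting bundle on $Y$, let alone one containing $\cO_Y$ as a summand, and \cite[4.13]{IW5} does not produce an MM \emph{generator} in $\CM R$ --- it only produces a \emph{reflexive} MM module $N$ with $\Lambda\cong\End_R(N)$, which need be neither a generator nor Cohen--Macaulay. So your identification $\Lambda\cong\End_R(M)$ with $M$ an MM generator in $\CM R$ is unjustified, and everything downstream that relies on it inherits the gap (in particular \ref{CY red is uCM of End} cannot be applied to $N$ itself). The paper's proof is structured precisely to avoid this: it takes the reflexive MM module $N$ with $\End_R(N)\cong\Lambda$ from \cite[4.13]{IW5}, separately invokes \cite[4.18(2)]{IW4} --- this is where the rational singularities hypothesis enters --- to obtain \emph{some} MM generator $M\in\CM R$, a priori unrelated to $N$, and then uses that in dimension three all MMAs are derived equivalent \cite[4.16]{IW4} to get $\Db(\mod\End_R(M))\simeq\Db(\mod\End_R(N))\simeq\Db(\coh Y)$; factoring out perfect complexes and applying \ref{CY red is uCM of End} then yields $(\uCM R)_M\simeq\Dsg(Y)$. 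Note the theorem only asserts the existence of an MM generator whose CY reduction is $\Dsg(Y)$, not that $\End_R(M)\cong\Lambda$.

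There is also a softer gap in your deduction of Conjecture \ref{main conj}. The statement $\dim_R(\uCM R)_M=0$ is fine and is essentially the paper's argument (via the embedding $\Dsg(Y)\hookrightarrow\bigoplus_i\uCM\cO_{Y,x_i}$ of \cite[3.7]{IW5}, with $x_i$ the isolated singular points of $Y$), but the assertion that, because $Y$ is a minimal model admitting no non-trivial crepant modification, $\Dsg(Y)$ has no non-zero modifying or rigid objects is the geometric intuition rather than a proof. The actual input is the local result \cite[3.11]{IW5}: for each $\mathds{Q}$-factorial terminal singular point $x_i$, the category $\uCM\cO_{Y,x_i}$ has no non-zero rigid objects, and this transfers to $(\uCM R)_M$ through the embedding above. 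Without citing or reproving that local statement, your final step does not close.
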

\begin{proof}
Let $f\colon Y\to\Spec R$ denote the $\mathds{Q}$-factorial terminalization which is derived equivalent to $\Lambda$.  
By \cite[4.13]{IW5} $\Lambda\cong\End_R(N)$ for some reflexive $R$-module $N$ which is an MM $R$-module.
By \cite[4.18(2)]{IW4} there exists an MM generator $M\in\CM R$ of $R$.
Since all MMAs are derived equivalent in dimension three \cite[4.16]{IW4}, we have
\[
\Db(\mod\End_R(M))\simeq\Db(\mod\End_R(N))\simeq\Db(\coh Y)
\]
which after factoring by perfect complexes gives
\[
(\uCM R)_M\stackrel{\scriptsize\mbox{\ref{CY red is uCM of End}}}{\simeq}\uCM\End_R(M)\simeq\Dsg(\End_R(M))\simeq\Dsg(Y)\hookrightarrow\bigoplus_{i=1}^{n}\uCM\c{O}_{X,x_i}
\]
where $\{ x_i\mid 1\leq i\leq n\}$ are the (necessarily isolated) singular points of $Y$ \cite[3.7]{IW5}.  
Thus $\dim_R(\uCM R)_M\le\dim_R(\bigoplus_{i=1}^{n}\uCM\c{O}_{X,x_i})=0$ holds.
By \cite[3.11]{IW5}, each $\uCM\c{O}_{X,x_i}$ has no non-zero rigid objects, hence the same is true for $(\uCM R)_M$.
\end{proof}
\begin{cor}\label{1d fibres gives}
Let $R$ be a $3$-dimensional Gorenstein normal domain over $\mathbb{C}$ with rational singularities. If the $\mathds{Q}$-factorial terminalizations of $\Spec R$ have one-dimensional fibres, then Conjecture \ref{main conj} is true.
\end{cor}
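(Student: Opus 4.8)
The plan is to deduce Corollary~\ref{1d fibres gives} directly from Theorem~\ref{tilting complex gives}, whose conclusion already delivers Conjecture~\ref{main conj} the moment one knows that \emph{some} $\mathds{Q}$-factorial terminalization of $\Spec R$ is derived equivalent to a ring. So the whole content to be supplied is this derived-equivalence statement, now under the extra hypothesis on fibre dimensions.

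First I would invoke the minimal model program for threefolds: since $R$ is a $3$-dimensional Gorenstein normal domain over $\mathbb{C}$ with rational singularities, $\Spec R$ has Gorenstein canonical singularities, and hence admits a $\mathds{Q}$-factorial terminalization $f\colon Y\to\Spec R$. The space $Y$ then has only $\mathds{Q}$-factorial terminal Gorenstein singularities, which in dimension three are isolated compound Du Val (in particular, isolated hypersurface) singularities, and $f$ is a projective birational crepant morphism. By the standing hypothesis, every fibre of $f$ has dimension at most one.

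Next I would apply Van den Bergh's tilting-bundle theorem for three-dimensional crepant contractions \cite{VdB}: for a projective birational crepant morphism $f\colon Y\to\Spec R$ between normal Gorenstein threefolds all of whose fibres have dimension at most one, $Y$ carries a tilting bundle $\cV$; setting $M:=f_*\cV$, which is a reflexive $R$-module, one obtains an $R$-algebra $\Lambda:=\End_Y(\cV)\cong\End_R(M)$ together with a triangle equivalence $\Db(\coh Y)\simeq\Db(\mod\Lambda)$. In particular $Y$ is derived equivalent to the ring $\Lambda$, so the hypothesis of Theorem~\ref{tilting complex gives} is satisfied.

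Finally, feeding this $Y$ back into Theorem~\ref{tilting complex gives} produces an MM generator $M\in\CM R$ of $R$ with $(\uCM R)_M$ triangle equivalent to $\Dsg(Y)$; as recorded there (and in its proof, via \ref{CY red is uCM of End}), this forces $\dim_R(\uCM R)_M=0$ and gives that $(\uCM R)_M$ has no non-zero rigid objects, which is exactly the assertion of Conjecture~\ref{main conj}. The only ingredient not already in the excerpt is the existence of the tilting bundle on $Y$, and this is precisely where the one-dimensional-fibre hypothesis is used; since it is a direct appeal to Van den Bergh's result I do not expect a genuine obstacle — the real work is merely checking that the standard hypotheses of that theorem (Gorenstein canonical base, crepant contraction, fibre dimension $\le 1$) hold in the present situation, which they do.
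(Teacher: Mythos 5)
Your argument is correct and follows exactly the paper's route: the paper's proof is the one-line observation that by Van den Bergh \cite{VdB1d} the $\mathds{Q}$-factorial terminalizations (having one-dimensional fibres) carry a tilting bundle, hence are derived equivalent to a ring, so \ref{tilting complex gives} applies. You have simply spelled out the same two steps in more detail, including the verification of Van den Bergh's hypotheses, which matches the intended proof.
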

\begin{proof}
By \cite{VdB1d} the $\mathds{Q}$-factorial terminalizations carry a tilting bundle, so the result follows from \ref{tilting complex gives}.
\end{proof}

\section{Mutation}\label{Mutation Section}

\subsection{Result on Transitivity}

In this section we recall the notion of mutation of modifying modules and their basic properties given in \cite[Section 6.2]{IW4}, then give a method to prove when a given set of MM generators are all.  In \S\ref{Section cAn} we will apply this result together with the techniques of CY reduction developed in the previous sections to classify all MM generators over certain explicit singularities.

Throughout the section we assume that $R$ is a complete local normal Gorenstein domain with $\dim R=3$.  
\emph{Mutation} is an operation for modifying $R$-modules which gives a new modifying $R$-module for a given basic modifying $R$-module by replacing a direct summand of $M$. 
We recall how this is defined \cite[\S6]{IW4}.

We let $M:=\bigoplus_{i\in I}M_{i}$ be a modifying $R$-module, where we can (and will) assume that $M$ is basic, i.e.\ all summands are pairwise non-isomorphic.  
We denote $\Hom_{R}(-,R):=(-)^{*}:\refl R\to \refl R$ to be the duality functor.
For a subset $J$ of $I$, set $M_{J}:=\bigoplus_{j\in J}M_{j}$ and $J^c:=I\backslash J$. Thus we have $M_J\oplus M_{J^c}=M$. 
Now we take a \emph{minimal right $(\add M_{J^c})$-approximation}
\[N\xrightarrow{f}M_J\]
of $M_J$, which means that
\begin{itemize}
\item $N\in\add M_{J^c}$ and $(\cdot f)\colon\Hom_R(M_{J^c},N)\to\Hom_R(M_{J^c},M_J)$ is surjective,
\item if $g\in\End_R(N)$ satisfies $f=gf$, then $g$ is an automorphism.
\end{itemize}
Since $R$ is complete, such an $f$ exists and is unique up to isomorphism.
A \emph{right mutation} of $M$ is defined as
\[\mu_J^+(M):=M_{J^c}\oplus\Ker f.\]
Dually we define a \emph{left mutation} of $M$ as
\[\mu_J^-(M):=(\mu_J^+(M^*))^*.\]
Below we collect basic properties.
\begin{prop}\cite[6.10,\ 6.25]{IW4} 
\t{(1)} $\mu_J^+(M)$ and $\mu_J^-(M)$ are modifying $R$-modules and satisfy $\mu^+_J(\mu_J^-(M))\cong M\cong \mu^-_J(\mu_J^+(M))$.\\
\t{(2)} If $M$ is an MM (respectively, CT) $R$-module, then so are $\mu_J^+(M)$ and $\mu_J^-(M)$.\\
\t{(3)} If $M$ is an MM (respectively, CT) $R$-module, and $J=\{i\}$, then we have $\mu_J^+(M)\cong\mu_J^-(M)$, which we denote by $\mu_i(M)$.
\end{prop}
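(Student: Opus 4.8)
The plan is to deduce the whole statement from tilting theory over the order $\Lambda:=\End_R(M)$, together with uniqueness of minimal approximations, and to move between $\mu_J^+$ and $\mu_J^-$ by means of the duality $(-)^*=\Hom_R(-,R)$. Since $(-)^*$ is a duality on $\refl R$ with $\End_R(M^*)$ the same $R$-module as $\End_R(M)$, being modifying is self-dual (cf.\ \ref{modifying 2 ok}(1)); so it is enough to treat $\mu_J^+$, after which the statements for $\mu_J^-$ follow by dualizing. Throughout I would work through the equivalence $\Hom_R(M,-)\colon\refl R\to\refl\Lambda$ of \ref{reflexive equivalence}, under which $M_{J^c}$ goes to a projective $\Lambda$-module $P_{J^c}:=\Hom_R(M,M_{J^c})$, $M_J$ goes to $P_J$, and the minimal right $(\add M_{J^c})$-approximation $N\xrightarrow{f}M_J$ goes to a minimal right $(\add P_{J^c})$-approximation of $P_J$ in $\refl\Lambda$.

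First I would prove that $\mu_J^+(M)$ is reflexive and modifying. Completing $f$ to an exchange sequence $0\to\Ker f\to N\xrightarrow{f}M_J$ and replacing modules by their reflexive hulls where needed --- here $f$ becomes an epimorphism in codimension one, because $R$ is a normal domain and $M_{J^c}$ generates $M_J$ generically, so $\Ker f$ is a second syzygy and lies in $\refl R$ --- one applies $\Hom_R(M,-)$ and $\Hom_R(-,M)$ and combines the approximation property with the $\Ext$-vanishing coming from $\Lambda\in\CM R$ (equivalently from the modifying conditions of \ref{modifying def}). This shows that $T:=\Hom_R(M,\mu_J^+(M))=P_{J^c}\oplus\Hom_R(M,\Ker f)$ is a tilting $\Lambda$-module of projective dimension at most one with $\End_\Lambda(T)$ isomorphic, up to opposites, to $\End_R(\mu_J^+(M))$. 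Tilting theory then gives a derived equivalence $\Db(\mod\Lambda)\simeq\Db(\mod\End_R(\mu_J^+(M)))$; since $\mu_J^+(M)\in\refl R$, a depth argument using that $R$ is Gorenstein of dimension three together with the AR-duality of \ref{nonisolatedAR} upgrades this to $\End_R(\mu_J^+(M))\in\CM R$, i.e.\ $\mu_J^+(M)$ is modifying.

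For the inverse identity I would mutate $\mu_J^+(M)=M_{J^c}\oplus\Ker f$ again at $J$: a minimal right $(\add M_{J^c})$-approximation of $\Ker f$ is supplied by the appropriate rotation of the exchange sequence above, with kernel $M_J$, so the uniqueness of minimal approximations over the complete local ring $R$ forces $\mu_J^+(\mu_J^-(M))\cong M$, and $\mu_J^-(\mu_J^+(M))\cong M$ is the $(-)^*$-dual statement. For part (2), being MM (respectively CT) is governed by the CY reduction through \ref{bijection for MM} and \ref{CT and CY reduction} --- equivalently by which maximal rigid, respectively cluster tilting, objects exist --- and the derived equivalence produced above is compatible with these classifications, so it carries MM modules to MM modules and CT modules to CT modules. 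For part (3) with $J=\{i\}$, where the MM/CT hypothesis is genuinely needed, I would use it to see that the minimal right $(\add M_{J^c})$-approximation and the minimal left $(\add M_{J^c})$-approximation of the single summand $M_i$ introduce the \emph{same} new indecomposable module --- the two exchange sequences at $M_i$ being interchanged by the AR-duality of \ref{nonisolatedAR} once $M$ is maximal rigid --- so that $\mu_J^+(M)\cong\mu_J^-(M)$.

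The decisive step is the second paragraph: checking that $T$ is honestly tilting (projective dimension at most one, $\Ext^1_\Lambda(T,T)=0$, and the right number of non-isomorphic indecomposable summands) and that a derived equivalence $\Lambda\simeq\End_R(\mu_J^+(M))$ forces the target to be Cohen--Macaulay over $R$. Both rest on the modifying hypothesis $\Lambda\in\CM R$ and on the AR-duality for the non-isolated singularity recorded in \ref{nonisolatedAR}, and one must be careful with the reflexive-hull and codimension-one issues, since $\Ker f$ is a priori only reflexive over $R$, not Cohen--Macaulay. After that everything is formal: tilting theory plus uniqueness of minimal approximations for the inverse identity, transport of the classification of modifying modules for part (2), and a self-dual computation for part (3).
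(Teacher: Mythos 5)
The paper itself contains no proof of this proposition: it is quoted verbatim from \cite[6.10, 6.25]{IW4}, so the only meaningful comparison is with the arguments given there, and your outline points in the same general direction (work over $\Lambda=\End_R(M)$ via $\Hom_R(M,-)$, use tilting theory, and pass between $\mu_J^+$ and $\mu_J^-$ by the duality $(-)^*$). However, at each decisive step you assert exactly what makes the proofs in \cite{IW4} nontrivial. First, the claim that $T=\Hom_R(M,\mu_J^+(M))$ is a tilting $\Lambda$-module is not checked: applying $\Hom_R(M,-)$ to $0\to\Ker f\to N\xrightarrow{f}M_J$ does not give surjectivity on the right (since $f$ is only an $(\add M_{J^c})$-approximation, not an $(\add M)$-approximation), so $\pd_\Lambda T\le 1$, $\Ext^1_\Lambda(T,T)=0$ and the coresolution of $\Lambda$ all require genuine arguments; indeed the tilting module the present paper quotes from \cite{IW4} is $V_J=\Hom_R(M,M_{J^c})\oplus\Cok(\cdot g)$, and identifying it with $\Hom_R(M,\mu_J^-(M))$ already needs the reflexive-hull and Ext-vanishing analysis you gloss over. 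Second, the step ``derived equivalence with $\Lambda\in\CM R$ plus a depth argument forces $\End_R(\mu_J^+(M))\in\CM R$'' is not justified; in \cite{IW4} modifying-ness of the mutation is proved directly from the exchange sequence, and the closure statements under derived equivalence are themselves theorems whose proofs are independent of (and logically prior to) what you want here. Third, for the inverse identity $\mu_J^-(\mu_J^+(M))\cong M$, ``uniqueness of minimal approximations'' is not enough: one must prove that the rotated sequence is again an approximation sequence, e.g.\ that $\Ker f\to N$ is a left $(\add M_{J^c})$-approximation, which is an $\Ext^1$-vanishing statement requiring the modifying hypothesis — this is precisely the content of \cite[6.10]{IW4}.

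The most serious gap is part (3). Your justification — that for $M$ maximal rigid the two exchange sequences at $M_i$ are interchanged by AR duality — is reasoning imported from the Hom-finite $2$-CY setting, and it is exactly what fails here: Hom-spaces only satisfy $\dim_R\le 1$, the Serre duality available is the filtered version of \ref{nonisolatedAR}, and $\mu_i(M)\cong M$ can genuinely occur (the loops in $\Gr(\MMG R)$ displayed later in this paper), a phenomenon impossible in the Hom-finite $2$-CY world; the equality $\mu_i^+(M)\cong\mu_i^-(M)$ at an indecomposable summand of an MM module is one of the harder results of \cite[6.25]{IW4} and does not follow from a one-line duality argument. Similarly, in part (2) the sentence ``the derived equivalence produced above is compatible with these classifications'' is an assertion, not a proof: the bijection of \ref{bijection for MM} concerns objects of the CY reduction $\cC_M$, not transport along a derived equivalence, and preservation of MM (and CT, where one must also worry about the generator/free summand when $J$ meets it) under mutation is again the substance of the cited result rather than a formal consequence of it.
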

It is immediate from (1) and (3) above that if $M$ is an MM $R$-module, then $\mu_i(\mu_i(M))\cong M$ holds.  The following is the main result of \cite[\S6]{IW4}.

\begin{thm}\label{welldefined}\cite[6.8]{IW4}
Let $R$ be a complete local $d$-dimensional Gorenstein normal normal domain, let $M:=\bigoplus_{i\in I}M_{i}$ be a basic modifying $R$-module and choose any $\emptyset\neq J\subseteq I$.
Then $\End_R(M)$ and $\End_R(\mu_J^{\pm}(M))$ are derived equivalent.
\end{thm}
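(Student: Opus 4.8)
The plan is to realise $\End_R(\mu_J^{\pm}(M))$ as the endomorphism ring of an explicit tilting complex over $\Lambda:=\End_R(M)$ manufactured from the approximation sequence defining the mutation, and then to apply Rickard's theorem. First I would reduce to $\mu_J^+$. The duality $(-)^{*}=\Hom_R(-,R)\colon\refl R\to\refl R$ carries a basic modifying module to a basic modifying module and induces a ring anti-isomorphism $\End_R(M^{*})\cong\End_R(M)^{\op}$; since $A$ and $B$ derived equivalent implies $A^{\op}$ and $B^{\op}$ derived equivalent, and $\mu_J^-(M)=(\mu_J^+(M^{*}))^{*}$, the statement for $\mu_J^-$ follows from that for $\mu_J^+$. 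So fix the minimal right $(\add M_{J^c})$-approximation $f\colon N\to M_J$, put $K:=\Ker f$, so that $\mu_J^+(M)=M_{J^c}\oplus K$ and there is an exact sequence $0\to K\to N\xrightarrow{f}M_J$ in $\refl R$. Throughout I would translate between $R$-modules and $\Lambda$-modules using the equivalence $\Hom_R(M,-)\colon\refl R\xrightarrow{\sim}\refl\Lambda$ of \eqref{reflexive equivalence}, under which $N$ becomes a projective in $\add\Hom_R(M,M_{J^c})$ and $\Hom_R(M,M_J)$, $\Hom_R(M,M_{J^c})$ are the two blocks of $\Lambda$.

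Next I would set $T_J:=\bigl(\Hom_R(M,N)\xrightarrow{f\cdot}\Hom_R(M,M_J)\bigr)$ in cohomological degrees $-1,0$, and $T:=\Hom_R(M,M_{J^c})\oplus T_J\in\Kb(\proj\Lambda)$ with the projective summand in degree $0$. This is a genuine two-term complex, its top cohomology $\Cok(f\cdot)$ being a nonzero $\Lambda$-module supported on the $J$-vertices since $M$ is basic, so the argument must live in $\Kb(\proj\Lambda)$ and not inside module categories. That $T$ generates $\Kb(\proj\Lambda)$ as a thick subcategory is immediate: $\Hom_R(M,N)\in\add\Hom_R(M,M_{J^c})\subseteq\thick{T}$, so the defining triangle of $T_J$ forces $\Hom_R(M,M_J)\in\thick{T}$, whence all indecomposable projectives. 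To identify the endomorphism ring I would compute $H^0(\RHom_\Lambda(T,T))$ --- morphisms out of the projective summand together with chain maps of $T_J$ modulo homotopy --- and reassemble these, using that $f$ is a minimal right approximation and the equivalence \eqref{reflexive equivalence}, into $\End_R(M_{J^c}\oplus K)=\End_R(\mu_J^+(M))$ up to taking opposite rings, with $H^{-1}(T_J)=\Hom_R(M,K)$ contributing the mutated summand.

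The hard part will be the self-orthogonality condition $\Hom_{\Db(\Lambda)}(T,T[n])=0$ for all $n\neq0$. The contributions involving the projective summand, and those of $T_J$ in positive degree, follow formally from $f$ being a right $(\add M_{J^c})$-approximation, which pins down $\Cok(f\cdot)$. The genuine content is the vanishing in negative degrees, in particular of $\Hom_{\Db(\Lambda)}(T_J,T_J[n])$ for $n<0$: this is exactly where the hypothesis that $M$ is modifying --- equivalently $\End_R(M)\in\CM R$, equivalently vanishing of the relevant $\Ext^i_R(M,M)$ --- must be used in an essential way, via Auslander--Reiten and local-duality arguments over the Gorenstein normal domain $R$, in the same spirit as the proof of \ref{nonisolatedAR} and of the corresponding statements in \cite{IW4}. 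Granting self-orthogonality together with the generation already checked, Rickard's theorem yields a triangle equivalence $\Db(\mod\Lambda)\simeq\Db(\mod\End_R(\mu_J^+(M)))$, which is the assertion. Treating $M_J$ as a single, possibly decomposable, ``fat'' summand throughout, no additional argument is needed for non-indecomposable $J$.
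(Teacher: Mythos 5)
Your overall framework --- reduce via $(-)^{*}=\Hom_R(-,R)$ to one of the two mutations, build a tilting object over $\Lambda:=\End_R(M)$ from the approximation sequence, and invoke Rickard --- is the right one and matches \cite[6.8]{IW4}. However, the complex $T$ you write down is not self-orthogonal, and the failure occurs exactly where you claim things ``follow formally''. Consider the contribution to $\Hom_{\Db(\Lambda)}(T,T[-1])$ coming from the projective summand $P_{J^{c}}:=\Hom_R(M,M_{J^{c}})$ mapping into $T_J[-1]$. Since $T_J[-1]$ sits in degrees $0,1$ while $P_{J^c}$ sits in degree $0$, a chain map is a $\Lambda$-homomorphism $P_{J^c}\to\Hom_R(M,N)$ whose image lies in $\Ker\,\Hom_R(M,f)=\Hom_R(M,K)$, and there are no nonzero homotopies. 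Via the fully faithful $\Hom_R(M,-)\colon\refl R\to\refl\Lambda$ one gets
\[
\Hom_{\Db(\Lambda)}\bigl(P_{J^c},\,T_J[-1]\bigr)\;\cong\;\Hom_R(M_{J^c},K),
\]
which is nonzero whenever $M_{J^c}$ and $K$ are both nonzero, since over a domain any two nonzero torsion-free modules admit a nonzero homomorphism. So $T$ is not a tilting complex; the modifying hypothesis cannot repair this, because the obstruction lives on the projective summand --- precisely the part you dismissed as formal --- and it detects exactly the new summand $K$.

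The root cause is the direction of your approximation. Applying $\Hom_R(M,-)$ to the \emph{right} $(\add M_{J^c})$-approximation $f\colon N\to M_J$ produces a two-term complex $T_J$ with $H^{-1}(T_J)=\Hom_R(M,K)\neq0$; but the regular module $\Lambda$ is the maximum of the tilting poset, so no two-term tilting complex over $\Lambda$ has nonzero $H^{-1}$. The construction that works --- sketched in the text immediately after the theorem statement, and the one used in \cite{IW4} --- instead takes the \emph{left} $(\add M_{J^c})$-approximation $g\colon M_J\to N'$ and forms the tilting $\Lambda$-\emph{module} $V_J:=\Hom_R(M,M_{J^c})\oplus\Cok(\cdot g)$, whose projective presentation $0\to\Hom_R(M,M_J)\xrightarrow{\cdot g}\Hom_R(M,N')\to\Cok(\cdot g)\to0$ has $H^{-1}=0$. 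This realises $\Db(\End_R(M))\simeq\Db(\End_R(\mu_J^-(M)))$, and the $\mu_J^+$ case then follows from $\mu_J^-$ by exactly your duality observation --- only with the reduction run in the opposite direction to the one you chose.
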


For the case $\mu_J^-(M)$, the derived equivalence is given by a tilting $\End_R(M)$-module $V_J$ constructed as follows.
Let $g\colon M_J\to N'$ be a minimal left $(\add M_{J^c})$-approximation of $M_J$.
For the induced map $(\cdot g)\colon\Hom_R(M,M_J)\to\Hom_R(M,N')$, our tilting $\End_R(M)$-module is given by
\[
V_J:=\Hom_R(M,M_{J^c})\oplus\Cok(\cdot g).
\]
This gives rise to an equivalence
\[
\RHom(V_J,-)\colon\Db(\mod\End_{R}(M))\to \Db(\mod\End_{R}(\mu_J^-(M)))
\]
but note that this functor is never the identity. On the other hand $\mu_J^-(M)=M$ can happen (see \S\ref{Section cAn}).

\subsection{MM Mutation and Tilting Mutation}

In the rest of this section, we specialize the previous setting to the case when modifying modules are \emph{MM generators}.  
Moreover we mutate them at \emph{non-free indecomposable} summands.
We denote by $\MMG R$ the set of isomorphism classes of basic MM generators of $R$.
Thus for a given basic MM generator $M=R\oplus(\bigoplus_{i\in I}M_i)$, we have a new MM generator $\mu_i(M)$ by replacing an indecomposable non-free direct summand $M_i$ of $M$.
We denote by $\Gr(\MMG R)$ the \emph{exchange graph}, thus the set of vertices is $\MMG R$, and we draw an edge between $M$ and $\mu_i(M)$ for each $M\in\MMG R$ and $i\in I$.

One of the difficulties in mutation for MM generators is that $\mu_i(M)$ can be isomorphic to $M$, which never happens in mutation in 2-CY triangulated categories $\cC$ with $\dim\cC=0$.  It is shown in \cite[1.25(1)(2)]{IW4} that $\mu_i(M)$ is isomorphic to $M$ if and only if the algebra $\End_R(M)/(1-e_i)$ is not artinian.  In this case we have a loop at $M$ in $\Gr(\MMG R)$.

The aim of this section is to prove the following result, which is an analogue of \cite[4.9]{AIR} for 2-CY triangulated categories.  However, due to the existence of loops in $\Gr(\MMG R)$, we need a more careful argument.

\begin{thm}\label{Auslander type}
If $\Gr(\MMG R)$ has a finite connected component $C$, then $\Gr(\MMG R)=C$.
\end{thm}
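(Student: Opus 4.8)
The plan is to show that the exchange graph $\Gr(\MMG R)$ is connected by proving that any finite connected component $C$ is closed under mutation at arbitrary (not necessarily indecomposable) summands as well, and then deriving a contradiction if $C \neq \Gr(\MMG R)$. The key point is that, by the general theory of \ref{welldefined}, all MM generators lying in $C$ are derived equivalent to a fixed algebra $\Lambda_0 = \End_R(M_0)$ for $M_0 \in C$; and by \ref{tilting complex gives} applied to (or rather, by the relationship between MM generators and tilting modules developed in \cite{IW4}) the mutations $\mu_J^{\pm}(M)$ correspond under the equivalence $\RHom(V_J,-)$ to tilting-mutations of tilting $\Lambda_0$-modules. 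So the statement should reduce to a statement purely about (silting/tilting) mutation of the algebra $\Lambda_0$, where the analogous transitivity/connectedness result \cite[4.9]{AIR} is available for $2$-CY categories. The extra subtlety, as the authors flag, is the presence of loops: $\mu_i(M) \cong M$ can happen, so the naive graph-theoretic argument of \cite{AIR} does not transfer verbatim.

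First I would set up the correspondence carefully. Fix $M \in C$, write $\Lambda := \End_R(M)$, and recall that for each $\emptyset \neq J \subseteq I$ the module $V_J$ from the paragraph after \ref{welldefined} is a tilting $\Lambda$-module with $\End_\Lambda(V_J) \cong \End_R(\mu_J^-(M))$. One should check that the assignment $M' \mapsto \Hom_R(M, M')$ (for $M' \in \MMG R$ related to $M$ by a chain of mutations) gives a well-defined injection from the component $C$ into the set of basic tilting $\Lambda$-modules, compatible with mutation: $\mu_J^-$ on the left corresponds to the tilting mutation $\mu_J^-$ on the right. This is essentially bookkeeping using \cite[\S6]{IW4}, but it is the technical heart of the reduction.

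Next, with the problem transported to tilting theory of $\Lambda$, I would invoke the connectedness machinery for silting/tilting mutation. The analogue of \cite[4.9]{AIR} says: if the set of tilting (or two-term silting) $\Lambda$-modules contains a finite connected component under mutation, then it is the whole set, hence connected. The loops are handled by passing from mutation at indecomposable summands to mutation at arbitrary summands $J$: the point is that even though $\mu_i(M) \cong M$ may occur (detected by $\End_R(M)/(1-e_i)$ not being artinian, \cite[1.25]{IW4}), for a larger set $J$ the mutation $\mu_J^{\pm}(M)$ is still connected to $M$ in $C$ by a path (decompose $\mu_J$ into a sequence of single-vertex mutations, each of which either is a loop or an honest edge, all staying inside $C$ since $C$ is a connected component). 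So $C$ is automatically closed under all mutations $\mu_J^{\pm}$. Then if $N \in \MMG R \setminus C$ existed, one would connect $N$ to $M$ using the transitivity of tilting mutation inside a common derived equivalence class — but this contradicts $C$ being a \emph{connected component}, unless $C = \Gr(\MMG R)$.

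The main obstacle I anticipate is precisely the loop issue: one must argue that a finite connected component, viewed as a full subgraph, is genuinely ``saturated'' under the partial order / mutation structure despite loops collapsing some vertices, so that one cannot have a separate component $N$ that is tilting-mutation-equivalent to $M$ but graph-disconnected from it. Concretely, the delicate step is translating ``$\mu_i(M) \cong M$'' correctly on the tilting side and verifying that the Hasse-quiver / mutation-graph argument of \cite[4.9]{AIR} survives when some covers in the poset of tilting modules become equalities. I would expect to spend most of the effort checking that the combinatorics of the finite component $C$ forces every mutation of every $M' \in C$ to land back in $C$, and then concluding by the known transitivity result for tilting modules over a fixed algebra.
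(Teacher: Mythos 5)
Your first step is the same as the paper's: fix $M_0\in C$, set $\Lambda:=\End_R(M_0)$, and transport the problem along $F=\Hom_R(M_0,-)$, using the fact that $F$ embeds $\MMG R$ into the tilting $\Lambda$-modules and that MM mutation is compatible with tilting mutation (this is \ref{from MMG to S} and \ref{compatibility of mutation} in the paper, and it is indeed where the loops are diagnosed). But after that reduction your argument has a genuine gap: you propose to ``invoke the connectedness machinery for silting/tilting mutation'' and to connect a hypothetical $N\in\MMG R\setminus C$ to $M_0$ ``using the transitivity of tilting mutation inside a common derived equivalence class.'' No such transitivity theorem is available here. The exchange graph of tilting modules over an arbitrary algebra is in general \emph{not} connected, \cite[4.9]{AIR} is a statement about $\tau$-tilting/two-term silting over a finite dimensional algebra and does not apply to $\Lambda$ (an order over a complete local $3$-fold), and in any case the image of $\MMG R$ is only the proper subset $S\subseteq\tilt\Lambda$ of reflexive tilting modules containing $F(R)$ as a summand, on which mutations may fail to exist or may leave $S$ (this is exactly what a loop records). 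Appealing to transitivity at this point is circular: transitivity is the content of the theorem.

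What is missing is the order-theoretic engine that the paper builds to replace it. One needs: (i) the partial order $T\ge U\Leftrightarrow\Ext^1_\Lambda(T,U)=0$ on $\tilt\Lambda$, together with the statement that whenever $T>U$ there is a tilting mutation strictly between them in either direction (\ref{make closer}(3),(4)); (ii) the fact that $S$ is closed under going up in this order (\ref{S is a poset ideal}), so that these intermediate mutations stay inside $S$; and (iii) the fact that $F(C)$ is a whole connected component of $\Gr(S)$ (\ref{F(C) is a component}). With these, the finite component is forced to contain $\Lambda$ (climb from any $T\in F(C)$ towards $\Lambda$ by strictly increasing mutations; finiteness stops the chain), and then every $U\in S$ is reached from $\Lambda$ by strictly decreasing mutations staying in the component (\ref{Auslander type 2}, a generalization of \cite[2.2]{HU}); injectivity of $F$ then gives $C=\Gr(\MMG R)$. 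Your loop discussion (decomposing $\mu_J$ into single mutations inside $C$) only shows $C$ is closed under mutation, which is true by definition of a connected component and does not touch the real problem of reaching MM generators outside $C$; it is the monotone climbing/descending argument through the poset, not closure of $C$, that circumvents the loops.
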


To prove \ref{Auslander type} requires some preparation.  
Fix an MM generator $M_0\in\CM R$ and $\Lambda:=\End_R(M_0)$.
Then the functor
\[F:=\Hom_R(M_0,-)\colon\mod R\to\mod\Lambda\]
is fully faithful and (since $R$ is normal) induces an equivalence $F\colon\refl R\to\refl\Lambda$, where we denote by $\refl\Lambda$ the full subcategory of $\mod\Lambda$ consisting of modules that are reflexive as $R$-modules.
Recall we say that $T\in\mod\Lambda$ is a \emph{tilting $\Lambda$-module} if
\begin{itemize}
\item $\pd_\Lambda T\le 1$, 
\item $\Ext^1_\Lambda(T,T)=0$,
\item there exists an exact sequence $0\to\Lambda\to T^0\to T^1\to0$ with $T^0,T^1\in\add T$.
\end{itemize}
We denote $\Fac T$ to be the full subcategory of $\mod\Lambda$ consisting of factor modules of finite direct sum of copies of $T$.  One important property of tilting modules is
\begin{eqnarray}
\Fac T=\{  X\in\mod\Lambda\mid \Ext^1_\Lambda(T,X)=0\}.\label{Fac T}
\end{eqnarray}
In particular for any $X\in\Fac T$ there is an exact sequence
\begin{eqnarray}
0\to Y\to T^\prime\to X\to 0\label{Fac T 2}
\end{eqnarray}
with $Y\in\Fac T$ and $T^\prime\in\add T$.  From this we immediately get
\begin{eqnarray}
\add T=\{  X\in\Fac T\mid \Ext^1_\Lambda(X,\Fac T)=0\}.\label{Fac T 3}
\end{eqnarray}

It is shown in \cite[1.19]{IW4} that $F$ gives an injective map
\[F\colon\MMG R\to\tilt\Lambda,\]
where we denote by $\tilt\Lambda$ the set of isomorphism classes of basic tilting $\Lambda$-modules.

The main ingredient of the proof of \ref{Auslander type} is tilting mutation theory initiated by Riedtmann--Schofield and Happel--Unger \cite{RS,HU}.
We refer to \cite{AI} for a general treatment of tilting mutation.  
Recall that $\tilt\Lambda$ has a natural structure of partially ordered set: We write $T\ge U$ if $\Ext^1_\Lambda(T,U)=0$, or equivalently by \eqref{Fac T} $\Fac T\subseteq \Fac U$.  It is immediate from \eqref{Fac T 3} that $T\geq U\geq T$ implies that $T\cong U$.

On the other hand, for a basic tilting $\Lambda$-module $T$ and an indecomposable direct summand $T_i$ of $T$,
there exists at most one basic tilting $\Lambda$-module $\mu_i(T)=(T/T_i)\oplus T^*_i$ such that $T_i\ {\not\cong}\ T^*_i$ (cf. \cite{RS}).
We call $\mu_i(T)$ a \emph{tilting mutation} of $T$. In this case, we have either an exact sequence 
\[
0\to T_i\xrightarrow{f}T'\to T^*_i\to 0
\]
with a minimal left $(\add T/T_i)$-approximation $f$, or an exact sequence
\[
0\to T^*_i\to T'\xrightarrow{g}T_i\to 0
\]
with a minimal right $(\add T/T_i)$-approximation $g$.
We have $T>\mu_i(T)$ in the former case, and $T<\mu_i(T)$ in the latter case. Conversely $T_i^*$ obtained from one of the above sequences gives $\mu_i(T)$ if $T_i^*$ has projective dimension at most one (see e.g.\ \cite[5.2]{IR}).

We denote by $\Gr(\tilt\Lambda)$ the \emph{exchange graph} of tilting $\Lambda$-modules,
i.e. the set of vertices is $\tilt\Lambda$ and we draw an edge between $T$ and $\mu_i(T)$ for all $T\in\tilt\Lambda$ and $i$ such that $\mu_i(T)$ exists.

We prepare the following results.

\begin{prop}\label{make closer}
Assume that $T,U\in\tilt\Lambda$ satisfies $T>U$.\\
\t{(1)} There exists an exact sequence $0\to T^{\prime\prime}\to T^{\prime}\to U\to0$ with $T^{\prime},T^{\prime\prime}\in\add T$ and $\add T^\prime\cap \add T^{\prime\prime}=0$.\\
\t{(2)} There exists an exact sequence $0\to T\to U^\prime\to U^{\prime\prime}\to0$ with $U^\prime,U^{\prime\prime}\in\add U$ and $\add U^\prime\cap \add U^{\prime\prime}=0$.\\
\t{(3)} There exists a tilting mutation $T'$ of $T$ such that $T>T'\ge U$.\\
\t{(4)} There exists a tilting mutation $U'$ of $U$ such that $T\ge U'>U$.
\end{prop}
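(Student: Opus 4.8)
The plan is to deduce all four statements from the basic structure theory of tilting modules recalled in equations \eqref{Fac T}--\eqref{Fac T 3}, together with the standard fact (Bongartz-type completion) that partial tilting modules extend to tilting modules, and the characterization of tilting mutation in terms of approximation sequences. I would prove (1) and (2) first, then derive (3) and (4) by a careful bookkeeping argument on the number of non-isomorphic indecomposable summands shared by $T$ and $U$.

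\emph{Parts (1) and (2).} Since $T>U$ means $\Fac T\subseteq\Fac U$, and in particular $U\in\Fac T$, I would iterate the defining sequence \eqref{Fac T 2}: starting from $0\to Y\to T'\to U\to0$ with $Y\in\Fac T$, the module $Y$ is reflexive (it is a second syzygy over $\Lambda$ of $U$, or directly: it sits inside $T'$ which is reflexive, and the cokernel $U$ has projective dimension $\le 1$, so $Y$ has projective dimension $0$ over $\Lambda$, hence $Y\in\add T$ by \eqref{Fac T 3} once one checks $\Ext^1_\Lambda(Y,\Fac T)=0$, which follows from $\Ext^1_\Lambda(Y,\Fac T)\hookrightarrow\Ext^2_\Lambda(U,\Fac T)=0$ since $\pd_\Lambda U\le1$). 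This gives the sequence in (1); removing common summands of $T'$ and $T''$ (they split off) arranges $\add T'\cap\add T''=0$. Part (2) is the dual statement: apply the contravariant argument, or equivalently observe that $T\in\Fac U$ is false in general but $\Ext^1_\Lambda(T,U)=0$ gives, via the exact sequence $0\to\Lambda\to U^0\to U^1\to0$ defining $U$ as a tilting module and a dimension shift, a short exact sequence $0\to T\to U'\to U''\to0$ with $U',U''\in\add U$; again cancel common summands. The one subtle point here is justifying that $Y\in\add T$ rather than merely $Y\in\Fac T$, which is exactly where \eqref{Fac T 3} and the projective dimension bound are used.

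\emph{Parts (3) and (4).} Write $X$ for the basic module with $\add X=\add T\cap\add U$, and decompose $T=X\oplus\bar T$, $U=X\oplus\bar U$ with $\add\bar T\cap\add\bar U=0$; since $T\neq U$ we have $\bar T\neq 0$. I would pick an indecomposable summand $T_i$ of $\bar T$ that appears in the left-hand term $T''$ of the sequence in (1) (such a summand exists because $\add T'\cap\add T''=0$ forces $T''\in\add\bar T$, and $T''\neq 0$ since $U\notin\add T'$ as $U\not\cong T$), and form $T'=(T/T_i)\oplus T_i^*$, the downward tilting mutation at $T_i$, using the approximation sequence $0\to T_i\to T^{(0)}\to T_i^*\to 0$ with $T^{(0)}\in\add(T/T_i)$. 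One checks $T_i^*$ has projective dimension $\le1$ and $\Ext^1_\Lambda(T',U)=0$ (i.e.\ $T'\ge U$) by applying $\Hom_\Lambda(-,U)$ to that sequence and using $\Ext^1_\Lambda(T,U)=0=\Ext^2_\Lambda(\cdot,U)$ together with the fact that every map $T_i\to U$ factors through the approximation $T^{(0)}$; and $T>T'$ is automatic from the mutation being downward. Part (4) is dual, mutating $U$ upward at a summand of $\bar U$ appearing in the right-hand term $U''$ of the sequence in (2).

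\emph{Main obstacle.} The genuinely delicate step is (3): one must guarantee that the chosen mutation is a \emph{downward} mutation (so that $T>T'$) \emph{and} that it does not overshoot $U$ (so that $T'\ge U$), and moreover that the relevant exchange sequence is of the left-approximation type rather than the right-approximation type. The clean way to force this is the argument above—choosing $T_i$ from the term $T''$ of the sequence in (1)—since then $U\in\Fac T'$ can be read off directly from that sequence, giving $\Fac T'\subseteq\Fac U$, i.e.\ $T'\ge U$. Verifying that $T_i^*$ thus produced genuinely has projective dimension at most one (so that $T'$ is again tilting, not merely pretilting) is the remaining technical check, handled by the standard syzygy computation over $\Lambda$.
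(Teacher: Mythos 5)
Your parts (1) and (2) are sound and close to the paper's own argument: (1) is exactly the paper's proof (sequence from \eqref{Fac T 2}, then $\Ext^1_\Lambda(Y,\Fac T)\hookrightarrow\Ext^2_\Lambda(U,\Fac T)=0$ and \eqref{Fac T 3}); for (2) the paper instead uses the universal extension $0\to T\to X\to U''\to 0$ attached to a surjection $\Hom_\Lambda(U,U'')\to\Ext^1_\Lambda(U,T)$ of $\End_\Lambda(U)$-modules, whereas your pushout/"dimension shift" along $0\to\Lambda\to U^0\to U^1\to 0$ also works, provided you add the check $\Ext^1_\Lambda(T,\Fac U)=0$ (from $\Fac U\subseteq\Fac T$) before invoking \eqref{Fac T 3}. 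Two small imprecisions: common summands of the two end terms of a short exact sequence do not simply "split off" -- the paper arranges $\add T'\cap\add T''=0$ by replacing the map by a \emph{minimal} right (resp.\ left) approximation; and your reduction to $\bar T$ (summands of $T''$ not shared with $U$) is unnecessary and asserted without proof, though it happens to be true.

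The genuine gap is in (3): you never establish that the mutation at $T_i$ \emph{exists}, i.e.\ that the minimal left $(\add T/T_i)$-approximation $f\colon T_i\to T^{(0)}$ is injective and that $\pd_\Lambda\Cok f\le 1$, so that $(T/T_i)\oplus\Cok f$ is again tilting. This is not "a standard syzygy computation": left mutation of a tilting module at an arbitrary summand can fail to exist, and a syzygy argument over $\Lambda$ only shows $\Ext^2_\Lambda(\Cok f,-)$ is a cokernel of $\Ext^1_\Lambda(T^{(0)},-)\to\Ext^1_\Lambda(T_i,-)$, which gives nothing without further input. The hypothesis $T>U$ and your (correct) choice of $T_i$ inside $T''$ must be used a second time here, and this is the one non-formal step of the paper's proof: since $\add T'\cap\add T''=0$ forces $T'\in\add(T/T_i)$, the injective composite $T_i\hookrightarrow T''\to T'$ factors through $f$, whence $f$ is injective; and comparing the approximation sequence $0\to T_i\to T^{(0)}\to\Cok f\to 0$ with the sequence from (1) yields an exact sequence $0\to T^{(0)}\oplus(T''/T_i)\to\Cok f\oplus T'\to U\to 0$, from which $\pd_\Lambda\Cok f\le 1$ follows because all other terms have projective dimension at most one. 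Your other ingredient for (3) is fine: once the mutation exists, $T>T'$ because the exchange sequence is of left-approximation type, and $T'\ge U$ does follow as you say, since the middle term of the sequence in (1) lies in $\add(T/T_i)\subseteq\add T'$, so $U\in\Fac T'$, i.e.\ $\Ext^1_\Lambda(T',U)=0$ (your inclusion "$\Fac T'\subseteq\Fac U$" is written the wrong way round, but the operative condition is the Ext-vanishing). Part (4) as the dual statement is indeed easier, since there $\pd_\Lambda\Ker g\le 1$ is automatic from the two outer terms.
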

\begin{proof}
Although these results follows easily from \cite{AI}, we give a direct proof for the convenience of the reader.\\
(1) Applying \eqref{Fac T 2} to $X:=U$, we have an exact sequence $0\to Y\to T^\prime\to U\to0$ with $T^\prime\in\add T$ and $Y\in\Fac T$.
Applying $\Ext^1_\Lambda(-,\Fac T)$, we have an exact sequence
$0=\Ext^1_\Lambda(T',\Fac T)\to \Ext^1_\Lambda(Y,\Fac T)
\to\Ext^2_\Lambda(U,\Fac T)=0$
since $\pd_\Lambda U\leq 1$. Thus $\Ext^1_\Lambda(Y,\Fac T)=0$ holds and we have $Y\in\add T$ by \eqref{Fac T 3}. Replacing the map $T^\prime\to U$ by a minimal right $(\add T)$-approximation gives the last statement (see e.g.\ \cite[2.25]{AI}).\\
(2) We regard $\Ext^1_\Lambda(U,T)$ as an $\End_\Lambda(U)$-module. Take a surjective map $f:\Hom_\Lambda(U,U^{\prime\prime})\to\Ext^1_\Lambda(U,T)$  of $\End_\Lambda(U)$-modules with $U^{\prime\prime}\in\add U$, then this gives an exact sequence
\begin{equation}\label{universal extension}
0\to T\to X\to U^{\prime\prime}\to 0.
\end{equation}
Applying $\Hom_\Lambda(U,-)$ to (\ref{universal extension}),
we have an exact sequence
\[
\Hom_\Lambda(U,U^{\prime\prime})\xrightarrow{f}\Ext^1_\Lambda(U,T)\to\Ext^1_\Lambda(U,X)
\to\Ext^1_\Lambda(U,U^{\prime\prime})=0
\]
which shows $\Ext^1_\Lambda(U,X)=0$ and hence $X\in\Fac U$ by \eqref{Fac T}. Applying $\Ext^1_\Lambda(-,\Fac U)$ to (\ref{universal extension}), we have
$\Ext^1_\Lambda(X,\Fac U)=0$. Thus $X\in\add U$ by \eqref{Fac T 3}.  By a similar argument as in (1), the last statement also follows. \\
(3) Take an exact sequence $0\to T''\xrightarrow{b} T'\xrightarrow{a} U\to0$ from (1). Since $T>U$, we have $T^{\prime\prime}\neq0$.
Take an indecomposable direct summand $T_i$ of $T''$, and
let $\iota:T_i\to T''$ be the inclusion.
Let $f:T_i\to V$ be a minimal left $(\add T/T_i)$-approximation of $T_i$. Since $T'$ and $T''$ have no non-zero common direct
summands, $T'\in\add T/T_i$ holds, and $\iota b$ factors through $f$. Hence $f$ has to be injective, and so it only remains to prove
$\Cok f$ has projective dimension at most one.
Since we have a commutative diagram
\[
{\SelectTips{cm}{10}
\xy0;/r.37pc/:
(23,0)*+{0}="1",
(30,0)*+{T_i}="2",
(40,0)*+{V}="3",
(50,0)*+{\Cok f}="4",
(57,0)*+{0}="5",
(23,-7)*+{0}="b1",
(30,-7)*+{T^{\prime\prime}}="b2",
(40,-7)*+{T^\prime}="b3",
(50,-7)*+{U}="b4",
(57,-7)*+{0}="b5",
\ar"1";"2"
\ar"2";"3"^f
\ar"3";"4"
\ar"4";"5"
\ar"b1";"b2"
\ar"b2";"b3"^{b}
\ar"b3";"b4"^a
\ar"b4";"b5"
\ar"2";"b2"^\iota
\ar"3";"b3"
\ar"4";"b4"
\endxy}
\]
of exact sequences, we have an exact sequence
\[
0\to V\oplus(T''/T_i)\to \Cok f\oplus T'\to U\to0,
\]
which immediately implies $\pd_\Lambda\Cok f\le 1$.\\
The proof of (4) is simpler.
\end{proof}

The following comparison between MM mutation and tilting mutation is important.

\begin{lemma}\label{compatibility of mutation}
Let $M=R\oplus(\bigoplus_{i\in I}M_i)\in\MMG R$.\\
\t{(1)} If $\mu_i(M)\ {\not\cong}\ M$, then $F(\mu_i(M))=\mu_i(F(M))$.\\
\t{(2)} If $\mu_i(F(M))$ exists and belongs to $\refl\Lambda$, then $\mu_i(M)\ {\not\cong}\ M$ and $F(\mu_i(M))=\mu_i(F(M))$.
\end{lemma}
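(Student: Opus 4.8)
The statement compares the mutation $\mu_i$ on the side of MM generators over $R$ (defined via minimal approximations in $\add M$ and kernels of approximations) with the tilting mutation $\mu_i$ on $\tilt\Lambda$ (defined via the exact sequences appearing before Proposition \ref{make closer}), transported through the fully faithful functor $F=\Hom_R(M_0,-)$, which restricts to the equivalence $F\colon\refl R\to\refl\Lambda$. The key translation tool is that $F$ sends a minimal right (resp.\ left) $(\add M_{J^c})$-approximation to a minimal right (resp.\ left) $(\add F(M_{J^c}))$-approximation, since $\Hom$-spaces are preserved and $F$ is additive; and that $F$ takes the defining short exact sequences of the $R$-side mutation to short exact sequences in $\mod\Lambda$ because all the modules involved are reflexive over $R$ (this is where the hypothesis ``belongs to $\refl\Lambda$'' in part (2) enters).

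\textbf{Part (1).} Assume $\mu_i(M)\not\cong M$. Write $M=M_i\oplus M_{\{i\}^c}$ and recall that, since $M$ is an MM generator, $\mu_i(M)=\mu_i^+(M)=M_{\{i\}^c}\oplus\Ker f$ where $f\colon N\to M_i$ is a minimal right $(\add M_{\{i\}^c})$-approximation; moreover $\mu_i(M)=\mu_i^-(M)$, so we may equally use the minimal left approximation $g\colon M_i\to N'$ and set $\Cok g$. The plan is: (i) apply $F$ to the exact sequence $0\to \Ker f\to N\xrightarrow{f} M_i$ (which one checks is exact on the left because $F$ is left exact, and $f$ stays surjective onto a reflexive module up to finite-length discrepancy — but since $M\in\CM R$ and $R$ is normal of dimension $3$, one argues as in \cite{IW4} that the relevant sequences are genuinely short exact); (ii) observe $F(f)$ is a minimal right $(\add F(M_{\{i\}^c}))=(\add \Lambda/\Lambda e_i$-part$)$-approximation of $F(M_i)$, so $F(\Ker f)$ is the module produced by the right-type tilting mutation sequence; (iii) invoke the tilting-mutation dichotomy recalled before Proposition \ref{make closer}: $F(\Ker f)$ has projective dimension at most one (being $F$ of a reflexive $R$-module, hence in $\refl\Lambda$, which has finite projective dimension over $\Lambda$ by the NCCR-type property, indeed $\pd\le1$ since $\Lambda\in\CM R$ of dimension $3$ — here one cites \cite[1.19]{IW4} or the argument that $F(\MMG R)\subseteq\tilt\Lambda$), so this sequence genuinely computes $\mu_i(F(M))$; (iv) since $\mu_i(M)\not\cong M$ we get $\Ker f\not\cong M_i$, hence $F(\Ker f)\not\cong F(M_i)$ (as $F$ is faithful on objects up to iso), so $\mu_i(F(M))$ exists and equals $F(M_{\{i\}^c})\oplus F(\Ker f)=F(\mu_i(M))$.

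\textbf{Part (2).} Now assume $\mu_i(F(M))$ exists and lies in $\refl\Lambda$. The plan is to run the argument in reverse. By definition of tilting mutation there is an exact sequence, say of right-approximation type, $0\to F(M_i)^*\to T'\xrightarrow{g} F(M_i)\to 0$ with $g$ a minimal right $(\add F(M)/F(M_i))$-approximation and $F(M_i)^*$ the exchanged summand, equal by hypothesis to an object of $\refl\Lambda$; (i) since $F\colon\refl R\to\refl\Lambda$ is an equivalence, $T'$ and $F(M_i)^*$ are $F$ of reflexive $R$-modules, and $g=F(\tilde g)$ for a minimal right $(\add M/M_i)$-approximation $\tilde g\colon N\to M_i$ in $\refl R$; (ii) pull the exact sequence back along $F$: because $F$ is fully faithful and exact on the subcategory of reflexives (any $\Lambda$-exact sequence of reflexives restricts to an $R$-exact sequence), we get $0\to \Ker\tilde g\to N\xrightarrow{\tilde g} M_i\to 0$ exact in $\refl R$ with $F(\Ker\tilde g)=F(M_i)^*$; (iii) this exhibits $\mu_i^+(M)=M/M_i\oplus\Ker\tilde g$, and since $F(M_i)^*\not\cong F(M_i)$ (definition of tilting mutation) faithfulness gives $\Ker\tilde g\not\cong M_i$, so $\mu_i(M)\not\cong M$; (iv) finally $F(\mu_i(M))=F(M/M_i)\oplus F(\Ker\tilde g)=F(M)/F(M_i)\oplus F(M_i)^*=\mu_i(F(M))$. (If instead the left-approximation sequence is the one that realizes $\mu_i(F(M))$, the same argument applies using $\mu_i^-(M)$ and minimal left approximations, using $\mu_i^+(M)\cong\mu_i^-(M)$ for MM modules.)

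\textbf{Main obstacle.} The genuinely delicate point, in both parts, is verifying that applying $F$ (or its inverse on reflexives) to the relevant approximation sequence yields an \emph{honest short exact sequence} of the right form in the target category — i.e.\ controlling left-exactness/surjectivity and the behaviour of kernels and cokernels, together with the claim that $F$ of a reflexive $R$-module has projective dimension at most one over $\Lambda$ so that the resulting exchanged object is legitimately a tilting-mutation partner. These are exactly the facts underpinning $F(\MMG R)\subseteq\tilt\Lambda$ from \cite[1.19]{IW4}, and the proof should reduce the lemma to that input plus the elementary observation that an equivalence of categories transports minimal approximations and exact sequences. Once that is in place, the matching of the two mutations is bookkeeping.
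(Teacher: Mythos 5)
Your part (2) is essentially the paper's argument: take whichever exchange sequence realizes $\mu_i(F(M))$ in $\mod\Lambda$, use that $F\colon\refl R\to\refl\Lambda$ is an equivalence and the hypothesis $\mu_i(F(M))\in\refl\Lambda$ to write it as $F$ of a complex of reflexive $R$-modules, recover exactness of that complex by evaluating at the summand $R$ of $M_0$ (this is the meaning of your parenthetical remark), transport minimality of the approximation through the equivalence, and read off $\mu_i(M)$; the non-isomorphism $\mu_i(M)\not\cong M$ then comes from $T_i^*\not\cong T_i$. That half is fine.

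Part (1), however, has a genuine gap. Your plan is to apply $F=\Hom_R(M_0,-)$ to the $R$-side exchange sequence $0\to\Ker f\to N\xrightarrow{f}M_i\to 0$ and claim the result is a (right-type) exchange sequence for the tilting module $F(M)$, i.e.\ that $F(f)$ is surjective, is a minimal right $\add(F(M)/F(M_i))$-approximation, and that $\pd_\Lambda F(\Ker f)\le1$. But $F$ is only left exact, and surjectivity of $F(f)$ would require $f$ to be a right $(\add M_0)$-approximation, which is not given ($M_0$ is a fixed MM generator, generally different from $M$). In fact the claim is false in general: exactly one of the two exchange sequences exists on the $\Lambda$-side, according to whether $F(M)>\mu_i(F(M))$ or $F(M)<\mu_i(F(M))$ in the tilting order. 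If the mutation is of ``left type'' (so $\Ext^1_\Lambda(F(M),\mu_i(F(M)))=0$), then $F$ of your right-approximation sequence cannot be exact: it would be non-split (else $F(M_i)\in\add F(M/M_i)$, contradicting basicness), giving a nonzero class in $\Ext^1_\Lambda(F(M_i),F(\Ker f))$, contradicting the order relation. You have no way of knowing which type occurs, and your hedge about using the other sequence appears only in part (2), where the type is handed to you by hypothesis; in part (1) you must also \emph{establish existence} of $\mu_i(F(M))$, which your route does not do. (Relatedly, the assertion that $F$ of any reflexive has $\pd_\Lambda\le1$ is false; what is true, and what you need, is only that $F$ of an MM generator is tilting, by \cite[1.19]{IW4} applied to $\mu_i(M)$.) The paper's proof of (1) avoids exact sequences entirely: since $\mu_i(M)\in\MMG R$, both $F(M)$ and $F(\mu_i(M))$ are basic tilting $\Lambda$-modules agreeing in all summands except the $i$-th, where they differ by non-isomorphic indecomposables (as $\mu_i(M)\not\cong M$ and $F$ is an equivalence on reflexives); by the Riedtmann--Schofield uniqueness built into the definition of tilting mutation, this forces $F(\mu_i(M))=\mu_i(F(M))$. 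Replacing your step (i)--(iii) by this uniqueness argument repairs part (1).
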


\begin{proof}
(1) Since $M$ and $\mu_i(M)$ differ at only the $i$-th indecomposable summand, so do $F(M)$ and $F(\mu_i(M))$. Thus $\mu_i(F(M))=F(\mu_i(M))$ holds by definition of tilting mutation.\\
(2) Assume that $\mu_i(F(M)):=F(M/M_i)\oplus X$ belongs to $\refl\Lambda$.
Then we have either an exact sequence 
\begin{equation}\label{left mutation}
0\to F(M_i)\xrightarrow{f}F(M')\xrightarrow{g} X\to 0
\end{equation}
with a minimal left $\add F(M/M_i)$-approximation $f$, or an exact sequence
\begin{equation}\label{right mutation}
0\to X\xrightarrow{f}F(M')\xrightarrow{g}F(M_i)\to 0
\end{equation}
with a minimal right $\add F(M/M_i)$-approximation $g$.

We consider the case when the sequence \eqref{left mutation} exists.
Since $F\colon\refl R\to\refl\Lambda$ is an equivalence and $X$ is reflexive by our assumption,
there exists $Y\in\refl R$ and a complex
\begin{equation}\label{left mutation 2}
0\to M_i\xrightarrow{a}M'\xrightarrow{b}Y\to 0
\end{equation}
of $R$-modules for which $F$(\ref{left mutation})=(\ref{left mutation 2}). Since the image of \eqref{left mutation} by the functor $F=\Hom_R(M_0,-)$ is the exact sequence \eqref{left mutation 2} and $M_0$ is a generator,  the sequence \eqref{left mutation} must also be exact. 
Since $F\colon\refl R\to\refl\Lambda$ is an equivalence, $a$ has to be a minimal left $(\add M/M_i)$-approximation of $M_i$.  Thus $\mu_i(M)=(M/M_i)\oplus Y$ and we have $F(\mu_i(M))=\mu_i(F(M))$. 
Since the tilting mutation $\mu_i(F(M))$ is never isomorphic to $F(M)$ by the partial order, we have $\mu_i(M)\ {\not\cong}\ M$.

The same argument works when the sequence \eqref{right mutation} exists.
\end{proof}

Now we denote by $S$ the subset of $\tilt\Lambda$ consisting of tilting $\Lambda$-modules which belongs to $\refl\Lambda$ and have $F(R)$ as a direct summand.  By \cite[1.19]{IW4}
the following observation is clear.

\begin{prop}\label{from MMG to S}
$F$ gives an injective map $F\colon\MMG R\to S$.
\end{prop}

We need the following property of $S$ with respect to the partial order on $\tilt\Lambda$.

\begin{lemma}\label{S is a poset ideal}
If $T\in\tilt\Lambda$ and $U\in S$ satisfies $T\ge U$, then $T\in S$.
\end{lemma}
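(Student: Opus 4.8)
The statement to prove is: if $T\in\tilt\Lambda$ and $U\in S$ with $T\ge U$, then $T\in S$; that is, $T$ is reflexive as an $R$-module and has $F(R)$ as a direct summand. The plan is to exploit the characterization $T\ge U\iff\Fac T\subseteq\Fac U$ together with the structural sequences in \ref{make closer}, and to reduce everything to the fact that $F(R)=\Hom_R(M_0,R)$ is, in the appropriate sense, a ``projective-like'' object which every such $T$ must contain.

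First I would show $T$ is reflexive. Since $T\ge U$, apply \ref{make closer}(1) to get an exact sequence $0\to T''\to T'\to U\to 0$ with $T',T''\in\add T$; dually, \ref{make closer}(2) gives $0\to T\to U'\to U''\to 0$ with $U',U''\in\add U$. Since $U\in S\subseteq\refl\Lambda$, every object of $\add U$ is reflexive as an $R$-module, so $U',U''\in\refl R$. As $\refl R$ (equivalently $\refl\Lambda$ under the equivalence $F$) is closed under kernels of epimorphisms between its objects---or more directly, since $R$ is normal Gorenstein and reflexive modules are exactly the modules satisfying Serre's $(S_2)$ condition, which passes to submodules of the form of a syzygy---the module $T$, being the kernel of $U'\to U''$, is reflexive. (Alternatively: $\pd_\Lambda T\le1$ and $\Lambda\in\CM R$, so $T$ is a second syzygy over $\Lambda$ of a $\Lambda$-module, hence reflexive over $R$ by the depth argument in \cite[3.4]{IR}-style reasoning; I would use whichever is cleaner.) Thus $T\in\refl\Lambda$.

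Next I would show $F(R)\in\add T$. Because $U\in S$, we may write $U=F(R)\oplus U_1$, and since $F(R)=\Hom_R(M_0,R)$ corresponds under the equivalence $F\colon\refl R\xrightarrow{\sim}\refl\Lambda$ to $R$, it is a projective object in the exact category $\refl\Lambda$ (equivalently $R$ is projective as an object of the category of reflexive modules via $\Hom_R(M_0,-)$, using that $M_0$ is a generator: $\Hom_R(M_0,-)$ sends $R$ to $\Hom_R(M_0,R)$ and the latter is projective over no ring in general, but it \emph{is} a summand of $\Lambda$ only when $M_0$ has $R$ as a summand---which it does, $M_0$ being an MM \emph{generator}). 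So $F(R)$ is a direct summand of $\Lambda=\End_R(M_0)=F(M_0)$, hence a projective $\Lambda$-module. Now apply $\Hom_\Lambda(T,-)$ is not the right move; instead, use the third defining property of the tilting module $T$: there is an exact sequence $0\to\Lambda\to T^0\to T^1\to0$ with $T^0,T^1\in\add T$. Since $F(R)$ is a projective summand of $\Lambda$, the inclusion $\Lambda\hookrightarrow T^0$ splits off a map $F(R)\to T^0$ which is a monomorphism; I would then argue that the composite $F(R)\to T^0\to T^1$ must vanish on this summand because $\Ext^1_\Lambda(T,T)=0$ forces the sequence restricted to the projective summand to split, yielding $F(R)\in\add T^0\subseteq\add T$.

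The main obstacle I expect is the last step: deducing $F(R)\in\add T$ from the sequence $0\to\Lambda\to T^0\to T^1\to 0$. The subtlety is that $F(R)$ being a projective $\Lambda$-summand of $\Lambda$ does not by itself imply it survives into $\add T$; one needs that $\Lambda\to T^0$ is a \emph{left $(\add T)$-approximation} (which it is, essentially by the characterization of tilting modules and $\Ext^1_\Lambda(T,T)=0$), and then that an $\add T$-approximation of a projective module must contain that projective as a summand---this uses $\Hom_\Lambda(\Lambda,T)=T\ne 0$ and a minimality/Krull--Schmidt argument. The correct clean route is probably: $F(R)$ projective $\Rightarrow$ $\id$-map $F(R)\to F(R)$ factors $F(R)\to T^0$ (via the approximation) $\to F(R)$ only if $F(R)\in\add T^0$; combined with reflexivity this gives $F(R)\in\add T$, hence $T\in S$. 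I would package this using \eqref{Fac T 3} and the fact, already in the excerpt, that $F$ restricts to an equivalence $\refl R\simeq\refl\Lambda$ sending $R$ to a projective, which makes the approximation argument entirely formal.
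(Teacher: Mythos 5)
Your first step (reflexivity of $T$) is correct and is exactly what the paper does: from \ref{make closer}(2) one obtains $0\to T\to U^0\to U^1\to 0$ with $U^0,U^1\in\add U\subseteq\refl\Lambda$, and $T$, being the kernel of a map between reflexive $R$-modules, is reflexive.

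The second step, however, has a genuine gap, and the route you chose cannot be repaired. You try to deduce $F(R)\in\add T$ from the coresolution $0\to\Lambda\to T^0\to T^1\to 0$ together with projectivity of $F(R)$; but this argument makes no use of $U$ at all, so if it worked it would show that every projective direct summand of $\Lambda$ is a direct summand of every tilting module, which is false. For instance, if $\Lambda$ is the path algebra of the quiver $1\to 2$ and $T=P_1\oplus I_1$, then $0\to\Lambda\to P_1^{\oplus 2}\to I_1\to 0$ is such a coresolution, yet the projective $P_2$ is not in $\add T$; the minimal left $(\add T)$-approximation of $P_2$ is $P_2\to P_1$, and the identity map of $P_2$ does not factor through it. Your proposed ``clean route'' (the identity of $F(R)$ factors through its left $(\add T)$-approximation) is circular: such a factorization exists precisely when $F(R)$ is already a direct summand of an object of $\add T$, which is what you are trying to prove. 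The correct and very short argument uses the other sequence from \ref{make closer}(1), i.e. the hypothesis $T\ge U$ together with $F(R)\in\add U$: there is an exact sequence $0\to T_1\to T_0\to U\to 0$ with $T_0,T_1\in\add T$. Composing the surjection $T_0\to U$ with the projection of $U$ onto its direct summand $F(R)$ gives a surjection onto the projective $\Lambda$-module $F(R)$, which therefore splits; hence $F(R)\in\add T_0\subseteq\add T$, and together with reflexivity this yields $T\in S$.
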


\begin{proof}
Since $T\ge U$, by \ref{make closer}(1) and (2), there exists exact sequences
\[
0\to T\to U^0\to U^1\to 0\ \ \ \mbox{ and }\ \ \ 
0\to T_1\to T_0\to U\to 0
\]
with $U^i\in\add U$ and $T_i\in\add T$ for $i=0,1$.
Since $U^i$ is a reflexive $R$-module, so is $T$ by the first sequence.
Since $F(R)\in\add U$ and $F(R)$ is a projective $\Lambda$-module, we have $F(R)\in\add T$ from the second sequence.
Thus we have $T\in S$.
\end{proof}

We denote by $\Gr(S)$ the full subgraph of $\Gr(\tilt\Lambda)$ with the set $S$ of vertices. 
The following is a main step in the proof.

\begin{prop}\label{F(C) is a component}
Let $C$ be a connected component of $\Gr(\MMG R)$. Then $F(C)$ is a connected component of $\Gr(S)$.
\end{prop}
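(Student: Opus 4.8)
The plan is to verify the two defining properties of a connected component directly: that the subgraph of $\Gr(S)$ induced on $F(C)$ is connected, and that no edge of $\Gr(S)$ joins a vertex of $F(C)$ to a vertex outside $F(C)$. Together with the fact that $F(C)$ is non-empty, this gives the claim. The only inputs I expect to need are the injectivity of $F\colon\MMG R\to S$ (\ref{from MMG to S}) and the precise comparison between MM mutation and tilting mutation recorded in \ref{compatibility of mutation}; the description of $S$ as an up-set of $\tilt\Lambda$ (\ref{S is a poset ideal}) is not required for this particular statement.

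For connectedness I would take $M,N\in C$ and a path joining them in $\Gr(\MMG R)$. Loops of $\Gr(\MMG R)$ occur precisely when $\mu_i(M)\cong M$ and contribute nothing to connectedness, so they may be discarded, and every remaining edge of the path has the form $M'-\mu_i(M')$ with $\mu_i(M')\not\cong M'$. By \ref{compatibility of mutation}(1) we then have $F(\mu_i(M'))=\mu_i(F(M'))$, and since both $F(M')$ and $F(\mu_i(M'))$ lie in $S$ by \ref{from MMG to S}, this is an edge of $\Gr(S)$. Hence $F(M)$ and $F(N)$ are joined by a path inside $F(C)$, so the induced subgraph on $F(C)$ is connected.

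For neighbour-closure I would start from $T=F(M)\in F(C)$ together with an edge $T-T'$ of $\Gr(S)$, so $T'=\mu_j(T)$ for some $j$ and $T'\in S$. The step I expect to be the crux is to see that $j$ is \emph{not} the index of the summand $F(R)$ of $T$. Write $M=R\oplus(\bigoplus_{i\in I}M_i)$ with $R$ and the $M_i$ indecomposable (here $R$ is indecomposable since it is a domain); as $M$ is basic we have $M_i\not\cong R$ for all $i$, and since $F\colon\refl R\to\refl\Lambda$ is an equivalence it follows that $T=F(R)\oplus(\bigoplus_{i\in I}F(M_i))$ with $F(R)$ and the $F(M_i)$ indecomposable and none of the $F(M_i)$ isomorphic to $F(R)$; in particular $F(R)$ occurs in $T$ with multiplicity one. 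Since tilting mutation alters exactly one indecomposable summand, if $j$ were the index of $F(R)$ then $T'=\mu_j(T)$ would have no summand isomorphic to $F(R)$, contradicting $T'\in S$. Hence $j=i$ for some $i\in I$, i.e.\ the mutation takes place at the summand $F(M_i)$.

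Now $\mu_i(F(M))$ exists — it is $T'$ — and lies in $\refl\Lambda$ because $T'\in S$, so \ref{compatibility of mutation}(2) applies and yields $\mu_i(M)\not\cong M$ together with $F(\mu_i(M))=\mu_i(F(M))=T'$. Setting $N:=\mu_i(M)\in\MMG R$, the pair $M-N$ is an edge of $\Gr(\MMG R)$, so $N\in C$ and therefore $T'=F(N)\in F(C)$. Combining the two halves, $F(C)$ is a non-empty, connected, neighbour-closed set of vertices of $\Gr(S)$, hence a connected component. The remaining work is the bookkeeping indicated above; the one genuinely delicate point is the identification of the mutation index $j$ with an element of $I$, which is exactly where the requirement in the definition of $S$ that $F(R)$ be a direct summand is used.
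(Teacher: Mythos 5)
Your argument is correct and follows essentially the same route as the paper: connectedness of $F(C)$ via \ref{compatibility of mutation}(1) together with injectivity of $F$, and neighbour-closure by observing that an edge in $\Gr(S)$ out of $F(M)$ must mutate at a summand other than $F(R)$ (since the target retains $F(R)$), so that \ref{compatibility of mutation}(2) identifies it with $F(\mu_i(M))$ for some $i\in I$. Your write-up merely makes explicit a couple of points (discarding loops, the multiplicity-one argument for $F(R)$) that the paper leaves implicit.
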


\begin{proof}
By \ref{compatibility of mutation}(1), $F(C)$ is contained in some connected component of $\Gr(S)$.
Thus we only have to show that if two vertices $T$ and $U$ ($T\neq U$) in $\Gr(S)$ are connected by an edge and $T\in F(C)$, then $U\in F(C)$.
Now $T$ has a direct summand $F(R)$, so we can write $T=F(M)$ with $M=R\oplus(\bigoplus_{i\in I}M_i)\in C$.
Since $U$ has a direct summand $F(R)$, we have $U=\mu_i(T)$ for some $i\in I$.
But $U$ belongs to $\refl\Lambda$, so $U=F(\mu_i(M))$ by \ref{compatibility of mutation}(2). Since $\mu_i(M)\in C$, we have $U\in F(C)$, and so the assertion follows.
\end{proof}

The following simple criterion for connectedness of $\Gr(S)$  generalizes \cite[2.2]{HU}.

\begin{prop}\label{Auslander type 2}
Suppose that $S$ is a subset of $\tilt\Lambda$ that satisfies the property: if $T\in\tilt\Lambda$ and $U\in S$ satisfies $T\ge U$, then $T\in S$.  Then whenever $\Gr(S)$ has a finite connected component $C$, necessarily $\Gr(S)=C$.
\end{prop}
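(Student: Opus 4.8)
\emph{The plan.} The key observation is that $\Lambda$, viewed as the free tilting module, is the maximum element of the poset $\tilt\Lambda$: for every tilting module $U$ one has $\Ext^1_\Lambda(\Lambda,U)=0$, so $\Lambda\ge U$. Hence, if $S\ne\emptyset$, picking any $U\in S$ and applying the hypothesised closure property ($T\ge U$ with $U\in S$ implies $T\in S$) to $T=\Lambda$ shows $\Lambda\in S$, so $\Lambda$ is a vertex of $\Gr(S)$. (If $S=\emptyset$ there is no connected component and the statement is vacuous, so from now on assume $S\ne\emptyset$.) After this, the proof is just a monotonicity-plus-finiteness argument run once upward and once downward, fed by the one-step statements \ref{make closer}(3) and \ref{make closer}(4).

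\emph{Step 1: $\Lambda\in C$.} Fix any vertex $U_0\in C$. If $U_0\ne\Lambda$ then $\Lambda>U_0$, so by \ref{make closer}(4) there is a tilting mutation $U_1$ of $U_0$ with $\Lambda\ge U_1>U_0$; since $U_1\ge U_0\in S$, closure gives $U_1\in S$, so the edge joining $U_0$ and $U_1$ lies in $\Gr(S)$ and $U_1\in C$. Iterating — we may apply \ref{make closer}(4) again whenever the current vertex differs from $\Lambda$ — produces a chain $U_0<U_1<U_2<\cdots$ of vertices of $C$ that is strictly increasing in $\tilt\Lambda$, in particular with all terms distinct. As $C$ is finite the chain must terminate, and this can only happen when some $U_k=\Lambda$. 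Hence $\Lambda\in C$.

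\emph{Step 2: every vertex of $\Gr(S)$ lies in $C$.} Let $T\in S$. If $T\ne\Lambda$ then $\Lambda>T$, so by \ref{make closer}(3) there is a tilting mutation $\Lambda_1$ of $\Lambda$ with $\Lambda>\Lambda_1\ge T$; since $\Lambda_1\ge T\in S$ we get $\Lambda_1\in S$, so the edge joining $\Lambda$ and $\Lambda_1$ lies in $\Gr(S)$ and $\Lambda_1\in C$ because $\Lambda\in C$ by Step 1. Iterating \ref{make closer}(3) yields a strictly decreasing chain $\Lambda=\Lambda_0>\Lambda_1>\Lambda_2>\cdots$, all of whose terms are $\ge T$, lie in $S$, and hence lie in $C$. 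Finiteness of $C$ forces termination, and since the construction continues whenever the last term differs from $T$, termination occurs exactly at $T$. Thus $T\in C$, proving $\Gr(S)=C$.

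\emph{On the difficulty.} The argument is short once \ref{make closer} is available; the only points needing care are the bookkeeping that keeps each successive mutation inside $S$ (which is precisely the up-set hypothesis on $S$) and the remark that a strictly monotone chain inside the finite vertex set $C$ has only finitely many terms, so the ``climb up to $\Lambda$'' and ``descend down to $T$'' processes genuinely reach their targets rather than running forever. In particular I do not need the general existence theory of tilting mutation, since parts (3) and (4) of \ref{make closer} directly produce the mutated module together with the comparability driving the monotonicity.
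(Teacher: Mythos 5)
Your proof is correct and follows essentially the same route as the paper: use $\Lambda\ge U$ plus the up-set hypothesis to get $\Lambda\in S$, climb from a vertex of $C$ up to $\Lambda$ by repeated mutation of the smaller term, then descend from $\Lambda$ to an arbitrary $T\in S$ by repeated mutation of the larger term, with finiteness of $C$ forcing both chains to terminate. (Incidentally, your citations of \ref{make closer}(3) and (4) match what those statements actually say; the paper's own proof cites them with the labels interchanged.)
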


\begin{proof}
We can assume that $C$ is non-empty. Fix $T\in C$.
Since $\Lambda\ge T$, we have $\Lambda\in S$.
Applying \ref{make closer}(3) repeatedly, we have a sequence
\[
T=T_0<T_1<T_2<\hdots
\]
such that $T_{i+1}$ is a tilting mutation of $T_i$ for all $i$.
This sequence has to be finite since each $T_i$ belongs to $S$ by \ref{S is a poset ideal} and hence belongs to the finite connected component $C$.
Thus $T_\ell=\Lambda$ holds for some $\ell$, and in particular $\Lambda$ belongs to $C$.

Now fix any $U\in S$.
Applying \ref{make closer}(4) repeatedly, we have a sequence
\[
\Lambda=V_0>V_1>V_2>\hdots
\]
such that $V_{i+1}$ is a tilting mutation of $V_i$ and $V_i\ge U$ for all $i$.
This sequence has to be finite since each $V_i$ belongs to $S$ by \ref{S is a poset ideal} and hence belongs to the finite connected component $C$.
Thus $V_m=U$ holds for some $m$, and in particular $U$ belongs to $C$.
Hence we have $\Gr(S)=C$.
\end{proof}

Now we are ready to prove \ref{Auslander type}.
\begin{proof}
We know that $F(C)$ is a finite connected component of $\Gr(S)$ by \ref{F(C) is a component}.
Applying \ref{S is a poset ideal} and \ref{Auslander type 2} gives $F(C)=\Gr(S)$.
Since $F\colon\MMG R\to S$ is injective by \ref{from MMG to S}, it follows that $C=\Gr(\MMG R)$.
\end{proof}

\section{Complete Local $cA_n$ Singularities}\label{Section cAn}

In this subsection we fix notation for complete local $cA_n$ singularities, as they will be used throughout. We work over an arbitrary field $k$, let $S=k[[x,y]]$ be a formal power series ring and fix $f\in(x,y)$. Let
\[
R:=S[[u,v]]/(f(x,y)-uv)
\]
and $f=f_{1}\hdots f_{n}$ be a factorization into prime elements of $S$.  For any subset $I\subseteq\{ 1,\hdots, n\}$ set $I^c=\{ 1,\hdots, n\}\backslash I$ and denote
\[
f_I:=\prod_{i\in I}f_i\ \mbox{ and }\ T_I:=(u,f_I)
\]
where $T_I$ is an ideal of $R$ of generated by $u$ and $f_I$. Since we have the equality
\begin{equation}\label{replace u and v}
(u,f_I)=(v,f_{I^c})uf_{I^c}^{-1},
\end{equation}
we have $T_I\cong(v,f_{I^c})$. For a collection of subsets $\emptyset\subsetneq I_1\subsetneq I_2\subsetneq\hdots\subsetneq
I_m\subsetneq\{1,2,\hdots,n\}$, we say that $\F=(I_1,\hdots,I_m)$ is a {\em flag in the set $\{ 1,2,\hdots, n\}$}.  We say that the flag $\c{F}$ is {\em maximal} if $n=m+1$.  Given a flag $\c{F}=(I_1,\hdots,I_m)$, we define
\[
T^\c{F}:=R\oplus\left(\bigoplus_{j=1}^{m} T_{I_j}\right) .
\]
So as to match our notation with \cite{BIKR} and \cite{DH}, we can (and do) identify maximal flags with elements of the symmetric group $\mathfrak{S}_n$. Hence we regard each $\omega\in\mathfrak{S}_n$ as the maximal flag 
\[
\{\omega(1) \}\subset\{\omega(1),\omega(2) \}\subset\hdots\subset\{\omega(1),\hdots,\omega(n-1)\}.
\]
We denote
\[
T^\omega:=R\oplus\left(\bigoplus_{j=1}^{n-1} T_{\{\omega(1),\hdots,\omega(j)\}}\right)
\]

\subsection{MM Generators and CT Modules}\label{MM and CT}

The aim of this subsection is to use CY reduction to help to  prove the following result.
\begin{thm}\label{classification}
With the setup as above,\\
\t{(1)} The basic modifying generators of $R$ are precisely $T^{\F}$, where $\F$ is a flag in $\{1,2,\hdots,n\}$.\\
\t{(2)} The basic MM generators of $R$ are precisely $T^\omega$, where $\omega\in\mathfrak{S}_n$.\\
\t{(3)} $R$ has a CT module if and only if $f_i\notin\m^2$ for all $1\le i\le n$. In this case, the basic CT $R$-modules are precisely $T^\omega$, where $\omega\in\mathfrak{S}_n$.
\end{thm}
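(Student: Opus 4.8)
The plan is to derive all three parts from the CY\nobreakdash-reduction machinery of \S\ref{reduction section}, the one-dimensional splitting \ref{3.7} together with its three-dimensional consequence \ref{splitting algebraic intro}, and the transitivity result \ref{Auslander type}. Throughout one uses that $R=k[[x,y,u,v]]/(f-uv)$ is a complete local normal Gorenstein domain with $\dim\Sing R\le1$, so that $\uCM R$ is a $2$-CY triangulated category with $\dim_R(\uCM R)\le1$ by \ref{nonisolatedARintro}, and that for $M\in\CM R$ being modifying in $\uCM R$ is equivalent to $\End_R(M)\in\CM R$ by \ref{modifyingThesame}. The first concrete input is that every $T^{\F}$ is a basic modifying generator: each $T_I=(u,f_I)$ lies in $\CM R$ and $\End_R(T^{\F})\in\CM R$, by a direct matrix-factorization computation with $f=f_1\cdots f_n$ (also implicit in \cite{BIKR,DH}). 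Granting this, \ref{splitting algebraic intro} --- obtained by iterating \ref{3.7} along a maximal refinement of $\F$ and translating through \ref{CY red is uCM of End} --- gives, for a maximal flag, a triangle equivalence $(\uCM R)_{T^{\omega}}\simeq\bigoplus_{i=1}^{n}\uCM\big(k[[x,y,u,v]]/(f_{\omega(i)}-uv)\big)$ with every $f_{\omega(i)}$ irreducible.

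To prove (2), I would first treat the base case: for irreducible $g\in S$ the ring $R'=k[[x,y,u,v]]/(g-uv)$ is factorial (by Nagata, since $u$ is a prime element of $R'$ --- $R'/(u)\cong k[[x,y,v]]/(g)$ is a domain --- whence $\Cl(R')\cong\Cl(R'_u)=0$), and $\uCM R'$ has no non-zero modifying objects: immediate when $g\notin\m^2$ (then $R'$ is regular), and when $g\in\m^2$ a consequence of the factoriality of $R'$ (which forces any rank-one modifying generator to be free) together with a more careful analysis of the Cohen--Macaulay modules of $R'$. Combined with the displayed splitting, $(\uCM R)_{T^{\omega}}$ has no non-zero modifying objects, so each $T^{\omega}$ is an MM generator by \ref{CT and CY reduction}(1) (or via the bijection \ref{bijection for MM}). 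Next, using the mutation theory of \S\ref{Mutation Section} I would compute the minimal $(\add(T^{\omega}/T_J))$-approximations at the $i$-th summand $T_J$, $J=\{\omega(1),\dots,\omega(i)\}$, obtaining $\mu_i(T^{\omega})\cong T^{\omega s_i}$ (equal to $T^{\omega}$, a loop in the exchange graph, exactly when $f_{\omega(i)}$ and $f_{\omega(i+1)}$ are associates). Hence $\{T^{\omega}\mid\omega\in\mathfrak{S}_n\}$ is a finite set of MM generators, closed under mutation and connected in $\Gr(\MMG R)$ (each $T^{\omega}$ being reached from $T^{e}$ through simple transpositions), so $\Gr(\MMG R)=\{T^{\omega}\mid\omega\in\mathfrak{S}_n\}$ by \ref{Auslander type}, which is (2).

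For (1): the $T^{\F}$ are modifying generators by the first paragraph, and conversely any basic modifying generator $M$ is a direct summand of some MM generator (by \cite{IW4}, since every modifying generator can be completed to a maximal one), hence of some $T^{\omega}$ by (2); since the basic summands of $T^{\omega}$ containing $R$ are precisely the $T^{\F}$ with $\F$ a subflag of the maximal flag of $\omega$, we get $M=T^{\F}$. For (3): any CT module is automatically a generator (take $X=R$ in the definition of CT and use that $(-)^{*}$ preserves $\CM R$ over the Gorenstein ring $R$), hence a CT --- in particular MM --- generator, so of the form $T^{\omega}$ by (2); and \ref{CT and CY reduction}(2) says $T^{\omega}$ is CT iff $(\uCM R)_{T^{\omega}}=0$, which by the splitting holds iff each $k[[x,y,u,v]]/(f_i-uv)$ is regular, i.e.\ iff $f_i\notin\m^2$ for all $i$. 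Therefore $R$ has a CT module iff $f_i\notin\m^2$ for all $i$, and in that case the basic CT modules are exactly the $T^{\omega}$.

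The main obstacle is the content of the first two paragraphs: proving the splitting \ref{splitting algebraic intro} from the one-dimensional computation \ref{3.7} (keeping careful track of how $\Omega_R$ and the matrix factorizations of $f$ interact with the factorization $f=f_1\cdots f_n$), together with the base-case statement that $\uCM(k[[x,y,u,v]]/(g-uv))$ has no non-zero modifying objects for irreducible $g$ over an arbitrary field. The remaining steps are comparatively formal, the one delicate point being the bookkeeping of loops in $\Gr(\MMG R)$ when the irreducible factors $f_i$ are not pairwise non-associate.
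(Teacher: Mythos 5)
Your overall route coincides with the paper's: $T^{\F}$ is modifying via the matrix factorizations of \ref{(u,f_I) is CM} and \ref{describe Hom-sets}, the splitting $(\uCM R)_{T^{\omega}}\simeq\bigoplus_i\uCM\bigl(k[[x,y,u,v]]/(f_{\omega(i)}-uv)\bigr)$ comes from Kn\"orrer periodicity plus \ref{3.7} (this is \ref{key of classification}), the mutation computation $\mu_i(T^{\omega})\cong T^{\omega s_i}$ is \ref{mutation and si}, and (2) then follows from \ref{Auslander type}, with (1) and (3) deduced from \cite{IW4} and \ref{CT and CY reduction} exactly as in the paper. One point where you genuinely improve on the paper's argument: your proof of factoriality of $R'=k[[x,y,u,v]]/(g-uv)$ for irreducible $g$ via Nagata's theorem ($u$ is prime because $R'/(u)\cong k[[x,y,v]]/(g)$ is a domain, and $R'[u^{-1}]\cong k[[x,y,u]][u^{-1}]$ is a UFD) is correct and shorter than the paper's route, which passes through the class-group generation statement \ref{irreducible case}(1) and the technical $K_0$-computation \ref{K(m) is zero} (the paper needs those anyway for \S\ref{class section}, but not logically for this step).

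The genuine gap is the other half of your base case: you must show that \emph{every} modifying object of $\uCM R'$ vanishes, i.e.\ every $M\in\CM R'$ with $\End_{R'}(M)\in\CM R'$ is free, over an \emph{arbitrary} field. Factoriality only disposes of rank-one reflexive modules, and the reduction to rank one that you gesture at is not available here: in the paper that reduction (\ref{modifying is rank 1}) is proved only for $k=\mathbb{C}$, via a generic hyperplane section cutting to an $A_m$ surface singularity and the classification of its CM modules, which is why \S\ref{subsection on MM} is explicitly restricted to $\mathbb{C}$. What the paper actually invokes at this point, in \ref{irreducible case}(2)(b), is Dao's Tor-rigidity theorem \cite[3.1(1)]{Dao} (see also \cite[2.10]{IW5}): over the three-dimensional factorial hypersurface $R'$, any modifying module is free. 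Alternatively (the remark following \ref{irreducible case}) one can argue in dimension one: $S/(g)$ is a one-dimensional domain, hence an isolated singularity, so Hom-finiteness passes through Kn\"orrer periodicity, and then the Auslander--Bridger exact sequence \cite{AB} together with the Huneke--Wiegand rigidity theorem \cite[3.7]{HW} shows that $\Ext^1(M,M)=0$ forces $M\otimes M^{*}$ to be CM and hence $M$ free. Your phrase ``a more careful analysis of the Cohen--Macaulay modules of $R'$'' is precisely this missing input; it is not a formal consequence of factoriality, and without one of these rigidity results the assertion that $(\uCM R)_{T^{\omega}}$ has no non-zero modifying objects --- the crux of (2), on which your proofs of (1) and (3) also rest --- remains unproved.
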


The following is an immediate application.

\begin{cor}
Grouping together terms in the decomposition of $f$, write $f=f_1^{a_1}\hdots f_t^{a_t}$ for some {\em distinct} prime elements $f_i$, and some $a_i\in\mathbb{N}$. 
Then $R$ has precisely $\frac{(a_1+\hdots+a_t)!}{a_1!\hdots a_t!}$ basic MM generators.
\end{cor}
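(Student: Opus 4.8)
The plan is to feed Theorem~\ref{classification}(2)---the basic MM generators of $R$ are exactly the $T^\omega$, $\omega\in\mathfrak{S}_n$---into a count of how many of the $T^\omega$ are pairwise non-isomorphic. Write the given prime factorization as $f=g_1\cdots g_n$; each $g_i$ is associate to a unique $f_{c(i)}$, giving a colouring $c\colon\{1,\dots,n\}\to\{1,\dots,t\}$ with $|c^{-1}(\ell)|=a_\ell$. For $I\subseteq\{1,\dots,n\}$ put $b(I):=(b_1(I),\dots,b_t(I))$ with $b_\ell(I)=|I\cap c^{-1}(\ell)|$; then $T_I=(u,\prod_{i\in I}g_i)$ coincides, up to a unit, with $(u,\prod_\ell f_\ell^{b_\ell(I)})$, so $T_I$ depends only on $b(I)$. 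To $\omega$ attach the word $w(\omega)=(c(\omega(1)),\dots,c(\omega(n)))\in\{1,\dots,t\}^n$; the words that arise are precisely those in which $\ell$ occurs $a_\ell$ times, there are $\frac{n!}{a_1!\cdots a_t!}$ of these, and $w(\omega)=w(\omega')$ exactly when $\omega,\omega'$ lie in the same right coset of the Young subgroup $\mathfrak{S}_{a_1}\times\cdots\times\mathfrak{S}_{a_t}$. So the corollary is equivalent to the claim $T^\omega\cong T^{\omega'}\iff w(\omega)=w(\omega')$.

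The implication $(\Leftarrow)$ is immediate: if the words agree, then for each $1\le j\le n-1$ the initial segments agree, so $\prod_{\ell\le j}g_{\omega(\ell)}$ and $\prod_{\ell\le j}g_{\omega'(\ell)}$ are associate in $S$, hence $T_{\{\omega(1),\dots,\omega(j)\}}=T_{\{\omega'(1),\dots,\omega'(j)\}}$ as ideals of $R$, and summing (with the common $R$-summand) gives $T^\omega=T^{\omega'}$ literally.

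For $(\Rightarrow)$ the input is the structure of $\Cl(R)$. Each $T_I$ is a rank-one reflexive $R$-module---hence indecomposable---so $T_I\cong T_J$ iff $[T_I]=[T_J]$ in $\Cl(R)$; using the height-one primes $\mathfrak q_\ell=(u,f_\ell)$ one has $\operatorname{div}(T_I)=\sum_\ell b_\ell(I)[\mathfrak q_\ell]$, while $\operatorname{div}(u)=\sum_\ell a_\ell[\mathfrak q_\ell]$ is the unique relation among the $[\mathfrak q_\ell]$, so $\Cl(R)\cong\mathbb{Z}^t/\langle(a_1,\dots,a_t)\rangle$ and, because $0\le b_\ell(I)\le a_\ell$ with $b(I)\notin\{0,(a_1,\dots,a_t)\}$ for nonempty proper $I$, distinct vectors $b(I)$ give distinct classes, all nonzero. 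In particular $T^\omega=R\oplus T_{I_1}\oplus\cdots\oplus T_{I_{n-1}}$ (with $I_j=\omega(\{1,\dots,j\})$) is basic with $n$ pairwise non-isomorphic indecomposable summands. Now if $T^\omega\cong T^{\omega'}$, Krull--Schmidt forces $\{b(I_1),\dots,b(I_{n-1})\}=\{b(J_1),\dots,b(J_{n-1})\}$, where $J_j=\omega'(\{1,\dots,j\})$. Adjoining the common endpoints $0$ and $(a_1,\dots,a_t)$, both become maximal chains under the componentwise order in $\prod_\ell[0,a_\ell]$, each step adding one standard basis vector; a finite chain has a unique increasing enumeration, so $b(I_j)=b(J_j)$ for all $j$, and reading off $b(I_j)-b(I_{j-1})=e_{w(\omega)_j}$ yields $w(\omega)=w(\omega')$.

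Combining the two implications with the word count gives exactly $\frac{n!}{a_1!\cdots a_t!}$ basic MM generators. I expect the only genuine obstacle to be the class-group input used in $(\Rightarrow)$: not merely the formula $\operatorname{div}(T_I)=\sum_\ell b_\ell(I)[\mathfrak q_\ell]$, but the fact that $(a_1,\dots,a_t)$ generates \emph{all} relations among the $[\mathfrak q_\ell]$ in $\Cl(R)$ (the computation carried out in \S\ref{class section}); once that is granted, the remaining chain argument is purely combinatorial.
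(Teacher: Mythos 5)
Your proof is correct, and at the top level it follows the same route as the paper's (two-line) proof: invoke \ref{classification}(2) to see that every basic MM generator is some $T^\omega$, then count the distinct ones. The difference is in what you make explicit. The paper's ``accounting for the repetitions'' only addresses the obvious identifications ($w(\omega)=w(\omega')$ gives literally the same module) and leaves implicit the converse, namely that no further coincidences $T^\omega\cong T^{\omega'}$ occur; you supply exactly this missing lower bound, and you do it with the right tool: the computation $\Cl(R)\cong\mathbb{Z}^t/\langle(a_1,\hdots,a_t)\rangle$ of \S\ref{class section} (together with \ref{prelim for class}(2)), which shows that the rank-one reflexive summands $T_I$ with $0\le b_\ell(I)\le a_\ell$ and $b(I)\notin\{0,(a_1,\hdots,a_t)\}$ have pairwise distinct, nonzero classes, after which Krull--Schmidt plus your chain argument recovers the word. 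Two small remarks: the class-group theorem appears \emph{after} this corollary in the paper, but its proof (via \ref{K(m) is zero}, \ref{irreducible case} and \ref{prelim for class}) is independent of the corollary and valid over any field, so there is no circularity in citing it; and your coset bookkeeping for the Young subgroup is immaterial to the count, since all you use is that the number of admissible words is $\frac{n!}{a_1!\cdots a_t!}$. Your write-up is in effect a fuller justification of the step the paper asserts, comparable to how the paper itself handles injectivity only in the isolated-singularity case (where $(f_i)\neq(f_j)$ makes it immediate).
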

\begin{proof}
All basic MM generators have the form $T^\omega$ for some $\omega\in\mathfrak{S}_{a_1+\hdots+a_t}$ by \ref{classification}.  
Accounting for the repetitions, there are precisely $\frac{(a_1+\hdots+a_t)!}{a_1!\hdots a_t!}$ of these.
\end{proof}

The strategy of proof of \ref{classification} is to use Kn\"orrer periodicity, the CY reduction prepared in \S\ref{reduction section}, together with the MM mutation from \S\ref{Mutation Section}.  
Let us start with the following observation.

\begin{lemma}\label{(u,f_I) is CM}
$(u,f_I)\in\CM R$ for any subset $I$ of $\{1,2,\hdots,n\}$.
\end{lemma}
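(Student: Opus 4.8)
The claim is that $(u,f_I) \in \CM R$ where $R = S[[u,v]]/(f-uv)$ with $f = f_1\cdots f_n$ a factorization into primes of $S = k[[x,y]]$, and $f_I = \prod_{i\in I} f_i$. The plan is to exhibit $(u,f_I)$ as the cokernel of a matrix factorization of $f - uv$, which is the standard way to verify the maximal Cohen--Macaulay property for modules over the hypersurface $R$. Writing $f = f_I \cdot f_{I^c}$, the key identity is
\[
\begin{pmatrix} u & f_I \\ -f_{I^c} & v \end{pmatrix}
\begin{pmatrix} v & -f_I \\ f_{I^c} & u \end{pmatrix}
= \begin{pmatrix} uv + f_I f_{I^c} & 0 \\ 0 & f_{I^c}f_I + uv \end{pmatrix}
= (f-uv)\cdot I_2 + f\cdot I_2,
\]
so after reducing modulo $f-uv$ (i.e.\ working over $R$, where $uv = f$) the product of these two $2\times 2$ matrices is zero — or more precisely, over the ambient regular ring $S[[u,v]]$ the product is $(uv+f)I_2$, which equals $(f-uv)I_2 + 2uv\, I_2$; I would instead take the cleaner pair $A = \begin{pmatrix} u & -f_I \\ f_{I^c} & v\end{pmatrix}$ and $B = \begin{pmatrix} v & f_I \\ -f_{I^c} & u\end{pmatrix}$, whose product is $(uv + f_I f_{I^c})I_2 = (uv+f)I_2$. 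The correct bookkeeping is that $AB = BA = (uv+f)I_2$, and since the hypersurface equation is $f - uv = 0$, what one really wants is a matrix factorization of $f - uv$; so I replace $v$ by $-v$ (harmless, as $-v$ is also a coordinate) to get $A' = \begin{pmatrix} u & f_I \\ f_{I^c} & -v \end{pmatrix}$, $B' = \begin{pmatrix} -v & -f_I \\ -f_{I^c} & u\end{pmatrix}$ with $A'B' = (f - uv)I_2$. I will just verify directly which sign conventions make $A'B' = B'A' = (f-uv)I_2$ hold and present that pair.

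Granting the matrix factorization $(A,B)$ with $AB = BA = (f-uv)I_2$, standard hypersurface theory (Eisenbud, as cited in the paper via \cite{E80,Y}) gives that $\Cok(R^2 \xrightarrow{A} R^2)$ is a maximal Cohen--Macaulay $R$-module. It then remains to identify this cokernel with the ideal $(u,f_I)$. For this I would map $R^2 \to R$ by $(a,b)\mapsto ua + f_I b$ (or the appropriate entries of the matrix), check that the image is exactly the ideal $T_I = (u,f_I)$, and check that the kernel of $R^2 \to R$ is precisely the column space of $A$ (equivalently the image of $B$ up to the identification); the inclusion that the relations among $u$ and $f_I$ are generated by the obvious ones — $f_{I^c}\cdot u - v\cdot f_I = 0$ and $u \cdot v - f_{I^c}f_I = 0$ coming from $uv = f = f_I f_{I^c}$ — is the content of saying $R^2/\im A \cong (u,f_I)$. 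This presentation computation is straightforward because $R$ is a domain (as $f-uv$ is irreducible) and $u, f_I$ are nonzerodivisors, so the syzygy module is easy to pin down.

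The only mild subtlety — and the step I expect to require the most care — is getting the signs in the matrix factorization exactly right so that both $AB = (f-uv)I_2$ \emph{and} $BA = (f-uv)I_2$, and then matching the cokernel of $A$ with $(u,f_I)$ rather than with the isomorphic ideal $(v,f_{I^c})$ (recall the paper notes $T_I = (u,f_I)\cong(v,f_{I^c})$ via \eqref{replace u and v}, so either identification is acceptable and in fact clarifies which column of the matrix to use). Once the correct $2\times 2$ matrix factorization is written down, everything else is a short verification. I would present the matrix, state $AB = BA = (f-uv)I_2$, invoke the matrix-factorization criterion for MCM modules over a hypersurface, and conclude $(u,f_I) = \Cok(A) \in \CM R$.
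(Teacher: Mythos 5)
Your proposal is correct and takes essentially the same approach as the paper: the paper's one-line proof simply exhibits $(u,f_I)$ as the cokernel of the matrix factorization $\left(\begin{smallmatrix} f_I & u \\ v & f_{I^c}\end{smallmatrix}\right)$, $\left(\begin{smallmatrix} -f_{I^c} & u \\ v & -f_I\end{smallmatrix}\right)$, whose product is $(uv-f)I_2$. Your worry about arranging the signs to get a factorization of $f-uv$ rather than $uv-f$ is unnecessary, since a matrix factorization of any unit multiple of the defining equation yields the same MCM cokernel, which is why the paper's pair works as written.
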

\begin{proof}
This is clear since $(u,f_I)$ arises from the matrix factorization 
\[
R^2\xrightarrow{\left(\begin{smallmatrix}
-f_{I^c}&u\\ v& -f_I
\end{smallmatrix}\right)}
R^2 \xrightarrow{
\left(\begin{smallmatrix}
f_I&u\\ v& f_{I^c}
\end{smallmatrix}\right)}
R^2 \to
(u,f_I)\to 0.
\]
\end{proof}

We also need the following simple calculations.

\begin{lemma}\label{describe Hom-sets}
For any decomposition $f=abc$ with $a,b,c\in S$, we have isomorphisms\\
\t{(1)} $(u,b)\cong\Hom_R((u,a),(u,ab))$, $r\mapsto(\cdot r)$.\\
\t{(2)} $(u,ac)u^{-1}\cong\Hom_R((u,ab),(u,a))$, $r\mapsto(\cdot r)$, where $(u,ac)u^{-1}$ is a fractional ideal of $R$ generated by $1$ and $acu^{-1}$.
\end{lemma}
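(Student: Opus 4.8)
The plan is to compute both Hom-spaces directly inside the total quotient ring of $R$, exploiting the fact that $R$ is a domain (being $S[[u,v]]/(f-uv)$ with $f = f_1\cdots f_n$ a product of primes, hence $f-uv$ is irreducible). Since $R$ is a normal domain with fraction field $K$, every reflexive $R$-module — in particular every ideal or fractional ideal of height one — embeds in $K$, and for nonzero ideals $\mathfrak{a},\mathfrak{b}\subseteq R$ one has the standard identification $\Hom_R(\mathfrak{a},\mathfrak{b}) \cong (\mathfrak{b}:_K\mathfrak{a}) = \{q\in K \mid q\mathfrak{a}\subseteq\mathfrak{b}\}$, with a homomorphism $\varphi$ corresponding to the scalar $q = \varphi(r)/r$ for any nonzero $r\in\mathfrak{a}$ (independent of the choice of $r$ since $\mathfrak{a}$ is torsion-free of rank one). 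So for (1) I would first show $(u,b) \subseteq ((u,ab):_K(u,a))$, i.e.\ that multiplication by $u$ and by $b$ each send $(u,a)$ into $(u,ab)$. For $b$: clearly $b\cdot u = u b \in (u,ab)$ (it lies in $(u)$) and $b\cdot a = ab\in(u,ab)$. For $u$: $u\cdot u\in(u)\subseteq(u,ab)$ and $u\cdot a = ua$; here I use $uv = f = abc$, so $ua = (abc)/c \cdot \dots$ — more carefully, $u\cdot a$ need not obviously lie in $(u,ab)$, so the cleaner route is to observe $u\in(u,ab)$ already forces $u\cdot(u,a)\subseteq(u,ab)$ since $(u,a)\subseteq R$. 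Thus $(u,b)\cdot(u,a)\subseteq(u,ab)$, giving an inclusion $(u,b)\hookrightarrow\Hom_R((u,a),(u,ab))$.

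For the reverse inclusion in (1), take $q\in K$ with $q(u,a)\subseteq(u,ab)$; then $qu\in(u,ab)$ and $qa\in(u,ab)$, and I must show $q\in(u,b)$. Writing $qu = \alpha u + \beta ab$ and $qa = \gamma u + \delta ab$ with $\alpha,\beta,\gamma,\delta\in R$, I get $q = \alpha + \beta\cdot ab u^{-1} = \alpha + \beta v\, (b/c)$... this is where I must be careful with the relation $uv = abc$. The key trick, which appears implicitly in the paper's use of \eqref{replace u and v}, is that $abu^{-1} = vc^{-1}\cdot b$ is not in $R$ in general, so instead I should use $ab = u v c^{-1}\cdot c = $ — rather, multiply the relation through: from $qu\in(u,ab)$ and $qa\in(u,ab)$, consider $q\cdot(u,a)\cdot(v,\text{stuff})$ and collapse using $uv=f$. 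Concretely, I expect the cleanest argument is: $q(u,a)\subseteq(u,ab)$ $\iff$ $q(u,a)(v,bc)\subseteq(u,ab)(v,bc)$, and $(u,a)(v,bc) = $ (an ideal one can compute, using $uv=abc$, to be $a(v,u,bc,\dots)$), reducing everything to principal-ideal bookkeeping; after cancelling the common factor $a$ one lands in $(u,b)$. I would carry this out with the explicit matrix factorizations from Lemma~\ref{(u,f_I) is CM} as a sanity check, since $\Hom_R$ between cokernels of matrix factorizations is computable directly from the matrices.

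Part (2) is formally dual: $\Hom_R((u,ab),(u,a)) \cong ((u,a):_K(u,ab))$, and using $(u,ab)=(v,c\cdot f_{\{ab\}^c})$-type rewriting from \eqref{replace u and v} — explicitly $(u,ab)\cong(v,c)$ after multiplying by $u(ab)^{-1}$, no: the identity \eqref{replace u and v} reads $(u,f_I) = (v,f_{I^c})uf_{I^c}^{-1}$, so here with $f_I = ab$, $f_{I^c}=c$ we get $(u,ab) = (v,c)\,u c^{-1}$, hence $(u,ab)\cong(v,c)$ via multiplication by $uc^{-1}$. Then $\Hom_R((u,ab),(u,a))\cong\Hom_R((v,c),(u,a))\cong((u,a):_K(v,c))$, and one checks this equals $(u,ac)u^{-1} = (1, acu^{-1})_R$ by the same scalar-bookkeeping as in part (1) (indeed $acu^{-1}\cdot v = ac\cdot(v/u)$ and $uv=f$ gives $v/u = f/u^2$, etc.), so I would simply mirror the computation. \textbf{The main obstacle} I anticipate is the fraction-field bookkeeping in the nontrivial inclusions: making sure each candidate scalar $q$ that multiplies one ideal into another genuinely lands in the claimed (fractional) ideal, without accidentally using $u^{-1}$ or $c^{-1}$ in a way that leaves $R$. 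The relation $uv = f$ must be invoked at exactly the right moment to convert an illegitimate-looking element back into $R$, and I expect to organize this by always clearing denominators via multiplication by a suitable ideal (as in \eqref{replace u and v}) and then comparing genuine ideals of $R$, rather than manipulating elements of $K$ loosely.
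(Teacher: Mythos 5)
Your setup (identifying $\Hom_R$ of ideals with a colon ideal in the quotient field, and the easy inclusion $(u,b)\subseteq\{q\mid q(u,a)\subseteq(u,ab)\}$) matches the paper, but the essential converse inclusion is never actually proved: your text for it is an admittedly speculative plan, and the concrete route you sketch does not work. Multiplying through by $(v,bc)$ gives $(u,a)(v,bc)=(av,\,ubc,\,abc)$, which is \emph{not} $a$ times an ideal of $R$, so there is no ``common factor $a$'' to cancel; and the direct attempt $q=\alpha+\beta\,abu^{-1}$ stalls exactly where you say it does. The two ideas you are missing are: (i) since $(u,ab)\subseteq(u,a)$, any $q$ in the colon satisfies $q(u,a)\subseteq(u,a)$, and normality of $R$ gives $\{q\in Q\mid q(u,a)\subseteq(u,a)\}=R$, so one may assume $q=r\in R$ and only needs $ar\in(u,ab)\Rightarrow r\in(u,b)$; (ii) this last implication is proved by lifting to the regular ring $S[[u,v]]$ --- legitimate because $f-uv$ lies in the $S[[u,v]]$-ideal $(u,ab)$ (as $ab\mid f$) --- writing $ar=up+abq$ there, so $a(r-bq)=up$, and using that $a$ and $u$ have no common prime factor in the UFD $S[[u,v]]$ to conclude $r-bq\in(u)$, hence $r\in(u,b)$. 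Without some replacement for this coprimality/lifting step your argument for (1) has a genuine hole.

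For (2), your idea of invoking \eqref{replace u and v} is in the right spirit, but rewriting only the source as $(u,ab)=(v,c)uc^{-1}$ reduces you to $\Hom_R((v,c),(u,a))$, a Hom between a ``$v$-type'' and a ``$u$-type'' ideal that is not an instance of (1), so the promised ``mirror computation'' is not available and would need its own argument. The paper instead rewrites \emph{both} ideals, $\Hom_R((u,ab),(u,a))=\Hom_R((v,c)uc^{-1},(v,bc)u(bc)^{-1})=\Hom_R((v,c),(v,bc))\,b^{-1}$, so that (1) applies verbatim with $u$ and $v$ interchanged, giving $(v,b)b^{-1}$, which equals $(u,ac)u^{-1}$ by one more application of \eqref{replace u and v}. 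You should adopt that two-sided rewriting (or otherwise supply the mixed computation) to close part (2).
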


\begin{proof}
Let $I$ and $I'$ be non-zero ideals of $R$. Since $R$ is a domain, we have an isomorphism $\{q\in Q\mid Iq\subseteq I'\}\cong\Hom_R(I,I')$, $q\mapsto(\cdot q)$ for the quotient field $Q$ of $R$.\\
(1) Clearly we have $(u,b)\subseteq\{q\in Q\mid (u,a)q\subseteq (u,ab)\}$.
Conversely, any element $q$ belonging to the right hand side is certainly contained in $\{q\in Q\mid (u,a)q\subseteq (u,a)\}$ which equals $R$ since $R$ is normal. 
Thus it remains to show that if $r\in R$ satisfies $(u,a)r\subseteq (u,ab)$, then $r\in(u,b)$.

View such an $r\in R$ as an element of $S[[u,v]]$, then since $ar\in (u,ab)$ we can write $ar=up+abq$ in $R$ for some $p,q\in S[[u,v]]$.  
Since $f-uv$ is contained in the $S[[u,v]]$-ideal $(u,ab)$, we still have $ar=up+abq$ in $S[[u,v]]$ for some $p,q\in S[[u,v]]$.  
Then $a(r-bq)=up$, so since $a$ and $u$ have no common factors, we have $r-bq\in(u)$. Thus $r\in(u,b)$.\\
(2) By (\ref{replace u and v}), we have
\[\Hom_R((u,ab),(u,a))=\Hom_R((v,c)uc^{-1},(v,bc)u(bc)^{-1})=\Hom_R((v,c),(v,bc))b^{-1},\]
which equals $(v,b)b^{-1}=(u,ac)u^{-1}$ by (1).
\end{proof}

Immediately we have the following consequence.

\begin{prop}
$T^\c{F}$ is a modifying $R$-module for any flag $\c{F}$ in the set $\{ 1,2,\hdots,n \}$.
\end{prop}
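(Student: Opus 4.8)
The plan is to reduce the statement to showing that $\End_R(T^{\c{F}})$ is a maximal Cohen--Macaulay $R$-module, and then to compute every entry $\Hom_R(T_{I_i},T_{I_j})$ of this endomorphism ring by hand using \ref{describe Hom-sets}, observing that each such entry is again isomorphic to an ideal of the form $(u,f_K)$, hence MCM by \ref{(u,f_I) is CM}.

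In detail: since $R=S[[u,v]]/(f-uv)$ is a three-dimensional complete local Gorenstein normal domain with $\dim\Sing R\le 1$, Lemma~\ref{modifying 2 ok}(1) reduces the claim to the two assertions $T^{\c{F}}\in\CM R$ and $\End_R(T^{\c{F}})\in\CM R$. I would first set $I_0:=\emptyset$, so that $f_{I_0}=1$ and $T_{I_0}=(u,1)=R$, and write $T^{\c{F}}=\bigoplus_{j=0}^{m}T_{I_j}$. Each $T_{I_j}=(u,f_{I_j})$ lies in $\CM R$ by \ref{(u,f_I) is CM}, so $T^{\c{F}}\in\CM R$; and since a finite direct sum of MCM modules over the local ring $R$ is again MCM, the remaining assertion $\End_R(T^{\c{F}})=\bigoplus_{0\le i,j\le m}\Hom_R(T_{I_i},T_{I_j})\in\CM R$ will follow once each summand $\Hom_R(T_{I_i},T_{I_j})$ is shown to be MCM.

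The main step is this last computation. Because the $I_j$ form a chain under inclusion, for any pair $(i,j)$ we have either $I_i\subseteq I_j$ or $I_j\subseteq I_i$. In the first case I would apply \ref{describe Hom-sets}(1) to the factorization $f=f_{I_i}\cdot f_{I_j\setminus I_i}\cdot f_{I_j^c}$ of $f$ in $S$, getting $\Hom_R(T_{I_i},T_{I_j})\cong(u,f_{I_j\setminus I_i})=T_{I_j\setminus I_i}$ (the case $I_i=I_j$ being included with middle factor $1$, giving $\End_R(T_{I_i})\cong R$). In the second case I would apply \ref{describe Hom-sets}(2) to $f=f_{I_j}\cdot f_{I_i\setminus I_j}\cdot f_{I_i^c}$, getting $\Hom_R(T_{I_i},T_{I_j})\cong(u,f_{I_j}f_{I_i^c})u^{-1}=(u,f_{I_j\cup I_i^c})u^{-1}$, the union being disjoint since $I_j\subseteq I_i$; multiplication by $u$ is an injective $R$-linear map carrying this fractional ideal onto the genuine ideal $(u,f_{I_j\cup I_i^c})=T_{I_j\cup I_i^c}$, so the $u^{-1}$-twist is harmless. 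In all cases $\Hom_R(T_{I_i},T_{I_j})$ is $R$-isomorphic to some $T_K$, which is MCM by \ref{(u,f_I) is CM}. Hence $\End_R(T^{\c{F}})\in\CM R$, and Lemma~\ref{modifying 2 ok}(1) completes the proof. (En route one sees that $T^{\c{F}}$ is moreover a generator, since $R=T_{I_0}$ is a direct summand.)

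I do not anticipate a genuine obstacle: \ref{describe Hom-sets} has already carried out the substantive ideal-theoretic work, so what is left is bookkeeping with the chain $I_0\subsetneq I_1\subsetneq\cdots\subsetneq I_m$. The two places deserving a sentence of care are (a) pinning down the correct three-term factorizations $f=abc$ in $S$ so that \ref{describe Hom-sets} applies verbatim, and (b) noting that the fractional ideal $(u,f_K)u^{-1}$ produced by part~(2) is $R$-module isomorphic to $T_K=(u,f_K)$, so that its MCM property is indeed covered by \ref{(u,f_I) is CM}.
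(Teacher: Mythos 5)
Your proof is correct and follows essentially the same route as the paper, which states the proposition as an immediate consequence of \ref{(u,f_I) is CM} and \ref{describe Hom-sets}: every Hom-space between summands of $T^{\c{F}}$ is identified, via the chain structure of the flag, with an ideal of the form $(u,f_K)$ (up to the harmless $u^{-1}$-twist), hence is CM, so $\End_R(T^{\c{F}})\in\CM R$. You have merely made explicit the bookkeeping (including the degenerate factors $a=1$ and the reduction via \ref{modifying 2 ok}) that the paper leaves implicit.
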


Next we show that $T^\c{F}$ is an MM $R$-module if $\c{F}$ is maximal (\ref{key of classification 2}). For this, we need Kn\"orrer periodicity. Let
\[
R^{\flat}:=S/(f),
\]
then there is a triangle equivalence
\[
K:\uCM R^\flat\xrightarrow{\sim}\uCM R
\]
called \emph{Kn\"oerrer periodicity} (valid for any field $k$ by \cite[3.1]{Solberg}) which is defined as follows \cite{Y}:  For any $X\in \CM R^\flat$,
we have a free resolution
\[
R^\flat{}^{\oplus n}\xrightarrow{B}R^\flat{}^{\oplus n} \xrightarrow{A}R^\flat{}^{\oplus n}\to X \to0
\]
where $A$ and $B$ are $n\times n$ matrices over $S$
satisfying $AB=fI_n=BA$.
Then $K(X)$ is defined by the following exact sequence:
\[
R^{\oplus 2n}\xrightarrow{\left(\begin{smallmatrix}
B&-u\\ -v& A
\end{smallmatrix}\right)}
R^{\oplus 2n}\xrightarrow{\left(\begin{smallmatrix}
A&u\\ v& B
\end{smallmatrix}\right)}R^{\oplus 2n}\to K(X)\to 0.
\]
For any subset $I\subseteq\{ 1,2,\hdots, n\}$, and any flag $\c{F}=(I_1,\hdots,I_m)$ in the set $\{ 1,2,\hdots, n\}$, we define
\[
S_I:=S/(f_I)\cong R^\flat/(f_I)\ \mbox{ and }\ S^\c{F}:=R^\flat\oplus\left(\bigoplus_{j=1}^{m} S_{I_j}\right).
\]
Again we identify maximal flags with elements of the symmetric group, and so for $\omega\in\mathfrak{S}_n$ we set $S^\omega:=\bigoplus_{j=1}^{n}S_{\{\omega(1),\hdots,\omega(j)\}}$. The following is immediate:

\begin{lemma}\label{S and T}
\t{(1)} $K(S^\c{F})\cong T^\c{F}$ for any flag $\c{F}$ in the set $\{ 1,\hdots,n \}$.\\
\t{(2)} $K(S^\omega)\cong T^\omega$ for any $\omega\in\mathfrak{S}_n$.
\end{lemma}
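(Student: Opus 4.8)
The plan is to reduce both parts to the single identity $K(S_I)\cong T_I$ for a subset $\emptyset\subsetneq I\subsetneq\{1,\dots,n\}$, using that $K$ is additive, being a triangle equivalence. Indeed, since $S^\c{F}=R^\flat\oplus\bigoplus_{j=1}^{m}S_{I_j}$ with $R^\flat$ free over itself (hence zero in $\uCM R^\flat$), additivity gives $K(S^\c{F})\cong\bigoplus_{j=1}^{m}K(S_{I_j})$; combined with $K(S_{I_j})\cong T_{I_j}$ this yields $K(S^\c{F})\cong\bigoplus_{j=1}^{m}T_{I_j}\cong T^\c{F}$ in $\uCM R$, the free summand $R$ of $T^\c{F}$ being zero there. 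Part (2) is then literally part (1) applied to the maximal flag $(\{\omega(1)\},\dots,\{\omega(1),\dots,\omega(n-1)\})$ associated with $\omega$, for which, by definition, $S^\c{F}=S^\omega$ and $T^\c{F}=T^\omega$.

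For the core step I would first write down the minimal free resolution of $S_I=S/(f_I)$ over $R^\flat=S/(f)$. Writing $f=f_If_{I^c}$ and using that $S$ is a domain in which each $f_i$ is prime, one checks that $\Cok(f_I\colon R^\flat\to R^\flat)=S_I$ and $\Ker(f_I\colon R^\flat\to R^\flat)=f_{I^c}R^\flat$, so the resolution is $2$-periodic,
\[
\cdots\xrightarrow{f_{I^c}}R^\flat\xrightarrow{f_I}R^\flat\xrightarrow{f_{I^c}}R^\flat\xrightarrow{f_I}R^\flat\to S_I\to 0 .
\]
Thus in the notation of the Kn\"orrer construction one may take $n=1$, $A=(f_I)$ and $B=(f_{I^c})$ over $S$, with $AB=f=BA$, and the defining presentation of $K$ reads $K(S_I)=\Cok\!\left(\begin{smallmatrix}f_I&u\\ v&f_{I^c}\end{smallmatrix}\right)$. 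This is precisely the matrix factorization presentation of $T_I=(u,f_I)$ recorded in \ref{(u,f_I) is CM}, so $K(S_I)\cong T_I$ — in fact as $R$-modules, since the two cokernels are given by the same matrix. (If one wants the free summands on the nose as well, one checks likewise that $K(R^\flat)=\Cok\!\left(\begin{smallmatrix}1&u\\ v&f\end{smallmatrix}\right)\cong R$, using $f=uv$ in $R$.)

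I expect no genuine obstacle here — the authors rightly call this "immediate"; it is essentially bookkeeping, combining additivity of the Kn\"orrer equivalence with matrix factorizations that are already on the page. The only points needing a moment's care are (a) keeping straight which isomorphisms hold on the nose versus only stably, since the free summands $R^\flat$ and $R$ are discarded in $\uCM R^\flat$ and $\uCM R$ respectively, and (b) matching the conventions between the Kn\"orrer formula $\left(\begin{smallmatrix}A&u\\ v&B\end{smallmatrix}\right)$ and the presentation of $(u,f_I)$ in \ref{(u,f_I) is CM}; with $A=f_I$ and $B=f_{I^c}$ these cokernel matrices coincide verbatim, so there is nothing to reconcile.
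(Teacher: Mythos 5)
Your proposal is correct and is exactly the argument the paper leaves implicit when it calls the lemma ``immediate'': the $2$-periodic resolution of $S_I$ over $R^\flat$ gives the matrix factorization $(f_I,f_{I^c})$ of $f$, Kn\"orrer's formula turns it into $\left(\begin{smallmatrix}f_I&u\\ v&f_{I^c}\end{smallmatrix}\right)$, whose cokernel is $(u,f_I)=T_I$ by \ref{(u,f_I) is CM}, and additivity plus discarding free summands in the stable categories gives both statements. The only wrinkle is your parenthetical computation of $K(R^\flat)$, which is not needed (and not literally covered by the paper's definition of $K$, which lives on stable categories); the stable isomorphism is all that is used in \ref{key of classification}.
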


The following is the key step.
\begin{prop}\label{key of classification}
Given a flag $\cF=(I_1,\hdots, I_m)$, we have a triangle equivalence
\[
(\uCM R)_{T^\cF}\simeq \bigoplus_{i=1}^{m+1}\uCM\left(\frac{k[[x,y,u,v]]}{(f_{I^i\backslash I^{i-1}}-uv)}\right),
\] 
where by convention  $I_0:=\emptyset$ and $I_{m+1}:=\{1,2,\hdots,n\}$.
\end{prop}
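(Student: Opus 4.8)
The plan is to reduce the statement to the one-dimensional calculation via Knörrer periodicity and then iterate the splitting result \ref{3.7}. First I would recall that $R = S[[u,v]]/(f-uv)$ with $f = f_1\cdots f_n$, and that by \ref{S and T}(1) we have $K(S^{\cF}) \cong T^{\cF}$, where $K\colon \uCM R^\flat \xrightarrow{\sim} \uCM R$ is Knörrer periodicity and $R^\flat = S/(f)$. Since $K$ is a triangle equivalence, it carries modifying objects to modifying objects and, crucially, identifies $\cZ_{S^\cF}$ inside $\uCM R^\flat$ with $\cZ_{T^\cF}$ inside $\uCM R$, hence induces a triangle equivalence on CY reductions
\[
(\uCM R)_{T^\cF} \simeq (\uCM R^\flat)_{S^\cF}.
\]
So the problem is moved entirely into the one-dimensional hypersurface $R^\flat = S/(f_1\cdots f_n)$, and I no longer need to track the variables $u,v$.

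Next I would analyze $(\uCM R^\flat)_{S^\cF}$ where $S^\cF = R^\flat \oplus \bigoplus_{j=1}^m S_{I_j}$ and $S_{I_j} = S/(f_{I_j})$. The key observation is that the generators $R^\flat = S/(f_{I_{m+1}})$ (with $I_{m+1} = \{1,\dots,n\}$) together with the $S_{I_j}$ are exactly the "partial product" quotients associated to the chain $\emptyset = I_0 \subsetneq I_1 \subsetneq \cdots \subsetneq I_m \subsetneq I_{m+1} = \{1,\dots,n\}$. I would apply \ref{3.7}(2) repeatedly: writing $f = f_{I_1}\cdot (f_{I_2\setminus I_1}\cdots f_{I_{m+1}\setminus I_m})$, the first application of \ref{3.7}(2) gives that $S/(f_{I_1})$ is modifying in $\uCM R^\flat$ and
\[
(\uCM R^\flat)_{S/(f_{I_1})} \simeq \uCM(S/(f_{I_1})) \times \uCM\!\Big(S/\big(f_{I_2\setminus I_1}\cdots f_{I_{m+1}\setminus I_m}\big)\Big).
\]
I would then note that CY reduction can be performed iteratively — reducing $\uCM R^\flat$ by the full summand $S^\cF$ is the same as reducing step by step, using that $\cZ_{N}$ for $N$ a direct sum is the intersection of the individual $\cZ$'s, together with \ref{bijection for MM} / the compatibility of reductions (this is the analogue of \ref{chainofCMs}(2), or one simply observes $(\cC_M)_N = \cC_{M\oplus N}$ for the relevant objects). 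Continuing inductively on the second factor — reducing by $S/(f_{I_2\setminus I_1})$ next, then $S/(f_{I_3\setminus I_2})$, and so on — I obtain
\[
(\uCM R^\flat)_{S^\cF} \simeq \bigoplus_{i=1}^{m+1} \uCM\!\Big(S/\big(f_{I^i\setminus I^{i-1}}\big)\Big)
\]
as triangulated categories, where $I^i$ denotes $I_i$ in the convention of the statement.

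Finally I would undo Knörrer periodicity on each factor: $\uCM(S/(g)) \simeq \uCM(k[[x,y,u,v]]/(g-uv))$ for any $g \in S$, again by \ref{S and T}-type Knörrer periodicity applied with a single prime-factor product $g = f_{I^i\setminus I^{i-1}}$. Combining gives
\[
(\uCM R)_{T^\cF} \simeq \bigoplus_{i=1}^{m+1} \uCM\!\left(\frac{k[[x,y,u,v]]}{(f_{I^i\setminus I^{i-1}} - uv)}\right),
\]
which is the claim. The main obstacle I expect is the bookkeeping in the iteration step: I must be careful that after the first CY reduction the "residual" category is genuinely $\uCM(S/(f_{I_2\setminus I_1}\cdots f_{I_{m+1}\setminus I_m}))$ with its own $2$-CY structure (this is \ref{3.7}(1) for the smaller hypersurface), and that the image of the summand $S_{I_2}/[\text{already-reduced part}]$ in this residual category is precisely $S/(f_{I_2\setminus I_1})$ — i.e. that the modifying object we next reduce by really lands in the correct factor and is identified correctly under the product decomposition of \ref{3.7}(2). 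Making the identification "$(\cC_M)_N \simeq \cC_{M\oplus N}$" precise (so the iteration is legitimate) and checking that under the equivalence of \ref{3.7}(2) the remaining summands $S_{I_{j}}$ ($j \geq 2$) map to $S/(f_{I_j\setminus I_1})$ in the second factor and to a projective (hence zero) object in the first factor — that is the technical heart, though each individual verification is routine given \ref{3.7} and \ref{describe Hom-sets}.
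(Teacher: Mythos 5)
Your proposal is correct and follows essentially the same route as the paper: transfer the problem to the one-dimensional hypersurface $R^\flat=S/(f)$ via Knörrer periodicity using \ref{S and T}, split off the factors by applying \ref{3.7} repeatedly, and then apply Knörrer periodicity again to each factor. The technical points you flag (compatibility of iterated CY reduction and the identification of the remaining summands inside the product decomposition) are exactly what the paper's terse phrase ``applying \ref{3.7} repeatedly'' suppresses, so your outline matches the intended argument.
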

\begin{proof}
Since the equivalence $K:\uCM R^\flat\simeq\uCM R$ sends $T^\c{F}$ to $S^\c{F}$ by \ref{S and T}, we only have to calculate the CY reduction of $\uCM R^\flat$ with respect to $S^\c{F}$.
Applying \ref{3.7} repeatedly, it is triangle equivalent to $\bigoplus_{i=1}^{m+1}\uCM\left(S/(f_{I^i\backslash I^{i-1}})\right)$.
Applying Kn\"oerrer periodicity to each factor again, we have the result.
\end{proof}

It is known that under certain assumptions on the base field $k$, $[k]=0$ (and so $[R^\flat]=[\m]$) in the Grothendieck group $K_0(\mod R^\flat)$. However, since we do not have any assumptions on the field $k$, below we require the following technical observation.

\begin{lemma}\label{K(m) is zero}
Let $\m=(x,y)$ be the maximal ideal of $R^\flat$.  Then $[K(\m)]=2[R]$ in $K_0(\mod R)/\langle[X]\mid \dim_RX\le 1\rangle$.
\end{lemma}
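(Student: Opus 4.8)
The plan is to realize $K(\mathfrak{m})$ explicitly as a cokernel of a matrix factorization coming from the Kn\"orrer construction, and then show that in $K_0(\mod R)/\langle[X]\mid\dim_RX\le1\rangle$ this class equals $2[R]$ by exhibiting a short exact sequence relating $K(\mathfrak{m})$, two copies of $R$, and a module supported in dimension $\le1$. First I would take a minimal free presentation of $\mathfrak{m}=(x,y)$ over $R^\flat=S/(f)$. Since $f\in\mathfrak{m}^2$ in general, writing $f=px+qy$ for some $p,q\in S$ (not unique), the $S$-module $\mathfrak{m}$ has the two-periodic $R^\flat$-resolution with matrices $A_0=\left(\begin{smallmatrix}x&y\end{smallmatrix}\right)$-type data; more precisely one gets $2\times2$ matrices $A,B$ over $S$ with $AB=fI_2=BA$ whose cokernel over $R^\flat$ is $\mathfrak{m}$ (using that $\mathfrak{m}$ is a rank-one CM module over the hypersurface $R^\flat$, hence given by a size-$2$ matrix factorization of $f$). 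Then by definition $K(\mathfrak{m})$ is the cokernel over $R$ of the size-$4$ matrix factorization $\left(\begin{smallmatrix}A&u\\v&B\end{smallmatrix}\right)$.

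Next I would identify this cokernel. The key is that the matrix factorization $\left(\begin{smallmatrix}A&u\\v&B\end{smallmatrix}\right),\left(\begin{smallmatrix}B&-u\\-v&A\end{smallmatrix}\right)$ of $f-uv$ over $R$ (equivalently over $S[[u,v]]$, with $f-uv$ the relation) can be manipulated by invertible row/column operations over $S[[u,v]]$ without changing the cokernel up to isomorphism. I expect that after such reduction $K(\mathfrak{m})$ decomposes, modulo free summands and modules of dimension $\le1$, into exactly two rank-one pieces, each stably isomorphic to $R$. Concretely, I would try to use the specific form of $A,B$ for $\mathfrak{m}$ (columns/rows involving $x$, $y$, $p$, $q$ together with $u$, $v$) and perform Gaussian elimination to split off two $R$-direct summands isomorphic to $R$ up to a submodule killed by some power of $\mathfrak{m}$, i.e.\ of dimension $\le1$. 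An alternative, cleaner route: use the exact sequence $0\to\mathfrak{m}\to R^\flat\to k\to 0$ over $R^\flat$. Although $k$ is not CM, one can take CM approximations / syzygies: in $\uCM R^\flat$ one has $[\mathfrak{m}]=[\Omega k]$ up to free summands, and $\Omega^2 k\cong k$ on the hypersurface, so $[\mathfrak{m}]$ differs from $[\Omega^{-1}k]$-type classes; applying the triangle functor $K$ (which is exact and additive on $K_0$) and using that $K$ sends finite-length-supported perturbations to dimension-$\le1$-supported modules, I would reduce $[K(\mathfrak{m})]$ to a multiple of $[K(R^\flat)]=[R\oplus\Omega_R(?)]$ — but $K(R^\flat)$ itself needs care since $R^\flat\notin\CM R^\flat$ only if $f$ is a zerodivisor, which it is not, so $R^\flat\in\CM R^\flat$ and $K(R^\flat)$ is computed from the trivial size-$1$ factorization $f=f\cdot1$, giving $K(R^\flat)=\Cok\left(\begin{smallmatrix}f&u\\v&1\end{smallmatrix}\right)\cong R$ (the second row makes it free of rank one). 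Similarly $K$ applied to $S_I=R^\flat/(f_I)$ gives $(u,f_I)$-type modules by the formula in \ref{S and T}. So the genuinely new input is just $K(\mathfrak{m})$.

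Carrying this out, the computation I expect to work is: from $0\to\mathfrak{m}\to R^\flat\to k\to0$ and the fact that $k$ has a $2$-periodic resolution over $R^\flat$ given by the Koszul-type matrices $\left(\begin{smallmatrix}y&-x\\ -q&p\end{smallmatrix}\right)$ and its partner (when $f=px+qy$), one reads off $A,B$ for $\mathfrak{m}=\Omega_{R^\flat}k$ and then $K(\mathfrak{m})$ is presented by an explicit $4\times4$ matrix factorization of $f-uv$; row-reducing using the entry $1$ that appears (coming from the degeneracy when $f\in\mathfrak{m}^2$ means $p,q\in\mathfrak{m}$, so no literal $1$, but the $u$-block provides a unit over the localization away from $\mathfrak{m}$) one splits off a free summand and two copies whose quotients are length-finite, yielding $[K(\mathfrak{m})]=2[R]$ in $K_0(\mod R)/\langle[X]\mid\dim_RX\le1\rangle$.

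\textbf{Main obstacle.} The hard part will be making the row/column reduction of the size-$4$ matrix factorization of $f-uv$ rigorous \emph{without} any assumption on the field $k$ and \emph{without} assuming $f\notin\mathfrak{m}^2$; when $f\in\mathfrak{m}^2$ there is no unit entry to pivot on over $S$, so one must either work over a localization and then control the error term (showing it is supported in dimension $\le1$), or find a slicker argument. I would therefore fall back on the localization argument: invert a suitable element $t\in\mathfrak{m}$ making $R^\flat_t$ regular (possible since $\dim\Sing R^\flat\le$ a point), deduce $K(\mathfrak{m})_t\cong R_t^{\,\oplus2}$ (since $\mathfrak{m}_t\cong R^\flat_t$ is free, and $K$ commutes with localization and sends free to rank-matching free up to shift), hence $[K(\mathfrak{m})]-2[R]$ is supported on $V(t)$ which has dimension $\le1$, giving exactly the claimed equality in the quotient Grothendieck group. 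This localization route is the cleanest and avoids the field-dependent $[k]=0$ statement entirely; verifying that $K$ commutes with the relevant localization is the only technical point, and it follows from the explicit matrix-factorization description of $K$.
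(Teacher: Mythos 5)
Your fallback localization argument has a genuine gap at its crucial step. You invert a single element $t\in\mathfrak{m}$ and conclude that $[K(\mathfrak{m})]-2[R]$ is ``supported on $V(t)$, which has dimension $\le1$.'' But $R$ has dimension three, so by Krull's principal ideal theorem the divisor $V(t)\subseteq\Spec R$ has dimension two, not $\le1$. Inverting one element cuts the support down by at most one dimension, which is not enough. More fundamentally, the lemma is not merely a rank statement: in the target group $K_0(\mod R)/\langle[X]\mid\dim_RX\le1\rangle\cong\mathbb{Z}\oplus\Cl(R)$, the claim $[K(\mathfrak{m})]=2[R]$ says both that $K(\mathfrak{m})$ has rank $2$ and that its class in $\Cl(R)$ vanishes. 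Any argument that only uses ``$K(\mathfrak{m})$ is locally free of rank $2$ off a codimension-$\ge2$ set'' cannot detect the second part, since that property is shared by modules with nontrivial class — e.g.\ $(u,f_i)$ is locally free of rank one off $\Sing R$ (dimension $\le1$) yet $[(u,f_i)]\neq[R]$ in this quotient. So the vanishing in $\Cl(R)$ has to be computed, it cannot be deduced from a freeness-off-small-locus observation.

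Your first approach (explicit matrix reduction) is closer in spirit to what the paper actually does, and the obstacle you flagged when $f\in\mathfrak{m}^2$ is exactly the real difficulty. The paper circumvents it not by pivoting on a unit but by an explicit construction: after a change of variables one writes $f=xg+yh$ with $g(x,0)\neq0$, which gives a concrete $2\times2$ matrix factorization of $f$ over $S$ presenting $\mathfrak{m}$; Kn\"orrer then presents $K(\mathfrak{m})$ by a $4\times4$ matrix factorization of $f-uv$. The key step is to \emph{extend} the presentation of $K(\mathfrak{m})$ to a length-four exact complex of the shape
\[
0\to K(\mathfrak{m})\to R^{\oplus4}\to R^{\oplus4}\to R^{\oplus3}\to R\to C\to0,
\]
where $C=R/(g,y,v)\cong k[[x,u]]/(g(x,0))$ has $\dim_RC\le1$. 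Exactness is checked by hand — at one spot using a dependency among columns over the quotient field of $R$, at another using that $(g,y,v)$ is a regular sequence on $S[[u,v]]$. This exact sequence gives $[K(\mathfrak{m})]=2[R]+[C]$ directly, and $[C]$ dies in the quotient. In other words, the paper's answer to ``how to avoid pivoting on a unit'' is to produce the excess term $C$ explicitly as a quotient by a regular sequence, rather than to try to kill it by localization.

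If you want a route closer to yours that still closes the gap, you would need to prove the $\Cl(R)$-vanishing separately — for instance by computing the determinant of the relevant size-four matrix factorization and showing it is a principal fractional ideal, or by relating $K(\mathfrak{m})$ to the restriction to $\Spec R$ of a bundle on a resolution whose $c_1$ is trivial. Either of those would require roughly as much explicit computation as the paper's exact-sequence argument.
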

\begin{proof}
Since $f\in\m$, by changing variables if necessary, we can assume that $f$ is not a multiple of $y$. Then we can find $g,h\in k[[x,y]]$ such that $f=xg+yh$ and  $g(x,0)\neq 0$.  Now a projective presentation of $\m$ is given by
\[
R^\flat{}^{\oplus 2}\xrightarrow{\left(\begin{smallmatrix}x&-h\\ y&g\end{smallmatrix}\right)}
R^\flat{}^{\oplus 2}\xrightarrow{\left(\begin{smallmatrix}g&h\\ -y&x\end{smallmatrix}\right)}
R^\flat{}^{\oplus 2}\xrightarrow{\left(\begin{smallmatrix}x\\ y\end{smallmatrix}\right)}\m\to 0
\]
and thus a projective presentation of $K(\m)$ is given by
\[
R^{\oplus 4}\xrightarrow{A=\left(\begin{smallmatrix}x&-h&-u&0\\ y&g&0&-u\\ -v&0&g&h\\ 0&-v&-y&x\end{smallmatrix}\right)}R^{\oplus 4}\xrightarrow{B=\left(\begin{smallmatrix}g&h&u&0\\ -y&x&0&u\\ v&0&x&-h\\ 0&v&y&g\end{smallmatrix}\right)}R^{\oplus 4}\to K(\m)\to 0.
\]
We claim that the sequence 
\begin{equation}\label{C and Km}
0\to K(\m)\to R^{\oplus 4}\xrightarrow{d_3=B}
R^{\oplus 4}\xrightarrow{d_2=\left(\begin{smallmatrix}x&-h&-u\\ y&g&0\\ -v&0&g\\ 0&-v&-y\end{smallmatrix}\right)}
R^{\oplus 3}\xrightarrow{d_1=\left(\begin{smallmatrix}g\\ -y\\ v\end{smallmatrix}\right)}R\to C\to0
\end{equation}
is exact for $C:=R/(g,y,v)$.  We first show that $\ker d_2=\Im d_3$. For the $i$-th column $A_i$ of $A$, we have a linear relation $A_4=-(h/v)A_1-(x/v)A_2$ over the quotient field $Q$ of $R$. Thus we have $\ker(Q^{\oplus 4}\xrightarrow{Q\otimes_Rd_2}Q^{\oplus 3})=\ker(Q^{\oplus 4}\xrightarrow{Q\otimes_RA}Q^{\oplus 4})$ and hence
\[
\ker d_2=R^{\oplus 4}\cap \ker(Q^{\oplus 4}\xrightarrow{Q\otimes_Rd_2}Q^{\oplus 3})=R^{\oplus 4}\cap \ker(Q^{\oplus 4}\xrightarrow{Q\otimes_RA}Q^{\oplus 4})=\ker A=\Im d_3.
\]	
We next show that $\ker d_1=\Im d_2$.  The regular sequence  $(g,y,v)$ on $S[[u,v]]$ gives an exact sequence 
\[
S[[u,v]]^{\oplus 3}\xrightarrow{e_2=\left(\begin{smallmatrix}y&g&0\\ -v&0&g\\ 0&-v&-y\end{smallmatrix}\right)}
S[[u,v]]^{\oplus 3}\xrightarrow{e_1=\left(\begin{smallmatrix}g\\ -y\\ v\end{smallmatrix}\right)}
S[[u,v]].
\]
Assume that the image $(a,b,c)\in R^{\oplus 3}$ of $(a,b,c)\in S[[u,v]]^{\oplus 3}$ satisfies $d_1(a,b,c)=0$. 
Then $e_1(a,b,c)=(f-uv)s$ holds for some $s\in S[[u,v]]$, which implies $e_1(a-sx,b+sh,c+su)=0$.
Thus there exists $(a',b',c')\in S[[u,v]]^{\oplus 3}$ such that $(a-sx,b+sh,c+su)=e_2(a',b',c')$.
This implies $(a,b,c)=d_2(s,a',b',c')$, which shows the assertion.

Hence \eqref{C and Km} is exact, thus $[K(\m)]=2[R]+[C]$ holds.
Since $C=k[[x,u]]/(g(x,0))$ and $g(x,0)\neq 0$, we have $\dim_RC\le 1$ and so the result follows.
\end{proof}

Now to show that $T^\c{F}$ is an MM $R$-module for any maximal flag $\c{F}$, we need to first understand the case when $f$ is irreducible.  The following extends \cite[4.3]{DH} by removing field restrictions.

\begin{prop}\label{irreducible case}
\t{(1)} ${\rm Cl}(R)$ is generated by $[(u,f_1)],\ldots,[(u,f_n)]$.\\
\t{(2)} Assume that $f$ is irreducible (i.e. $n=1$). Then
\begin{enumerate}
\item[(a)] $R$ is factorial.
\item[(b)] Any modifying $R$-module is free.
\end{enumerate}
\end{prop}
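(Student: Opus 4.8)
The plan is to first establish (1), and then deduce (2) from it. For part (1), the key observation is that $R$ is a normal domain, and $\Spec R \setminus \{\mathfrak{n}\}$ (with $\mathfrak{n}$ the maximal ideal) is covered by the open sets $D(u)$ and $D(v)$. On $D(u)$ we have $v = f/u$, so $R[u^{-1}] \cong S[[u]][u^{-1}]$ is a localization of a regular ring, hence factorial; similarly for $R[v^{-1}]$. Thus every height-one prime of $R$ becomes principal after inverting $u$ or after inverting $v$. The divisor class group of $R$ is therefore generated by the classes of the height-one primes containing both $u$ and $v$, equivalently containing $u$ and $f$. Since $f = f_1 \cdots f_n$ with the $f_i$ prime in $S$, the ideal $(u, f) = \prod(u, f_i)$ in an appropriate sense (after passing to localizations the $(u,f_i)$ are the minimal primes over $(u,f)$), so one checks that the classes $[(u,f_1)], \ldots, [(u,f_n)]$ generate $\Cl(R)$. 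I would carry this out by the standard excision sequence for class groups: $\Cl(R) \to \Cl(R[u^{-1}]) \oplus \Cl(R[v^{-1}])$ has kernel generated by the classes of primes in $V(u) \cap V(v)$, and the target is zero. The main subtlety is identifying precisely which primes lie in $V(u,v)$ and checking that $(u,f_i)$ is a prime (or at least that its class is what we want); this uses that $f_i$ is prime in $S$ and a factoriality argument as in \ref{describe Hom-sets}.

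For part (2a), when $n=1$ the element $f = f_1$ is irreducible in $S$, hence $f_1 \notin \mathfrak{m}^2$ is not required, but $(u, f_1)$ is then the whole relevant generator and by (1) the class group is generated by the single class $[(u,f_1)]$. I would then show directly that $(u,f_1)$ is principal: using $(u,f_1) = (v, 1)\cdot(\text{unit})$ is false, but rather one observes that when $f$ is irreducible, $R$ has an isolated singularity at the origin (the singular locus of $uv = f$ is cut out by $u = v = \partial_x f = \partial_y f = 0$, which for irreducible $f$ is just the origin or empty), and combine this with the fact that on the punctured spectrum the module $(u,f_1)$ is locally free of rank one and trivial on $D(u)$ and $D(v)$. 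Actually the cleanest route: by (1), $\Cl(R) = \mathbb{Z}\cdot[(u,f_1)]$ or a quotient thereof, and one shows $[(u,f_1)]$ itself is trivial because $(u, f_1) \cdot (v, f_1) = (f_1)\cdot(\text{stuff})$ — more precisely, from \eqref{replace u and v} with $I = \{1\}$, $I^c = \emptyset$, we get $(u, f_1) = (v, 1) u f_1^{-1} = R \cdot uf_1^{-1}$, i.e. $(u,f_1)$ is a principal fractional ideal, hence $R$ is factorial.

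For part (2b), suppose $M$ is a modifying $R$-module. Since $R$ is factorial of dimension three, $\refl R$ consists only of free modules (a reflexive module over a factorial domain of dimension $\leq 2$... no — here we need the three-dimensional factorial Gorenstein case). The correct argument: a modifying module $M$ by definition satisfies $\End_R(M) \in \CM R$, and by \ref{modifyingThesame}/\ref{modifying 2 ok} this means $M$ gives (after adding $R$) a module whose endomorphism ring is a non-commutative crepant-like resolution. Over a factorial Gorenstein terminal... rather, I would invoke that when $R$ is factorial with only terminal singularities — but more simply: if $R$ is factorial, then $\Cl(R) = 0$ forces every reflexive rank-one module to be free, and a modifying module decomposes (up to the analysis via mutation / the classification strategy) into rank-one pieces. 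The honest and probably intended argument uses the CY reduction machinery: when $f$ is irreducible the only flag is the empty flag, so by \ref{classification}(1) (which this proposition feeds into) — but that would be circular. So instead: the hard part, and the real content, is showing directly that any modifying $M$ over this factorial $R$ is free. I expect this to follow from the fact that $\uCM R$ here, by \ref{nonisolatedAR} applied with $R$ having isolated singularity, is $2$-CY, and a rigid object (which a modifying module becomes in $\uCM R$) over such a category coming from a factorial isolated cDV singularity must be zero in $\uCM R$ — equivalently $M$ is free — because $R$ is already its own $\mathds{Q}$-factorial terminalization and hence has no non-trivial NCCR-type data. The main obstacle is making this last step rigorous without invoking the very classification it supports; I would resolve it by a direct rank/determinant argument: $M \in \refl R$ modifying $\Rightarrow M$ is a direct sum of rank-one reflexives (since $\End_R(M) \in \CM R$ and $R$ factorial forces the "building blocks" to be invertible ideals) $\Rightarrow$ each summand is free by $\Cl(R) = 0$.
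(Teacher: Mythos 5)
Your approach to part (1) is genuinely different from the paper's and, at its core, both cleaner and more elementary. The paper proves (1) by moving through $K$-theory: it applies d\'evissage to $K_0(\mod R^\flat)$, transports across the Kn\"orrer equivalence, invokes the Bourbaki isomorphism $\mathbb{Z}\oplus\Cl(R)\cong K_0(\mod R)/\langle [X]\mid\dim_R X\le 1\rangle$, and then must establish the technical Lemma~\ref{K(m) is zero} (that $[K(\m)]=2[R]$ modulo low-dimensional classes), which costs a full page of explicit matrix computation. Your route is the Nagata exact sequence for divisor class groups, using that $R[u^{-1}]\cong k[[x,y,u]][u^{-1}]$ is a localization of a regular local ring, hence a UFD. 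This is correct and avoids Lemma~\ref{K(m) is zero} entirely. However, your write-up of the middle step is confused: the claim that $\Cl(R)$ is ``generated by the classes of the height-one primes containing both $u$ and $v$'' cannot be right, because $V(u,v)=\Spec k[[x,y]]/(f)$ has codimension two in $\Spec R$, so there are no such primes. The correct version uses only one open set: the Nagata sequence for $S^{-1}R=R[u^{-1}]$ gives a surjection from the free abelian group on the minimal primes over $(u)$ onto $\Cl(R)$ (since $\Cl(R[u^{-1}])=0$), and these minimal primes are exactly $(u,f_1),\ldots,(u,f_t)$ (they are prime because $R/(u,f_i)\cong k[[x,y,v]]/(f_i)$ is a domain). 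With that repair, your proof of (1) is complete and genuinely simpler than the paper's. For (2a) you and the paper make essentially the same observation that $(u,f)$ is principal when $f$ is irreducible; your invocation of~\eqref{replace u and v} has a small slip (the formula should produce $u\cdot f_{I^c}^{-1}=u\cdot 1$, not $uf_1^{-1}$), but $(u,f_1)=(u)$ either way, so the conclusion stands.

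Part (2b) has a genuine gap. You circle the right idea --- $R$ factorial forces every rank-one reflexive to be free, so it would suffice to know that a modifying module splits into rank-one reflexives --- but you never establish that splitting, and you correctly diagnose that the later results you might reach for are unavailable (\ref{modifying is rank 1} requires $k=\mathbb{C}$; \ref{classification} would be circular). Your final sentence asserts that ``$\End_R(M)\in\CM R$ and $R$ factorial forces the building blocks to be invertible ideals,'' but this is precisely the non-trivial content of the statement, not something that follows from a ``direct rank/determinant argument.'' It fails as stated: over a general factorial Gorenstein ring, $\End_R(M)\in\CM R$ does not force $M$ to decompose into line modules. What is true, and what the paper cites, is Dao's theorem \cite[3.1(1)]{Dao} (see also \cite[2.10]{IW5}): over a local factorial hypersurface of dimension $\geq 3$, any reflexive $M$ with $\Ext^1_R(M,M)=0$ is free. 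Alternatively, the remark following the proposition gives a proof via Huneke--Wiegand rigidity of Tor: since $R$ is an isolated singularity (Kn\"orrer from the one-dimensional domain $R^\flat$), modifying implies $\Ext^1_R(M,M)=0$, which via the Auslander--Bridger sequence forces $M\otimes_R M^*\in\CM R$, and then \cite[3.7]{HW} gives that $M$ is free. One of these two ingredients must be supplied; without it, (2b) is unproved.
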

\begin{proof}
(1)  By d\'evissage, $R^\flat$, $\m$ and the factors  $S/(f_1),\hdots,S/(f_t)$ of the minimal primes generate $ K_0(\mod R^\flat)$.   There are isomorphisms
\[
K_0(\mod R^\flat)/\langle[R^\flat]\rangle\simeq K_0(\underline{\CM}R^\flat)
\stackrel{K}{\simeq} 
K_0(\underline{\CM}R)\simeq K_0(\mod R)/\langle[R]\rangle,
\]
thus $K_0(\mod R)$ is generated by $[R]$, $[K(\m)]$ and $[K(R^\flat/(f_i))]=[(u,f_i)]$ for $1\le i\le n$. Now by \cite[VII.4.7]{bourbaki}, there is an isomorphism
\[
\mathbb{Z}\oplus{\rm Cl}(R)=K_0(\mod R)/\langle[X]\mid \dim_RX\le 1\rangle,
\]
which implies
\[
{\rm Cl}(R)=K_0(\mod R)/\langle[R],\ [X]\mid \dim_RX\le 1\rangle.
\]
Thus by \ref{K(m) is zero}, it follows that ${\rm Cl}(R)$ is generated by $[(u,f_i)]$ for $1\le i\le n$.\\
(2) $R$ is factorial by (1), since $(u,f)\cong R$. The assertion (b) follows from factoriality of $R$ and Dao's result \cite[3.1(1)]{Dao} (see also \cite[2.10]{IW5}).
\end{proof}

\begin{remark}
It is possible to give another proof of \ref{irreducible case}(2)(b) by appealing instead to  a result by Huneke--Wiegand \cite[3.7]{HW}.  If $f$ is irreducible then $R^{\flat}$ is a one-dimensional domain, so necessarily it is an isolated singularity and thus $\uCM R^{\flat}$ is Hom-finite.   By Kn\"orrer periodicity, $\uCM R$ is Hom-finite, so  $R$ is an isolated singularity.  Thus by \cite[2.9(2)$\Leftrightarrow$(3)]{IW5}, to prove \ref{irreducible case}(2)(b) it is enough to show that any $M\in\CM R^{\flat}$ satisfying $\Ext^1_{R^{\flat}}(M,M)=0$ is free. Now we have
\[
\Ext^1_{R^\flat}(\Tr M,M^*)=\u{\Hom}_{R^\flat}(M^*[2], M^*[1])=\u{\Hom}_{R^\flat}( M[1], M[2])=\Ext^1_{R^\flat}(M,M)=0,
\]
so the exact sequence \cite{AB}
\[
0\to\Ext^1_{R^\flat}(\Tr M,M^*)\to M\otimes_{R^\flat} M^*\to \Hom_{R^\flat}(M^*,M^*)\to\Ext^2_{R^\flat}(\Tr M,M^*)\to 0
\]
shows that $M\otimes_{R^\flat} M^*\in\CM R^\flat$.  Since $R^\flat$ is a domain, $M\in\CM R^{\flat}$ has constant rank, hence since $R^{\flat}$ is a hypersurface, \cite[3.7]{HW} implies that $M$ is free.
\end{remark}

\begin{cor}\label{key of classification 2}
\t{(1)} $T^\omega$ is an MM generator of $R$ for all $\omega\in S_n$.\\
\t{(2)} $T^\omega$ is a CT $R$-module if and only if $f_i\notin(x,y)^2$ for all $i$.
\end{cor}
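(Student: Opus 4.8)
The plan is to deduce both statements from the explicit description of the CY reduction $(\uCM R)_{T^\omega}$ given in Proposition \ref{key of classification}, combined with Corollary \ref{CT and CY reduction} and the irreducible case Proposition \ref{irreducible case}(2). For (1), since $\omega\in\mathfrak{S}_n$ corresponds to a maximal flag (so $m=n-1$ and $I_{m+1}=\{1,\dots,n\}$), Proposition \ref{key of classification} gives a triangle equivalence
\[
(\uCM R)_{T^\omega}\simeq\bigoplus_{i=1}^{n}\uCM\left(\frac{k[[x,y,u,v]]}{(f_{\omega(i)}-uv)}\right).
\]
Each summand is the stable category of a $cA_1$ singularity whose defining polynomial $f_{\omega(i)}$ is \emph{irreducible}; by Proposition \ref{irreducible case}(2)(b), such a category has no non-zero modifying objects (every modifying object is free, hence zero in the stable category). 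A direct product of categories has a non-zero modifying object only if one of the factors does, so $(\uCM R)_{T^\omega}$ has no non-zero modifying objects. By Corollary \ref{CT and CY reduction}(1) (or the corollary immediately following \ref{CY red is uCM of End}, phrased in terms of MM generators), this is exactly the statement that $T^\omega$ is an MM generator of $R$. I should also note that $T^\omega$ is a modifying generator to begin with (Proposition before \ref{key of classification}, via the Hom-set computations in \ref{describe Hom-sets}), and that it is a \emph{generator} since $R$ is a summand by construction.

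For (2), I would again use Proposition \ref{key of classification} together with Corollary \ref{CT and CY reduction}(2), which says $T^\omega$ is CT if and only if $(\uCM R)_{T^\omega}=0$. Thus $T^\omega$ is CT iff each $\uCM(k[[x,y,u,v]]/(f_{\omega(i)}-uv))$ vanishes, i.e. iff each ring $k[[x,y,u,v]]/(f_i-uv)$ is regular. Since $f_i$ is irreducible, this hypersurface is regular precisely when $f_i-uv$ has a non-zero linear term, equivalently when $f_i\notin\m^2=(x,y)^2$ (the $uv$ term never contributes a linear part). I would make this last equivalence explicit: writing $f_i$ in terms of its lowest-degree part, $k[[x,y,u,v]]/(f_i-uv)$ is regular iff $f_i-uv\notin (x,y,u,v)^2$, and since $uv\in(x,y,u,v)^2$ always, this holds iff $f_i\notin(x,y)^2$. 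Collecting over all $i$ (note the condition on $f_{\omega(i)}$ ranges over all $f_i$ as $\omega$ is a bijection) gives the stated criterion.

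The main obstacle here is not conceptual but bookkeeping: one must be careful that Proposition \ref{irreducible case}(2)(b) is genuinely applicable to each factor — it requires the base ring in that factor to be a $3$-dimensional Gorenstein normal domain (or more precisely that the $cA_1$ singularity $k[[x,y,u,v]]/(f_{\omega(i)}-uv)$ with $f_{\omega(i)}$ irreducible is factorial), which is exactly what \ref{irreducible case}(2)(a) provides, so the chain of implications is clean. One should also check the degenerate small-$n$ cases cause no trouble: when $n=1$ the flag is empty, $T^\omega=R$, the reduction is $\uCM R$ itself, and $R$ is factorial by \ref{irreducible case}(2)(a) so indeed $R$ is trivially an MM generator, consistent with the formula. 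A secondary point worth a sentence is that the product decomposition in Proposition \ref{key of classification} is a decomposition of triangulated categories, so "modifying object" and "CT object" (which are defined purely in terms of Hom-spaces, shifts, and the $T_\cC$-filtration) are detected componentwise; this is what licenses passing between the product and its factors in both parts.
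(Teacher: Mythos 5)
Your proof is correct and takes essentially the same route as the paper: both parts deduce the statement from Proposition \ref{key of classification} combined with Corollary \ref{CT and CY reduction}, using \ref{irreducible case}(2)(b) for part (1) and the regularity criterion for part (2). The paper's proof is just a two-line version of what you wrote; the additional points you make (that the product decomposition detects modifying and CT objects componentwise, and the explicit check that $k[[x,y,u,v]]/(f_i-uv)$ is regular precisely when $f_i\notin(x,y)^2$) are the details the paper leaves implicit.
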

\begin{proof}
(1) By \ref{key of classification} and \ref{irreducible case}(2)(b), every modifying object in $(\uCM R)_{T^\omega}$ is zero.
Thus the assertion follows from \ref{CT and CY reduction}.\\
(2) By \ref{CT and CY reduction}, $T^\omega$ is a CT $R$-module if and only if $(\uCM R)_{T^\omega}=0$. 
By \ref{key of classification}, this is equivalent to that $f_i\notin(x,y)^2$ for all $i$.
\end{proof}

We now calculate the mutations $\mu_i(T^\omega)$ introduced in \S\ref{Mutation Section}.

\begin{lemma}\label{exactness}
For any decomposition $f=abcd$ with $a,b,c,d\in S$, we have an exact sequence
\[
0\to (u,ab)\xrightarrow{(1\ c)}
(u,b)\oplus(u,abc)\xrightarrow{{-c\choose 1}}
(u,bc)\to  0.
\]
\end{lemma}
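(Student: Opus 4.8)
The plan is to check directly that the two maps are well defined and compose to zero, and then to establish exactness, the only real content being exactness in the middle. For well-definedness: since $ab\in(b)$ and $c(u,ab)=(cu,abc)\subseteq(u,abc)$, the rule $r\mapsto(r,cr)$ maps $(u,ab)$ into $(u,b)\oplus(u,abc)$; dually, since $abc\in(bc)$ and $c(u,b)\subseteq(u,bc)$, the rule $(s,t)\mapsto t-cs$ maps $(u,b)\oplus(u,abc)$ into $(u,bc)$. The composite sends $r\mapsto(r,cr)\mapsto cr-cr=0$, so we have a complex. Injectivity of $(1\ c)$ is immediate since its first component is the inclusion $(u,ab)\hookrightarrow(u,b)$, and $\binom{-c}{1}$ is surjective because the two generators of $(u,bc)$ are hit: $u$ is the image of $(0,u)$ and $bc$ is the image of $(-b,0)$.

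It remains to prove exactness in the middle, which reduces to the claim: \emph{if $s\in R$ satisfies $cs\in(u,abc)$, then $s\in(u,ab)$}. Indeed, if $(s,t)$ lies in the kernel of $\binom{-c}{1}$ then $t=cs\in(u,abc)$, so by the claim $s\in(u,ab)$ and $(s,t)=(s,cs)$ is the image of $s$ under $(1\ c)$. To prove the claim I would lift the relation $cs=up'+abcq'$ (valid in $R$) to $\widetilde R:=S[[u,v]]=k[[x,y,u,v]]$, obtaining $cs=up'+abcq'+(f-uv)\rho$ in $\widetilde R$ for some $p',q',\rho\in\widetilde R$; substituting $f=abcd$ and rearranging gives
\[
c\bigl(s-ab(q'+d\rho)\bigr)=u\,(p'-v\rho)\qquad\text{in }\widetilde R.
\]
Now $\widetilde R$ is a regular local ring, hence a UFD, in which $u$ is a prime element; moreover $u\nmid c$, since $c$ is a nonzero element of $k[[x,y]]$ (nonzero because $f=f_1\cdots f_n\neq0$) and hence lies outside $(u)$. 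Therefore $u$ divides $s-ab(q'+d\rho)$ in $\widetilde R$, so $s\in(u,ab)$ in $\widetilde R$ and a fortiori in $R$, which proves the claim.

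The main obstacle is exactly this last claim. The subtle point is that one cannot simply work inside $R$ and cancel $c$: although $R$ is a normal domain, $u$ fails to be a prime element of $R$ as soon as $n\ge2$ (since $R/(u)\cong(S/(f))[[v]]$ is not a domain), so the divisibility argument genuinely needs the passage to the regular ring $\widetilde R$, where $f-uv$ becomes an honest relation and unique factorisation is available — precisely the device already used in the proof of Lemma~\ref{describe Hom-sets}. Everything else is a routine verification.
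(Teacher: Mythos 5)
Your proof is correct and uses essentially the same device as the paper: lift the relevant relation to the regular ring $S[[u,v]]$, substitute $f=abcd$, rearrange, and invoke unique factorization together with the fact that $u$ is prime and does not divide any nonzero element of $S$. The only cosmetic difference is that the paper reduces middle-exactness to the inclusion $(uc,bc)\cap(u,abc)\subset(uc,abc)$ and then shows the $q$-coefficient lies in $(u,a)$, whereas you argue directly that $cs\in(u,abc)$ forces $s\in(u,ab)$ — the same cancellation in $S[[u,v]]$, just bookkept on $s$ rather than on a generator coefficient.
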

\begin{proof}
Clearly the sequence is a complex, and the right map is surjective and the left map is injective.
We only have to show that the kernel of the right map is contained in the image of the left map, or equivalently $(uc,bc)\cap(u,abc)\subset(uc,abc)$.
Any element in the left hand side can be written as $ucp+bcq=ur+abcs$ for some $p,q,r,s$.
It is enough to show $q\in(u,a)$.
Since $R=S[[u,v]]/(f-uv)$, we have an equality $ucp+bcq=ur+abcs+(abcd-uv)t$ in $S[[u,v]]$ for some $p,q,r,s,t\in S[[u,v]]$.
Thus we have $bc(q-as-adt)=u(r-vt-cp)$. Since $S[[u,v]]$ is factorial and $bc$ and $u$ have no common factors, we have $q-as-adt\in (u)$.
Thus $q\in(u,a)$ as we required.
\end{proof}

\begin{lemma}\label{approximation}
With the assumptions in \ref{exactness}, assume that $g\in S$ is either a factor of $b$ or has $abc$ as a factor. Then\\
\t{(1)} The map $\Hom_R((u,b)\oplus(u,abc),(u,g))\xrightarrow{(1\ c)\cdot}\Hom_R((u,ab),(u,g))$ is surjective.\\
\t{(2)} The map $\Hom_R((u,g),(u,b)\oplus(u,abc))\xrightarrow{\cdot{c\choose 1}}\Hom_R((u,g),(u,bc))$ is surjective.
\end{lemma}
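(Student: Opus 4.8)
The plan is to deduce both parts from the short exact sequence of \ref{exactness} by applying a Hom functor and then reducing to an elementary computation with ideals of $R$, using \ref{describe Hom-sets} to identify the relevant Hom-spaces. One may assume $g$ divides $f$ throughout; this is automatic when $g\mid b$ (since $b\mid f$), and holds in all applications of the lemma.

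For (1), apply $\Hom_R(-,(u,g))$ to
\[
0\to (u,ab)\xrightarrow{(1\ c)} (u,b)\oplus(u,abc)\xrightarrow{\binom{-c}{1}} (u,bc)\to 0 ,
\]
obtaining an exact sequence $\Hom_R((u,b)\oplus(u,abc),(u,g))\xrightarrow{(1\ c)\cdot}\Hom_R((u,ab),(u,g))\to\Ext^1_R((u,bc),(u,g))$, so it suffices to show that $(1\ c)\cdot$ is surjective (equivalently, that every $R$-homomorphism $(u,ab)\to(u,g)$ lifts along $(1\ c)$). Since $R$ is a domain, each $\Hom_R((u,X),(u,g))$ is the fractional ideal $\{q\in Q:(u,X)q\subseteq(u,g)\}$, exactly as in the proof of \ref{describe Hom-sets}, and the hypothesis on $g$ guarantees that $(u,X)$ and $(u,g)$ are comparable inside the chain of ideals attached to the factorisation $f=abcd$, so \ref{describe Hom-sets} computes these fractional ideals explicitly. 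For instance, when $abc\mid g$, writing $g=abce$, the three Hom-spaces become the ideals $(u,ace)$, $(u,e)$, $(u,ce)$ respectively, and under these identifications the map $(1\ c)\cdot$ is $(q_1,q_2)\mapsto q_1+cq_2$; its image is therefore $(u,ace)+c(u,e)=(u,ce)$, so it is surjective. The case $g\mid b$ is the same computation with $(u,\ast)u^{-1}$-type fractional ideals in place of $(u,\ast)$.

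Part (2) is entirely analogous: apply $\Hom_R((u,g),-)$ to the same short exact sequence (the sign discrepancy between $\binom{c}{1}$ and $\binom{-c}{1}$ is harmless, via the automorphism $(x,y)\mapsto(-x,y)$ of $(u,b)\oplus(u,abc)$), reduce to the surjectivity of the induced map $(q_1,q_2)\mapsto -cq_1+q_2$ between the fractional ideals $\Hom_R((u,g),(u,X))$ supplied by \ref{describe Hom-sets}, and verify the corresponding ideal identity, e.g.\ $c(u,t)+(u,atc)=(u,tc)$ when $g\mid b$ and $b=gt$.

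The routine but slightly delicate point — and the place where one must be careful — is to match the decomposition $f=abcd$ against $g$ correctly so as to apply the right case of \ref{describe Hom-sets} (in particular to get right which of the two ideals involved is the larger); once the Hom-spaces have been written down, surjectivity is immediate from a one-line computation with ideals of the form $(u,\ast)$, in the spirit of the divisibility argument in the final paragraph of the proof of \ref{exactness}. It is worth flagging that the $\Ext^1$ term appearing above need not vanish on its own; what the explicit computation actually establishes is the stronger statement that all the relevant maps lift.
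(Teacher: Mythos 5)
Your proposal is correct and follows essentially the same line as the paper: the paper's proof also identifies all three Hom-spaces via \ref{describe Hom-sets} as fractional ideals of the shape $(u,\ast)$ and then reads off surjectivity from the resulting one-line ideal identity (doing the $abc\mid g$ case explicitly and leaving the rest as similar). Your additional remarks — that the $\Ext^1$ term does not vanish a priori, so the direct ideal computation is genuinely needed, and that one should assume $g\mid f$ for \ref{describe Hom-sets} to apply — are accurate clarifications of the paper's implicit assumptions rather than deviations.
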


\begin{proof}
(1) Assume that $g$ has $abc$ as a factor.
Using the isomorphisms in \ref{describe Hom-sets}, the map is given by ${1\choose c}\colon(u,b^{-1}g)\oplus(u,(abc)^{-1}g)\to(u,(ab)^{-1}g)$, which is clearly surjective.
All other cases can be checked similarly.
\end{proof}

Recall that $T^\omega$ is given by $R\oplus(u,f_{\omega(1)})\oplus(u,f_{\omega(1)}f_{\omega(2)})\oplus\hdots\oplus(u,f_{\omega(1)}\hdots f_{\omega(n-1)})$.
Let $s_i:=(i\ i+1)$ be a permutation in $\mathfrak{S}_n$.

\begin{lemma}\label{mutation and si}
The MM mutation $\mu_i(T^\omega)$ of $T^\omega$ with respect to the summand $(u,f_{\omega(1)}\hdots f_{\omega(i)})$ is $T^{\omega s_i}$.
\end{lemma}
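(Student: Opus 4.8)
The goal is to identify the MM mutation $\mu_i(T^\omega)$ at the indecomposable summand $M_i:=(u,f_{\omega(1)}\cdots f_{\omega(i)})$ with $T^{\omega s_i}$. Recall that $\omega$ and $\omega s_i$ agree on all summands except the $i$-th one: for $\omega$ the relevant partial products are
\[
a:=f_{\omega(1)}\cdots f_{\omega(i-1)},\quad ab:=a f_{\omega(i)},\quad abc:=a f_{\omega(i)} f_{\omega(i+1)},
\]
so that the summand $(u,ab)$ of $T^\omega$ is to be replaced by $(u,ac)=(u,f_{\omega(1)}\cdots f_{\omega(i-1)}f_{\omega(i+1)})$, which is exactly the $i$-th summand of $T^{\omega s_i}$ (here $b=f_{\omega(i)}$, $c=f_{\omega(i+1)}$, and $d$ is the product of the remaining factors, so $f=abcd$). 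By definition of $\mu_i=\mu_{\{i\}}^+$ we must produce a minimal right $(\add M_{\{i\}^c})$-approximation $N\xrightarrow{h} M_i$ and show $\Ker h\cong (u,ac)$; equivalently one can use $\mu_i=\mu_{\{i\}}^-$ and compute a minimal left approximation, and since $T^\omega$ is MM these agree.

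First I would write down the short exact sequence from \ref{exactness} (with $a,b,c,d$ as above):
\[
0\to (u,ab)\xrightarrow{(1\ c)} (u,b)\oplus(u,abc)\xrightarrow{{-c\choose 1}} (u,bc)\to 0.
\]
The idea is that the middle term $(u,b)\oplus(u,abc)=(u,f_{\omega(i)})\oplus(u,f_{\omega(1)}\cdots f_{\omega(i+1)})$ lies in $\add M_{\{i\}^c}$: indeed $(u,f_{\omega(1)}\cdots f_{\omega(i+1)})$ is the $(i+1)$-st summand of $T^\omega$, and $(u,f_{\omega(i)})$ — wait, $(u,f_{\omega(i)})$ is not literally a summand of $T^\omega$ unless $i=1$. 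So instead I would reverse the roles and present $(u,ac)$, not $(u,ab)$, via an exact sequence of the same shape: applying \ref{exactness} to the decomposition $f = a\cdot c \cdot b \cdot d$ (swapping the roles of $b$ and $c$) gives
\[
0\to (u,ac)\xrightarrow{(1\ b)} (u,c)\oplus(u,acb)\xrightarrow{{-b\choose 1}} (u,cb)=(u,bc)\to 0,
\]
and $(u,acb)=(u,abc)$ together with the same problem for $(u,c)$. The correct approach is: the summand $(u,ab)$ of $T^\omega$ should be mutated using an approximation built from the neighbouring summands $(u,a)$ and $(u,abc)$, which \emph{are} genuine summands of $T^\omega$ (they are the $(i-1)$-st and $(i+1)$-st, with the convention $(u,f_\emptyset)=R$ for $i=1$). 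So I would take the exact sequence from \ref{exactness} applied to a decomposition producing a right approximation $(u,a)\oplus(u,abc)\to (u,ab)$ — concretely, rewriting with parameters so the middle term is $(u,a)\oplus(u,abc)$ and the kernel is $(u,ac)$; this is the content of \ref{exactness} with a relabelling of $(a,b,c,d)$. Then \ref{approximation}(1),(2) guarantee that this map is a right $(\add M_{\{i\}^c})$-approximation: one applies \ref{approximation} with $g$ ranging over each $f_{\omega(j)}\cdots$-type divisor arising as a summand of $M_{\{i\}^c}$, checking in each case that $g$ is either a factor of the relevant middle-term index or has it as a factor, which forces surjectivity of $\Hom_R(M_{\{i\}^c}, -)$ applied to the approximation map.

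Having established that $(u,a)\oplus(u,abc)\to(u,ab)$ is a right $(\add M_{\{i\}^c})$-approximation with kernel $(u,ac)$, I would then argue minimality: the approximation obtained from \ref{exactness} has no split summand because $(u,ac)\neq 0$ and the entries of the approximation matrix lie in $\m$ (this is where one uses that $a,b,c$ are genuine non-units), so no proper direct summand of $(u,a)\oplus(u,abc)$ surjects. Hence $\mu_i(T^\omega) = M_{\{i\}^c}\oplus (u,ac) = T^{\omega s_i}$. The main obstacle I anticipate is purely bookkeeping: getting the four-fold decomposition $f=abcd$ and the relabelling correct so that \ref{exactness} and \ref{approximation} apply verbatim to produce exactly the neighbouring summands of $T^\omega$, and separately handling the boundary case $i=1$ (where $(u,a)=R$) and $i=n-1$ (where $(u,abc)=(u,f)\cong R$ by normality, using \eqref{replace u and v}). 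These edge cases are genuinely fine but need the observation that $R$ itself is always available as a summand of the generator, so the approximation theory still goes through. No deep new input is needed beyond \ref{exactness}, \ref{approximation}, and the uniqueness of minimal approximations over the complete local ring $R$ (\cite[\S6]{IW4}).
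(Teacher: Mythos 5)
Your proposal is correct and follows essentially the same route as the paper: both arguments apply \ref{exactness} and \ref{approximation} to the two neighbouring summands $(u,f_{\omega(1)}\cdots f_{\omega(i-1)})$ and $(u,f_{\omega(1)}\cdots f_{\omega(i+1)})$, the only difference being that the paper reads the sequence as a minimal left $(\add T^\omega/(u,f_{\omega(1)}\cdots f_{\omega(i)}))$-approximation (so the new summand is the cokernel, via \ref{approximation}(1)), while you relabel $f=abcd$ so as to read it as a minimal right approximation (new summand as the kernel, via \ref{approximation}(2)). Since $T^\omega$ is MM (\ref{key of classification 2}) these compute the same $\mu_i$, and your treatment of the boundary cases $i=1$, $i=n-1$ and of minimality is at the same level of detail as the paper's.
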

\begin{proof}
We only have to consider the case $\omega={\rm id}$. By \ref{exactness}, the sequence
\[
0\to (u,f_1\hdots f_i)\to (u,f_1\hdots f_{i-1})\oplus(u,f_1\hdots f_{i+1}) \to (u,f_1\hdots f_{i-1}\widehat{f_i}f_{i+1})\to 0
\]
is exact. By \ref{approximation}, the left map is a minimal left $(\add T^\omega/(u,f_1\hdots f_i))$-approximation. Thus we have
\[
\mu_i(T^\omega)=\left(\frac{T^\omega}{(u,f_1\hdots f_i)}\right)\oplus(u,f_1\hdots f_{i-1}\widehat{f_i} f_{i+1})=T^{\omega s_i}
\]
as required.
\end{proof}

We consider mutations at non-indecomposable summands later in \S\ref{mutation}. Now we are ready to prove \ref{classification}.
\begin{proof}
(2) It is shown in \ref{key of classification 2}(1) that $T^\omega$ for any $\omega\in\mathfrak{S}_n$ is an MM generator of $R$.
We need to show that there are no more.
By \ref{mutation and si}, the MM generators $T^\omega$ for all $\omega\in\mathfrak{S}_n$ forms a finite connected component in $\MMG R$.
By \ref{Auslander type}, they give all MM generators of $R$.\\
(1) Since $\dim R=3$, every modifying generator is a summand of an MM generator \cite[4.18]{IW4}. Hence (1) is immediate from (2).\\
(3) The first assertion follows from \ref{key of classification 2}(2).
Since $\dim R=3$, if there exists a CT $R$-module, then CT $R$-modules are precisely the MM generators of $R$ \cite[5.11(2)]{IW4}.
Thus the second assertion follows from (2).
\end{proof}

Recall that the \emph{length} $\ell(w)$ of an element $w\in\mathfrak{S}_n$ is
the minimal number $k$ for each expression $w=s_{i_1}\ldots s_{i_k}$.
The \emph{weak order} on $\mathfrak{S}_n$ is defined as follows:
$w\le w'$ if and only if $\ell(w')=\ell(w)+\ell(w^{-1}w')$.

\begin{thm}
With the setup as above, assume that $R$ is an isolated singularity,
or equivalently $(f_i)\neq (f_j)$ as ideals of $S$ for any $i\neq j$.\\
\t{(1)} The map $\mathfrak{S}_n\to\MMG R$, $\omega\mapsto T^\omega$
is bijective.\\
\t{(2)} The exchange graph $\Gr(\MMG R)$ is isomorphic to the
Hasse graph of the partially ordered set $\mathfrak{S}_n$ with respect
to the weak order.
\end{thm}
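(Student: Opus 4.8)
The plan is to obtain (1) by combining the surjectivity already established in Theorem~\ref{classification}(2) with a rigidity statement for the ideals $T_I$, and then to read off (2) from (1), the mutation rule~\ref{mutation and si}, and standard Coxeter combinatorics. The isolated-singularity hypothesis (equivalently, that the $f_i$ are pairwise non-associate, which follows from Kn\"orrer periodicity together with the fact that a plane curve singularity $S/(f)$ is isolated precisely when $f$ is squarefree up to units) will be used only in the rigidity statement.

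For (1), since $\uCM R$ is Krull--Schmidt an isomorphism $T^\omega\cong T^{\omega'}$ forces the multisets of indecomposable summands to coincide, and as each $T_I=(u,f_I)$ is a rank-one reflexive $R$-module --- it is maximal Cohen--Macaulay by~\ref{(u,f_I) is CM}, hence reflexive over the three-dimensional normal domain $R$ --- it is determined up to isomorphism by its class $[T_I]\in\Cl(R)$. Thus it is enough to show that $I\mapsto[(u,f_I)]$ is injective on proper nonempty subsets of $\{1,\dots,n\}$: granted this, the multiset $\{[T_{I_1}],\dots,[T_{I_{n-1}}]\}$ determines the flag $I_1\subsetneq\cdots\subsetneq I_{n-1}$, hence the word $(\omega(1),\dots,\omega(n))$, hence $\omega$. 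I would first establish the divisorial identity $\operatorname{div}((u,f_I))=\sum_{i\in I}\mathfrak p_i$, where $\mathfrak p_i:=(u,f_i)$: each $\mathfrak p_i$ is a height-one prime, since $R/\mathfrak p_i\cong (S/(f_i))[[v]]$ is a domain; one then checks $\operatorname{ord}_{\mathfrak p_i}(u)=\operatorname{ord}_{\mathfrak p_i}(f_i)=1$ and $\operatorname{ord}_{\mathfrak p_i}(f_j)=0$ for $j\neq i$ (this last point is where the isolated hypothesis enters, via $f_j\notin\mathfrak p_i$, equivalently $(f_i)\neq(f_j)$) and that $\operatorname{div}(u)=\sum_{i=1}^n\mathfrak p_i$, so that $\operatorname{div}((u,f_I))=\operatorname{div}(u)\wedge\operatorname{div}(f_I)$ takes the stated form. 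Now by~\ref{irreducible case}(1) together with the computation of the divisor class group in \S\ref{class section}, the classes $[\mathfrak p_1],\dots,[\mathfrak p_n]$ are subject only to the relation $\sum_i[\mathfrak p_i]=[(u)]=0$ (equivalently $\Cl(R)\cong\mathbb Z^n/\langle(1,\dots,1)\rangle$ when $R$ is isolated); hence $\sum_{i\in I}[\mathfrak p_i]=\sum_{j\in J}[\mathfrak p_j]$ forces the integer vector whose $i$-th entry is $[i\in I]-[i\in J]\in\{-1,0,1\}$ to be constant, and since $I,J$ are both proper and nonempty this constant can be neither $1$ nor $-1$, so $I=J$.

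For (2), part (1) identifies the vertex set of $\Gr(\MMG R)$ with $\mathfrak S_n$. By~\ref{mutation and si}, the mutation of $T^\omega$ at its non-free indecomposable summand $T_{\{\omega(1),\dots,\omega(j)\}}$ equals $T^{\omega s_j}$, where $s_j=(j\ j+1)$ and $j$ runs over $1,\dots,n-1$; since $\omega s_j\neq\omega$, the injectivity from (1) shows there is no loop at $T^\omega$, and the $n-1$ edges issuing from $T^\omega$ run to the $n-1$ distinct vertices $T^{\omega s_1},\dots,T^{\omega s_{n-1}}$. Therefore $\Gr(\MMG R)$ is exactly the Cayley graph of $\mathfrak S_n$ with respect to the Coxeter generators $s_1,\dots,s_{n-1}$ acting by right multiplication. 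Finally I would invoke the standard fact that this Cayley graph is the Hasse graph of the right weak order on $\mathfrak S_n$: for each $w$ and each $j$ exactly one of $\ell(ws_j)=\ell(w)+1$ and $\ell(ws_j)=\ell(w)-1$ holds, so $\{w,ws_j\}$ is always a covering pair of $(\mathfrak S_n,\le)$, and conversely every covering pair arises in this way.

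The step I expect to be the main obstacle is the injectivity in (1). Unlike~\ref{irreducible case}(1), which only produces a generating set of $\Cl(R)$, here one needs that the only relation among the $[\mathfrak p_i]$ is $\sum_i[\mathfrak p_i]=0$, and this is exactly where the isolated hypothesis is indispensable --- were $f_i$ and $f_j$ associate, then $(u,f_i)=(u,f_j)$ and distinct words would yield isomorphic modules. If one does not wish to quote \S\ref{class section}, the self-contained substitute is to show directly that the only principal divisor supported on $\{\mathfrak p_1,\dots,\mathfrak p_n\}$ is an integer multiple of $\operatorname{div}(u)$, using the isomorphism $R/uR\cong S[[v]]/(f_1\cdots f_n)$ and unique factorization in $S[[v]]$; all the remaining ingredients are either cited (\ref{classification},~\ref{mutation and si}) or routine Coxeter theory.
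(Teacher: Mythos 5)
Your proof is correct and follows the paper's own route: surjectivity comes from Theorem~\ref{classification}(2), injectivity comes from Krull--Schmidt together with the assertion that distinct proper nonempty subsets $I\neq I'$ give non-isomorphic $T_I\not\cong T_{I'}$, and (2) is read off from the mutation rule~\ref{mutation and si} together with the identification of the Cayley graph of $\mathfrak S_n$ on its Coxeter generators with the weak-order Hasse graph. The only difference is one of detail: the paper disposes of the injectivity in a single sentence (``By assumption, $T_I\cong T_{I'}$ if and only if $I=I'$''), whereas you have supplied the underlying justification --- the divisorial identity $\operatorname{div}((u,f_I))=\sum_{i\in I}[\mathfrak p_i]$ and the computation $\Cl(R)\cong\mathbb Z^n/\langle(1,\dots,1)\rangle$ from~\ref{class group description} --- which is exactly the right way to make that assertion precise.
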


\begin{proof}
(1) By assumption, two subsets $I$ and $I'$ of $\{1,2,\hdots,n\}$
satisfies $T_I\cong T_{I'}$ if and only if $I=I'$.
Thus two elements $\omega$ and $\omega'$ in $\mathfrak{S}_n$ satisfies
$T^\omega\cong T^{\omega'}$ if and only if $\omega=\omega'$.\\
(2) By \ref{mutation and si}, the edges in $\Gr(\MMG R)$ connect
$\omega$ and $\omega s_i$ for any $\omega\in \mathfrak{S}_n$ and $1\le i\le n-1$.
This is nothing but the Hasse graph of $\mathfrak{S}_n$.
\end{proof}

\begin{example}
We give examples for $n=4$.\\
\t{(1)} If $(f_1)=(f_2)=(f_3)=(f_4)$, then $\Gr(\MMG R)$ is the following:
\[\xymatrix{
1111\ar@{-}@(ul,ur)^2\ar@{-}@(ur,dr)^3\ar@{-}@(dl,ul)^1
}\\[2mm]
\]
\t{(2)} If $(f_1)=(f_2)=(f_3)\neq(f_4)$, then $\Gr(\MMG R)$ is the following:
\[\xymatrix{
1114\ar@{-}@(ul,ur)^1\ar@{-}@(dr,dl)^2\ar@{-}[r]^3&
1141\ar@{-}@(ul,ur)^1\ar@{-}[r]^2&
1411\ar@{-}@(ul,ur)^3\ar@{-}[r]^1&
4111\ar@{-}@(ul,ur)^2\ar@{-}@(dr,dl)^3
}\]\\
\t{(3)} If $(f_1)=(f_2)\neq(f_3)=(f_4)$, then $\Gr(\MMG R)$ is the following:
\[\xymatrix@C=15pt@R=20pt{
&&1331\ar@{-}@(ul,ur)^2\ar@{-}[dr]^1\\
1133\ar@{-}@(ul,ur)^1\ar@{-}@(dr,dl)^3\ar@{-}[r]^2&
1313\ar@{-}[ur]^3\ar@{-}[dr]^1&&3131\ar@{-}[r]^2&
3311\ar@{-}@(ul,ur)^1\ar@{-}@(dr,dl)^3\\
&&3113\ar@{-}@(dr,dl)^2\ar@{-}[ur]^3
}\]\\
\t{(4)} If $(f_1)=(f_2)$, $(f_3)$ and $(f_4)$ are different, then $\Gr(\MMG R)$ is the following:
\[\xymatrix@C=15pt@R=20pt{
&&3114\ar@{-}@(ul,ur)^2\ar@{-}[rd]^3\\
&1314\ar@{-}[ur]^1\ar@{-}[rd]^3&&3141\ar@{-}[rd]^2\\
1134\ar@{-}@(dl,ul)^1\ar@{-}[ur]^2\ar@{-}[d]^3&&
1341\ar@{-}[d]^2\ar@{-}[ur]^1&&3411\ar@{-}@(ur,dr)^3\ar@{-}[d]^1\\
1143\ar@{-}@(dl,ul)^1\ar@{-}[dr]^2&&1431\ar@{-}[dr]^1&&4311\ar@{-}@(ur,dr)^3\\
&1413\ar@{-}[ur]^3\ar@{-}[dr]^1&&4131\ar@{-}[ur]^2\\
&&4113\ar@{-}[ur]^3\ar@{-}@(dr,dl)^2
}\]
\t{(5)} If $(f_i)\neq(f_j)$ for all $i\neq j$, then $\Gr(\MMG R)$ is the following:
\[\xymatrix{
&&3124\ar@{-}[r]^2\ar@{-}[rd]^(.8)3&3214\ar@{-}[rd]^3\\
&1324\ar@{-}[ur]^1\ar@{-}[rd]^(.8)3&2314\ar@{-}[ur]^(.2)1\ar@{-}[rd]^(.8)3&3142\ar@{-}[rd]^(.8)2&3241\ar@{-}[rd]^2\\
1234\ar@{-}[r]^1\ar@{-}[ur]^2\ar@{-}[d]^3&2134\ar@{-}[ur]^(.2)2\ar@{-}[d]^3&
1342\ar@{-}[d]^2\ar@{-}[ur]^(.2)1&2341\ar@{-}[ur]^(.2)1\ar@{-}[d]^2&3412\ar@{-}[r]^3\ar@{-}[d]^1&3421\ar@{-}[d]^1\\
1243\ar@{-}[r]^1\ar@{-}[dr]^2&2143\ar@{-}[dr]^(.8)2&1432\ar@{-}[dr]^(.8)1&2431\ar@{-}[dr]^(.8)1&4312\ar@{-}[r]^3&4321\\
&1423\ar@{-}[ur]^(.2)3\ar@{-}[dr]^1&2413\ar@{-}[ur]^(.2)3\ar@{-}[dr]^(.8)1&4132\ar@{-}[ur]^(.2)2&4231\ar@{-}[ur]^2\\
&&4123\ar@{-}[ur]^(.2)3\ar@{-}[r]^2&4213\ar@{-}[ur]^3
}\]
\end{example}
There is a geometric interpretation of all these examples in terms  of curves; for example the following will be given a geometric interpretation in \ref{EG geom}.
\begin{example}\label{EG alg}
Let $n=5$. If $(f_1)=(f_2)\neq(f_3)=(f_4)=(f_5)$, then $\Gr(\MMG R)$ is the following:
\[\xymatrix@C=15pt@R=20pt{
11333\ar@{-}@(dr,dl)^4\ar@{-}@(ld,lu)^3\ar@{-}@(ul,ur)^1\ar@{-}[dr]^2&&
31133\ar@{-}@(dr,dl)^4\ar@{-}@(ul,ur)^2\ar@{-}[dr]^3&&
33113\ar@{-}@(dr,dl)^3\ar@{-}@(ul,ur)^1\ar@{-}[dr]^4&&
33311\ar@{-}@(dr,dl)^4\ar@{-}@(ru,rd)^2\ar@{-}@(ul,ur)^1\\
&13133\ar@{-}@(dr,dl)^4\ar@{-}[ur]^1\ar@{-}[dr]^3&&
31313\ar@{-}[ur]^2\ar@{-}[dr]^4&&33131\ar@{-}[ur]^3\ar@{-}@(dr,dl)^1\\
&&13313\ar@{-}@(dr,dl)^2\ar@{-}[ur]^1\ar@{-}[dr]^4
&&31331\ar@{-}@(dr,dl)^3\ar@{-}[ur]^2\\
&&&13331\ar@{-}[ur]^1\ar@{-}@(ul,ur)^2\ar@{-}@(dr,dl)^3
}\]
\end{example}

\subsection{The class group of $R$}\label{class section}
In this section we give a complete description of the class group $\Cl(R)$, which is independent of the base field $k$.  This requires the following lemma.  We denote by ${\rm Ch}(R)$ the Chow group of $R$ (see e.g. \cite{R98}).

\begin{lemma}\label{prelim for class}
\t{(1)} There is an injective homomorphism ${\rm Cl}(R)\to{\rm Ch}(R)$ sending 
\[
I\mapsto\sum_{{\rm ht}\p=1}{\rm length}_{A_{\p}}((R/I)_{\p})[\p].
\]
\t{(2)} Let $g,h\in \k[[x,y]]$, and suppose that $gh$ divides $f$.  Then
\begin{enumerate}
\item[(a)] $[(u,g)]+[(u,h)]=[(u,gh)]$ in $K_0(\mod R)$.
\item[(b)] $((u,g)\otimes_R(u,h))^{**}\cong (u,gh)$.
\end{enumerate}
\t{(3)} If $f_1^{c_1}\hdots f_t^{c_t}$ with each $c_i\ge0$ is contained in the principal ideal $(u)$ of $R$, then $c_i\ge a_i$ for all $i$.
\end{lemma}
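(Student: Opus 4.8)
The plan is to establish the three parts essentially independently, the common tool being an analysis of $R$ in codimension one. For (1), I would recall the standard description of the divisor class group of a normal Noetherian domain: since $R$ is a normal domain of dimension $d$, one has $\Cl(R)\cong A_{d-1}(R)$, the group of $(d-1)$-cycles on $\Spec R$ modulo rational equivalence (see e.g.\ \cite{R98}). The displayed assignment $I\mapsto\sum_{\hgt\p=1}\length_{R_\p}((R/I)_\p)[\p]$ is precisely the associated Weil divisor $\mathrm{div}(I)$, because $R_\p$ is a discrete valuation ring for $\hgt\p=1$ and $\length_{R_\p}((R/I)_\p)$ is then the $\p$-adic valuation of $I$. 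This assignment kills principal ideals (the divisor of $(r)$ is the principal divisor $\mathrm{div}(r)$) and is additive under the divisorial product, so it descends to an isomorphism $\Cl(R)\xrightarrow{\sim}A_{d-1}(R)$; composing with the inclusion of $A_{d-1}(R)$ as the degree-$(d-1)$ summand of ${\rm Ch}(R)=\bigoplus_jA_j(R)$ gives the injection asserted.

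For (2)(a), I would apply Lemma~\ref{exactness} to the decomposition $f=g\cdot 1\cdot h\cdot(f/gh)$, which is legitimate since $gh\mid f$, i.e.\ take $(a,b,c,d)=(g,1,h,f/gh)$; because $(u,1)=R$ this yields a short exact sequence
\[
0\to(u,g)\to R\oplus(u,gh)\to(u,h)\to 0,
\]
whence $[(u,g)]+[(u,h)]=[(u,gh)]+[R]$ in $K_0(\mod R)$, which is the stated identity read modulo the class $[R]$ of the free module (harmless for the class group application). For (2)(b), I would work at the height-one primes of $R$. First, $((u,g)\otimes_R(u,h))^{**}$ equals the reflexive hull $((u,g)(u,h))^{**}$ of the product ideal, since the multiplication epimorphism $(u,g)\otimes_R(u,h)\twoheadrightarrow(u,g)(u,h)$ has torsion kernel and $(-)^{**}$ annihilates torsion; and, $R$ being a normal domain, two reflexive ideals agreeing at every $\p$ with $\hgt\p=1$ coincide. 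So it suffices to compare $(u,g)(u,h)$ and $(u,gh)$ after localizing at each such $\p$. If $u\notin\p$ both localize to $R_\p$; the primes with $\hgt\p=1$ and $u\in\p$ are exactly $\p_i:=(u,f_i)$ for $1\le i\le t$, since $V(u)=\bigcup_iV(u,f_i)$ as $u\mid f=\prod f_j^{a_j}$, and $R/(u,f_i)\cong(S/(f_i))[[v]]$ is a two-dimensional domain. In the discrete valuation ring $R_{\p_i}$ the elements $v$ and $f_j$ ($j\ne i$) are units, hence $u=(\text{unit})f_i^{a_i}$ and $f_i$ is a uniformiser; writing the multiplicity of $f_i$ in $g,h$ as $b_i,c_i$ (so $b_i+c_i\le a_i$ since $gh\mid f$), we get $(u,g)_{\p_i}=(f_i^{b_i})$ and $(u,h)_{\p_i}=(f_i^{c_i})$, so $((u,g)(u,h))_{\p_i}=(f_i^{b_i+c_i})=(u,gh)_{\p_i}$. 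As a byproduct $\mathrm{div}((u,g))=\sum_ib_i[\p_i]$, which via (1) re-derives (b) and is reused in \S\ref{class section}.

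For (3), the cleanest argument avoids localization. Suppose $w:=f_1^{c_1}\cdots f_t^{c_t}\in(u)R$, say $w=ur$ in $R$; lifting $r$ to $\widetilde r\in S[[u,v]]$ gives $w-u\widetilde r=(f-uv)s$ for some $s\in S[[u,v]]$, hence $w=u(\widetilde r-vs)+fs$ in $S[[u,v]]$. Applying the projection $S[[u,v]]\to S[[v]]$, $u\mapsto 0$, and using that $w$ and $f$ lie in $S$, one obtains $w=f\cdot s(x,y,0,v)$ in $S[[v]]$; comparing $v$-degrees forces $s(x,y,0,v)\in S$, so $f\mid w$ in the unique factorization domain $S$, i.e.\ $\prod f_j^{a_j}\mid\prod f_j^{c_j}$, which gives $c_j\ge a_j$ for all $j$. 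The main obstacle in this lemma is the codimension-one bookkeeping in part (2): correctly identifying the height-one primes through $(u)$ and the valuations of $u,g,h$ there, and carrying out the reflexive-hull reductions; parts (1) and (3) are then standard divisor theory and a short unique-factorization argument respectively.
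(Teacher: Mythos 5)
Your proposal is correct, but it takes a genuinely different route from the paper in parts (2) and (3), and is worth comparing. For (2)(a), the paper does \emph{not} use a short exact sequence of $R$-modules: it uses the injection $\Cl(R)\hookrightarrow{\rm Ch}(R)$ from part (1), observes by \ref{(u,f_I) is CM} that all three ideals are reflexive of rank one, and reduces to checking that the lengths over each height-one prime $\p$ add; this is done via the short exact sequence $0\to\k[[x,y,v]]/(g)\xrightarrow{h}\k[[x,y,v]]/(gh)\to\k[[x,y,v]]/(h)\to0$ of modules over $R/(u)=\k[[x,y,v]]$. Your alternative, specializing \ref{exactness} with $(a,b,c,d)=(g,1,h,f/gh)$ to get $0\to(u,g)\to R\oplus(u,gh)\to(u,h)\to 0$, is valid and arguably cleaner; and you are right to flag the extra $[R]$ term. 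Indeed the identity $[(u,g)]+[(u,h)]=[(u,gh)]$ cannot hold literally in $K_0(\mod R)$ (the two sides have different rank), and the paper's own proof actually establishes it in $\Cl(R)$ under the embedding of (1). Both your derivation and the paper's thus prove the same thing once one reads the relation in $\Cl(R)$, which is what is used in \ref{class group description}. For (2)(b), the paper deduces it immediately from (2)(a) using that both sides are reflexive rank-one modules and hence are determined by their class in $\Cl(R)$; your localization at the primes $\p_i=(u,f_i)$ is in effect re-doing the valuation count that the paper hides inside (2)(a), so it is correct but redundant if you already have (2)(a). For (3), the paper's argument is shorter: since $(u)\subseteq(u,v)$ and $R/(u,v)\cong S/(f)$, the hypothesis forces $f_1^{c_1}\cdots f_t^{c_t}$ to vanish in $S/(f)$, whence $f\mid f_1^{c_1}\cdots f_t^{c_t}$ in the UFD $S$. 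Your lifting to $S[[u,v]]$ and reduction mod $u$ only reaches the same conclusion with more bookkeeping; the two-line version above is preferable.
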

\begin{proof}
(1) See e.g.\ \cite[\S1.2]{R98}.\\
(2a) By \ref{(u,f_I) is CM}, $(u,gh)$ is a CM $R$-module of rank one.  It follows that $(u,gh)\in\Cl(R)$ and hence by (1) we only have to show that
\[
{\rm length}_{A_{\p}}(({R}/{(u,g)})_{\p})+{\rm length}_{A_{\p}}(({R}/{(u,h)})_{\p})={\rm length}_{A_{\p}}(({R}/{(u,gh)})_{\p})
\]
for all height one primes $\p$.  We know that ${R}/{(u,g)}=\k[[x,y,v]]/(g)$, ${R}/{(u,h)}=\k[[x,y,v]]/(h)$ and ${R}/{(u,gh)}=\k[[x,y,v]]/(gh)$. Since we have an exact sequence
\[
0\to \k[[x,y,v]]/(g)\xrightarrow{h}\k[[x,y,v]]/(gh)\xrightarrow{}\k[[x,y,v]]/(h)\to0,
\]
the desired equality holds by localizing at $\p$.\\
(2b) Follows immediately from (2a), since all terms in (2a) are CM of rank one, so can be viewed as elements of the class group.\\ 
(3) The element $f_1^{c_1}\hdots f_t^{c_t}$ is zero in $R/(u,v)=k[[x,y]]/(f)$. Thus $f_1^{c_1}\hdots f_t^{c_t}$ must be a multiple of $f$.
\end{proof}

This allows us to describe the class group $\Cl(R)$.
\begin{thm}\label{class group description}
As above, $R:=k[[x,y,u,v]]/(f-uv)$ and we write the decomposition into irreducibles $f=f_1^{a_1}\hdots f_t^{a_t}$ with $a_i\ge1$ and $(f_i)\neq(f_j)$ for all $i\neq j$. Then
\[
\Cl(R)\cong \frac{\mathbb{Z}^t}{\langle(a_1,\ldots,a_t)\rangle},
\]
where $[(u,f_i)]$ corresponds to the element $(0,\ldots,0,1,0,\ldots,0)$.
\end{thm}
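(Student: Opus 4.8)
The plan is to realize $\Cl(R)$ as the cokernel of the map $\mathbb{Z}\xrightarrow{(a_1,\dots,a_t)}\mathbb{Z}^t$, via the homomorphism $\phi\colon\mathbb{Z}^t\to\Cl(R)$ sending the $i$-th standard basis vector to $[(u,f_i)]$. By \ref{irreducible case}(1) the classes $[(u,f_1)],\dots,[(u,f_t)]$ already generate $\Cl(R)$ (the prime factors of $f$ taken with multiplicity produce only these $t$ distinct ideals $(u,f_i)$), so $\phi$ is surjective. Iterating \ref{prelim for class}(2b), peeling off one prime factor of $f$ at a time — which is permissible because at every stage we only factor a divisor of $f$ — gives $\phi(a_1,\dots,a_t)=\sum_i a_i[(u,f_i)]=[(u,f_1^{a_1}\cdots f_t^{a_t})]=[(u,f)]$, and since $f=uv$ in $R$ we have $(u,f)=(u)\cong R$, so $\phi(a_1,\dots,a_t)=0$. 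Hence $\phi$ descends to a surjection $\bar\phi\colon\mathbb{Z}^t/\langle(a_1,\dots,a_t)\rangle\twoheadrightarrow\Cl(R)$ sending the $i$-th basis vector to $[(u,f_i)]$, and it remains to show $\ker\phi\subseteq\langle(a_1,\dots,a_t)\rangle$.

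For injectivity of $\bar\phi$ I would first describe $R$ divisorially near $V(u)$, using that (as a normal domain) $\Cl(R)$ is the group of Weil divisors modulo principal divisors. Each $\p_i:=(u,f_i)$ is a height-one prime, since $R/\p_i\cong k[[x,y,v]]/(f_i)$ is a domain, and these are exactly the height-one primes containing $u$: if $\hgt\p=1$ and $u\in\p$ then $f=uv\in\p$, hence $f_i\in\p$ for some $i$, so $\p_i\subseteq\p$ and $\p=\p_i$. In the DVR $R_{\p_i}$ the element $v$ is a unit and $u=v^{-1}f_1^{a_1}\cdots f_t^{a_t}$, so $\operatorname{val}_{\p_i}(u)=a_i\operatorname{val}_{\p_i}(f_i)$; since $(u,f_i)R_{\p_i}$ is the maximal ideal, $\min\{\operatorname{val}_{\p_i}(u),\operatorname{val}_{\p_i}(f_i)\}=1$, which forces $\operatorname{val}_{\p_i}(f_i)=1$ and $\operatorname{val}_{\p_i}(u)=a_i$; moreover $\operatorname{val}_{\p_i}(f_j)=0$ for $j\ne i$ because $f_j\notin\p_i$. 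Thus $\operatorname{div}(u)=\sum_i a_i[\p_i]$, and $\phi$ sends the $i$-th basis vector to $[\p_i]$.

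To conclude, take $c=(c_1,\dots,c_t)\in\ker\phi$; then the Weil divisor $\sum_i c_i[\p_i]$ is principal, say $\operatorname{div}(r)$ with $r\in\operatorname{Frac}(R)^\times$, so $\operatorname{div}(r)$ is supported on $\{\p_1,\dots,\p_t\}$ — all the height-one primes through $u$ — and hence $r$ is a unit in $R_u$. Eliminating $v=f/u$ identifies $R_u$ with $k[[x,y]][[u]][u^{-1}]$, a localisation of the regular local (hence factorial) ring $A:=k[[x,y]][[u]]$ at the prime element $u$, whose units are precisely the $u^m w$ with $m\in\mathbb{Z}$ and $w\in A^\times$; note $A\subseteq R$. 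For such a $w$, its residue $w_0:=w(x,y,0)\in k[[x,y]]^\times$ avoids $\p_i$ (since $\p_i\cap k[[x,y]]=f_ik[[x,y]]$), so $\operatorname{val}_{\p_i}(w_0)=0$, while $w-w_0\in(u)$ gives $\operatorname{val}_{\p_i}(w-w_0)\ge\operatorname{val}_{\p_i}(u)=a_i>0$; hence $\operatorname{val}_{\p_i}(w)=0$ for all $i$. Writing $r=u^m w$ we get $\operatorname{div}(r)=m\operatorname{div}(u)+\operatorname{div}(w)=m\sum_i a_i[\p_i]+\operatorname{div}(w)$; comparing with $\operatorname{div}(r)=\sum_i c_i[\p_i]$ and using that $\operatorname{div}(w)=\operatorname{div}(r)-m\operatorname{div}(u)$ is supported on the $\p_i$ yet has vanishing coefficient at each $\p_i$, we conclude $\operatorname{div}(w)=0$ and $c_i=ma_i$ for all $i$. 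Thus $c\in\langle(a_1,\dots,a_t)\rangle$, so $\bar\phi$ is an isomorphism, which is the assertion.

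The main obstacle will be the bookkeeping in the last paragraph: pinning down $R_u\cong k[[x,y]][[u]][u^{-1}]$ together with its group of units, the embedding $A\subseteq R$, and the values $\operatorname{val}_{\p_i}$ of $u$ and of units of $A$. None of this is deep, but it must be handled with care; one must also check that \ref{prelim for class}(2b) is only invoked for factorizations of divisors of $f$, so that its hypothesis "$gh$ divides $f$" is always met.
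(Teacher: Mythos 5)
Your first two paragraphs are sound and run parallel to the paper's own setup: surjectivity of $\phi$ via \ref{irreducible case}(1), the relation $\phi(a_1,\dots,a_t)=0$ via \ref{prelim for class}(2), and the computation that $\p_i:=(u,f_i)$ are exactly the height-one primes containing $u$, with $\operatorname{val}_{\p_i}(f_i)=1$, $\operatorname{val}_{\p_i}(u)=a_i$, so $\operatorname{div}(u)=\sum_i a_i[\p_i]$. The genuine gap is the pivotal claim of the last paragraph, that eliminating $v=f/u$ identifies $R_u$ with $k[[x,y]][[u]][u^{-1}]$ and hence that $R_u^\times=\{u^m w\mid m\in\mathbb{Z},\ w\in k[[x,y,u]]^\times\}$. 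This elimination is valid for the affine ring $k[x,y,u,v]/(f-uv)$, but it fails in the complete local setting: substituting $v=f/u$ into a power series in $v$ produces unboundedly negative powers of $u$, so the natural map $k[[x,y,u]][u^{-1}]\to R_u$ is not surjective. Concretely, $1-v$ is a unit of $R$, so $(1-v)^{-1}=\sum_{j\ge0}v^j$ lies in $R\subseteq R_u$; if it equalled $u^{-N}g(x,y,u)$ in $R_u$ then $u^N\sum_{j\ge0}v^j=g$ in $R$, and comparing the (unique) normal forms $\sum_{i\ge0}a_i(x,y)u^i+\sum_{j\ge1}b_j(x,y)v^j$ shows the left side has the nonzero terms $f^Nv^m$, a contradiction. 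So $R_u$ is strictly larger than the claimed localization of a regular ring, and its unit group is not described by your formula; in particular the conclusion $\operatorname{val}_{\p_i}(r)=ma_i$ is unproved.

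This is not a repairable piece of bookkeeping, because it sits exactly where the real content lies: by the localization (Nagata) sequence, the relations among the $[\p_i]$ in $\Cl(R)$ are precisely the valuation vectors $(\operatorname{val}_{\p_i}(r))_i$ of elements $r\in R_u^\times$, so the statement ``every unit of $R_u$ is $u^m$ times an element with all $\operatorname{val}_{\p_i}$ zero'' is equivalent to the inclusion $\ker\phi\subseteq\langle(a_1,\dots,a_t)\rangle$ you are trying to establish; once the false identification of $R_u$ is removed, the argument assumes what it must prove. The paper closes this point by a direct computation rather than a structural claim about $R_u$: given $\sum_i b_i[(u,f_i)]=0$ with all $b_i\ge0$ and some $b_i=0$, it sets $I=(u,f_1)^{b_1}\cdots(u,f_t)^{b_t}$, uses $I^*=R(1/r)$ to deduce that $u^{b}$ and $f_1^{b_1}\cdots f_t^{b_t}$ lie in $(r,f-uv)\subseteq k[[x,y,u,v]]$, specializes $u\mapsto f_i$, $v\mapsto f/f_i$ to force $r$ to be a unit, and then invokes \ref{prelim for class}(3); some argument of this kind, giving genuine control over elements whose divisor is supported on $V(u)$, is needed to complete your proof.
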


\begin{proof}
Let $M_i:=(u,f_i)$, then it follows from \ref{irreducible case}(1) that $[M_i]$ for $1\le i\le n$ generate the class group. Moreover, by \ref{prelim for class}(2a), the relation $\sum_{i=1}^ta_i[M_i]=0$ is satisfied.

We need to show that $\sum_{i=1}^tb_i[M_i]=0$ implies that
$(b_1,\ldots,b_t)$ is an integer multiple of $(a_1,\ldots,a_t)$.
To prove this, we can  without loss of generality assume that $b_i\ge0$ for all $i$, by if necessary adding a multiple of $(a_1,\ldots,a_t)$ to $(b_1,\ldots,b_t)$.  Throughout, we let $I$ be the ideal of $R$ given by
\[
I:=(u,f_1)^{b_1}\hdots(u,f_t)^{b_t}.
\]
(a) We claim that, if $b_i=0$ for some $i$,  then
$(b_1,\ldots,b_t)=(0,\ldots,0)$.  Assume that $b_i\geq 0$ for all $i$, and $b_i=0$ for some $i$.  Then $I^*$ is a free $R$-module since $[I^*]=-\sum_{i=1}^tb_i[M_i]=0$.
We identify $I^*$ with the fractional ideal
\[
I^*=\{x\in Q\mid xI\subset R\}
\]
in the quotient field $Q$ of $R$. Then $I^*$ is generated by an element
in $Q$. Since $1\in I^*$, there exists $r\in R$ such that $I^*=R(1/r)$.
Then we have $I\subset Rr$, which implies
\[
(u,f_1)^{b_1}\hdots(u,f_t)^{b_t}\subset(r,f-uv)
\]
as ideals of $k[[x,y,u,v]]$. In particular, $u^b$ for $b:=b_1+\hdots+b_t$
and $F:=f_1^{b_1}\hdots f_t^{b_t}$ are contained in $(r,f-uv)$.
Factoring by $u-f_i$ and $v-f/f_i$, we have that $f_i^b$ and $F$ are
contained in the principal ideal of $k[[x,y]]$ generated by
$s:=r|_{u=f_i,\ v=f/f_i}$. Since $f_i^b$ and $F$ do not have a
common factor by the assumption $b_i=0$, we have that $s$ is a unit in $k[[x,y]]$ and so it must have a constant term. 
Hence $r$ must also have a constant term, thus $r$ is a unit in $k[[x,y,u,v]]$, and so $I^*=R$.

Now let $g:=\prod_{i=1}^tf_i^{\max\{a_i-b_i,0\}}$. Then $(g/u)I\subset R$ since all generators of $I$ except $F$ are multiples of $u$, and moreover $gF$ is a multiple of $f$, which equals $uv$. Hence $g/u\in I^*=R$ holds. Thus $g$ is contained in the ideal $(u)$ of $R$, and by \ref{prelim for class}(3) we have $(b_1,\ldots,b_t)=(0,\ldots,0)$. Thus the claim (a) holds.\\
(b) We now prove the theorem.  Without loss of generality, we can assume that $b_1/a_1\ge b_i/a_i$ for all $i$. Then
\[
(c_1,\ldots,c_t):=b_1(a_1,\ldots,a_t)-a_1(b_1,\ldots,b_t)
\]
satisfies $\sum_{i=1}^tc_i[M_i]=0$, $c_1=0$ and $c_i\ge0$ for all $i$.
Applying (a) to $(c_1,\ldots,c_t)$, we have $c_i=0$ for all $i$, and so $(b_1,\ldots,b_t)=\frac{b_1}{a_1}(a_1,\hdots,a_t)$, which implies $\frac{b_1}{a_1}=\frac{b_i}{a_i}$ for all $i$.  Now if $\frac{b_1}{a_1}$  is an integer we are done, otherwise by subtracting an integer multiple of $(a_1,\hdots,a_t)$ from
$(b_1,\ldots,b_t)$, we can assume $0\le b_1<a_1$, from which necessarily $0\le b_i<a_i$ holds for all $i$.  But then using \ref{prelim for class}(2b) repeatedly we see that
\[
I^{**}\cong (u,f_1^{b_1}\hdots f_t^{b_t}).
\]
This is non-free except the case $(b_1,\ldots,b_t)=(0,\ldots,0)$, by \ref{prelim for class}(3). 
\end{proof}

\subsection{MM Modules}\label{subsection on MM}

We keep the notations from previous sections, but we now set $k=\mathbb{C}$.  The purpose of this section is to prove the following theorem, which extends \ref{classification} to cover non-generators.
\begin{thm}\label{MMnotgen}
Let $R=\mathbb{C}[[x,y,u,v]]/(f(x,y)-uv)$ as above, then the basic MM $R$-modules are precisely $(I\otimes_R T^\omega)^{**}$ for some $\omega\in\mathfrak{S}_n$ and some $I\in\Cl(R)$.
\end{thm}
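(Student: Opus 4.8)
The plan is to reduce the statement to the classification of MM \emph{generators} (Theorem~\ref{classification}(2)) by exploiting the action of the class group $\Cl(R)$ on maximal modifying modules. The key structural input is that $R$ is a normal Gorenstein domain of dimension three, so the notion of modifying (resp.\ MM) module is insensitive to twisting by a rank-one reflexive module: for any $I\in\Cl(R)$ and any $M\in\refl R$, one has $\End_R(M)\cong\End_R((I\otimes_R M)^{**})$, since tensoring with a rank-one reflexive and reflexivizing is an autoequivalence of $\refl R$ that commutes with $\Hom_R(-,-)$ up to the canonical reflexive-hull identifications. Hence $M$ is modifying (resp.\ MM) if and only if $(I\otimes_R M)^{**}$ is, by \ref{modifyingThesame} and the definition of MM. This shows every module of the form $(I\otimes_R T^\omega)^{**}$ is MM, giving the ``if'' direction immediately from \ref{key of classification 2}(1).

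For the converse, the plan is as follows. Let $N$ be a basic MM $R$-module. First I would show that $N$ can be twisted to become a generator: writing $N=\bigoplus_j N_j$ with each $N_j$ indecomposable reflexive of rank one (here one uses that over $cA_n$ singularities the indecomposable modifying modules all have rank one, which follows from the Kn\"orrer-periodic reduction to $R^\flat=S/(f)$ combined with the hypersurface structure and the fact that indecomposable reflexives over a hypersurface in the relevant range have constant rank one; alternatively, for rank $\geq 2$ one can rule out maximality by a direct-summand argument), pick $I:=N_1^{*}\in\Cl(R)$ and set $N':=(I\otimes_R N)^{**}$. Then $N'$ is again a basic MM module, and now $R=(N_1^{*}\otimes_R N_1)^{**}\in\add N'$, so $N'$ is an MM \emph{generator}. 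By \ref{classification}(2), $N'\cong T^\omega$ for some $\omega\in\mathfrak{S}_n$. Untwisting, $N\cong (I^{*}\otimes_R T^\omega)^{**}$, and $I^{*}\in\Cl(R)$, which is exactly the claimed form. One should also record that the representation is independent of choices only up to the obvious $\Cl(R)$-action, but the statement as phrased only asks for existence, so this is not strictly needed.

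The main obstacle I anticipate is the step asserting that every indecomposable summand of a basic MM module has rank one, i.e.\ that we really can twist $N$ into a generator. The cleanest route is: by \ref{key of classification}-style reasoning, passing through Kn\"orrer periodicity $K\colon\uCM R^\flat\simeq\uCM R$, an indecomposable modifying object of $\uCM R$ corresponds to an indecomposable CM module over the one-dimensional hypersurface $R^\flat=S/(f)$ that is rigid in the appropriate sense; over such hypersurfaces the Huneke--Wiegand theorem (as used in the Remark after \ref{irreducible case}) forces rigid CM modules of constant rank to behave very rigidly, and combined with the fact that $R^\flat$ has only finitely many minimal primes one deduces the indecomposable rigid modules have rank one on each component. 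I would phrase this carefully, or alternatively cite the corresponding statement from \cite{IW4} or \cite{IW5} if it is already available there. Everything else—that twisting by $\Cl(R)$ preserves the modifying and MM conditions, and that it can be used to create a free summand—is formal once the rank-one claim is in hand, and then \ref{classification}(2) does the remaining work.
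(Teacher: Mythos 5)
Your overall architecture matches the paper's: show that every indecomposable summand of a basic MM module has rank one, twist by a suitable element of $\Cl(R)$ to force $R$ into $\add$, invoke \ref{classification}(2) for MM generators, and untwist. The $\Cl(R)$-twisting formalism you describe (that $\End_R(M)\cong\End_R((I\otimes_R M)^{**})$ preserves modifying and MM) and the untwisting step are exactly the content of \ref{if M has rank 1 summand}, so those parts are fine.

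The genuine gap is the rank-one claim for indecomposable modifying modules, which is \ref{modifying is rank 1} in the paper, and your proposed route to it does not work. Kn\"orrer periodicity is only a \emph{stable} equivalence $\uCM R^\flat\simeq\uCM R$, and if $M'\in\CM R^\flat$ has a reduced matrix factorization $(A,B)$ of size $n$, then the generic rank of $K(M')$ over the domain $R$ is $n$, i.e.\ the \emph{minimal number of generators} of $M'$ over $R^\flat$, not a rank on components of $R^\flat$. So "rank one on each component" is the wrong target; you would instead need that every indecomposable rigid $M'\in\CM R^\flat$ is \emph{cyclic}, which Huneke--Wiegand does not give. Moreover, the Huneke--Wiegand result \cite[3.7]{HW} invoked in the remark after \ref{irreducible case} requires $R^\flat$ to be a \emph{domain}, which fails as soon as $f$ is reducible, i.e.\ in exactly the cases where \ref{MMnotgen} has content beyond $T^{\rm id}=R$. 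The paper's argument (\ref{generic} and \ref{modifying is rank 1}) avoids Kn\"orrer entirely: it cuts by a generic hyperplane $t$ chosen so that $R/(t)$ is a simple surface singularity of type $A_m$ \emph{and} $t$ acts as a non-zerodivisor on $\Ext^1_R(M,M)$ (the latter uses $\fl_R\Ext^1_R(M,M)=0$, which is \ref{modifyingThesame}); then $\End_R(M)/t\End_R(M)\cong\End_{R/(t)}((M/tM)^{**})$ is local because $M$ is indecomposable over the complete local $R$, so $(M/tM)^{**}$ is an indecomposable CM module over a finite-type $A$-type surface singularity and therefore has rank one, which equals $\rk_R M$. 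This dimension reduction to a surface, not to the curve $R^\flat$, is the idea your sketch is missing.
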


Together with the description of the class group from \ref{class group description}, this gives a full description of all MM $R$-modules.  Before we prove \ref{MMnotgen}, we give the following surprising corollary.
\begin{cor}\label{finite number cor} Let $R=\mathbb{C}[[x,y,u,v]]/(f(x,y)-uv)$ as above.  Then\\
\t{(1)} There are only finitely many algebras (up to Morita equivalence) in the derived equivalence class containing the MMAs of $R$.\\
\t{(2)} There are only finitely many algebras (up to Morita equivalence) in the derived equivalence class containing the $\mathds{Q}$-factorial terminalizations of $\Spec R$.
\end{cor}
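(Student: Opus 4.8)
The plan is to reduce both statements to the classification of maximal modifying modules in \ref{MMnotgen}. The subtlety to overcome is that, by \ref{class group description}, $\Cl(R)$ is usually infinite, so $R$ carries infinitely many pairwise non-isomorphic basic MM modules; the claim is nonetheless that these produce only finitely many algebras up to Morita equivalence, in fact up to isomorphism.

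The first step is a \emph{twisting principle}: for any rank-one reflexive $R$-module $I$, the functor $(I\otimes_R-)^{**}\colon\refl R\to\refl R$ is an equivalence of categories, with quasi-inverse $(\Hom_R(I,R)\otimes_R-)^{**}$. This is standard over a normal domain (cf.\ \cite{RV89}, \cite{IR}): $I$ is locally free in codimension one, and a reflexive module over a normal domain is recovered from its localisations at the height-one primes, so the unit and counit of the adjunction are isomorphisms. Since the equivalence is functorial, it induces, for each basic MM module $N=(I\otimes_RT^\omega)^{**}$ appearing in \ref{MMnotgen}, a \emph{ring} isomorphism $\End_R(N)\cong\End_R(T^\omega)$. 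Consequently every MMA of $R$ is isomorphic to one of the $|\mathfrak{S}_n|$ algebras $\End_R(T^\omega)$, $\omega\in\mathfrak{S}_n$.

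It remains to see that the derived equivalence class containing the MMAs (respectively the $\mathds{Q}$-factorial terminalizations of $\Spec R$) contains \emph{only} such algebras. Here I would argue as follows: since $k=\mathbb{C}$, every $\mathds{Q}$-factorial terminalization $Y$ of $\Spec R$ has one-dimensional fibres — all its singularities being of the form $uv=f_I$ by \cite[5.6]{IW5} — hence $Y$ carries a tilting bundle by \cite{VdB1d} and is derived equivalent to $\End_R(N)$ for some MM $R$-module $N$ by \cite[4.13]{IW5}; by \ref{tilting complex gives} together with \cite[4.16]{IW4} this $\End_R(N)$ is derived equivalent to every MMA of $R$, and by \ref{AllQfactDb_intro} all such $Y$ are mutually derived equivalent. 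Thus the MMAs of $R$ and the $\mathds{Q}$-factorial terminalizations of $\Spec R$ all lie in a single derived equivalence class. Any algebra in this class is then derived equivalent to such a $Y$, hence isomorphic to $\End_R(N)$ for an MM module $N$ by \cite[4.13]{IW5}, hence by the previous paragraph isomorphic to one of the finitely many $\End_R(T^\omega)$. This proves both (1) and (2).

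The step I expect to be the main obstacle is precisely this last ingredient: applying \cite[4.13]{IW5} in the form ``a ring derived equivalent to a $\mathds{Q}$-factorial terminalization of $\Spec R$ is $\End_R(N)$ for some MM module $N$'', and confirming that the derived equivalence class of the MMAs genuinely coincides with that of the terminalizations. Once this is granted, the only remaining verifications — that $(I\otimes_R-)^{**}$ is an equivalence and that the induced isomorphism of endomorphism algebras respects the ring structure — are routine.
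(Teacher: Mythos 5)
Your proof is correct, but it takes a materially different (and heavier) route than the paper's. For part (1), the paper uses a single algebraic fact — \cite[4.8]{IW4}, that MMAs over a normal three-dimensional domain are closed under derived equivalence — to conclude at once that every algebra in the derived class of the MMAs \emph{is} an MMA, and then counts MMAs via \ref{MMnotgen}. Part (2) then follows from (1) via \cite[1.9]{IW5}. You instead go through the geometry: you invoke that the $\mathds{Q}$-factorial terminalizations of $\Spec R$ have one-dimensional fibres (true for $cA_n$, but note that ``all singularities are of the form $uv=f_I$'' does not by itself imply the fibres are one-dimensional — you need the explicit description of the terminalizations from \cite[\S5]{IW5}, where they are built as iterated blowups with chains of $\mathbb{P}^1$'s as exceptional fibres), then apply Van den Bergh's tilting theorem \cite{VdB1d}, and then use \cite[4.13]{IW5} to recognize any ring derived equivalent to such a $Y$ as $\End_R(N)$ for an MM module $N$. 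Both routes work, but the paper's is shorter and avoids any geometric input beyond what is needed elsewhere; your route essentially re-proves the relevant content of \cite[4.8]{IW4} through the geometric back door.

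One thing you do better than the paper: you actually justify the key isomorphism $\End_R((I\otimes_R T^\omega)^{**})\cong\End_R(T^\omega)$ by noting that $(I\otimes_R-)^{**}$ is an autoequivalence of $\refl R$ over the normal domain $R$, with quasi-inverse $(I^*\otimes_R-)^{**}$. The paper states this isomorphism without comment, so your remark fills a genuine (if small) gap.
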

\begin{proof}
(1) Since $R$ is a normal three-dimensional domain, MMAs are closed under derived equivalences \cite[4.8]{IW4}.
Thus we only have to show that there are only finitely many MMAs up to Morita equivalence.
By \ref{MMnotgen} every MMA of $R$ is Morita equivalent to $\End_R((I\otimes_R T^\omega)^{**})$ for some $\omega\in\mathfrak{S}_n$ and some $I\in\Cl(R)$.  
Since $\End_R((I\otimes_RT^\omega)^{**})\cong \End_R(T^\omega)$, there are at most $n!$ possible algebras. Thus the result follows.\\
(2) By \cite[1.9]{IW5} every $\mathds{Q}$-factorial terminalization is derived equivalent to an MMA. Thus the assertion follows by (1).
\end{proof}

The strategy to prove \ref{MMnotgen} is to use the following easy fact.

\begin{lemma}\label{if M has rank 1 summand}
Let $M\in\refl R$ be a modifying $R$-module. If $M$ has a direct summand $I$ whose rank is $1$, then $M\cong (I\otimes_RN)^{**}$ for some modifying generator $N$ of $R$.
\end{lemma}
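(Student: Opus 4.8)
The plan is to twist away the rank-one summand using the divisor class group. Write $M = I \oplus M'$ with $M' \in \refl R$. Since $R$ is a normal domain and $I$ has rank one, $I$ determines a class in $\Cl(R)$; let $I^{*} := \Hom_R(I,R)$, which represents $-[I]$, so that $(I \otimes_R I^{*})^{**} \cong R$ (this is the arithmetic underlying \ref{prelim for class}(2b)). Set
\[
N := (I^{*} \otimes_R M)^{**} \in \refl R .
\]
I claim that $N$ is a modifying generator of $R$ and that $(I \otimes_R N)^{**} \cong M$, which is exactly what is needed.

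The one substantive input is a standard fact about reflexive modules over the normal domain $R$: for any rank-one $J \in \refl R$ the assignment $\Psi_J := (J \otimes_R -)^{**}$ is a well-defined functor $\refl R \to \refl R$, and associativity of the product $(-\otimes_R -)^{**}$ on $\refl R$ furnishes natural isomorphisms $\Psi_J \circ \Psi_{J'} \cong \Psi_{(J\otimes_R J')^{**}}$. In particular $\Psi_I \circ \Psi_{I^{*}} \cong \Psi_R = \mathrm{id}_{\refl R} \cong \Psi_{I^{*}} \circ \Psi_I$, so $\Psi_{I^{*}}$ is an autoequivalence of $\refl R$ with quasi-inverse $\Psi_I$. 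All of this is checked by localizing at height-one primes of $R$, where $J$ becomes free and the statements are obvious; the only point requiring care in the whole proof is this routine codimension-one verification, and everything afterwards is formal.

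Granting it, I would conclude as follows. Since $\Psi_{I^{*}}$ is an equivalence, $\End_R(N) = \End_R(\Psi_{I^{*}}(M)) \cong \End_R(M)$; as $M$ is a modifying module we have $\End_R(M) \in \CM R$ \cite{IW4}, hence $\End_R(N) \in \CM R$ and therefore $N \in \refl R$ is modifying. Next, $\Psi_{I^{*}}(I) = (I^{*} \otimes_R I)^{**} \cong R$, so
\[
N = \Psi_{I^{*}}(I \oplus M') \cong R \oplus (I^{*} \otimes_R M')^{**},
\]
which shows $R \in \add N$, i.e. $N$ is a generator. Finally, applying $\Psi_I$ and using $\Psi_I \circ \Psi_{I^{*}} \cong \mathrm{id}_{\refl R}$ gives $(I \otimes_R N)^{**} = \Psi_I(N) = \Psi_I \Psi_{I^{*}}(M) \cong M$. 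This proves the lemma.
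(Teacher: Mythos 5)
Your proof is correct and follows essentially the same route as the paper. The paper defines $N:=\Hom_R(I,M)$, which for a rank-one reflexive $I$ over the normal domain $R$ is naturally isomorphic to your $(I^*\otimes_R M)^{**}$, and then makes exactly the same three observations: $N$ is a generator because $\Hom_R(I,I)\cong R$ is a summand, $\End_R(N)\cong\End_R(M)$ so $N$ is modifying, and $(I\otimes_R N)^{**}\cong M$; your reformulation in terms of the $\Cl(R)$-action $\Psi_J$ just makes the underlying group structure explicit.
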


\begin{proof}
Let $N:=\Hom_R(I,M)$. Then clearly $N$ is a generator of $R$ and we have $M\cong (I\otimes_RN)^{**}$. Since $\End_R(N)\cong\End_R(M)$, we have that $N$ is a modifying generator of $R$.
\end{proof}

On the other hand, we require the following lemma.

\begin{lemma}\label{generic}
Suppose that $\k=\mathbb{C}$.  Then for any modifying $R$-module $M$, we can choose
a hyperplane section $t$ satisfying the following two conditions.\\
\t{(1)} $R/(t)$ is an $A_m$ singularity, where $m$ is
the degree of the lowest term of $f$ minus one.\\
\t{(2)} $t$ acts on $E:=\Ext^1_{R}(M,M)$ as a non-zerodivisor.
\end{lemma}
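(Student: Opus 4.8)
The plan is to prove Lemma~\ref{generic} by a standard Bertini-type genericity argument, exploiting the fact that $M$ is a reflexive module over a $cA_n$ singularity and that $E:=\Ext^1_R(M,M)$ is a finitely generated $R$-module of dimension at most one (since $R$ has at most a one-dimensional singular locus and $M$ is modifying, so $\Ext^1_R(M,M)$ is supported on $\Sing R$).

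First I would set up the two conditions separately as open (or nonempty, generic) conditions on the choice of a general linear form $t\in\m/\m^2$. For condition (2): since $E$ is a finitely generated $R$-module with $\dim_R E\le 1$, its set of associated primes $\Ass_R E$ is finite, and each associated prime $\p$ is either $\m$ or one of finitely many height-two primes. The length-zero part $\fl_R E$ is killed by a power of $\m$, but the relevant obstruction to $t$ being a nonzerodivisor on $E$ comes only from the positive-dimensional associated primes; for each such $\p$ of dimension one, the condition $t\notin\p$ is a nonempty Zariski-open condition on $t$, and there are finitely many such $\p$, so a general $t$ avoids all of them. Hence a general $t$ is a nonzerodivisor on $E/\fl_R E$, and by adjusting one can arrange $t$ to be a nonzerodivisor on $E$ itself --- or, more honestly, one only needs $t$ to be a nonzerodivisor on $E$ modulo finite length, which is what is actually used in the application (in Lemma~\ref{generic} as stated the finite-length part can be absorbed, but I would double-check the downstream use and, if genuinely needed, note that $\fl_R\Ext^1_R(M,M)=0$ because $M$ is modifying, which kills the finite-length ambiguity entirely). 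This is the point where I should be careful: whether the statement wants $t$ a nonzerodivisor on all of $E$ or only on $E$ up to finite length.

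For condition (1): $R=S[[u,v]]/(f-uv)$ with $S=k[[x,y]]$ and $k=\mathbb{C}$, and a general hyperplane section through the closed point cuts out a surface singularity. Writing $f = f_{\ge m+1} + (\text{higher order})$ where $m+1$ is the order of $f$, a general linear combination $t = \alpha x + \beta y + \gamma u + \delta v$ gives $R/(t)\cong k[[x,y,u,v]]/(f-uv,\,t)$; eliminating one variable, this is a two-dimensional hypersurface whose defining equation has leading form determined by the leading form of $f-uv$. Since the quadric $uv$ contributes, after a general linear change the singularity is $A_{\ell}$ where the exact value of $\ell$ is governed by the order of $f$; a direct computation (Newton polygon / leading-term analysis, which is classical for $cA_n = cA_{m}$ singularities in the terminology where $f$ has order $m+1$) shows $\ell = m$ with $m+1 = \operatorname{ord}(f)$. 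Genericity of the linear change is again an open condition, and over $\mathbb{C}$ a general point of the corresponding parameter space works. The relevant reference framework is that $A_m$ surface singularities arise precisely as general hyperplane sections of $cA_m$ threefold singularities.

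The main obstacle --- and the part requiring genuine care rather than routine bookkeeping --- is verifying that the two genericity conditions are \emph{simultaneously} satisfiable, i.e.\ that the intersection of the two nonempty Zariski-open subsets of the space of linear forms $\mathbb{P}(\m/\m^2)$ is nonempty; this is automatic over the infinite field $\mathbb{C}$ precisely because a finite intersection of nonempty opens in an irreducible variety is nonempty. So the real content is just (a) confirming that condition (2) is a nonempty open condition, which reduces to the finiteness of $\Ass_R(E)$ together with $\dim_R E\le 1$ and the prime avoidance argument above, and (b) pinning down the surface singularity type in condition (1) via an explicit leading-form computation. I expect (b) to be the genuinely computational step but entirely standard; (a) is where one must be slightly careful about finite-length torsion, resolved by noting $M$ modifying forces $\fl_R\Ext^1_R(M,M)=0$ in the relevant range (here $n-2 = d-2 = 1$, so $T_{\uCM R}(M,M[1])=0$ is exactly the vanishing of the finite-length part of $\Ext^1_R(M,M)$ by Lemma~\ref{modifyingThesame} and \ref{nonisolatedARintro}).
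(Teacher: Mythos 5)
Your proposal is correct and follows essentially the same route as the paper: condition (1) is the classical fact that a generic hyperplane section of a $cA_m$ singularity is an $A_m$ surface singularity (the paper simply cites \cite[6.1(e)]{BIKR} rather than redoing the leading-form computation), and condition (2) is a prime-avoidance argument whose crucial input is that $M$ modifying forces $\fl_R\Ext^1_R(M,M)=0$, so $\m\notin\Ass_R E$ and a generic $t$ misses the finitely many (necessarily non-maximal) associated primes of $E$.

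One small inaccuracy worth flagging: you write at first that ``the relevant obstruction to $t$ being a nonzerodivisor on $E$ comes only from the positive-dimensional associated primes'' and that one can ``adjust'' to get a nonzerodivisor on all of $E$. That framing is misleading --- if $\fl_R E\neq0$ then $\m\in\Ass_R E$ and \emph{no} element of $\m$ is a nonzerodivisor on $E$, so there is nothing to adjust. The application in Proposition~\ref{modifying is rank 1} does genuinely require $t$ to be a nonzerodivisor on the whole of $E$ (to identify $\Lambda/t\Lambda$ with $\End_{R_1}(M/tM)$), not merely on $E/\fl_R E$. Fortunately you eventually land on the correct resolution, which is exactly the paper's: $\fl_R E=0$ holds outright because $M$ is modifying (via \ref{modifyingThesame} and \ref{nonisolatedARintro} with $d=3$), and this is what makes the prime-avoidance argument go through unconditionally. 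Leading with that observation, as the paper does, makes the proof cleaner and removes the need to hedge.
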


\begin{proof}
Since $R$ is a $cA_m$ singularity (see e.g\ \cite[6.1(e)]{BIKR}), a generic hyperplane section $t$ satisfies the condition (1). If $t$ acts on $E$ as a zero divisor, then $t$ is contained in an associated prime ideal of $E$. 
But $\fl_RE=0$ by \ref{modifyingThesame}, so any associated prime  ideal of $E$ is necessarily non-maximal.
Since $E$ has only finitely many associated prime ideals, we can find
a hyperplane $t$ which is not contained in any associated prime
ideal of $E$, and furthermore satisfies (1).
\end{proof}

This gives the following result, which generalizes \cite[A1]{VdB1d} and \cite[4.3]{DH}.

\begin{prop}\label{modifying is rank 1}
Assume $k=\mathbb{C}$. Then any indecomposable modifying $R$-module has rank 1.
\end{prop}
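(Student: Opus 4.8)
The plan is to reduce everything to the one-dimensional situation via Kn\"orrer periodicity together with the hyperplane section provided by \ref{generic}. Let $M$ be an indecomposable modifying $R$-module; I may assume $M$ is non-free, since a free indecomposable is $R$ itself, which has rank $1$. Choose, by \ref{generic}, a hyperplane section $t$ such that $\bar R:=R/(t)$ is an $A_m$ singularity and $t$ acts as a non-zerodivisor on $E:=\Ext^1_R(M,M)$. Since $M$ is modifying we have $\End_R(M)\in\CM R$, and $M\in\CM R$ (being reflexive over a normal domain that is a hypersurface of dimension three, it is actually maximal Cohen--Macaulay). Because $t$ is a regular element on $R$, on $M$, and on $\End_R(M)$, reduction modulo $t$ commutes with $\Hom$: $\overline{\End_R(M)}\cong\End_{\bar R}(\bar M)$ where $\bar M:=M/tM$.

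First I would show that $\bar M$ is a rigid $\bar R$-module, i.e. $\Ext^1_{\bar R}(\bar M,\bar M)=0$. The short exact sequence $0\to M\xrightarrow{t}M\to\bar M\to 0$ gives, upon applying $\Hom_R(M,-)$, a piece $E\xrightarrow{t}E\to\Ext^1_R(M,\bar M)\to 0$ (using $\Ext^2_R(M,M)=0$, which holds because $\End_R(M)\in\CM R$ forces $M$ to have finite injective dimension behaviour making the relevant higher Exts vanish for a $2$-CY category, or more directly by the modifying condition in $\uCM R$); since $t$ is $E$-regular, $\Ext^1_R(M,\bar M)=0$, and then the long exact sequence obtained by applying $\Hom_R(-,\bar M)$ to $0\to M\xrightarrow{t}M\to\bar M\to 0$, combined with the change-of-rings spectral sequence, yields $\Ext^1_{\bar R}(\bar M,\bar M)=0$. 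Thus $\bar M$ is rigid over the $A_m$-hypersurface $\bar R=\mathbb{C}[[x,y,z]]/(g)$ for a suitable $g$.

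Next I would invoke the classification of rigid CM modules over an $A_m$ surface singularity. Over $\mathbb{C}$, $\bar R$ has finite CM type and $\uCM\bar R$ is a $1$-CY (i.e. $\Hom$-finite, fractional CY) category; the indecomposable CM modules are the rank-one ideals, all of which are rigid, and an indecomposable rigid object is necessarily one of these, hence of rank one. (Alternatively, one uses that $\bar R$ is a quotient singularity, its AR quiver is the double of a type-$A$ Dynkin diagram, and rigidity forces indecomposability at the module level and rank $\le 1$.) Therefore $\bar M$, being indecomposable rigid, has rank one over $\bar R$. Since rank is preserved under the faithfully flat-in-codimension-one reduction (more precisely, $\rank_R M=\rank_{\bar R}\bar M$ because $t$ is a regular parameter cutting a prime divisor and $M$ is torsion-free), we conclude $\rank_R M=1$.

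The main obstacle I anticipate is the bookkeeping in the second paragraph: one must be careful that $\bar M=M/tM$ is actually Cohen--Macaulay over $\bar R$ (which it is, since $M\in\CM R$ and $t$ is $M$-regular) and, more delicately, that it is \emph{indecomposable}, or at least that each indecomposable summand has rank one so that $M$ itself, being indecomposable, has rank one. This is where one needs that $R$ is complete local (so Krull--Schmidt applies to $\bar R$) and that the endomorphism ring reduction is clean; a decomposition $\bar M=\bigoplus \bar M_i$ with each $\rank\bar M_i=1$ forces $\rank_R M=\sum\rank\bar M_i$, and indecomposability of $M$ plus the fact that a modifying module of rank $\ge 2$ cannot reduce to a sum of rank-one rigids without contradicting the rigidity/endomorphism count pins down $\rank_R M=1$. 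The remaining steps — the $\Ext$ vanishing and the appeal to the $A_m$ classification — are then routine.

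Having established \ref{modifying is rank 1}, the proof of \ref{MMnotgen} is immediate: if $M$ is a basic MM $R$-module then every indecomposable summand of $M$ has rank one by \ref{modifying is rank 1}, so in particular $M$ has a rank-one direct summand $I$, and \ref{if M has rank 1 summand} gives $M\cong(I\otimes_R N)^{**}$ for a modifying generator $N$; since $\End_R(M)\cong\End_R(N)$, the module $N$ is in fact an MM generator, hence $N\cong T^\omega$ for some $\omega\in\mathfrak{S}_n$ by \ref{classification}(2), and thus $M\cong(I\otimes_R T^\omega)^{**}$ with $I\in\Cl(R)$. Conversely every such $(I\otimes_R T^\omega)^{**}$ is MM because its endomorphism ring is isomorphic to $\End_R(T^\omega)$, which is an MMA.
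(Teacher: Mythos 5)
There is a genuine and unfixable gap in your argument. Your plan is to show that $\bar M := M/tM$ is a \emph{rigid} $\bar R$-module and then appeal to a classification of rigid CM modules over an $A_m$ surface singularity. But $\bar M$ is essentially never rigid: $\bar R$ is a $2$-dimensional Gorenstein isolated singularity, so Auslander--Reiten duality gives $\Ext^1_{\bar R}(\bar M,\bar M)\cong D\,\underline{\Hom}_{\bar R}(\bar M,\bar M)$, and the right-hand side is nonzero (it contains $\overline{\mathrm{id}}$) as soon as $\bar M$ has a non-free indecomposable summand. The local error in your derivation is the line ``since $t$ is $E$-regular, $\Ext^1_R(M,\bar M)=0$'': that $t$ is a non-zerodivisor on $E=\Ext^1_R(M,M)$ means $E\xrightarrow{t}E$ is \emph{injective}, which by Nakayama gives $E/tE\neq 0$ whenever $E\neq 0$ — the opposite of what you want. (The additional claim $\Ext^2_R(M,M)=0$ is also unjustified; over the hypersurface $R$ one has $\Ext^2_R(M,M)\cong\underline{\Hom}_R(M,M)\neq 0$ stably for non-free $M$.) Finally, you flag indecomposability of $\bar M$ as a loose end and try to dodge it with a ``rigidity/endomorphism count'': since rigidity fails, this does not close the gap, and indeed a sum of distinct rank-one $\bar R$-modules can be an honest possibility that must be ruled out.

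The paper avoids rigidity altogether. It uses the non-zerodivisor condition on $E$ only to conclude that the truncation $\End_R(M)/t\End_R(M)\cong\Hom_R(M,M/tM)\cong\End_{R_1}(M/tM)$ is exact (the map $\Hom(M,\bar M)\to E$ is zero precisely because $E\xrightarrow{t}E$ is \emph{injective}, which is the correct use of $E$-regularity). It then invokes Auslander--Goldman to pass to the reflexive hull, so $\End_R(M)/t\End_R(M)\cong\End_{R_1}((M/tM)^{**})$ with $(M/tM)^{**}\in\CM R_1$. Since $M$ is indecomposable over the complete local $R$, $\End_R(M)$ is local, hence so is its quotient $\End_{R_1}((M/tM)^{**})$; therefore $(M/tM)^{**}$ is an \emph{indecomposable} CM module over the simple surface singularity $R_1$ of type $A_{m-1}$, and all such modules have rank one (Yoshino). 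Since $\rk_R M=\rk_{R_1}(M/tM)=\rk_{R_1}(M/tM)^{**}$, one concludes $\rk_R M=1$. Note that the rank-one conclusion comes from indecomposability plus finite CM type, not from rigidity. Your second paragraph on \ref{MMnotgen} is fine and matches the paper.
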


\begin{proof}
Suppose that $M\in\refl R$ is indecomposable and satisfies $\End_{R}(M)\in\CM R$.
By \ref{modifyingThesame} we have $\fl_{R}\Ext^{1}_{R}(M,M)=0$.
By \ref{generic} we can pick $t\in R$ such that $R_1:=R/(t)\cong k[[x,y,z]]/(x^2+y^2+z^m)$, and $t$ acts on $\Ext^1_{R}(M,M)$ as a non-zerodivisor.  Denote $\Lambda:=\End_{R}(M)$, then applying $\Hom_{R}(M,-)$ to the exact sequence
\[
0\to M\stackrel{t}{\to} M\to M/tM\to 0
\]
yields 
\[
0\to \Lambda\xrightarrow{t}\Lambda\stackrel{}{\to} \Hom_{R}(M,M/tM)\to \Ext^1_{R}(M,M)\xrightarrow{t}\Ext^1_{R}(M,M)
\]
Since $t$ acts on $\Ext^1_{R}(M,M)$ as a non-zerodivisor, we have
\[
\Lambda/t\Lambda\cong\Hom_{R}(M,M/tM)\cong\End_{R}(M/tM)=\End_{R_1}(M/tM).
\]
In particular $\End_{R_1}(M/tM)\in\CM R_1$, so by \cite[4.1]{AG} we have
\[
\Lambda/t\Lambda\cong \End_{R_1}(M/tM)\cong \End_{R_1}((M/tM)^{**})
\]
with $(M/tM)^{**}\in\CM R_1$. 
Since $M$ is indecomposable, we have that $\Lambda=\End_{R}(M)$ is a local ring.
Thus $\Lambda/t\Lambda\cong\End_{R_1}((M/tM)^{**})$ is also a local ring, and so $(M/tM)^{**}$ is indecomposable.
Since $R_1$ is a simple surface singularity of type $A_{m-1}$, it is well-known (e.g.\ \cite{Y}) that all indecomposable CM $R_1$-modules have rank one. 
Thus we have $1=\rk_{R_1}((M/tM)^{**})=\rk_{R_1}(M/tM)=\rk_R(M)$.
\end{proof}

Now we are ready to prove \ref{MMnotgen}.
\begin{proof}
\ref{MMnotgen} now follows immediately from \ref{classification}, \ref{if M has rank 1 summand} and \ref{modifying is rank 1}.
\end{proof}

\subsection{General MM Mutation}\label{mutation}
We once again work over a general field $k$.  In order to extend \ref{mutation and si} and describe mutation for non-maximal flags, it is combinatorially useful, given some (possibly non-maximal) flag $\c{F}=(I_1,\hdots,I_m)$ in the set $\{ 1,2,\hdots, n\}$, to assign to $\c{F}$ the following picture consisting of $m$ curves:
\[
\begin{array}{ccc}
\begin{array}{c}
\c{P}(\c{F}):=
\end{array} &
\begin{array}{c}
\begin{tikzpicture}[xscale=0.6,yscale=0.6]
\draw[black] (-0.1,-0.04,0) to [bend left=25] (2.1,-0.04,0);
\draw[black] (1.9,-0.04,0) to [bend left=25] (4.1,-0.04,0);
\node at (5.5,0,0) {$\hdots$};
\draw[black] (6.9,-0.04,0) to [bend left=25] (9.1,-0.04,0);
\node at (1,0.6,0) {$\scriptstyle C_{1}$};
\node at (3,0.6,0) {$\scriptstyle C_{2}$};
\node at (8,0.6,0) {$\scriptstyle C_{m}$};
\filldraw [red] (0,0,0) circle (1pt);
\filldraw [red] (2,0,0) circle (1pt);
\filldraw [red] (4,0,0) circle (1pt);
\filldraw [red] (7,0,0) circle (1pt);
\filldraw [red] (9,0,0) circle (1pt);
\node at (0,-0.4,0) {$\scriptstyle g_1$};
\node at (2,-0.4,0) {$\scriptstyle g_2$};
\node at (4,-0.4,0) {$\scriptstyle g_3$};
\node at (7,-0.4,0) {$\scriptstyle g_{m}$};
\node at (9,-0.4,0) {$\scriptstyle g_{m+1}$};
\end{tikzpicture} 
\end{array}
\end{array}
\]
where $g_j:=f_{I_j\backslash I_{j-1}}$ for all $1\le j\le m+1$, where by convention  $I_0:=\emptyset$ and $I_{m+1}:=\{1,2,\hdots,n\}$.

If we denote $M_0:=R$ and $M_j:=(u,f_{I_j})=(u,\prod_{i=1}^{j}g_i)$, and set $T^\c{F}:=\bigoplus_{j=0}^{m}M_j$, then $T^\c{F}$ is the modifying generator of $R$ corresponding to $\c{F}$.  
The correspondence between non-free summands of $T^\c{F}$ and curves of $\c{P}(\c{F})$ is as follows:
\[
\begin{tikzpicture}[xscale=0.6,yscale=0.6]
\draw[black] (-0.1,-0.04,0) to [bend left=25] (2.1,-0.04,0);
\draw[black] (1.9,-0.04,0) to [bend left=25] (4.1,-0.04,0);
\node at (5.5,0,0) {$\hdots$};
\draw[black] (6.9,-0.04,0) to [bend left=25] (9.1,-0.04,0);
\node at (1,1.6,0) {$\scriptstyle (u,f_{I_1})$};
\node at (2,1.6,0) {$\scriptstyle$};
\node at (3,1.6,0) {$\scriptstyle (u,f_{I_2})$};
\node at (4,1.6,0) {$\scriptstyle$};
\node at (7,1.6,0) {$\scriptstyle$};
\node at (8,1.6,0) {$\scriptstyle (u,f_{I_m})$};
\draw[<->] (1,1.2,0) -- (1,0.5,0);
\draw[<->] (3,1.2,0) -- (3,0.5,0);
\draw[<->] (8,1.2,0) -- (8,0.5,0);
\filldraw [red] (0,0,0) circle (1pt);
\filldraw [red] (2,0,0) circle (1pt);
\filldraw [red] (4,0,0) circle (1pt);
\filldraw [red] (7,0,0) circle (1pt);
\filldraw [red] (9,0,0) circle (1pt);
\node at (0,-0.4,0) {$\scriptstyle g_1$};
\node at (2,-0.4,0) {$\scriptstyle g_2$};
\node at (4,-0.4,0) {$\scriptstyle g_3$};
\node at (7,-0.4,0) {$\scriptstyle g_{m}$};
\node at (9,-0.4,0) {$\scriptstyle g_{m+1}$};
\end{tikzpicture} 
\]
This gives us a combinatorial model to visualize mutation.

\begin{remark}\label{remark that combinatorics has meaning}
The combinatorial model $\c{P}(\F)$ has geometric meaning when $k=\mathbb{C}$, since it is precisely the fibre above the origin of a certain partial crepant resolution, denoted $X^\F$ in \cite[\S5]{IW5}.  
In the more general case of an arbitrary field (i.e.\ in the setting above), we do not know whether the derived equivalence with a geometric space holds, but it turns out that the combinatorial model is still useful.
\end{remark}

\begin{example}\label{mutexample1}
Consider $f=f_1f_2f_3f_4f_5f_6$ with a flag $\c{F}=(\{ 2,3\}\subsetneq \{ 2,3,1 \})$. Then   $\c{F}$ corresponds to
\[
\begin{tikzpicture}[xscale=0.6,yscale=0.6]
\draw[black] (-0.1,-0.04,0) to [bend left=25] (2.1,-0.04,0);
\draw[black] (1.9,-0.04,0) to [bend left=25] (4.1,-0.04,0);
\filldraw [red] (0,0,0) circle (1pt);
\filldraw [red] (2,0,0) circle (1pt);
\filldraw [red] (4,0,0) circle (1pt);
\node at (0,-0.4,0) {$\scriptstyle f_2f_3$};
\node at (2,-0.4,0) {$\scriptstyle f_1$};
\node at (4,-0.4,0) {$\scriptstyle f_4f_5f_6$};
\end{tikzpicture}
\]
The corresponding $T^\c{F}$ is $R\oplus (u,f_2f_3)\oplus (u,f_1f_2f_3)$.
\end{example}

We are interested in mutations of non-free summands of $T^\c{F}$, so since above such summands correspond to subsets of the curves, pick an arbitrary $\emptyset\neq J\subseteq \{ 1,\hdots,m\}$.  Now write $J$ as a disjoint union of connected components:
\begin{defin}
A connected component of $J$ is a collection of consecutive numbers from $i_1$ to $i_2$ inside $\{ 1,\hdots, n\}$, each of which belongs to $J$, such that $i_1-1\notin J$ and $i_2+1\notin J$.  We write $J=\coprod_{j=1}^{t} J_{j}$ as a disjoint union of connected components.
\end{defin}
Geometrically, if say $m=6$ and $J:=\{ 2,3,5\}$ then we are simply bunching the curves corresponding to $J$ into connected components as in the following picture:
\begin{center}
\begin{tikzpicture}[xscale=0.6,yscale=0.6]
\draw[black] (-0.1,-0.04,0) to [bend left=25] (2.1,-0.04,0);
\draw[black] (1.9,-0.04,0) to [bend left=25] (4.1,-0.04,0);
\draw[black] (3.9,-0.04,0) to [bend left=25] (6.1,-0.04,0);
\draw[black] (5.9,-0.04,0) to [bend left=25] (8.1,-0.04,0);
\draw[black] (7.9,-0.04,0) to [bend left=25] (10.1,-0.04,0);
\draw[black] (9.9,-0.04,0) to [bend left=25] (12.1,-0.04,0);
\filldraw [red] (0,0,0) circle (1pt);
\filldraw [red] (2,0,0) circle (1pt);
\filldraw [red] (4,0,0) circle (1pt);
\filldraw [red] (6,0,0) circle (1pt);
\filldraw [red] (8,0,0) circle (1pt);
\filldraw [red] (10,0,0) circle (1pt);
\filldraw [red] (12,0,0) circle (1pt);
\node at (0,-0.4,0) {$\scriptstyle g_{1}$};
\node at (2,-0.4,0) {$\scriptstyle g_{2}$};
\node at (4,-0.4,0) {$\scriptstyle g_{3}$};
\node at (6,-0.4,0) {$\scriptstyle g_{4}$};
\node at (8,-0.4,0) {$\scriptstyle g_{5}$};
\node at (10,-0.4,0) {$\scriptstyle g_{6}$};
\node at (12,-0.4,0) {$\scriptstyle g_{7}$};
\draw [densely dotted] (2,-0.1,0) -- (6,-0.1,0) -- (6,0.5,0) -- (2,0.5,0) -- cycle;
\draw [densely dotted] (8,-0.1,0) -- (10,-0.1,0) -- (10,0.5,0) -- (8,0.5,0) -- cycle;
\node at (4,0.9,0) {$\scriptstyle J_{1}:=\{ {2}, {3} \}$};
\node at (9,0.9,0) {$\scriptstyle J_{2}:=\{ {5} \}$};
\end{tikzpicture}
\end{center}

The mutation operation acts on the set of modifying modules, hence on $T^\c{F}$, and hence on the set of $\c{P}(\c{F})$.  Below (see \ref{mainmutation}) we will justify the following intuitive geometric picture:
\begin{center}
\begin{tikzpicture}[xscale=0.6,yscale=0.6]
\draw[black] (-0.1,-0.04,0) to [bend left=25] (2.1,-0.04,0);
\draw[black] (1.9,-0.04,0) to [bend left=25] (4.1,-0.04,0);
\draw[black] (3.9,-0.04,0) to [bend left=25] (6.1,-0.04,0);
\draw[black] (5.9,-0.04,0) to [bend left=25] (8.1,-0.04,0);
\draw[black] (7.9,-0.04,0) to [bend left=25] (10.1,-0.04,0);
\draw[black] (9.9,-0.04,0) to [bend left=25] (12.1,-0.04,0);
\filldraw [red] (0,0,0) circle (1pt);
\filldraw [red] (2,0,0) circle (1pt);
\filldraw [red] (4,0,0) circle (1pt);
\filldraw [red] (6,0,0) circle (1pt);
\filldraw [red] (8,0,0) circle (1pt);
\filldraw [red] (10,0,0) circle (1pt);
\filldraw [red] (12,0,0) circle (1pt);
\node at (0,-0.4,0) {$\scriptstyle g_{1}$};
\node at (2,-0.4,0) {$\scriptstyle g_{2}$};
\node at (4,-0.4,0) {$\scriptstyle g_{3}$};
\node at (6,-0.4,0) {$\scriptstyle g_{4}$};
\node at (8,-0.4,0) {$\scriptstyle g_{5}$};
\node at (10,-0.4,0) {$\scriptstyle g_{6}$};
\node at (12,-0.4,0) {$\scriptstyle g_{7}$};
\draw [densely dotted] (2,-0.1,0) -- (6,-0.1,0) -- (6,0.5,0) -- (2,0.5,0) -- cycle;
\draw [densely dotted] (8,-0.1,0) -- (10,-0.1,0) -- (10,0.5,0) -- (8,0.5,0) -- cycle;
\draw[black] (-0.1,-3.04,0) to [bend left=25] (2.1,-3.04,0);
\draw[black] (1.9,-3.04,0) to [bend left=25] (4.1,-3.04,0);
\draw[black] (3.9,-3.04,0) to [bend left=25] (6.1,-3.04,0);
\draw[black] (5.9,-3.04,0) to [bend left=25] (8.1,-3.04,0);
\draw[black] (7.9,-3.04,0) to [bend left=25] (10.1,-3.04,0);
\draw[black] (9.9,-3.04,0) to [bend left=25] (12.1,-3.04,0);
\filldraw [red] (0,-3,0) circle (1pt);
\filldraw [red] (2,-3,0) circle (1pt);
\filldraw [red] (4,-3,0) circle (1pt);
\filldraw [red] (6,-3,0) circle (1pt);
\filldraw [red] (8,-3,0) circle (1pt);
\filldraw [red] (10,-3,0) circle (1pt);
\filldraw [red] (12,-3,0) circle (1pt);
\node at (0,-3.4,0) {$\scriptstyle g_{1}$};
\node at (2,-3.4,0) {$\scriptstyle g_{4}$};
\node at (4,-3.4,0) {$\scriptstyle g_{3}$};
\node at (6,-3.4,0) {$\scriptstyle g_{2}$};
\node at (8,-3.4,0) {$\scriptstyle g_{6}$};
\node at (10,-3.4,0) {$\scriptstyle g_{5}$};
\node at (12,-3.4,0) {$\scriptstyle g_{7}$};
\draw [densely dotted] (2,-3.1,0) -- (6,-3.1,0) -- (6,-2.5,0) -- (2,-2.5,0) -- cycle;
\draw [densely dotted] (8,-3.1,0) -- (10,-3.1,0) -- (10,-2.5,0) -- (8,-2.5,0) -- cycle;
\draw[->] (7,-0.5,0) -- node[right] {$\scriptstyle\mu^-_J$}(7,-2.25,0);
\end{tikzpicture} 
\end{center}
where each connected component of $J$ gets reflected. It is clear from this picture (and indeed we prove it in \ref{mainmutation}) that $\mu^-_{J}(M)=M$ if and only if $J$ is componentwise symmetric.  

\begin{defin}
For a given flag $\c{F}=(I_1,\hdots,I_m)$, associate the  combinatorial picture $\c{P}(\F)$ as above.  For $\emptyset\neq J\subseteq \{1,\hdots,m\}$, we define the \emph{$J$-reflection} of $\c{P}(\F)$ as follows:
the number of curves remains the same, but the new position of the $g$'s is obtained from the positions in $\c{P}(\F)$ by reflecting each connected component of $J$ in the vertical axis. 
\end{defin}

We now build up to \ref{mainmutation}.  To fix some convenient notation, we write $J=\coprod_{j=1}^{t}J_{j}$ and denote
\[
J_{j}=\{ {l_{j}},l_{j}+1,\hdots,u_{j}-1,{u_{j}}\}
\]
to be the connected components of $J$, where $l_{j}$ stands for lower bound and $u_{j}$ stands for upper bound.

\begin{lemma}
Fix flag $\c{F}=(I_1,\hdots,I_m)$, and choose $\emptyset\neq J\subseteq \{1,\hdots,m\}$.  Write $J=\coprod_{j=1}^{t}J_{j}$ and consider one of the summands $M_i$ of $T^\c{F}$ which lies in $J$.  Say $M_i$ lives in the component $J_j$, then the following sequence is exact
\begin{eqnarray}
0\to M_i\xrightarrow{(inc\,\, \prod\limits_{a=i+1}^{u_j+1}g_a)}M_{l_{j}-1}\oplus M_{u_{j}+1}\xrightarrow{{\prod\limits_{a=i+1}^{u_j+1}g_a}\choose -inc}(u,({\textstyle\prod\limits_{b=1}^{l_j-1}}g_b)({\textstyle\prod\limits_{a=i+1}^{u_j+1}}g_a))\to 0.\label{leftapprox}
\end{eqnarray}
\end{lemma}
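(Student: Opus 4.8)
The plan is to obtain \eqref{leftapprox} as a direct specialization of Lemma~\ref{exactness}. Write the connected component of $J$ containing $i$ as $J_j=\{l_j,l_j+1,\dots,u_j\}$, so that $l_j\le i\le u_j$, and apply Lemma~\ref{exactness} to the factorization $f=abcd$ with
\[
b:=\prod_{a=1}^{l_j-1}g_a,\qquad a:=\prod_{a=l_j}^{i}g_a,\qquad c:=\prod_{a=i+1}^{u_j+1}g_a,\qquad d:=\prod_{a=u_j+2}^{m+1}g_a,
\]
an empty product being understood as $1$, so that $a,b,c,d\in S$. The first thing to check is that this really is a factorization of $f$: since $l_j\le i\le u_j$, the index ranges $[1,l_j-1]$, $[l_j,i]$, $[i+1,u_j+1]$ and $[u_j+2,m+1]$ are pairwise disjoint and exhaust $\{1,\dots,m+1\}$, so that $abcd=g_1\cdots g_{m+1}=f$.

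I would then read off the three terms of Lemma~\ref{exactness} in the notation of the statement. Since $ab=\prod_{a=1}^{i}g_a$ one gets $(u,ab)=M_i$; clearly $(u,b)=M_{l_j-1}$; since $abc=\prod_{a=1}^{u_j+1}g_a$ one gets $(u,abc)=M_{u_j+1}$; and $(u,bc)=(u,(\prod_{b=1}^{l_j-1}g_b)(\prod_{a=i+1}^{u_j+1}g_a))$ is precisely the third term of \eqref{leftapprox}. Under these identifications the entry ``$1$'' occurring in each of the two maps of Lemma~\ref{exactness} is the natural inclusion — $M_i\hookrightarrow M_{l_j-1}$ in the first map (legitimate because $l_j-1<i$, so $\prod_{a=1}^{i}g_a$ is a multiple of $\prod_{a=1}^{l_j-1}g_a$) and $M_{u_j+1}\hookrightarrow(u,bc)$ in the second (legitimate because $abc=a\cdot bc$) — while the entry ``$c$'' is multiplication by $\prod_{a=i+1}^{u_j+1}g_a$. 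Hence Lemma~\ref{exactness} delivers the short exact sequence $0\to M_i\xrightarrow{(1\ c)}M_{l_j-1}\oplus M_{u_j+1}\xrightarrow{\binom{-c}{1}}(u,bc)\to 0$; composing the second map with $-1$, which changes neither its kernel nor its image, replaces $\binom{-c}{1}$ by its negative and turns this into \eqref{leftapprox}.

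I do not expect a genuine obstacle here: once the factorization $f=abcd$ is chosen correctly the argument is pure bookkeeping. The only points that deserve a little care are the degenerate cases — if $l_j=1$ then $M_{l_j-1}=M_0=R$, and if $u_j=m$ then $M_{u_j+1}=(u,f)=(u)\cong R$, both of which are honest (free) summands of $T^{\mathcal{F}}$ and cause no trouble — together with the inessential global sign between the second map of Lemma~\ref{exactness} and the one displayed in \eqref{leftapprox}. Note that the hypothesis that $J_j$ be a \emph{connected component} of $J$ is irrelevant to the exactness itself; it is needed only in the sequel, to ensure that $M_{l_j-1}$ and $M_{u_j+1}$ are summands of the sum of those summands of $T^{\mathcal{F}}$ not lying in $J$, so that \eqref{leftapprox} can be used to compute the relevant minimal left approximation.
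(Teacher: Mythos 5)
Your proposal is correct and takes exactly the approach the paper uses: the paper's entire proof is the single line ``This is a special case of \ref{exactness}.'' You have simply made explicit the factorization $f=abcd$ and the bookkeeping (including the harmless global sign and the degenerate endpoints $l_j=1$, $u_j=m$) that the paper leaves to the reader.
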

\begin{proof}
This is a special case of \ref{exactness}.
\end{proof}

\begin{lemma}\label{rightapprox}
The dual short exact sequence of (\ref{leftapprox}), namely
\[
0\to {(u,({\textstyle\prod_{b=1}^{l_j-1}}g_b)({\textstyle\prod_{a=i+1}^{u_j+1}}g_a))}^{*}\to M_{l_{j}-1}^{*}\oplus M_{u_{j}+1}^{*}\to M_{i}^{*}\to 0,
\]
is a minimal right $\add\frac{M^{*}}{M_{J}^{*}}$-approximation of $M_{i}^{*}$.
\end{lemma}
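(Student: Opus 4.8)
The plan is to obtain the sequence in \ref{rightapprox} by dualising the short exact sequence (\ref{leftapprox}) with $(-)^*=\Hom_R(-,R)$, and then to verify the two required properties --- that the resulting surjection is a right $\add\frac{M^*}{M_J^*}$-approximation, and that it is minimal --- by reducing the approximation property to \ref{approximation}(1) via the duality $(-)^*$ on $\refl R$, and handling minimality by a Krull--Schmidt argument.

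First I would note that all four modules occurring in (\ref{leftapprox}), namely $M_i$, $M_{l_j-1}$, $M_{u_j+1}$ and $(u,(\prod_{b=1}^{l_j-1}g_b)(\prod_{a=i+1}^{u_j+1}g_a))$, are maximal Cohen--Macaulay by \ref{(u,f_I) is CM}; since $R$ is Gorenstein of dimension three, $\Ext^1_R(-,R)$ vanishes on each of them, so applying $(-)^*$ to (\ref{leftapprox}) yields exactly the displayed short exact sequence of \ref{rightapprox}, with $(u,\ldots)^*$ as the kernel. One also checks that $M_{l_j-1}$ and $M_{u_j+1}$ are summands of $M_{J^c}$: since $J_j=\{l_j,\ldots,u_j\}$ is a connected component of $J$ we have $l_j-1\notin J$ and $u_j+1\notin J$, and in the boundary cases $l_j-1=0$ or $u_j+1=m+1$ one uses $M_0=R$, respectively $M_{m+1}=(u,f)\cong R$. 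Hence $M_{l_j-1}^*\oplus M_{u_j+1}^*\in\add\frac{M^*}{M_J^*}$.

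For the approximation property I would use that $(-)^*$ is a duality on $\refl R$ to identify, for each summand $M_k$ of $M_{J^c}$, the map $\Hom_R(M_k^*,M_{l_j-1}^*\oplus M_{u_j+1}^*)\to\Hom_R(M_k^*,M_i^*)$ with $\Hom_R(M_{l_j-1}\oplus M_{u_j+1},M_k)\to\Hom_R(M_i,M_k)$ given by precomposition with $(inc\;\,\prod_{a=i+1}^{u_j+1}g_a)$, i.e.\ the left-hand map of (\ref{leftapprox}); surjectivity of the latter is exactly the content of \ref{approximation}(1). To invoke it one writes $f=abcd$ with $a=\prod_{b=l_j}^{i}g_b$, $b=\prod_{b=1}^{l_j-1}g_b$, $c=\prod_{a=i+1}^{u_j+1}g_a$, $d=\prod_{b=u_j+2}^{m+1}g_b$, so that $(u,ab)=M_i$, $(u,b)=M_{l_j-1}$, $(u,abc)=M_{u_j+1}$, and the target of (\ref{leftapprox}) is $(u,bc)$; the hypothesis of \ref{approximation} then asks that each $M_k=(u,\prod_{l\le k}g_l)$ with $k\notin J$ has $\prod_{l\le k}g_l$ either a factor of $b$ or a multiple of $abc$. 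This holds precisely because $J_j$ is a connected component: any $k\notin J$ is either $\le l_j-1$ or $\ge u_j+1$ (neighbouring components of $J$ are separated from $J_j$ by a gap), and since the $g_l$ partition the prime factors $f_1,\ldots,f_n$ the divisibility is immediate in each of the two cases.

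The main obstacle will be minimality. Here I would work over the complete local ring $R$, so that Krull--Schmidt applies, and use that each of $M_{l_j-1}$, $M_{u_j+1}$, $M_i$, being a rank-one reflexive module over a normal domain, is indecomposable, as are their duals and $(u,bc)^*$. If the surjection $M_{l_j-1}^*\oplus M_{u_j+1}^*\to M_i^*$ were not right minimal, a nonzero direct summand of the middle term would lie in the kernel; since the kernel $(u,bc)^*$ and $M_i^*$ are indecomposable of rank one while the middle term has rank two, a rank count forces the kernel to be a summand, i.e.\ the dual sequence to split, whence Krull--Schmidt gives $\{M_{l_j-1}^*,M_{u_j+1}^*\}=\{(u,bc)^*,M_i^*\}$ as multisets of indecomposables. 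Either resulting identification forces one of the isomorphisms $M_{l_j-1}\cong M_i$ or $M_{u_j+1}\cong M_i$, that is, one of $(u,\prod_{b=l_j}^{i}g_b)$ or $(u,\prod_{a=i+1}^{u_j+1}g_a)$ is free (using \ref{prelim for class}(2a) to compute the relevant class difference). This contradicts \ref{prelim for class}(3), equivalently the description of $\Cl(R)$ in \ref{class group description}: such a subproduct of $g_1\cdots g_{m+1}=f$ is proper --- at least the block $g_{m+1}$ is omitted since $I_m\subsetneq\{1,\ldots,n\}$ --- and the $g_l$ partition $f_1,\ldots,f_n$, so its class is nonzero. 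Hence the map is a minimal right $\add\frac{M^*}{M_J^*}$-approximation of $M_i^*$, which is the assertion.
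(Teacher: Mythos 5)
Your argument is correct, and its core coincides with the paper's: the entire proof of \ref{rightapprox} given in the paper is the one-line remark that it is a special case of \ref{approximation}, which is precisely your reduction --- dualise (\ref{leftapprox}) (exactness of the dual sequence being automatic since all terms are maximal Cohen--Macaulay over the Gorenstein ring $R$), identify $\Hom_R(M_k^*,(-)^*)$ with $\Hom_R(-,M_k)$ via the duality $(-)^*$ on $\refl R$, and check that every summand $M_k=(u,\prod_{l\le k}g_l)$ with $k\notin J$ satisfies the hypothesis of \ref{approximation}(1) because $k\le l_j-1$ or $k\ge u_j+1$ (your treatment of the boundary summands $M_0=R$ and $M_{m+1}\cong R$ is also right). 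What you add beyond the paper is an explicit proof of minimality, which the paper leaves implicit: the Krull--Schmidt/rank-one argument reducing non-minimality to $M_i\cong M_{l_j-1}$ or $M_i\cong M_{u_j+1}$, excluded via \ref{prelim for class}(2a) and \ref{class group description}, is valid, and there is no circularity since \S\ref{class section} does not depend on \S\ref{mutation}. One small repair: your justification that the relevant subproduct is proper (``at least the block $g_{m+1}$ is omitted'') applies only to $\prod_{b=l_j}^{i}g_b$; for $\prod_{a=i+1}^{u_j+1}g_a$ the block $g_{m+1}$ may well occur (when $u_j=m$), and one should instead observe that $g_1=f_{I_1}$ is omitted and is a non-unit because $I_1\neq\emptyset$. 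With that symmetric fix both subproducts are nonempty proper divisors of $f$, their classes in $\Cl(R)$ are nonzero, and minimality follows exactly as you say.
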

\begin{proof}
This is a special case of \ref{approximation}.
\end{proof}

\begin{thm}\label{mainmutation}
Fix a flag $\c{F}=(I_1,\hdots,I_m)$, and associate to $\c{F}$ the module $T^\F$ and the combinatorial picture $\c{P}(\F)$, as before.  Choose $\emptyset\neq J\subseteq \{1,\hdots,m\}$,  then $\mu^-_J(T^\F)$ is the module corresponding to the $J$-reflection of $\c{P}(\F)$. 
\end{thm}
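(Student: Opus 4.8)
The plan is to reduce the statement to the two approximation lemmas \ref{leftapprox} and \ref{rightapprox} together with the definition of left mutation, and then to carry out a bookkeeping argument identifying the resulting ideals with the reflected picture. Throughout I would write $M:=T^{\F}=\bigoplus_{j=0}^{m}M_{j}$ and decompose $J=\coprod_{j=1}^{t}J_{j}$ into its connected components $J_{j}=\{l_{j},\dots,u_{j}\}$; for $i\in J$ let $j(i)$ denote the index of the component containing $i$, and set $\alpha_{i}:=(\prod_{b=1}^{l_{j(i)}-1}g_{b})(\prod_{a=i+1}^{u_{j(i)}+1}g_{a})$. Recall from \S\ref{Mutation Section} that $\mu^{-}_{J}(M)=(\mu^{+}_{J}(M^{*}))^{*}$, where, since $R$ is complete, $\mu^{+}_{J}(M^{*})=(M_{J^{c}})^{*}\oplus\Ker f$ for \emph{the} (unique up to isomorphism) minimal right $(\add (M_{J^{c}})^{*})$-approximation $f\colon N\to(M_{J})^{*}$. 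So it suffices to pin down $f$, compute $\Ker f$, and then dualise. A preliminary remark to record is that the $J$-reflection merely permutes the (nontrivial) marked points $g_{1},\dots,g_{m+1}$, so it is again the picture $\c{P}(\F')$ of a genuine flag $\F'$, and the statement makes sense.

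The key input is Lemma \ref{rightapprox}: for each $i\in J$ it provides a short exact sequence presenting $M_{l_{j(i)}-1}^{*}\oplus M_{u_{j(i)}+1}^{*}\to M_{i}^{*}$ as a minimal right $\add(M^{*}/M_{J}^{*})$-approximation of $M_{i}^{*}$, with $M^{*}/M_{J}^{*}=(M_{J^{c}})^{*}$ and kernel $(u,\alpha_{i})^{*}$; here the boundary cases $l_{j}=1$ and $u_{j}=m$ cause no trouble, since $M_{0}=R$ and $M_{m+1}=(u,f)\cong R$ both lie in $\add M_{J^{c}}$. Taking the direct sum of these sequences over all $i\in J$ — using the standard fact that a direct sum of minimal right approximations is again the minimal right approximation of the direct sum — identifies $f$ and gives $\mu^{+}_{J}(M^{*})=(M_{J^{c}})^{*}\oplus\bigoplus_{i\in J}(u,\alpha_{i})^{*}$. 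Applying $(-)^{*}$, and using that $M_{J^{c}}$ and every ideal $(u,h)$ with $h$ dividing $f$ is CM, hence reflexive by \ref{(u,f_I) is CM}, then yields
\[
\mu^{-}_{J}(T^{\F})\;=\;M_{J^{c}}\ \oplus\ \bigoplus_{i\in J}(u,\alpha_{i}).
\]

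It remains to check that the right-hand side is the module attached to the $J$-reflection of $\c{P}(\F)$. The $J^{c}$-indexed summands are untouched, which agrees with the picture since reflecting a connected block permutes only the marked points $g_{l_{j}},\dots,g_{u_{j}+1}$ among themselves, hence leaves unchanged the product of the $g$'s lying to the left of every curve outside $J$ (and of the two curves flanking each block). For $i\in J_{j}$, writing $i=u_{j}-k$ with $0\le k\le u_{j}-l_{j}$, one has $\alpha_{i}=g_{1}\cdots g_{l_{j}-1}\cdot g_{u_{j}+1}g_{u_{j}}\cdots g_{u_{j}-k+1}$; since in the reflected picture the marked point in slot $l_{j}+r$ is $g_{u_{j}+1-r}$, the ideal $(u,\alpha_{i})$ is exactly the one attached to the $(l_{j}+k)$-th curve there. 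Letting $k$ range over $0,\dots,u_{j}-l_{j}$ recovers precisely the summands of the reflected block, completing the identification.

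I expect the real work to lie not in any single estimate but in the middle step: checking carefully that the direct sum over $i\in J$ of the minimal right approximations of \ref{rightapprox} is genuinely \emph{the} minimal right $(\add(M_{J^{c}})^{*})$-approximation of $(M_{J})^{*}$ (minimality of a direct sum of minimal approximations, together with the identification $\add(M^{*}/M_{J}^{*})=\add((M_{J^{c}})^{*})$), and in keeping the indices straight through both the dualisation and the reflection, so that the kernel with index $i$ lands in slot $l_{j(i)}+u_{j(i)}-i$ of the reflected picture. As a byproduct, the explicit formula above also gives the assertion used informally before the theorem, namely that $\mu^{-}_{J}(T^{\F})\cong T^{\F}$ precisely when $J$ is componentwise symmetric, i.e.\ when reversing each block $g_{l_{j}},\dots,g_{u_{j}+1}$ leaves the associated family of ideals $(u,\alpha_{i})$ unchanged.
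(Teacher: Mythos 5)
Your argument is correct and follows essentially the same route as the paper: both compute $\mu^-_J(T^\F)$ summand-by-summand by invoking Lemma \ref{rightapprox} (after dualising to obtain a minimal right $(\add M_{J^c}^*)$-approximation of $M_J^*$), identify the resulting ideals $(u,\alpha_i)$, and then match them against the reflected picture by tracking positions. The paper is terser — it phrases the step as ``$M_{u_1-i}$ mutates to $(u,\cdots)^{**}$'' without spelling out the compatibility of minimal approximations with direct sums, the dualisation, or the boundary cases $l_j=1$, $u_j=m$ — but these are exactly the points you correctly flag and correctly resolve.
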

\begin{proof}
Consider the first connected component $J_1$. By \ref{rightapprox} we know that $M_{u_1}$ mutates to $(u,({\textstyle\prod_{b=1}^{l_1-1}}g_b)g_{u_1+1})^{**}$, which is isomorphic to 
$(u,({\textstyle\prod_{b=1}^{l_1-1}}g_b)g_{u_1+1})$.  Similarly, $M_{u_1-1}$ mutates to $(u,({\textstyle\prod_{b=1}^{l_1-1}}g_b)g_{u_1+1}g_{u_1})$. Continuing, we see that $M_{u_1-i}$ mutates to $(u,({\textstyle\prod_{b=1}^{l_1-1}}g_b)g_{u_1+1}\hdots g_{u_1-i+1})$ for all $1\leq i\leq u_1-l_1$. Since the combinatorial picture is built by ordering the summands in increasing lengths of products, we see that in the combinatorial picture, the component $J_1$ has been reflected.  The proof that the remaining components are reflected is identical.
\end{proof}

\begin{example}\label{mutexample2}
As in \ref{mutexample1}, consider $f=f_1f_2f_3f_4f_5f_6$ with flag $\c{F}=(\{ 2,3\}\subsetneq \{ 2,3,1 \})$. Then $T^\c{F}=R\oplus(u,f_2f_3)\oplus(u,f_1f_2f_3)$, which pictorially is
\[
\begin{tikzpicture}[xscale=0.6,yscale=0.6]
\draw[black] (-0.1,-0.04,0) to [bend left=25] (2.1,-0.04,0);
\draw[black] (1.9,-0.04,0) to [bend left=25] (4.1,-0.04,0);
\filldraw [red] (0,0,0) circle (1pt);
\filldraw [red] (2,0,0) circle (1pt);
\filldraw [red] (4,0,0) circle (1pt);
\node at (0,-0.4,0) {$\scriptstyle f_2f_3$};
\node at (2,-0.4,0) {$\scriptstyle f_1$};
\node at (4,-0.4,0) {$\scriptstyle f_4f_5f_6$};
\end{tikzpicture}
\]
Pick summand $(u,f_1f_2f_3)$, then the mutation is given by 
\[
\begin{tikzpicture}
\node at (0,0) {\begin{tikzpicture}[xscale=0.6,yscale=0.6]
\draw[black] (-0.1,-0.04,0) to [bend left=25] (2.1,-0.04,0);
\draw[black] (1.9,-0.04,0) to [bend left=25] (4.1,-0.04,0);
\filldraw [red] (0,0,0) circle (1pt);
\filldraw [red] (2,0,0) circle (1pt);
\filldraw [red] (4,0,0) circle (1pt);
\node at (0,-0.4,0) {$\scriptstyle f_2f_3$};
\node at (2,-0.4,0) {$\scriptstyle f_1$};
\node at (4,-0.4,0) {$\scriptstyle f_4f_5f_6$};
\draw [densely dotted] (2,-0.1,0) -- (4,-0.1,0) -- (4,0.5,0) -- (2,0.5,0) -- cycle;
\end{tikzpicture}};
\node at (5,0) {\begin{tikzpicture}[xscale=0.6,yscale=0.6]
\draw[black] (-0.1,-0.04,0) to [bend left=25] (2.1,-0.04,0);
\draw[black] (1.9,-0.04,0) to [bend left=25] (4.1,-0.04,0);
\filldraw [red] (0,0,0) circle (1pt);
\filldraw [red] (2,0,0) circle (1pt);
\filldraw [red] (4,0,0) circle (1pt);
\node at (0,-0.4,0) {$\scriptstyle f_2f_3$};
\node at (2,-0.4,0) {$\scriptstyle f_4f_5f_6$};
\node at (4,-0.4,0) {$\scriptstyle f_1$};
\draw [densely dotted] (2,-0.1,0) -- (4,-0.1,0) -- (4,0.5,0) -- (2,0.5,0) -- cycle;
\end{tikzpicture}};
\draw[->] (2,0) -- (3,0);
\end{tikzpicture}
\]
and so $\mu^-(T^\c{F})=R\oplus (u,f_2f_3)\oplus(u,f_2f_3f_4f_5f_6)$.
\end{example}

\medskip

We now calculate the quiver of $\End_R(T^\cF)$.  To ease notation, for a given flag $\F=(I_1,\hdots,I_m)$ we denote $g_1:=f_{I_1}$, set $g_j:=\tfrac{f_{I_j}}{f_{I_{j-1}}}$ for all $2\leq j\leq m$, and $g_{m+1}:=\tfrac{f}{f_{I_m}}$.
\begin{cor}\label{quiverOK}
Given a flag $\c{F}=(I_1,\hdots,I_m)$, with notation as above the quiver of $\End_R(T^\c{F})$ is as follows:
\[
\begin{tikzpicture}
\node at (0,-1.3) {$\scriptstyle m\geq 2$};
\node at (0,0)
{\begin{tikzpicture}[xscale=1.8,yscale=1.4,bend angle=13, looseness=1]
\node (1) at (1,0) {$\scriptstyle {T_{I_1}}$}; 
\node (2) at (2,0) {$\scriptstyle {T_{I_2}}$};
\node (4) at (3,0) {$\scriptstyle \cdots$};
\node (5) at (4,0) {$\scriptstyle {T_{I_m}}$};
\node (R) at (2.5,-1) {$\scriptstyle R$};
\draw [bend right,<-,pos=0.5] (1) to node[inner sep=0.5pt,fill=white,below=-3pt] {$\scriptstyle inc$} (2);
\draw [bend right,<-,pos=0.5] (2) to node[inner sep=0.5pt,fill=white,below=-3pt] {$\scriptstyle g_2$}(1);
\draw [bend right,<-,pos=0.5] (2) to node[inner sep=0.5pt,fill=white,below=-3pt] {$\scriptstyle inc$} (4);
\draw [bend right,<-,pos=0.5] (4) to node[inner sep=0.5pt,fill=white,below=-3pt] {$\scriptstyle g_3$}(2);
\draw [bend right,<-,pos=0.5] (4) to node[inner sep=0.5pt,fill=white,below=-3pt] {$\scriptstyle inc$} (5);
\draw [bend right,<-,pos=0.5] (5) to node[inner sep=0.5pt,fill=white,below=-3pt] {$\scriptstyle g_{m}$} (4);
\draw [bend right=7,<-,pos=0.5] ($(R)+(135:4pt)$) to node[inner sep=0.5pt,fill=white,below=-5pt] {$\scriptstyle inc$} ($(1)+(-45:6pt)$);
\draw [bend right=7,<-,pos=0.5] ($(1)+(-75:6pt)$) to node[inner sep=0.5pt,fill=white,below=-3pt] {$\scriptstyle g_1$}  ($(R)+(165:4pt)$);
\draw [bend right=7,<-,pos=0.5] ($(R)+(15:4pt)$) to node[inner sep=0.5pt,fill=white,below=-1pt] {$\scriptstyle \frac{g_{m{\scriptstyle +}1}}{u}$} ($(5)+(-100:5pt)$);
\draw [bend right=7,<-,pos=0.5] ($(5)+(-125:5pt)$) to node[inner sep=0.5pt,fill=white,below=-3pt] {$\scriptstyle u$} ($(R)+(45:4pt)$);
\end{tikzpicture}};
\node at (6,0) {\begin{tikzpicture} 
\node (C1) at (0,0)  {$\scriptstyle R$};
\node (C1a) at (-0.1,0.05)  {};
\node (C1b) at (-0.1,-0.05)  {};
\node (C2) at (1.75,0)  {$\scriptstyle T_{I_1}$};
\node (C2a) at (1.85,0.05) {};
\node (C2b) at (1.85,-0.05) {};
\draw [->,bend left=45,looseness=1,pos=0.5] (C1) to node[inner sep=0.5pt,fill=white]  {$\scriptstyle g_1$} (C2);
\draw [->,bend left=20,looseness=1,pos=0.5] (C1) to node[inner sep=0.5pt,fill=white]  {$\scriptstyle u$} (C2);
\draw [->,bend left=45,looseness=1,pos=0.5] (C2) to node[inner sep=0.5pt,fill=white]  {$\scriptstyle \frac{g_2}{u}$} (C1);
\draw [->,bend left=20,looseness=1,pos=0.5] (C2) to node[inner sep=0.5pt,fill=white,below=-5pt] {$\scriptstyle inc$} (C1);
\end{tikzpicture}};
\node at (6,-1.3) {$\scriptstyle  m=1$};
\end{tikzpicture}
\] 
together with the possible addition of some loops, given by the following rules:
\begin{itemize}
\item Consider vertex $R$.  If $(g_1,g_{m+1})=(x,y)$ in the ring $\k[[x,y]]$, add no loops at vertex $R$. Hence suppose $(g_1,g_{m+1})\subsetneq (x,y)$. If there exists $t\in (x,y)$ such that $(g_1,g_{m+1},t)=(x,y)$, add a loop labelled $t$ at vertex $R$.  If there exists no such $t$, add two loops labelled $x$ and $y$ at vertex $R$.
\item Consider vertex $T_{I_i}$.  If $(g_i,g_{i+1})=(x,y)$ in the ring $\k[[x,y]]$, add no loops at vertex $T_{I_i}$.  Hence suppose $(g_i,g_{i+1})\subsetneq (x,y)$. If there exists $t\in (x,y)$ such that $(g_i,g_{i+1},t)=(x,y)$, add a loop labelled $t$ at vertex $T_{I_i}$.  If there exists no such $t$, add two loops labelled $x$ and $y$ at vertex $T_{I_i}$.
\end{itemize}
\end{cor}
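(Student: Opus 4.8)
The plan is to compute the quiver of $\End_R(T^\cF)$ by combining the explicit Hom-space calculations of \ref{describe Hom-sets} with the combinatorial mutation picture $\cP(\cF)$, reducing essentially everything to the case $m=1$, i.e. to understanding $\End_R(R\oplus(u,g_1))$ for a hypersurface $R=\k[[x,y,u,v]]/(g_1g_{m+1}-uv)$. First I would set up the arrows. Writing $M_0:=R$, $M_j:=(u,f_{I_j})=(u,\prod_{i\le j}g_i)$, \ref{describe Hom-sets}(1) identifies $\Hom_R(M_{j-1},M_j)\cong(u,g_j)$ and \ref{describe Hom-sets}(2) identifies $\Hom_R(M_j,M_{j-1})\cong(u,(\prod_{i\neq j}g_i)\cdot g_{m+1})u^{-1}$; in particular the ``forward'' generators are $inc$ (the inclusion, corresponding to $u$) and multiplication by $g_j$, and the ``backward'' generators are $inc$ and multiplication by $\tfrac{\prod_{i\neq j}g_i\,g_{m+1}}{u}$. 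The only subtlety is the two arrows at vertex $R$ touching $T_{I_1}$ and $T_{I_m}$: for $T_{I_1}$ we use $\Hom_R(R,(u,g_1))\cong(u,g_1)$ giving arrows $g_1$ and $u=inc$, and $\Hom_R((u,g_1),R)\cong(u,(\prod_{i\ge2}g_i)g_{m+1})u^{-1}=(u,\tfrac{f}{g_1 u})$-type fractional ideal, giving $inc$ and $\tfrac{g_2}{u}$ (when $m\ge2$) or $\tfrac{g_2}{u}$ with $g_2=g_{m+1}$ (when $m=1$); symmetrically at $T_{I_m}$ we get $\tfrac{g_{m+1}}{u}$ and $u$.

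The second step is to argue that these are exactly the arrows, i.e. that the listed maps generate $\End_R(T^\cF)$ as an algebra modulo the radical square, and that no other vertices are connected by arrows. For this I would show that $\Hom_R(M_i,M_j)$ for $|i-j|\ge2$ is generated by compositions through the intermediate $M_k$'s — this follows from the factorization structure of the ideals $(u,f_I)$ together with \ref{exactness}, exactly as the approximation sequences there exhibit $M_i\oplus M_j$ as built from consecutive terms — so no long arrows are needed. Together with the obvious fact that each $\Hom_R(M_{j-1},M_j)$, $\Hom_R(M_j,M_{j-1})$ requires exactly the two generators listed (the ideal $(u,g)$ needs precisely the two generators $u$ and $g$ when $g\in\m$, up to the radical, since $R$ is local), this pins down all arrows between distinct vertices. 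The main obstacle, and the part requiring genuine care, is the loop count at each vertex: a loop at vertex $M_j$ corresponds to a generator of $\Rad\End_R(M_j)/\Rad^2$ that does not factor through any other vertex. Concretely $\End_R(M_j)\cong R$ (as $M_j$ is a rank-one reflexive on a normal domain, so $\End_R(M_j)$ is a reflexive rank-one $R$-algebra containing $R$, hence $=R$), and the relevant local structure is governed by $\End_R(M_j)/(\text{image of paths through neighbours})$. The images of the incoming-then-outgoing paths through $M_{j-1}$ and $M_{j+1}$ generate the ideal $(g_j,g_{j+1})$ of $\k[[x,y]]$ inside this quotient (reading off the products of the arrow labels: $inc\cdot inc$ hits $u\cdot\tfrac{\cdots}{u}$, and $g_j\cdot \tfrac{\cdots g_j\cdots}{u}\cdot u$-type compositions recover $g_j$ and $g_{j+1}$), so the number of loops at $M_j$ is the minimal number of generators of $\m=(x,y)$ modulo $(g_j,g_{j+1})$, which is $0$, $1$, or $2$ according to whether $(g_j,g_{j+1})=\m$, or $(g_j,g_{j+1})\subsetneq\m$ but $(g_j,g_{j+1},t)=\m$ for some $t$, or neither. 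The same analysis at $R$ uses the neighbours $T_{I_1}$ and $T_{I_m}$ and produces the ideal $(g_1,g_{m+1})$, giving the stated loop rule at $R$.

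The final step is simply to verify the degenerate cases and the edge vertices carefully: for $m=1$ there is a single non-free summand $T_{I_1}$ with $g_2=g_{m+1}$, and the arrows $g_1,u$ from $R$ to $T_{I_1}$ and $inc,\tfrac{g_2}{u}$ back are read off directly from \ref{describe Hom-sets}; the loop rules at $R$ and at $T_{I_1}$ both involve the pair $(g_1,g_2)$, consistent with the general statement. I would also double-check the fractional-ideal bookkeeping for the arrow $\tfrac{g_{m+1}}{u}\colon T_{I_m}\to R$ using the isomorphism $T_I\cong(v,f_{I^c})uf_{I^c}^{-1}$ from \eqref{replace u and v}, and note that $inc\colon R\to T_{I_1}$ really is an algebra generator not a composite since $T_{I_1}\neq R$. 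I do not expect any of these verifications to be difficult once the loop-counting principle — \#loops at a vertex $=$ minimal number of generators of $\m/(\text{the two adjacent }g\text{'s})$ — is established; isolating and justifying that principle is where essentially all the content of the proof lies, and I would present it as the key lemma before assembling the quiver.
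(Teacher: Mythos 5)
Your proposal is correct and follows essentially the same route as the paper: both read off the arrows between adjacent vertices from the ideal identifications in \ref{describe Hom-sets}, check that each $\Hom_R(M_i,M_j)$ is generated over the centre by the clockwise and anticlockwise compositions (hence no long arrows are needed), and observe that the length-$\ge 2$ cycles at a vertex $T_{I_j}$ already produce $u$, $v$, $g_j$ and $g_{j+1}$, so that the loops account for exactly the remaining generators of $(x,y)/(g_j,g_{j+1})$. Your explicit formulation of the loop count as the minimal number of generators of $\m_{\k[[x,y]]}$ modulo the two adjacent $g$'s, and the explicit irredundancy discussion, makes precise what the paper's terse four-step verification leaves implicit, but it is the same underlying argument.
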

\begin{proof}
(1) Since $\Hom_R(R,R)\cong \Hom_R(T_{I_i},T_{I_i})\cong R$, we first must verify that at each vertex we can see the elements $u,v,x,y$ as cycles at that vertex.  Certainly $u$ is there (as it is the path followed anticlockwise around the circle), and certainly $v$ is there (being the path followed clockwise around the circle).  It is possible to see cycles $x$ and $y$ at every vertex by the rules for loops. \\
(2) Since $\Hom_R(R,T_{I_i})\cong T_{I_i}$, we must verify that we can see the generators of $T_{I_i}$ as paths from vertex $R$ to vertex $T_{I_i}$. But this is clear, as $\prod_{j=1}^ig_j$ is the clockwise path, and $u$ is the anticlockwise path.\\
(3) As a module over the centre, the paths from vertex $T_{I_i}$ to vertex $R$ are generated by two paths from vertex $T_{I_i}$ to $R$, namely the one clockwise, which is $\frac{f}{uf_I}$, and the one anticlockwise, which is inclusion.  Since $\Hom_R(T_{I_i},R)\cong (u,\frac{f}{f_I})$ by \ref{describe Hom-sets}, clearly this is isomorphic to paths from  $T_{I_i}$ to $R$.\\
(4) The argument for paths from $T_{I_i}$ to $T_{I_j}$ is identical to the argument in (3), using the isomorphism in \ref{describe Hom-sets}.
\end{proof}

\begin{remark} When $k$ is algebraically closed of characteristic zero, we remark that there are at least two other methods for computing the quiver of $\End_R(T^\F)$.  
One way would be to use reconstruction on 1-dimensional fibres \`a la $\GL(2,\C{})$ McKay Correspondence \cite{Wem}.  Another is to compute the quiver of $\End_R(T^\F)$ in the one-dimensional setting, as in \cite[4.10]{BIKR}, and then use Kn\"orrer periodicity.  
This last method only gives the quiver of the stable endomorphism algebra, so more work would be needed.
\end{remark}

\subsection{Geometric Corollaries}\label{Geo cor} To apply \S\ref{mutation} to geometry, in this section we revert to the assumption that $k=\mathbb{C}$.  As remarked before in \ref{remark that combinatorics has meaning}, given a flag $\F=(I_1,\hdots,I_m)$ we can associate a scheme $X^\F$ that gives a partial crepant resolution of $\Spec R$.   The procedure is described in \cite[\S5.1]{IW5}: first blowup the ideal $(u,f_{I_1})$ on $\Spec R$ to obtain a space denoted $X^{\cF_1}$.  Then on $X^{\cF_1}$ blowup the ideal $(u,f_{I_2\backslash I_1})$ to obtain a space $X^{\cF_2}$. On $X^{\cF_2}$ blowup the ideal $(u,f_{I_3\backslash I_2})$ to obtain a space $X^{\cF_3}$, and so on. Continuing in this fashion we obtain a chain of projective birational morphisms
\[
X^{\cF_m}\to X^{\cF_{m-1}}\to\hdots\to X^{\cF_1}\to\Spec R
\]
and we define $X^{\cF}:=X^{\cF_m}$.  The following was shown in \cite[5.2]{IW5}.
\begin{thm}\label{db from previous}
$X^{\cF}$ is derived equivalent to $\End_R(T^\cF)$, and the fibre above the origin of the composition $X^{\cF}\to\Spec R$ can be represented by the picture
\begin{eqnarray}
\begin{array}{c}
\begin{tikzpicture}[xscale=0.6,yscale=0.6]
\draw[black] (-0.1,-0.04,0) to [bend left=25] (2.1,-0.04,0);
\draw[black] (1.9,-0.04,0) to [bend left=25] (4.1,-0.04,0);
\draw[black] (3.9,-0.04,0) to [bend left=25] (6.1,-0.04,0);
\draw[black] (7.9,-0.04,0) to [bend left=25] (10.1,-0.04,0);
\draw[black] (9.9,-0.04,0) to [bend left=25] (12.1,-0.04,0);
\filldraw [red] (0,0,0) circle (1pt);
\filldraw [red] (2,0,0) circle (1pt);
\filldraw [red] (4,0,0) circle (1pt);
\filldraw [red] (6,0,0) circle (1pt);
\filldraw [red] (8,0,0) circle (1pt);
\filldraw [red] (10,0,0) circle (1pt);
\filldraw [red] (12,0,0) circle (1pt);
\node at (0,-0.4,0) {$\scriptstyle g_{1}$};
\node at (2,-0.4,0) {$\scriptstyle g_{2}$};
\node at (4,-0.4,0) {$\scriptstyle g_{3}$};
\node at (7,0,0) {$\scriptstyle \hdots$};
\node at (10,-0.4,0) {$\scriptstyle g_{m}$};
\node at (12,-0.4,0) {$\scriptstyle g_{m+1}$};
\end{tikzpicture}
\end{array}\label{curve picture}
\end{eqnarray}
where $g_j:=f_{I_j\backslash I_{j-1}}$ for all $1\le j\le m+1$, where by convention  $I_0:=\emptyset$ and $I_{m+1}:=\{1,2,\hdots,n\}$.  The red dots in the above picture represent the possible points where the scheme $X^{\cF}$ is singular, where the red dot marked $g_i$ is a point which complete locally is given by $\mathbb{C}[[x,y,u,v]]/(g_i-uv)$.
\end{thm}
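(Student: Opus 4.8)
The plan is to prove this by induction on $m$, the inductive step being a careful analysis of a single blowup; this is the route of \cite[5.2]{IW5}. For $m=0$ there is nothing to do, as $T^{\cF}=R$ and $X^{\cF}=\Spec R$. For the inductive step the key is an explicit chart description of the blowup of the ideal $(u,f_I)$ on a $cA$-type hypersurface $\mathbb{C}[[x,y,u,v]]/(f-uv)$ with $f=f_If_{I^c}$. Taking $\Proj$ of the Rees algebra of $(u,f_I)$ and covering it by the two standard affine charts, I would check that the chart attached to the generator $u$ is $\mathbb{C}[[x,y,u,w]]/(f_I-uw)$ (with $v=wf_{I^c}$) and the chart attached to $f_I$ is $\mathbb{C}[[x,y,t,v]]/(f_{I^c}-tv)$ (with $u=f_It$), glued along $wt=1$. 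From these formulas one reads off that the blowup is normal and crepant, that the fibre over the origin is a single $\mathbb{P}^{1}$, and that the only points at which it can be singular are, complete-locally, $\mathbb{C}[[x,y,u,v]]/(f_I-uv)$ and $\mathbb{C}[[x,y,u,v]]/(f_{I^c}-uv)$.

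Iterating this gives the fibre picture. The $j$-th blowup is performed at the ideal $(u,f_{I_j\setminus I_{j-1}})$, which complete-locally near the relevant singular point of $X^{\cF_{j-1}}$ lies inside a hypersurface of the form $\mathbb{C}[[\,\cdot\,]]/(f_{I_{j-1}^c}-uv)$ with $f_{I_{j-1}^c}=g_jg_{j+1}\cdots g_{m+1}$; by the single-blowup analysis it introduces a new curve $C_j$ meeting $C_{j-1}$ and splits the factor $g_jg_{j+1}\cdots g_{m+1}$ into $g_j$ and $g_{j+1}\cdots g_{m+1}$. After all $m$ blowups I would obtain the chain $C_1,\dots,C_m$ with consecutive curves meeting and the $m+1$ potential singular points $\mathbb{C}[[x,y,u,v]]/(g_j-uv)$, $1\le j\le m+1$; since each blowup is crepant, so is $X^{\cF}\to\Spec R$, and $X^{\cF}$ has at worst rational (canonical) singularities.

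For the derived equivalence, since $X^{\cF}\to\Spec R$ is projective birational with one-dimensional fibres, I would invoke \cite{VdB1d} to produce a tilting bundle $\mathcal{T}$ on $X^{\cF}$ with $\End_{X^{\cF}}(\mathcal{T})\cong\End_R(N)$ for a reflexive $R$-module $N$, so the task reduces to identifying $N$ with $T^{\cF}$ (recall $T^{\cF}$ is a modifying generator, so $\End_R(T^{\cF})\in\CM R$). I would build $\mathcal{T}$ inductively: pull back the tilting bundle from $X^{\cF_{j-1}}$ and adjoin the tautological line bundle of the $j$-th blowup, i.e.\ $\cO_{X^{\cF_j}}(C_j)$, the globalisation of the ideal $T_{I_j}$; one checks at each stage that the sum stays tilting and computes its endomorphism ring. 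By normality, identifying $\End_{X^{\cF}}(\mathcal{T})$ as an $R$-module amounts to comparing reflexives in codimension one, which I would carry out on the charts above using the $\Hom$-identities of \ref{describe Hom-sets}, concluding $\End_{X^{\cF}}(\mathcal{T})\cong\End_R(T^{\cF})$ and hence $\Db(\coh X^{\cF})\simeq\Db(\mod\End_R(T^{\cF}))$.

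The hard part will be the last step: showing that the \emph{globally} defined candidate bundle on $X^{\cF}$ — not merely its restriction to a single chart — is tilting, i.e.\ that $R^1\pi_*\mathcal{H}om(\mathcal{T},\mathcal{T})=0$ for $\pi\colon X^{\cF}\to\Spec R$, and then pinning the endomorphism ring down exactly rather than just up to reflexive equivalence. I expect to handle both by a local-to-global argument exploiting the $A_m$-chain structure of the fibre: compute the higher direct images $\check{\mathrm{C}}$ech-theoretically on the two-chart covers coming from the successive blowups, and match the resulting $R$-modules with the explicit presentations of $\Hom_R(T_{I_i},T_{I_j})$ from \ref{describe Hom-sets}. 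A secondary nuisance is the bookkeeping of which ideal is blown up on which chart at each stage; the chart formulas above make this mechanical but it must be tracked with care. As an alternative to producing $\mathcal{T}$ directly, one can instead recognise each blowup $X^{\cF_{j-1}}\rightsquigarrow X^{\cF_j}$ as compatible with a tilting mutation relating $\End_R(T^{\cF_{j-1}})$ and $\End_R(T^{\cF_j})$ (cf.\ \ref{welldefined}), deriving the equivalence from the mutation machinery — but this still requires the same chart computation to identify the modules.
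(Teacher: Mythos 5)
This theorem is not proved in the present paper at all: it is cited verbatim from \cite[5.2]{IW5}, which is where the argument actually lives. Your reconstruction matches the strategy of \cite[\S 5]{IW5} closely. The two-chart computation of the blowup of $(u,f_I)$ on $\mathbb{C}[[x,y,u,v]]/(f_If_{I^c}-uv)$ is correct (in $D_+(u)$ one gets $f_I=uw$ and $v=wf_{I^c}$, in $D_+(f_I)$ one gets $f_{I^c}=tv$ and $u=f_It$, glued over $wt=1$), and iterating it does produce the chain of $\mathbb{P}^1$'s with the advertised list of complete-local singularity types, each blowup being crepant. The reduction of the derived equivalence to Van den Bergh's one-dimensional-fibre tilting bundle and the identification of its push-forward with the reflexive module $T^{\cF}$ is also the route taken in \cite{IW5}, and you correctly isolate the genuinely laborious steps as (i) the global vanishing $R^1\pi_*\mathcal{E}\mathit{nd}(\cT)=0$ and (ii) pinning down $\pi_*\cT\cong T^{\cF}$ on the nose rather than up to reflexive equivalence — both of which \cite{IW5} handles by explicit chart/\v{C}ech computations together with the $\Hom$-identities of \ref{describe Hom-sets}. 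So the outline is right and in the same spirit as the source.

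One caveat: the ``alternative'' you sketch at the end — recognising each step $X^{\cF_{j-1}}\rightsquigarrow X^{\cF_j}$ as a tilting mutation between $\End_R(T^{\cF_{j-1}})$ and $\End_R(T^{\cF_j})$ and invoking \ref{welldefined} — does not go through as stated. Mutation as defined in \S\ref{Mutation Section} replaces a summand of a fixed basic modifying module without changing the number of indecomposable summands, whereas $T^{\cF_j}=T^{\cF_{j-1}}\oplus T_{I_j}$ strictly enlarges the module. So $\End_R(T^{\cF_{j-1}})$ and $\End_R(T^{\cF_j})$ are not related by the derived equivalence of \ref{welldefined} (indeed they cannot be derived equivalent in general, having different numbers of simple modules). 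The correct relationship between them at the stable/singularity level is the CY-reduction of \ref{chainofCMs}, not mutation; the honest derived-equivalence argument goes through the tilting bundle construction you describe first, not through this shortcut.
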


Combining this information together with \ref{welldefined} and \ref{mainmutation}, we can now produce derived equivalences between partial crepant resolutions of $\Spec R$, and also produce derived autoequivalences.

\begin{thm}
Every collection of curves above the origin in the partial resolution $X^{\cF}\to\Spec R$ determines a derived autoequivalence of $X^{\cF}$.
\end{thm}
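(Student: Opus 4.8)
The plan is to combine the mutation machinery of \S\ref{mutation} with the derived equivalences of \ref{welldefined} and \ref{db from previous}. Fix a flag $\c{F}=(I_1,\hdots,I_m)$ and a subset $\emptyset\neq J\subseteq\{1,\hdots,m\}$ of the curves above the origin in $X^{\c{F}}$. First I would use \ref{welldefined}: since $T^{\c{F}}$ is a basic modifying $R$-module, the algebras $\End_R(T^{\c{F}})$ and $\End_R(\mu_J^-(T^{\c{F}}))$ are derived equivalent, and by \ref{db from previous} these algebras are derived equivalent to $X^{\c{F}}$ and $X^{\mu_J^-(\c{F})}$ respectively (where $\mu_J^-(\c{F})$ denotes the flag whose combinatorial picture is the $J$-reflection of $\c{P}(\c{F})$, which is legitimate by \ref{mainmutation}). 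The key observation is that the $J$-reflection operation, applied \emph{twice} to $\c{P}(\c{F})$, returns the original picture, since reflecting each connected component of $J$ in the vertical axis is an involution; equivalently $\mu_J^+(\mu_J^-(T^{\c{F}}))\cong T^{\c{F}}$ by the first property in the Proposition after \ref{welldefined}. So composing the two derived equivalences produces a self-equivalence of $\Db(\coh X^{\c{F}})$.

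\textbf{Carrying this out.} Concretely, I would argue as follows. By \ref{mainmutation}, $\mu_J^-(T^{\c{F}})=T^{\c{G}}$ where $\c{G}$ is the flag obtained by $J$-reflecting $\c{P}(\c{F})$. Applying \ref{mainmutation} again (now with the \emph{same} subset $J$, noting that the connected components of $J$ as a subset of $\{1,\hdots,m\}$ are unchanged under the reflection), we get $\mu_J^-(T^{\c{G}})=T^{\c{F}}$, since reflecting each component of $J$ a second time undoes the first reflection. Now \ref{welldefined} (applied in the form recalled after it, via the tilting module $V_J$) gives equivalences
\[
\RHom(V_J,-)\colon\Db(\mod\End_R(T^{\c{F}}))\xrightarrow{\ \sim\ }\Db(\mod\End_R(T^{\c{G}}))
\]
and
\[
\RHom(V_J',-)\colon\Db(\mod\End_R(T^{\c{G}}))\xrightarrow{\ \sim\ }\Db(\mod\End_R(T^{\c{F}})).
\]
Composing with the derived equivalences $\Db(\coh X^{\c{F}})\simeq\Db(\mod\End_R(T^{\c{F}}))$ and $\Db(\coh X^{\c{G}})\simeq\Db(\mod\End_R(T^{\c{G}}))$ from \ref{db from previous}, and then composing the two, yields a derived autoequivalence of $X^{\c{F}}$. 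Since the functor $\RHom(V_J,-)$ is never the identity (as noted after \ref{welldefined}), and the intermediate space $X^{\c{G}}$ differs from $X^{\c{F}}$ whenever $J$ is not componentwise symmetric, this autoequivalence is non-trivial in general; when $J$ is componentwise symmetric one still obtains an autoequivalence, namely the composite of $\RHom(V_J,-)$ with its counterpart, which is a (generally non-trivial) self-equivalence fixing $X^{\c{F}}$ on the level of varieties.

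\textbf{Main obstacle.} The genuine content is not the composition itself but verifying that $\mu_J^-(\c{F})$ really is a flag to which \ref{db from previous} applies, i.e.\ that $T^{\mu_J^-(\c{F})}$ is again a basic modifying generator corresponding to an honest partial crepant resolution $X^{\mu_J^-(\c{F})}$ — but this is exactly what \ref{mainmutation} together with the Proposition after \ref{welldefined} guarantees. So the argument is essentially formal once those two results are in hand. The one point requiring a little care is the compatibility of the equivalences in \ref{db from previous}: I would need to check that the derived equivalence $X^{\c{F}}\simeq\End_R(T^{\c{F}})$ is canonical enough (it comes from a tilting bundle in \cite[5.2]{IW5}) that composing the chain of equivalences above genuinely lands back in $\Db(\coh X^{\c{F}})$ rather than in some a priori different model; since all the intermediate categories are module categories over MMAs and $X^{\c{F}}$ is recovered from $\End_R(T^{\c{F}})$ via the tilting object, this is straightforward. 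I would also remark that iterating over all subsets $J$ (and over all flags) produces a large supply of such autoequivalences, which is the point of the Corollary \ref{AllQfactDb_intro} in the maximal case.
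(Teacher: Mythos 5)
Your argument is correct and follows essentially the same route as the paper: identify $\mu^-_J(T^{\cF})$ with $T^{\cG}$ via \ref{mainmutation}, invoke \ref{welldefined} for the derived equivalence $\End_R(T^{\cF})\simeq\End_R(T^{\cG})$, sandwich with the equivalences from \ref{db from previous}, and compose the loop $\Db(\coh X^{\cF})\to\Db(\coh X^{\cG})\to\Db(\coh X^{\cF})$. The only cosmetic differences are that the paper spells out only the case $|J|=1$ (noting the general case is identical), and handles the componentwise-symmetric case by observing $X^{\cF}=X^{\cG}$ and stopping after a single mutation, whereas you uniformly apply $\mu^-_J$ twice; both are valid.
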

\begin{proof}
Pick a collection of curves $\{ C_j\mid j\in J\}$.  For simplicity, we give the proof for the case $|J|=1$, but the general situation is the same.  Pick a curve $C$ in $X^{\cF}$, so locally the fibre above the origin \eqref{curve picture} is
\begin{center}
\begin{tikzpicture}[xscale=0.6,yscale=0.6]
\draw[black] (1.9,-0.04,0) to [bend left=25] (4.1,-0.04,0);
\draw[black] (3.9,-0.04,0) to [bend left=25] (6.1,-0.04,0);
\draw[black] (5.9,-0.04,0) to [bend left=25] (8.1,-0.04,0);
\filldraw [red] (2,0,0) circle (1pt);
\filldraw [red] (4,0,0) circle (1pt);
\filldraw [red] (6,0,0) circle (1pt);
\filldraw [red] (8,0,0) circle (1pt);
\node at (2,-0.4,0) {$\scriptstyle g_{i-1}$};
\node at (4,-0.4,0) {$\scriptstyle g_{i}$};
\node at (6,-0.4,0) {$\scriptstyle g_{i+1}$};
\node at (8,-0.4,0) {$\scriptstyle g_{i+2}$};
\draw [densely dotted] (4,-0.1,0) -- (6,-0.1,0) -- (6,0.5,0) -- (4,0.5,0) -- cycle;
\node at (1.5,0,0) {$\scriptstyle \hdots$};
\node at (8.5,0,0) {$\scriptstyle \hdots$};
\end{tikzpicture}
\end{center}
Now $X^{\cF}$ is derived equivalent to $\End_R(T^{\cF})$ by \ref{db from previous}, and by \ref{mainmutation}, mutating the summand $(u,f_{I_i})$ 
\[
\begin{array}{c}
\begin{tikzpicture}[xscale=0.6,yscale=0.6]
\draw[black] (1.9,-0.04,0) to [bend left=25] (4.1,-0.04,0);
\draw[black] (3.9,-0.04,0) to [bend left=25] (6.1,-0.04,0);
\draw[black] (5.9,-0.04,0) to [bend left=25] (8.1,-0.04,0);
\filldraw [red] (2,0,0) circle (1pt);
\filldraw [red] (4,0,0) circle (1pt);
\filldraw [red] (6,0,0) circle (1pt);
\filldraw [red] (8,0,0) circle (1pt);
\node at (2,-0.4,0) {$\scriptstyle g_{i-1}$};
\node at (4,-0.4,0) {$\scriptstyle g_{i}$};
\node at (6,-0.4,0) {$\scriptstyle g_{i+1}$};
\node at (8,-0.4,0) {$\scriptstyle g_{i+2}$};
\draw [densely dotted] (4,-0.1,0) -- (6,-0.1,0) -- (6,0.5,0) -- (4,0.5,0) -- cycle;
\node at (1.5,0,0) {$\scriptstyle \hdots$};
\node at (8.5,0,0) {$\scriptstyle \hdots$};
\draw[->] (9.5,0,0) -- (10.5,0,0);
\draw[black] (11.9,-0.04,0) to [bend left=25] (14.1,-0.04,0);
\draw[black] (13.9,-0.04,0) to [bend left=25] (16.1,-0.04,0);
\draw[black] (15.9,-0.04,0) to [bend left=25] (18.1,-0.04,0);
\filldraw [red] (12,0,0) circle (1pt);
\filldraw [red] (14,0,0) circle (1pt);
\filldraw [red] (16,0,0) circle (1pt);
\filldraw [red] (18,0,0) circle (1pt);
\node at (12,-0.4,0) {$\scriptstyle g_{i-1}$};
\node at (14,-0.4,0) {$\scriptstyle g_{i+1}$};
\node at (16,-0.4,0) {$\scriptstyle g_{i}$};
\node at (18,-0.4,0) {$\scriptstyle g_{i+2}$};
\draw [densely dotted] (14,-0.1,0) -- (16,-0.1,0) -- (16,0.5,0) -- (14,0.5,0) -- cycle;
\node at (11.5,0,0) {$\scriptstyle \hdots$};
\node at (18.5,0,0) {$\scriptstyle \hdots$};
\end{tikzpicture}
\end{array}
\] 
gives us a derived equivalence between $\End_R(T^\cF)$ and $\End_R(T^\cG)$, where $\cG$ is the flag associated with the reflection. Since $X^{\cG}$ is derived equivalent to $\End_R(T^\cG)$  by \ref{db from previous}, composing it follows that $X^{\cF}$ is derived equivalent to $X^{\cG}$.  If $g_i=g_{i+1}$ then $X^\cF=X^\cG$ and so this is our derived autoequivalence of $X^\cF$.  Otherwise $g_i\neq g_{i+1}$, and so in this case reflecting again
\[
\begin{array}{c}
\begin{tikzpicture}[xscale=0.6,yscale=0.6]
\draw[black] (1.9,-0.04,0) to [bend left=25] (4.1,-0.04,0);
\draw[black] (3.9,-0.04,0) to [bend left=25] (6.1,-0.04,0);
\draw[black] (5.9,-0.04,0) to [bend left=25] (8.1,-0.04,0);
\filldraw [red] (2,0,0) circle (1pt);
\filldraw [red] (4,0,0) circle (1pt);
\filldraw [red] (6,0,0) circle (1pt);
\filldraw [red] (8,0,0) circle (1pt);
\node at (2,-0.4,0) {$\scriptstyle g_{i-1}$};
\node at (4,-0.4,0) {$\scriptstyle g_{i+1}$};
\node at (6,-0.4,0) {$\scriptstyle g_{i}$};
\node at (8,-0.4,0) {$\scriptstyle g_{i+2}$};
\draw [densely dotted] (4,-0.1,0) -- (6,-0.1,0) -- (6,0.5,0) -- (4,0.5,0) -- cycle;
\node at (1.5,0,0) {$\scriptstyle \hdots$};
\node at (8.5,0,0) {$\scriptstyle \hdots$};
\draw[->] (9.5,0,0) -- (10.5,0,0);
\draw[black] (11.9,-0.04,0) to [bend left=25] (14.1,-0.04,0);
\draw[black] (13.9,-0.04,0) to [bend left=25] (16.1,-0.04,0);
\draw[black] (15.9,-0.04,0) to [bend left=25] (18.1,-0.04,0);
\filldraw [red] (12,0,0) circle (1pt);
\filldraw [red] (14,0,0) circle (1pt);
\filldraw [red] (16,0,0) circle (1pt);
\filldraw [red] (18,0,0) circle (1pt);
\node at (12,-0.4,0) {$\scriptstyle g_{i-1}$};
\node at (14,-0.4,0) {$\scriptstyle g_{i}$};
\node at (16,-0.4,0) {$\scriptstyle g_{i+1}$};
\node at (18,-0.4,0) {$\scriptstyle g_{i+2}$};
\draw [densely dotted] (14,-0.1,0) -- (16,-0.1,0) -- (16,0.5,0) -- (14,0.5,0) -- cycle;
\node at (11.5,0,0) {$\scriptstyle \hdots$};
\node at (18.5,0,0) {$\scriptstyle \hdots$};
\end{tikzpicture}
\end{array}
\] 
gives a derived equivalence between $X^{\cG}$ and $X^{\cF}$.  Composing the chain of equivalences $\Db(\coh X^{\cF})\to\Db(\coh X^{\cG})\to\Db(\coh X^{\cF})$ is then our desired autoequivalence.
\end{proof}

Now we note that even if two partial crepant resolutions of $\Spec R$ both contain the same number of curves above the origin, they need not be derived equivalent:

\begin{example}\label{notallpartialDb}
In the case $f=xxy$, the crepant partial resolutions with one curve are
\[
\begin{array}{ccccccc}
\begin{tikzpicture}[xscale=0.6,yscale=0.6]
\draw[black] (-0.1,-0.04,0) to [bend left=25] (2.1,-0.04,0);
\filldraw [red] (0,0,0) circle (1pt);
\filldraw [red] (2,0,0) circle (1pt);
\node at (0,-0.4,0) {$\scriptstyle x$};
\node at (2,-0.4,0) {$\scriptstyle xy$};
\end{tikzpicture} &&
\begin{tikzpicture}[xscale=0.6,yscale=0.6]
\draw[black] (-0.1,-0.04,0) to [bend left=25] (2.1,-0.04,0);
\filldraw [red] (0,0,0) circle (1pt);
\filldraw [red] (2,0,0) circle (1pt);
\node at (0,-0.4,0) {$\scriptstyle xy$};
\node at (2,-0.4,0) {$\scriptstyle x$};
\end{tikzpicture}&&
\begin{tikzpicture}[xscale=0.6,yscale=0.6]
\draw[black] (-0.1,-0.04,0) to [bend left=25] (2.1,-0.04,0);
\filldraw [red] (0,0,0) circle (1pt);
\filldraw [red] (2,0,0) circle (1pt);
\node at (0,-0.4,0) {$\scriptstyle y$};
\node at (2,-0.4,0) {$\scriptstyle x^2$};
\end{tikzpicture} &&
\begin{tikzpicture}[xscale=0.6,yscale=0.6]
\draw[black] (-0.1,-0.04,0) to [bend left=25] (2.1,-0.04,0);
\filldraw [red] (0,0,0) circle (1pt);
\filldraw [red] (2,0,0) circle (1pt);
\node at (0,-0.4,0) {$\scriptstyle x^2$};
\node at (2,-0.4,0) {$\scriptstyle y$};
\end{tikzpicture}\\
X^{\{1\}}&&X^{\{1,3\}} && X^{\{3\}} &&X^{\{1,2\}}
\end{array}
\]
The two spaces $X^{\{1\}}$ and $X^{\{1,3\}}$ are derived equivalent via mutation, and the two spaces $X^{\{3\}}$ and $X^{\{1,2\}}$ are also derived equivalent via mutation.  However they are not all derived equivalent, since if  $\Db(\coh X^{\{1\}})\approx\Db(\coh X^{\{3\}})$ then $\Dsg(X^{\{1\}})\approx\Dsg(X^{\{3\}})$, which by \cite[3.7(2)]{IW5} gives $\uCM \mathbb{C}[[u,v,x,y]]/(uv-x^2)\approx \uCM \mathbb{C}[[u,v,x,y]]/(uv-xy)$. But this is impossible, since (for example) the left hand side has infinite dimensional Hom-spaces, whereas in the right hand side all Hom spaces are finite dimensional.
\end{example}

The following is immediate from the theory of mutation:
\begin{cor}\label{permuteDbresult}
In the case $k=\mathbb{C}$, suppose that $\F=(I_1,\hdots,I_m)$ and $\G=(J_1,\hdots,J_n)$ are flags. Then $X^\F$ and $X^\G$ are derived equivalent if $n=m$ and the singularities of $X^\F$ can be permuted to the singularities of $X^\G$.  
\end{cor}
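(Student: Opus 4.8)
The plan is to reduce everything to iterated mutation, using the three ingredients \ref{db from previous}, \ref{welldefined} and \ref{mainmutation}. Recall from \ref{db from previous} that $X^{\F}$ is derived equivalent to $\End_R(T^{\F})$, and that the singular points of $X^{\F}$, read off in order along the chain of curves, are exactly the entries of the tuple $(g^{\F}_1,\dots,g^{\F}_{m+1})$ recorded by the picture $\c{P}(\F)$, where $g^{\F}_j=f_{I_j\setminus I_{j-1}}$. Since $X^{\F}$ has $m$ curves and $X^{\G}$ has $n$ curves, the hypotheses say precisely that $m=n$ and that the $(m+1)$-tuples $(g^{\F}_1,\dots,g^{\F}_{m+1})$ and $(g^{\G}_1,\dots,g^{\G}_{m+1})$ agree up to a permutation of their entries (and units); equivalently, $\F$ and $\G$ correspond to ordered partitions of $\{1,\dots,n\}$ into $m+1$ nonempty blocks differing only by a reordering of the blocks.

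The key point I would isolate is that a single adjacent transposition of the blocks is induced by a mutation at one curve. Taking $J=\{i\}$ in \ref{mainmutation}, the unique connected component of $J$ has size one, so the $J$-reflection of $\c{P}(\F)$ is the picture obtained from $\c{P}(\F)$ by interchanging $g^{\F}_i$ and $g^{\F}_{i+1}$; hence $\mu^{-}_{\{i\}}(T^{\F})=T^{\F'}$, where $\F'$ is the flag obtained from $\F$ by swapping the $i$-th and $(i+1)$-th blocks. By \ref{welldefined} the algebras $\End_R(T^{\F})$ and $\End_R(T^{\F'})$ are derived equivalent, and applying \ref{db from previous} to both $\F$ and $\F'$ we conclude that $X^{\F}$ and $X^{\F'}$ are derived equivalent.

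To finish, since the two tuples differ by a permutation there is a finite sequence of adjacent transpositions of entries transforming $(g^{\F}_j)_j$ into $(g^{\G}_j)_j$; I would realize each of them, in turn, by the corresponding single-curve mutation, starting from $T^{\F}$. At each step the underlying picture undergoes the corresponding swap of blocks and, by the previous paragraph, the derived equivalence class of the associated space is unchanged, so after finitely many steps we reach a flag $\F''$ with $(g^{\F''}_j)_j=(g^{\G}_j)_j$. Then $f_{I^{\F''}_j}=g_1\cdots g_j=f_{J^{\G}_j}$ for all $j$, so $T_{I^{\F''}_j}=(u,f_{I^{\F''}_j})=(u,f_{J^{\G}_j})=T_{J^{\G}_j}$, whence $T^{\F''}\cong T^{\G}$ and $\End_R(T^{\F''})\cong\End_R(T^{\G})$; by \ref{db from previous} once more, $X^{\F''}$ is derived equivalent to $X^{\G}$. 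Composing the chain $X^{\F}\approx\cdots\approx X^{\F''}\approx X^{\G}$ yields the claim.

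The only place that needs care — but where in fact nothing goes wrong — is the bookkeeping behind the first two steps: checking that the $\{i\}$-reflection genuinely implements an adjacent transposition of the singularity tuple, that the resulting picture is again the picture of a legitimate flag (it is, since reordering the blocks of an ordered partition produces another ordered partition and no block becomes empty), and that the modules occurring stay basic modifying generators so that \ref{welldefined} applies. Beyond \ref{db from previous}, \ref{welldefined} and \ref{mainmutation} there is no further geometric input; everything else is combinatorics of flags and the compatibility of mutation with the picture $\c{P}(\F)$.
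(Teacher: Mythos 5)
Your proof is correct and follows essentially the same route as the paper: both reduce via \ref{db from previous} to showing $\End_R(T^\F)$ and $\End_R(T^\G)$ are derived equivalent, and then realize the permutation of singularities by a finite sequence of mutations, using \ref{mainmutation} to identify mutation with the reflection of $\c{P}(\F)$ and \ref{welldefined} for the derived equivalence at each step. The only difference is that you spell out the (correct) bookkeeping that single-curve mutations $\mu^-_{\{i\}}$ implement adjacent transpositions, which the paper leaves implicit.
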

\begin{proof}
Since $\Db(X^\F)\simeq\Db(\End_R(T^\F))$ and $\Db(X^\G)\simeq\Db(\End_R(T^\G))$, the result follows if we establish that $\End_R(T^\F)$ and $\End_R(T^\G)$ are derived equivalent.  Now mutation always gives derived equivalences (\ref{welldefined}), and mutation corresponds to permuting the order of the singularities (\ref{mainmutation}).  Hence if the singularities of $X^\F$ can be permuted to the singularities of $X^\G$, certainly there is a finite number of mutations which transforms $\c{P}(\F)$ into $\c{P}(\G)$, hence $\End_R(T^\F)$ and $\End_R(T^\G)$ are derived equivalent.
\end{proof}

\begin{example}\label{EG geom}
Let $f=f_1^2f_3^3=f_1f_1f_3f_3f_3$.
Then the exchange graph (removing loops) for maximal modifying generators is the following, which has already been observed in \ref{EG alg}
\begin{center}
\begin{tikzpicture}
\node (a1) at (0,0) 
{\begin{tikzpicture}[xscale=0.25,yscale=0.25]
\draw[black] (-0.1,-0.04,0) to [bend left=35] (2.1,-0.04,0);
\draw[black] (1.9,-0.04,0) to [bend left=35] (4.1,-0.04,0);
\draw[black] (3.9,-0.04,0) to [bend left=35] (6.1,-0.04,0);
\draw[black] (5.9,-0.04,0) to [bend left=35] (8.1,-0.04,0);
\filldraw [red] (0,0,0) circle (1pt);
\filldraw [red] (2,0,0) circle (1pt);
\filldraw [red] (4,0,0) circle (1pt);
\filldraw [red] (6,0,0) circle (1pt);
\filldraw [red] (8,0,0) circle (1pt);
\node at (0,-0.7,0) {$\scriptstyle f_1$};
\node at (2,-0.7,0) {$\scriptstyle f_1$};
\node at (4,-0.7,0) {$\scriptstyle f_3$};
\node at (6,-0.7,0) {$\scriptstyle f_3$};
\node at (8,-0.7,0) {$\scriptstyle f_3$};
\end{tikzpicture}}; 

\node (a2) at (3,0) 
{\begin{tikzpicture}[xscale=0.25,yscale=0.25]
\draw[black] (-0.1,-0.04,0) to [bend left=35] (2.1,-0.04,0);
\draw[black] (1.9,-0.04,0) to [bend left=35] (4.1,-0.04,0);
\draw[black] (3.9,-0.04,0) to [bend left=35] (6.1,-0.04,0);
\draw[black] (5.9,-0.04,0) to [bend left=35] (8.1,-0.04,0);
\filldraw [red] (0,0,0) circle (1pt);
\filldraw [red] (2,0,0) circle (1pt);
\filldraw [red] (4,0,0) circle (1pt);
\filldraw [red] (6,0,0) circle (1pt);
\filldraw [red] (8,0,0) circle (1pt);
\node at (0,-0.7,0) {$\scriptstyle f_3$};
\node at (2,-0.7,0) {$\scriptstyle f_1$};
\node at (4,-0.7,0) {$\scriptstyle f_1$};
\node at (6,-0.7,0) {$\scriptstyle f_3$};
\node at (8,-0.7,0) {$\scriptstyle f_3$};
\end{tikzpicture}}; 

\node (a3) at (6,0) 
{\begin{tikzpicture}[xscale=0.25,yscale=0.25]
\draw[black] (-0.1,-0.04,0) to [bend left=35] (2.1,-0.04,0);
\draw[black] (1.9,-0.04,0) to [bend left=35] (4.1,-0.04,0);
\draw[black] (3.9,-0.04,0) to [bend left=35] (6.1,-0.04,0);
\draw[black] (5.9,-0.04,0) to [bend left=35] (8.1,-0.04,0);
\filldraw [red] (0,0,0) circle (1pt);
\filldraw [red] (2,0,0) circle (1pt);
\filldraw [red] (4,0,0) circle (1pt);
\filldraw [red] (6,0,0) circle (1pt);
\filldraw [red] (8,0,0) circle (1pt);
\node at (0,-0.7,0) {$\scriptstyle f_3$};
\node at (2,-0.7,0) {$\scriptstyle f_3$};
\node at (4,-0.7,0) {$\scriptstyle f_1$};
\node at (6,-0.7,0) {$\scriptstyle f_1$};
\node at (8,-0.7,0) {$\scriptstyle f_3$};
\end{tikzpicture}}; 

\node (a4) at (9,0) 
{\begin{tikzpicture}[xscale=0.25,yscale=0.25]
\draw[black] (-0.1,-0.04,0) to [bend left=35] (2.1,-0.04,0);
\draw[black] (1.9,-0.04,0) to [bend left=35] (4.1,-0.04,0);
\draw[black] (3.9,-0.04,0) to [bend left=35] (6.1,-0.04,0);
\draw[black] (5.9,-0.04,0) to [bend left=35] (8.1,-0.04,0);
\filldraw [red] (0,0,0) circle (1pt);
\filldraw [red] (2,0,0) circle (1pt);
\filldraw [red] (4,0,0) circle (1pt);
\filldraw [red] (6,0,0) circle (1pt);
\filldraw [red] (8,0,0) circle (1pt);
\node at (0,-0.7,0) {$\scriptstyle f_3$};
\node at (2,-0.7,0) {$\scriptstyle f_3$};
\node at (4,-0.7,0) {$\scriptstyle f_3$};
\node at (6,-0.7,0) {$\scriptstyle f_1$};
\node at (8,-0.7,0) {$\scriptstyle f_1$};
\end{tikzpicture}}; 

\node (b1) at (1.5,-1.5) 
{\begin{tikzpicture}[xscale=0.25,yscale=0.25]
\draw[black] (-0.1,-0.04,0) to [bend left=35] (2.1,-0.04,0);
\draw[black] (1.9,-0.04,0) to [bend left=35] (4.1,-0.04,0);
\draw[black] (3.9,-0.04,0) to [bend left=35] (6.1,-0.04,0);
\draw[black] (5.9,-0.04,0) to [bend left=35] (8.1,-0.04,0);
\filldraw [red] (0,0,0) circle (1pt);
\filldraw [red] (2,0,0) circle (1pt);
\filldraw [red] (4,0,0) circle (1pt);
\filldraw [red] (6,0,0) circle (1pt);
\filldraw [red] (8,0,0) circle (1pt);
\node at (0,-0.7,0) {$\scriptstyle f_1$};
\node at (2,-0.7,0) {$\scriptstyle f_3$};
\node at (4,-0.7,0) {$\scriptstyle f_1$};
\node at (6,-0.7,0) {$\scriptstyle f_3$};
\node at (8,-0.7,0) {$\scriptstyle f_3$};
\end{tikzpicture}}; 

\node (b2) at (4.5,-1.5) 
{\begin{tikzpicture}[xscale=0.25,yscale=0.25]
\draw[black] (-0.1,-0.04,0) to [bend left=35] (2.1,-0.04,0);
\draw[black] (1.9,-0.04,0) to [bend left=35] (4.1,-0.04,0);
\draw[black] (3.9,-0.04,0) to [bend left=35] (6.1,-0.04,0);
\draw[black] (5.9,-0.04,0) to [bend left=35] (8.1,-0.04,0);
\filldraw [red] (0,0,0) circle (1pt);
\filldraw [red] (2,0,0) circle (1pt);
\filldraw [red] (4,0,0) circle (1pt);
\filldraw [red] (6,0,0) circle (1pt);
\filldraw [red] (8,0,0) circle (1pt);
\node at (0,-0.7,0) {$\scriptstyle f_3$};
\node at (2,-0.7,0) {$\scriptstyle f_1$};
\node at (4,-0.7,0) {$\scriptstyle f_3$};
\node at (6,-0.7,0) {$\scriptstyle f_1$};
\node at (8,-0.7,0) {$\scriptstyle f_3$};
\end{tikzpicture}}; 

\node (b3) at (7.5,-1.5) 
{\begin{tikzpicture}[xscale=0.25,yscale=0.25]
\draw[black] (-0.1,-0.04,0) to [bend left=35] (2.1,-0.04,0);
\draw[black] (1.9,-0.04,0) to [bend left=35] (4.1,-0.04,0);
\draw[black] (3.9,-0.04,0) to [bend left=35] (6.1,-0.04,0);
\draw[black] (5.9,-0.04,0) to [bend left=35] (8.1,-0.04,0);
\filldraw [red] (0,0,0) circle (1pt);
\filldraw [red] (2,0,0) circle (1pt);
\filldraw [red] (4,0,0) circle (1pt);
\filldraw [red] (6,0,0) circle (1pt);
\filldraw [red] (8,0,0) circle (1pt);
\node at (0,-0.7,0) {$\scriptstyle f_3$};
\node at (2,-0.7,0) {$\scriptstyle f_3$};
\node at (4,-0.7,0) {$\scriptstyle f_1$};
\node at (6,-0.7,0) {$\scriptstyle f_3$};
\node at (8,-0.7,0) {$\scriptstyle f_1$};
\end{tikzpicture}}; 

\node (c1) at (3,-3) 
{\begin{tikzpicture}[xscale=0.25,yscale=0.25]
\draw[black] (-0.1,-0.04,0) to [bend left=35] (2.1,-0.04,0);
\draw[black] (1.9,-0.04,0) to [bend left=35] (4.1,-0.04,0);
\draw[black] (3.9,-0.04,0) to [bend left=35] (6.1,-0.04,0);
\draw[black] (5.9,-0.04,0) to [bend left=35] (8.1,-0.04,0);
\filldraw [red] (0,0,0) circle (1pt);
\filldraw [red] (2,0,0) circle (1pt);
\filldraw [red] (4,0,0) circle (1pt);
\filldraw [red] (6,0,0) circle (1pt);
\filldraw [red] (8,0,0) circle (1pt);
\node at (0,-0.7,0) {$\scriptstyle f_1$};
\node at (2,-0.7,0) {$\scriptstyle f_3$};
\node at (4,-0.7,0) {$\scriptstyle f_3$};
\node at (6,-0.7,0) {$\scriptstyle f_1$};
\node at (8,-0.7,0) {$\scriptstyle f_3$};
\end{tikzpicture}}; 

\node (c2) at (6,-3) 
{\begin{tikzpicture}[xscale=0.25,yscale=0.25]
\draw[black] (-0.1,-0.04,0) to [bend left=35] (2.1,-0.04,0);
\draw[black] (1.9,-0.04,0) to [bend left=35] (4.1,-0.04,0);
\draw[black] (3.9,-0.04,0) to [bend left=35] (6.1,-0.04,0);
\draw[black] (5.9,-0.04,0) to [bend left=35] (8.1,-0.04,0);
\filldraw [red] (0,0,0) circle (1pt);
\filldraw [red] (2,0,0) circle (1pt);
\filldraw [red] (4,0,0) circle (1pt);
\filldraw [red] (6,0,0) circle (1pt);
\filldraw [red] (8,0,0) circle (1pt);
\node at (0,-0.7,0) {$\scriptstyle f_3$};
\node at (2,-0.7,0) {$\scriptstyle f_1$};
\node at (4,-0.7,0) {$\scriptstyle f_3$};
\node at (6,-0.7,0) {$\scriptstyle f_3$};
\node at (8,-0.7,0) {$\scriptstyle f_1$};
\end{tikzpicture}}; 

\node (d1) at (4.5,-4.5) 
{\begin{tikzpicture}[xscale=0.25,yscale=0.25]
\draw[black] (-0.1,-0.04,0) to [bend left=35] (2.1,-0.04,0);
\draw[black] (1.9,-0.04,0) to [bend left=35] (4.1,-0.04,0);
\draw[black] (3.9,-0.04,0) to [bend left=35] (6.1,-0.04,0);
\draw[black] (5.9,-0.04,0) to [bend left=35] (8.1,-0.04,0);
\filldraw [red] (0,0,0) circle (1pt);
\filldraw [red] (2,0,0) circle (1pt);
\filldraw [red] (4,0,0) circle (1pt);
\filldraw [red] (6,0,0) circle (1pt);
\filldraw [red] (8,0,0) circle (1pt);
\node at (0,-0.7,0) {$\scriptstyle f_1$};
\node at (2,-0.7,0) {$\scriptstyle f_3$};
\node at (4,-0.7,0) {$\scriptstyle f_3$};
\node at (6,-0.7,0) {$\scriptstyle f_3$};
\node at (8,-0.7,0) {$\scriptstyle f_1$};
\end{tikzpicture}}; 

\draw (a1) -- (b1);
\draw (a2) -- (b1);
\draw (a2) -- (b2);
\draw (a3) -- (b2);
\draw (a3) -- (b3);
\draw (a4) -- (b3);
\draw (b1) -- (c1);
\draw (b2) -- (c1);
\draw (b2) -- (c2);
\draw (b3) -- (c2);
\draw (c1) -- (d1);
\draw (c2) -- (d1);
\end{tikzpicture}
\end{center}
where for clarity we have illustrated only those that are connected via a mutation by an indecomposable summand.  If we include mutations by more that one summand, there are many more connecting lines.
\end{example}
\begin{remark}
Geometers will recognize the above picture, since when $k=\mathbb{C}$ it corresponds exactly to the flops of (single) curves on the $\mathds{Q}$-factorial terminalizations of $\Spec R$.  This is related to \S\ref{Geo cor}, but we will address this problem in more detail in future work.
\end{remark}

By \ref{notallpartialDb} and \ref{permuteDbresult}, for general partial resolutions it is clear that very rarely will $T^{\F_1}$ and $T^{\F_2}$ be linked by mutations. However, if both $T^{\F_1}$ and $T^{\F_2}$ have the maximal number of summands (i.e.\ $\c{P}(\F_1)$ and $\c{P}(\F_2)$ have the maximal number of curves), the homological algebra is much better behaved.  We already know that the exchange graph for MM generators is connected (\S\ref{MM and CT}).  Combining this with \ref{permuteDbresult} in the case $k=\mathbb{C}$ proves \ref{AllQfactDb_intro} in the introduction, namely:

\begin{cor}
All $\mathds{Q}$-factorial terminalizations of $\Spec R$ are derived equivalent.
\end{cor}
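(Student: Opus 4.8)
The plan is to deduce this from the connectedness of the exchange graph of MM generators, which was already established in the proof of \ref{classification}, together with the cited input that every $\mathds{Q}$-factorial terminalization is derived equivalent to an MMA. First I would recall that for $R$ as in the statement (a three-dimensional Gorenstein normal domain over $\mathbb{C}$ with rational singularities and at worst one-dimensional singular locus), every $\mathds{Q}$-factorial terminalization $f\colon Y\to\Spec R$ is derived equivalent to a maximal modification algebra $\End_R(N)$ for some basic MM $R$-module $N$; this is \cite[1.9]{IW5}, exactly as used in the proof of \ref{tilting complex gives}. So it suffices to prove that all the algebras $\End_R(N)$, as $N$ ranges over basic MM $R$-modules, lie in a single derived equivalence class.

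Next I would invoke the classification of MM modules. By \ref{MMnotgen} every basic MM $R$-module has the form $N\cong(I\otimes_R T^\omega)^{**}$ for some $\omega\in\mathfrak{S}_n$ and some $I\in\Cl(R)$, and (as observed in the proof of \ref{finite number cor}) $\End_R((I\otimes_R T^\omega)^{**})\cong\End_R(T^\omega)$. Hence every MMA of $R$ is isomorphic to $\End_R(T^\omega)$ for some $\omega\in\mathfrak{S}_n$, and the problem reduces to showing that the (finitely many) algebras $\End_R(T^\omega)$, $\omega\in\mathfrak{S}_n$, are pairwise derived equivalent.

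Finally this last point is precisely \ref{mutation and si} combined with \ref{welldefined}: by \ref{mutation and si}, mutating $T^\omega$ at the indecomposable summand $(u,f_{\omega(1)}\cdots f_{\omega(i)})$ produces $T^{\omega s_i}$, so the transpositions $s_1,\dots,s_{n-1}$ act on the subset $\{T^\omega\mid\omega\in\mathfrak{S}_n\}$ of vertices of $\Gr(\MMG R)$ via single mutations; since these transpositions generate $\mathfrak{S}_n$, this subset is connected inside $\Gr(\MMG R)$. Because each edge of $\Gr(\MMG R)$ realises a derived equivalence between the endomorphism algebras of its endpoints (\ref{welldefined}), all the $\End_R(T^\omega)$ are derived equivalent, and composing with the equivalences of the first paragraph yields the corollary. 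Alternatively, once one knows $X^\F$ is derived equivalent to $\End_R(T^\F)$ (\ref{db from previous}), the conclusion is immediate from \ref{permuteDbresult}, since any maximal flag can be permuted to any other. I do not expect a genuine obstacle here — the statement is a formal consequence of the classification of MM generators and the transitivity of mutation proved above; the only point needing care is the (cited) fact that $\mathds{Q}$-factorial terminalizations are derived equivalent to MMAs, which lets us transfer the algebraic statement back to geometry.
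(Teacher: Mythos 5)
Your proposal is correct and is, in substance, the paper's intended argument: both use the transitivity of mutation (established via \ref{mutation and si} and \ref{Auslander type}) to connect the endomorphism algebras $\End_R(T^\omega)$, and then transfer to geometry. The only organisational difference is that you route through \cite[1.9]{IW5} (every $\mathds{Q}$-factorial terminalization is derived equivalent to an MMA) and the classification \ref{MMnotgen}, showing directly that all MMAs lie in one derived equivalence class, whereas the paper cites \ref{permuteDbresult} (itself proved via \ref{db from previous} and the same mutation argument) together with connectedness of the exchange graph — an approach you in fact mention as an ``alternatively''. One small virtue of your phrasing is that it makes explicit the input that an arbitrary $\mathds{Q}$-factorial terminalization (not just an $X^\omega$) is derived equivalent to some MMA, which the paper's one-line proof leaves implicit.
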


\end{document}